\newtheorem{theorem}{Theorem}[section]
\newtheorem{proposition}[theorem]{Proposition}
\newtheorem{lemma}[theorem]{Lemma}
\newtheorem{corollary}[theorem]{Corollary}
\theoremstyle{remark}
\theoremstyle{definition}
\newtheorem{definition}[theorem]{Definition}
\newtheorem{remark}[theorem]{Remark}
\newtheorem{condition}[theorem]{Condition}
\numberwithin{equation}{section}
\numberwithin{figure}{section}
\numberwithin{table}{section}
\newcommand{\dN}{\mathbb{N}}
\newcommand{\RR}{\mathbb{R}}
\newcommand{\ZZ}{\mathbb{Z}}
\newcommand{\p}{\partial}
\newcommand{\s}{\sigma}
\newcommand{\hr}{\hat{r}}
\newcommand{\n}{\nabla}
\begin{document}

 \title[Conformally prescribed scalar curvature on orbifolds]{Conformally prescribed scalar curvature on orbifolds}

 \author{Tao Ju}\thanks{The authors were partially supported by NSF Grants DMS-1811096 and DMS-2105478.}
 \author{Jeff Viaclovsky}
 \address{Department of Mathematics, University of California, Irvine, CA 92697}
 \email{jut1@uci.edu, jviaclov@uci.edu}

\begin{abstract}
We study the prescribed scalar curvature problem in a conformal class on orbifolds with isolated singularities. We prove a compactness theorem in dimension $4$, and an existence theorem which holds in dimensions $n \geq 4$. This problem is more subtle than the manifold case since the positive mass theorem does not hold for ALE metrics in general.  We also determine the $\rm{U}(2)$-invariant Leray-Schauder degree for a family of negative-mass orbifolds found by LeBrun.  
\end{abstract}
\maketitle

\setcounter{tocdepth}{1}
\tableofcontents

\section{Introduction}
\label{intro}
To begin, we give the definition of a Riemannian orbifold.
\begin{definition}
\label{obf_def}
We say that $(M,g)$ is a Riemannian orbifold of dimension $n$ if $M$ is a smooth manifold of dimension $n$ with a smooth Riemannian metric away from a finite singular set
\begin{align}
\Sigma = \{(q_1, \dots,  q_l)\}.
\end{align}
Near each singular point $q_j$, there exists a neighborhood $U_j$ of $q_j$, a nontrivial finite subgroup 
$\Gamma_j \subset {\rm{O}}(n)$ acting freely on $\RR^{n} \setminus \{0\}$ 
and a $\Gamma_j$-equivariant diffeomorphism  
\begin{align}
\varphi_j: \widetilde{U_j}\to B_{\s_j}(0),
\end{align}
where $\widetilde{U_j}$ is the completion (by adding a point $\tilde{q}_j$) of the universal cover of $U_j \setminus \{q_j\}$,
and $ B_{\s_j}(0)$ is a ball of radius $\sigma_j$ about the origin in $\RR^4$. 
Furthermore, $(\varphi_j)_*  \pi_j^* g$ extends to a smooth Riemannian metric on $B_{\s_j}(0)$,
where $\pi_j :  \widetilde{U_j} \setminus \{\tilde{q}_j\} \rightarrow U_j \setminus \{q_j\}$ is a universal covering map. 
\end{definition}
Our convention is that if $l = 0$, then $\Sigma = \emptyset$, and $(M,g)$ is a smooth Riemannian manifold. But if $l \geq 1$, then there must be nontrivial singular points. 
We next define the meaning of a $C^k(M)$ function on a Riemannian orbifold $(M,g)$.
\begin{definition}
A function $f: M \rightarrow \RR$ is in $C^k(M)$ if $f: M \setminus \Sigma \rightarrow \RR$ is in $C^k( M \setminus \Sigma)$,
and near each singularity, there exists a coordinate system $\varphi_j$ such that the function $f  \circ \pi_j \circ \varphi_j^{-1} : B_{\s_j}(0) \rightarrow \RR$ is in $C^k( B_{\s_j}(0))$.
We can also define the spaces $C^{\infty}(M), C^{k,\alpha}(M), C^k_{loc}(M)$ in a similar fashion. 
\end{definition}
Note that since linear terms are never invariant under a nontrivial orbifold group, a $C^1$ function necessarily has a critical point at any nontrivial orbifold singularity. In other words,  $\Sigma \subset Crit(f)$, where $Crit(f) \equiv \{x \in M \ | \ \nabla_g f(x) = 0\}$. 

\begin{remark} In general, an orbifold can have higher-dimensional singular sets. So our definition is restrictive in that we only allow isolated quotient singularities. 
\end{remark}

Assume $(M, g)$ is a compact Riemannian $n$-orbifold 
with positive scalar curvature $R_g > 0$. Let $K > 0$ be a positive $C^2$ function on $M$. We will study the following equation
\begin{align}
\label{yamabeequ}
-\Delta_g u + c(n)R_gu = K u^{p},
\end{align}
where $c(n)=\frac{n-2}{4(n-1)}$, $1<p\leq \frac{n+2}{n-2}$ and $\Delta_g$ is the Laplacian operator associated with $g$. Let $L_g = \Delta_g - c(n)R_g$ denote the conformal Laplacian of the metric $g$. When $p=\frac{n+2}{n-2}$, the solution of equation \eqref{yamabeequ} corresponds to the prescribed scalar curvature problem. That is, the metric given by $\tilde g = u^{\frac{4}{n-2}} g$ has scalar curvature $R_{\tilde{g}} = \frac{4(n-1)}{n-2}K$.

The Yamabe problem on manifolds is well-understood, and we refer to \cite{Aubin3, Lee-Parker, Schoen, Yamabe} 
and references therein.  
The prescribed scalar curvature problem on $S^n$ and other manifolds has been studied in many works; see for example \cite{BA, CGY, ChenLin, YanyanI, YanyanII, Malchiodi, SchoenZhang}.
Prescribed scalar curvature on manifolds is studied for example in \cite{Duke, MalchiodiMayer, Mayer}. More recently, the Yamabe problem on singular spaces has been of interest; see for example \cite{ABI, ABII, ACMI, ACMII, CLV, MondelloI, MondelloII, monopole_V, Viaclovsky_Tohoku}.

Analogous to the generalization from manifolds to orbifolds, there is the following generalization of asymptotically flat (AF) metrics. 
\begin{definition}
\label{ALEdef}
 A complete Riemannian orbifold $(X^n,g)$ with finitely many singular points
is called {\em{asymptotically locally 
Euclidean}} or {\rm{ALE}} of order $\tau$ if 
it has finitely many ends and for each end
there exists a finite subgroup 
$\Gamma \subset {\rm{O}}(n)$ 
acting freely on $\RR^n \setminus \{0\}$ and a diffeomorphism 
$\psi : X \setminus K \rightarrow ( \mathbb{R}^n \setminus \overline{B_R(0)}) / \Gamma$ 
where $K$ is a compact subset of $X$, and such that under this identification, 
\begin{align}
\label{ale1}
(\psi_* g)_{ij} &= \delta_{ij} + O( \rho^{-\tau}),\\
\label{ale2}
\ \partial^{k} (\psi_*g)_{ij} &= O(\rho^{-\tau - |k| }),
\end{align}
for any partial derivative of order $|k|$, as
$\rho \rightarrow \infty$, where $\rho$ is the distance to some fixed basepoint.  
\end{definition}
We will occasionally refer to an AF metric as an ALE metric, since
AF is exactly the case of ALE with $\Gamma = \{ e \}$. Next, we give the definition of the ADM mass on asymptotically locally Euclidean (ALE) orbifolds.
\begin{definition}
\label{mass_def}
Given an $n$-dimensional ALE orbifold $(X,g)$ with asymptotic coordinates $\{z^i\}$ and quotient group $\Gamma$ near $\infty$, define the ADM mass as follows:
\begin{align}
\label{mass_form}
m(g) =\frac{|\Gamma|}{Vol(S^{n-1})}\lim_{r\to\infty}\int_{S_r/\Gamma}\sum_{i,j=1}^n(\p_ig_{ij}-\p_jg_{ii}) ( \p_j \lrcorner dV_z),
\end{align}
where $S_r/\Gamma$ is the hypersurface at $|z| = r$, and $dV_z$ is the Euclidean volume element.
\end{definition}
\begin{remark}  Bartnik proved that in the AF case, if $\tau > (n-2)/2$ then the mass is well-defined and independent of the choice of coordinates at infinity \cite{Bartnik}. A similar argument shows that the same result holds in the ALE case. 
\end{remark}

\begin{remark} Note that if $|\Gamma|=\{e\}$ is the trivial group, the formula above defines the mass for AF orbifold $(X,g)$, which is consistent with \cite[Definition 8.2]{Lee-Parker}.
Note also that if $\Gamma \neq \{e\}$, our coefficient differs from \cite{HeinLeBrun} due to the factor of $|\Gamma|$. Our convention has the advantage that it eliminates the need for writing extra factors of $|\Gamma|$ in several formulas. 
\end{remark}

For any Riemannian orbifold $(M,g)$ with positive scalar curvature, we can construct a scalar-flat ALE orbifold $(X,\hat{g})$ by the following well-known procedure.  
\begin{definition}
\label{def_psi}
Take $g$-normal coordinates $\{x^i\}$ centered at point $\bar x$ and let $r=|x|$ denote the distance function. Let $\psi_{\bar{x}} > 0$ be the Green's function of $L_g$ with leading term $r^{2-n}$ near $\bar x$. Then $(X_{\bar x}, \hat g_{\bar{x}}) = (M \setminus \bar{x}, \psi_{\bar x}^{4/(n-2)} g)$ is a scalar flat ALE orbifold. 
We will refer to $(X_{\bar x}, \hat{g}_{\bar{x}})$ as the conformal blow-up of $g$ at the point $\bar x$.
\end{definition}
Of course, if $\bar{x}$ is a smooth point of $M$, then $(X_{\bar x}, \hat{g}_{\bar{x}})$ is an AF orbifold, but if $\bar x$ is a singular point, then $(X_{\bar x}, \hat{g}_{\bar x})$ is an ALE orbifold.
\subsection{Positive Mass Theorem on AF orbifolds}
Our first result is that the positive mass theorem does hold for AF orbifolds. 
\begin{theorem}
\label{orbifold_PMT}
Let $(X, g_X)$ be an asymptotically flat (AF) $n$-dimensional Riemannian orbifold with finitely many isolated singular points, with $R(g_X) \geq 0$,
and which is of order $\tau_X > \frac{n-2}{2}$.
Then $\mathrm{mass}(g_X) \geq 0$.
and $\mathrm{mass}(g_X) = 0$ if and only if
there are no nontrivial orbifold singularities, and $(X,g_X)$ is isometric to $(\RR^{n}, g_{Euc})$.
\end{theorem}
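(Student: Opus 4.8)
The plan is to reduce to the positive mass theorem for smooth asymptotically flat manifolds (due to Schoen--Yau and to Witten, with the higher-dimensional case settled by Schoen--Yau and by Lohkamp), together with its rigidity statement. As a first reduction I would conformally change $g_X$ to a scalar-flat metric: solving $L_{g_X}u=0$ with $u\to 1$ at infinity yields an expansion $u=1+A\rho^{2-n}+O(\rho^{1-n})$ near infinity, and since $R(g_X)\ge 0$ makes $u$ subharmonic with $u\le 1$ by the maximum principle (which holds on orbifolds, locally lifting to $\Gamma_j$-invariant functions), one gets $A\le 0$ and hence $\mathrm{mass}(u^{4/(n-2)}g_X)=\mathrm{mass}(g_X)+cA\le\mathrm{mass}(g_X)$ for a fixed constant $c>0$. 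The correction $A\rho^{2-n}$ decays faster than $\rho^{-(n-2)/2}$, so the new metric is again AF of admissible order. Thus it suffices to treat scalar-flat AF orbifolds, and the equality case descends, since $\mathrm{mass}(g_X)=0$ then forces the scalar-flat metric to have mass zero as well. A further small conformal perturbation near infinity makes the metric exactly Euclidean outside a compact set; this is convenient for the asymptotic estimates but not essential.

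The main step is to eliminate the orbifold singularities by passing to a cover. Each singular point is modeled on $\RR^n/\Gamma_j$ with $\Gamma_j$ finite, so, granting that the orbifolds occurring here are developable (global quotients of manifolds) with virtually torsion-free orbifold fundamental group --- a point worth checking --- the finite cover $\pi:X'\to X$ associated to a torsion-free finite-index subgroup is a \emph{smooth} complete Riemannian manifold with $\pi^*g_X$ of the same decay order. Since the AF end of $X$ is simply connected, each of the finitely many ends of $X'$ is isometric to the end of $X$; hence $X'$ is a smooth AF $n$-manifold with $R\ge 0$ whose ADM mass on every end equals $\mathrm{mass}(g_X)$. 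Applying the smooth positive mass theorem, in its version for manifolds with finitely many asymptotically flat ends, gives $\mathrm{mass}(g_X)\ge 0$.

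For the rigidity statement, $\mathrm{mass}(g_X)=0$ forces some end of $X'$ to have vanishing mass, so by the smooth rigidity theorem $X'$ is isometric to $(\RR^n,g_{Euc})$; as $\RR^n$ has a single end and is flat, the cover must be trivial, so $X$ itself is a flat AF orbifold. A flat AF orbifold has trivial holonomy and is simply connected near infinity, which forces $\Sigma=\emptyset$, and hence $X\cong(\RR^n,g_{Euc})$.

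The point I expect to be the main obstacle is this covering reduction: establishing that the AF orbifolds with $R\ge 0$ under consideration are developable with virtually torsion-free orbifold fundamental group, so that a resolving manifold cover with finitely many ends genuinely exists, and then tracking the asymptotic coordinates carefully enough to verify that $\pi$ preserves the mass exactly. If developability is problematic, the alternative is to run Witten's spinor argument directly on $X$: solve the Dirac equation $D\phi=0$ with $\phi\to\phi_0$ constant at infinity; the orbifold points lie at finite distance and $\phi$ is smooth there in orbifold charts, so the Lichnerowicz--Weitzenb\"ock integration by parts picks up no boundary term from $\Sigma$, yielding $\mathrm{mass}(g_X)=c\int_X\big(|\nabla\phi|^2+\tfrac14 R|\phi|^2\big)\ge 0$, with equality forcing a parallel spinor which cannot be $\Gamma_j$-invariant for a nontrivial $\Gamma_j$, so $\Sigma=\emptyset$. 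That route must instead contend with the existence of a spin structure on $X$.
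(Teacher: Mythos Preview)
Your covering reduction does not go through: AF orbifolds with isolated singularities are \emph{not} developable in general, and the examples the paper is built around already witness this. Take the compactified LeBrun orbifold $\check{\mathcal O}_{\mathbb P^1}(-n)$ and conformally blow it up at a smooth point; the result is a scalar-flat AF orbifold with a single $\mathbb Z/n\mathbb Z$ singularity. A van Kampen computation (split into the smooth piece $\mathcal O(-n)$, which is simply connected, and the local model $B^4/\mathbb Z_n$) gives $\pi_1^{orb}=1$, so the local group does not inject and no manifold cover exists. Thus ``granting developability'' is not a technicality to be checked but a false hypothesis in the cases of interest. The spinor alternative has the same character: you correctly flag the spin obstruction, and on orbifolds the Lichnerowicz argument in any case requires compatibility conditions at the singular points (cf.\ Nakajima's assumptions for ALE spaces), so it does not give the theorem in the stated generality.

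The paper's proof avoids global topology entirely. At each orbifold point $q$ it builds a harmonic function $H=1+h$ with $h\sim r^{2-n}$ near $q$ and $h\sim A r^{2-n}$ at infinity, $A>0$. The conformal metric $H_\delta^{4/(n-2)}g_X$, with $H_\delta=\delta H+(1-\delta)$, then has $R\ge 0$, an AF end of mass $m(g_X)+b(n)\delta A$, and an ALE end replacing $q$. Cutting the ALE end along a large concave sphere yields a \emph{smooth} AF manifold with mean-convex boundary (from the inside), to which the Hirsch--Miao boundary version of the positive mass theorem applies; letting $\delta\to 0$ gives $m(g_X)\ge 0$. Rigidity is handled separately via the Lee--Parker argument that $m=0$ forces Ricci-flatness, then Bishop volume comparison on orbifolds forces flatness and hence $\Sigma=\emptyset$. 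Your initial conformal reduction to $R\equiv 0$ is fine but unnecessary for this route.
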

This is proved in Section~\ref{s:PMT}. The basic idea is to use a certain Green's function for the Laplacian based at the orbifold points, which we use to reduce to the positive mass theorem for manifolds with concave boundary due to Hirsch-Miao \cite{HirschMiao} using the fundamental work of Schoen-Yau \cite{SYI, SYII, SchoenYauPMT}.

\begin{remark}
The positive mass theorem holds on ALE spaces only with some extra assumptions; see for example \cite{Nakajima}. In contrast, the positive mass theorem does not necessarily hold for arbitrary ALE metrics with nonnegative scalar curvature. The first examples were given in \cite{LeBrun}, and many more in \cite{HeinLeBrun}.
Theorem \ref{orbifold_PMT} shows that the study of the Yamabe equation on orbifolds can be substantially different from the manifold case. In dimension $4$, we can define a mass function $m:  M  \rightarrow \RR$ by assigning the mass of the conformal blow-up at $\bar{x} \in M$. In the manifold case, this is a smooth function \cite{Habermann}. However, the above result shows that if $(M,g)$ is an orbifold and the conformal blow-up has negative mass at $\bar x \in \Sigma$, then the mass function $m$ is necessarily \textit{discontinuous} at $\bar x$; see Corollary~\ref{c:discont} below. 
\end{remark}

\subsection{Existence and compactness results}
There is a long history of existence and compactness results for the Yamabe problem, which is the case when $K = constant$. The fundamental idea for compactness was due to Schoen \cite{Schoen1987, Schoen_csc, Schoen_Courses}; 
 compactness results for the Yamabe problem in low dimensions were then proved in \cite{Druet, KMS, Li-Zhang, Li-Zhu, Marques}. As mentioned above, existence and compactness results for variable $K$ have been studied in great detail on $S^n$ and on manifolds. 
Our next result generalizes many of these results in dimension four to 
the case of a Riemannian orbifold.  
\begin{theorem}\label{cpt_crit_thm_combined}
Let $(M,g)$ be a compact Riemannian 4-dimensional orbifold with positive scalar curvature. Let $K\in C^2(M)$ satisfy 
\begin{align}
0< &\delta_1 \equiv \inf_M K, \\
\label{mass_ineq}
0 < &\delta_2 \equiv \inf_{ \bar{x} \in Crit(K)} \Big|m(\hat g_{\bar{x}}) + \frac{\Delta_g K(\bar x)}{2K(\bar x)}
\Big|.
\end{align}
Then there exists some constant $C$ depending only on $M, g, \delta_1,\delta_2, \Vert K\Vert_{C^2(M)}$ such that 
\begin{align}
\label{comp_est}
1/C\leq u\leq C\mathrm{\ \ and\ \ }\Vert u\Vert_{C^{2,\alpha}(M)}\leq C
\end{align}
for all solutions $u$ of \eqref{yamabeequ} with $p = 3$, where $0<\alpha<1$. Moreover, if 
\begin{align}
\label{mass_ineq_sub}
0< \delta_3 \equiv \inf_{ \bar{x} \in Crit(K)} \Big( m(\hat g_{\bar{x}}) + \frac{\Delta_g K(\bar x)}{2K(\bar x)}\Big),
\end{align}
then \eqref{comp_est} holds 
for all  $1< 1+\varepsilon <p\leq 3$ where $C$ in addition depends upon $\delta_3,\epsilon$. Consequently, in this case there exists a solution
$u$ of \eqref{yamabeequ} with $p = 3$. 
\end{theorem}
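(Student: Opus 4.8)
The plan is to establish the a priori bounds \eqref{comp_est} by a blow-up argument in the spirit of Schoen \cite{Schoen1987} and its refinements for variable $K$ \cite{SchoenZhang,ChenLin,Li-Zhang}, carried out on the local uniformizing covers near the orbifold points, and then to deduce the existence statement by solving the subcritical equations variationally and letting $p\uparrow 3$. All of the elliptic theory that is needed --- the Sobolev inequality, the strong maximum principle, Harnack and Schauder estimates --- holds on the compact orbifold $M$ after pulling back to balls in $\RR^4$ near $\Sigma$. The lower bound $u\geq 1/C$ in \eqref{comp_est} follows from the positivity of $R_g$ together with the Harnack inequality: evaluating \eqref{yamabeequ} at a maximum point of $u$ gives $\sup_M u\geq c>0$, where $c$ depends on $\min_M R_g$, $\max_M K$, and $p$ (hence on $\varepsilon$ in the subcritical case), and Harnack then bounds $\inf_M u$ from below; the $C^{2,\alpha}$ bound follows from Schauder estimates. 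Thus \eqref{comp_est} reduces to the upper bound $\sup_M u\leq C$.

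Suppose that bound fails. Then there are exponents $p_i\in(1,3]$ --- with $p_i=3$ in the first part, or $p_i\in(1+\varepsilon,3]$ in the second --- and solutions $u_i$ of \eqref{yamabeequ} with $\max_M u_i\to\infty$. The standard blow-up analysis, following \cite{Schoen1987,Li-Zhang,ChenLin} and valid in dimension four, produces (after passing to a subsequence) a finite nonempty collection of blow-up points, each of which is \emph{isolated} and \emph{simple}. Rescaling $u_i$ by its local maximum near such a point: at a smooth point of $M$ the rescaled solutions converge in $C^2_{loc}$ to the standard bubble on $\RR^4$; at an orbifold point $q_j\in\Sigma$ one works on the uniformizing cover $B_{\sigma_j}(0)$, where the rescaled, now $\Gamma_j$-invariant, solutions converge to a standard bubble on $\RR^4$, which must be centered at the origin (a nonzero center would have to be a fixed point of the freely-acting group $\Gamma_j$), and which descends to the orbifold bubble on $\RR^4/\Gamma_j$ of energy $|\Gamma_j|^{-1}$ times that of the round $S^4$ --- so a positive energy gap persists. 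In every case the blow-up point lies in $Crit(K)$: at a smooth point by a first-moment (Kazdan--Warner type) Pohozaev identity, which forces $\nabla_g K(\bar x)=0$, and at an orbifold point automatically, since $\Sigma\subset Crit(K)$.

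For each isolated simple blow-up point $\bar x$, pull the equation back to a small ball $B_\sigma$ (a genuine ball when $\bar x$ is smooth, a ball on the uniformizing cover when $\bar x\in\Sigma$), apply the Pohozaev identity, and let $i\to\infty$ and then $\sigma\to 0$. Because $n=4$, the boundary integral converges to a positive multiple of $K(\bar x)^{-1}m(\hat g_{\bar x})$, where $m(\hat g_{\bar x})$ is the mass of the conformal blow-up of Definition~\ref{def_psi}; this step uses the ALE expansion of the blow-up limit in terms of the Green's function of $L_g$, in place of the AF expansion of the manifold case. The interior integral converges to a positive multiple of $K(\bar x)^{-1}\Delta_g K(\bar x)$, the $\nabla_g K$ contribution dropping out since $\bar x\in Crit(K)$; and in the subcritical case there is an additional term, proportional to $\lim_i(3-p_i)\int v_i^{p_i+1}\,dV\geq 0$, with $v_i$ the rescaled solutions. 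Up to a positive constant the identity reads
\begin{align*}
m(\hat g_{\bar x})+\frac{\Delta_g K(\bar x)}{2K(\bar x)}+\bigl(\text{nonnegative subcritical correction}\bigr)=0 .
\end{align*}
When $p=3$ the correction is absent, so $m(\hat g_{\bar x})+\frac{\Delta_g K(\bar x)}{2K(\bar x)}=0$, contradicting \eqref{mass_ineq}; in the subcritical case it forces $m(\hat g_{\bar x})+\frac{\Delta_g K(\bar x)}{2K(\bar x)}\leq 0$, contradicting \eqref{mass_ineq_sub}. (By the remark following Theorem~\ref{orbifold_PMT}, $m(\hat g_{\bar x})$ can be negative at an orbifold point, so the combined quantity appearing in \eqref{mass_ineq}--\eqref{mass_ineq_sub} is genuinely required.) This establishes \eqref{comp_est}.

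For existence, fix $\varepsilon$ as in \eqref{mass_ineq_sub}. For each $p\in(1+\varepsilon,3)$, minimize $\int_M\bigl(|\nabla_g u|^2+c(4)R_g u^2\bigr)\,dV$ over $\{u\in H^1(M):\int_M K|u|^{p+1}\,dV=1\}$; since $p+1<4$, the critical Sobolev exponent in dimension four, the embedding $H^1(M)\hookrightarrow L^{p+1}(M)$ is compact, and $R_g>0$ makes the functional coercive, so a minimizer exists, may be chosen positive, and --- after rescaling --- solves \eqref{yamabeequ}. By the first part, these solutions satisfy \eqref{comp_est} uniformly for $p\in(1+\varepsilon,3]$, so taking $p_i\uparrow 3$ and passing to a $C^2(M)$-convergent subsequence produces a limit $u_\infty>0$ solving \eqref{yamabeequ} with $p=3$, as desired. (Equivalently, the uniform bounds make the Leray--Schauder degree of the associated operator well defined and invariant along the homotopy $p\in[1+\varepsilon,3]$, which again yields a solution at $p=3$.) The principal difficulty is the blow-up analysis of the second and third steps: showing the blow-up points are isolated and simple, controlling their interaction with $\Sigma$ --- in particular ruling out a sequence of smooth blow-up points converging to an orbifold point and making the equivariant convergence on the uniformizing covers rigorous --- and identifying the Pohozaev boundary term with the ALE mass $m(\hat g_{\bar x})$.
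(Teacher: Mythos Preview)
Your strategy is correct and matches the paper's approach: blow-up analysis on the uniformizing covers to reduce to isolated simple blow-up points (with $\nabla K(\bar x)=0$), Pohozaev identity yielding $m(\hat g_{\bar x})+\tfrac{\Delta_g K(\bar x)}{2K(\bar x)}\le 0$ (with equality when $p=3$), contradiction with \eqref{mass_ineq} or \eqref{mass_ineq_sub}, and subcritical approximation for existence. The one point to sharpen is your heuristic that the bubble at an orbifold point is centered at the origin by symmetry: this presupposes convergence to a \emph{single} bubble, whereas if the maxima $x_k\to q$ with $x_k\neq q$ their $|\Gamma|$ lifts could a priori produce a $\Gamma$-orbit of bubbles---the paper handles this by first proving no-accumulation on the cover (your acknowledged ``principal difficulty''), which forces $x_k=q$ for large $k$ and only then fixes the coordinates at $q$ for the Pohozaev step.
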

Theorem \ref{cpt_crit_thm_combined} will be proved in Sections \ref{prelim}, \ref{local_ana}, and \ref{s:blow-up}. One of the main difficulties is due to the ``discontinuity'' of conformal normal coordinates at an orbifold point. In Section~\ref{s:blow-up}, we will show that if concentration of a sequence $u_k$ happens at an orbifold point, then the local maxima of $u_k$ must occur \textit{exactly} at the orbifold point for $k$ sufficiently large; see Condition~\ref{blow-up_cond} which is a generalization of the isolated simple blow-up condition. Once we prove that blow-up points satisfy Condition \ref{blow-up_cond};
we can then fix the coordinates to be centered at the orbifold point, and then there is a contribution only from the mass function at the blow-up point. Note: our argument does not need continuity of the mass function which, as pointed out above, is not true in general anyway.

\begin{remark} We emphasize that under the assumption \eqref{mass_ineq}, we are only claiming compactness; we do not have the existence in this general case. The main reason is that the subcritical method only works under the stronger assumption \eqref{mass_ineq_sub}. 
However, the compactness result does allow one to define the Leray-Schauder degree. 
We expect that by imposing extra assumptions on $K$, one should be able to prove this degree is non-zero in some cases where \eqref{mass_ineq_sub} does not hold, but we do not pursue this here.  Results along this line in the manifold case can be found in \cite{Duke, YanyanII, MalchiodiMayer, Mayer}. 
\end{remark}

\subsection{Variational methods}
Next, we have another existence result in dimensions $n \geq 4$ which is proved using variational methods. Roughly, this says that if $K$ is not too large away from the orbifold points, then we need assumption \eqref{mass_ineq_sub} to hold at only one orbifold point. 
\begin{theorem}\label{energy_existence}
Let $(M, g)$ be a compact Riemannian $n$-dimensional orbifold with singularities $\Sigma_\Gamma = \{(q_1,\Gamma_1),\cdots,(q_l,\Gamma_l)\}$ and positive scalar curvature. Let $K$ be a positive smooth function on $M$. 
Assume that
\begin{align}
\label{maxcond}
\max_{1 \leq i \leq l} \{ |\Gamma_i|^{\frac{2}{n-2}} K(q_i) \} \geq \sup K,
\end{align}
and
\begin{equation}
\label{energy_exist_cond}
\begin{cases}
m(\hat g_{q_{i_0}}) + \frac{\Delta_g K(q_{i_0})}{2 K(q_{i_0})} > 0, & n=4,\\
\Delta_g K(q_{i_0}) > 0, & n \geq 5,
\end{cases}
\end{equation}
for some $i_0$ such that $|\Gamma_{i_0}|^{\frac{2}{n-2}} K(q_{i_0}) = \max_{1 \leq i \leq l} \{ |\Gamma_i|^{\frac{2}{n-2}} K(q_i) \} $,
where $\hat g_{q_{i_0}}$ is as in Definition~\ref{def_psi}.
Then there exists a positive smooth solution of equation \eqref{yamabeequ} with $p = \frac{n+2}{n-2}$.
\end{theorem}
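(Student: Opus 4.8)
The plan is to carry out the variational (Aubin--Schoen) method on the orbifold. Set
$$
Q_K(u) = \frac{\int_M \big(|\nabla_g u|^2 + c(n) R_g u^2\big)\, dV_g}{\big(\int_M K|u|^{\frac{2n}{n-2}}\, dV_g\big)^{\frac{n-2}{n}}},
\qquad
Y_K \equiv \inf_{0 \not\equiv u \in H^1(M)} Q_K(u).
$$
Since $R_g > 0$, the numerator is comparable to $\|u\|_{H^1(M)}^2$, so $Y_K > 0$; and any positive minimizer, after rescaling by a constant to absorb the Lagrange multiplier, is a weak solution of \eqref{yamabeequ} with $p = \frac{n+2}{n-2}$, hence smooth and positive by elliptic regularity on the orbifold together with the strong maximum principle. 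So it suffices to show $Y_K$ is attained.

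First I would record the concentration--compactness alternative. Put
$$
A \equiv \max\Big\{ \sup_M K, \ \max_{1 \leq i \leq l} |\Gamma_i|^{\frac{2}{n-2}} K(q_i) \Big\},
$$
so that, by \eqref{maxcond}, $A = |\Gamma_{i_0}|^{\frac{2}{n-2}} K(q_{i_0})$. The classical Lions--Aubin argument, transplanted to the orbifold, shows that a minimizing sequence for $Y_K$ (normalized by $\int_M K|u_k|^{\frac{2n}{n-2}} = 1$) either converges strongly in $H^1(M)$ --- in which case $Y_K$ is attained --- or concentrates at a single point $x_0 \in M$, in which case
$$
Y_K \ \geq\ \big( |\Gamma_{x_0}|^{\frac{2}{n-2}} K(x_0) \big)^{-\frac{n-2}{n}} Y(S^n) \ \geq\ A^{-\frac{n-2}{n}} Y(S^n),
$$
where $|\Gamma_{x_0}| = 1$ at a smooth point and $Y(S^n) = n(n-1)\, \mathrm{Vol}(S^n)^{2/n}$. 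The bound at the concentration point uses the sharp Sobolev inequality on $\RR^n$ and its quotients: a $\Gamma$-invariant standard bubble descends to $\RR^n/\Gamma$ with energy and $L^{\frac{2n}{n-2}}$-mass both divided by $|\Gamma|$, so the sharp constant degrades by the factor $|\Gamma|^{-2/n}$, while continuity of $K$ supplies only the factor $K(x_0)^{-\frac{n-2}{n}}$. Hence it is enough to produce a test function $\varphi$ with $Q_K(\varphi) < A^{-\frac{n-2}{n}} Y(S^n)$.

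The construction of $\varphi$ is the crux, and by \eqref{maxcond} it should concentrate at $q_{i_0}$. Work in an orbifold chart $\widetilde{U_{i_0}} \cong B_\sigma(0)$ with $\Gamma_{i_0}$-invariant metric and choose $\Gamma_{i_0}$-invariant conformal normal coordinates centered at the origin (the Lee--Parker conformal factor may be taken $\Gamma_{i_0}$-invariant by averaging, and the Green's function $\psi_{q_{i_0}}$ of Definition~\ref{def_psi} is automatically $\Gamma_{i_0}$-invariant). Using the radial bubbles $\varphi_\varepsilon(x) = \eta(x)\big(\varepsilon/(\varepsilon^2 + |x|^2)\big)^{\frac{n-2}{2}}$ with a fixed cutoff $\eta$, which are $\Gamma_{i_0}$-invariant and hence descend to $M$, the integral over $M$ of any $\Gamma_{i_0}$-invariant density equals $|\Gamma_{i_0}|^{-1}$ times the corresponding integral over $B_\sigma(0)$; carrying this factor through numerator and denominator is precisely what converts the leading term of $Q_K(\varphi_\varepsilon)$ into $A^{-\frac{n-2}{n}} Y(S^n)$ rather than $K(q_{i_0})^{-\frac{n-2}{n}} Y(S^n)$. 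Since $q_{i_0} \in Crit(K)$ automatically, the first-order term in the expansion of $\int_M K \varphi_\varepsilon^{\frac{2n}{n-2}}$ vanishes and the Hessian contributes $c_n \Delta_g K(q_{i_0})\, \varepsilon^2 + o(\varepsilon^2)$ with $c_n > 0$, which lowers $Q_K$ when $\Delta_g K(q_{i_0}) > 0$. For $n \geq 5$ the geometric corrections in the numerator --- the scalar-curvature term, and, in the Green's-function variant of $\varphi_\varepsilon$, the mass --- occur at order $\varepsilon^3$ or smaller (including the $\varepsilon^{n-2}$ cutoff errors), hence are negligible against the $\varepsilon^2$ term, so $\Delta_g K(q_{i_0}) > 0$ alone gives the required inequality. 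For $n = 4$ the scalar-curvature/mass correction is itself of order $\varepsilon^2$; there I would take $\varphi_\varepsilon$ to be the Schoen--Lee--Parker test function built from the conformal blow-up $\hat g_{q_{i_0}} = \psi_{q_{i_0}}^{2} g$, whose Green's-function expansion has finite part $m(\hat g_{q_{i_0}})$ as in \eqref{mass_form}, giving
$$
Q_K(\varphi_\varepsilon) = A^{-1/2} Y(S^4)\Big( 1 - c\, \varepsilon^2\big( m(\hat g_{q_{i_0}}) + \tfrac{\Delta_g K(q_{i_0})}{2 K(q_{i_0})} \big) + o(\varepsilon^2) \Big), \qquad c > 0,
$$
so the strict inequality holds precisely under \eqref{energy_exist_cond}.

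I expect the main obstacle to be the bookkeeping in this last step: confirming that conformal normal coordinates and the Green's-function expansion can be taken $\Gamma_{i_0}$-equivariantly, and tracking the factor $|\Gamma_{i_0}|^{-1}$ through both numerator and denominator so that the comparison is genuinely against $\big(|\Gamma_{i_0}|^{\frac{2}{n-2}} K(q_{i_0})\big)^{-\frac{n-2}{n}} Y(S^n)$; and, in dimension $4$, deriving the precise $\varepsilon^2$-expansion that isolates $m(\hat g_{q_{i_0}})$ --- exactly the quantity that can be negative for orbifold blow-ups (the LeBrun-type examples), and whose positivity must therefore be hypothesized in \eqref{energy_exist_cond} rather than obtained from Theorem~\ref{orbifold_PMT}. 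A secondary point is to verify that on a compact orbifold the concentration--compactness dichotomy really reduces to the two cases above: vanishing is excluded by the normalization, and the dichotomy case by strict sub-additivity of the relevant infimum.
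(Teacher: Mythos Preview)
Your proposal is correct and follows essentially the same route as the paper: establish the critical threshold $A^{-\frac{n-2}{n}} Y(S^n)$ below which a minimizer exists, then construct a test function concentrating at $q_{i_0}$ that beats it, with the $\varepsilon^2$-coefficient governed by $\Delta_g K(q_{i_0})$ for $n\geq 5$ and by $m(\hat g_{q_{i_0}}) + \frac{\Delta_g K(q_{i_0})}{2K(q_{i_0})}$ for $n=4$. The paper packages the first step as a separate theorem (Theorem~\ref{energy_obf}) and, rather than invoking Lions--Aubin concentration--compactness directly, follows Akutagawa's device of taking Escobar--Schoen minimizers on the exhaustion $M\setminus \overline{B_{1/k}}(q)$ and proving a uniform $C^0$-bound by a rescaling/blow-up argument; for the test-function expansion it quotes \cite[Proposition~5.1]{M-M} verbatim (their $\varphi_{q,\lambda}$ is the Green's-function test function in all dimensions), and identifies the regular part $H_q$ with $m(\hat g_{q_{i_0}})/12$ via \cite[Lemma~9.7]{Lee-Parker}. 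Your sketch and the paper's proof differ only in these implementation choices, not in substance.
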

This will be proved in Section \ref{s:Energy}, which is closely related to \cite[Theorem~2.1]{E-S}, \cite[Theorem~3.1]{Akutagawa} and \cite[Proposition 5.1]{M-M}. The analogue on manifolds is that \eqref{energy_exist_cond} has to hold at one maximum point of $K$, which is only possible in dimension $n=4$. Hence the theorem is quite special to the orbifold case in dimensions $n\geq 5$. We next apply this to some examples. 

 Consider $S^n \subset \RR^{n+1}$, and let $\Gamma \subset {\rm{SO}}(n)$ be a finite subgroup acting freely on $S^{n-1} \subset S^n \cap \{x_{n+1} = 0\}$. 
Then $S^n / \Gamma$ with the spherical metric $g_S$ is a Riemannian orbifold with 2 orbifold points $q_1$ and $q_2$.
\begin{corollary}[Spherical football orbifold]
With $\Gamma$ as above, let $K : S^n/\Gamma \rightarrow \RR_+$ be a smooth function and without loss a generality assume that $K(q_1) \geq K(q_2)$. If
\begin{align} 
|\Gamma|^{\frac{2}{n-2}} K(q_1) &\geq \sup K,\\ 
\Delta_{g_S} K (q_1) &> 0,
\end{align}
then there exists a positive smooth solution of equation \eqref{yamabeequ} with $g=g_S$ and $p = \frac{n+2}{n-2}$.
\end{corollary}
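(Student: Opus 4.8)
The plan is to obtain the corollary as a direct application of Theorem~\ref{energy_existence} to $(M,g) = (S^n/\Gamma, g_S)$ with $l = 2$ and $i_0 = 1$. First I would record the elementary geometry of the football: the isometric $\Gamma$-action on $S^{n-1}$ extends to an isometric action on $(S^n, g_S)$ fixing the $x_{n+1}$-coordinate, whose fixed set is exactly the two poles $q_1, q_2 = \pm e_{n+1}$ and which is free on $S^n \setminus \{q_1,q_2\}$ (any $\gamma \neq e$ with eigenvalue $1$ would fix a point of $S^{n-1}$). Hence $S^n/\Gamma$ is a compact Riemannian orbifold with isolated singularities $\Sigma_\Gamma = \{(q_1,\Gamma),(q_2,\Gamma)\}$, the local group at each pole being the linearized action on $\RR^n \cong T_{q_i}S^n$, conjugate to $\Gamma \subset {\rm{SO}}(n)$ itself. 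Since $R_{g_S} \equiv n(n-1) > 0$, this orbifold has positive scalar curvature, and since $|\Gamma_1| = |\Gamma_2| = |\Gamma|$ together with $K(q_1) \geq K(q_2)$, the hypothesis $|\Gamma|^{2/(n-2)}K(q_1) \geq \sup K$ is precisely condition~\eqref{maxcond}, with the maximum attained at $i_0 = 1$.

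The one substantive step is to verify condition~\eqref{energy_exist_cond} at $q_1$, for which I need $m(\hat g_{q_1})$. Here I would use that $S^n$ minus a point is conformally flat: stereographic projection from $q_1$ is a $\Gamma$-equivariant conformal diffeomorphism from $(S^n \setminus \{q_1\}, g_S)$ onto $(\RR^n, (2/(1+|x|^2))^2 g_{Euc})$, carrying the $\Gamma$-action to its original orthogonal action on $\RR^n$ and sending $q_2$ to the origin. The function $u = ((1+|x|^2)/2)^{(n-2)/2}$ satisfies $u^{4/(n-2)} g_S = g_{Euc}$, hence $L_{g_S} u = 0$ on $S^n \setminus \{q_1\}$, and a short expansion shows $u$ blows up like a positive multiple of $r^{2-n}$ as $r = d_{g_S}(\cdot, q_1) \to 0$; after rescaling the leading coefficient to $1$ — which only multiplies the resulting metric by a positive constant — this is the Green's function $\psi_{q_1}$ of Definition~\ref{def_psi}. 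Consequently the conformal blow-up $(X_{q_1}, \hat g_{q_1})$ is isometric to flat $(\RR^n/\Gamma, g_{Euc})$, with its single remaining orbifold point at the origin (the image of $q_2$). Because the metric is exactly Euclidean near infinity, every term in the integrand of \eqref{mass_form} vanishes identically, so $m(\hat g_{q_1}) = 0$.

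Plugging this in finishes the argument: for $n = 4$ one gets $m(\hat g_{q_1}) + \Delta_{g_S}K(q_1)/(2K(q_1)) = \Delta_{g_S}K(q_1)/(2K(q_1)) > 0$ from $\Delta_{g_S}K(q_1) > 0$ and $K(q_1) > 0$, while for $n \geq 5$ the condition $\Delta_{g_S}K(q_1) > 0$ is assumed outright; in both cases \eqref{energy_exist_cond} holds at $i_0 = 1$, and Theorem~\ref{energy_existence} yields a positive smooth solution of \eqref{yamabeequ} with $p = (n+2)/(n-2)$. I do not anticipate a serious obstacle, since the result is essentially a verification of hypotheses; the only point that demands care is the identification of $\hat g_{q_1}$ with the flat metric on $\RR^n/\Gamma$ and the resulting vanishing of its mass, because it is exactly this vanishing that lets the corollary drop the mass term from its assumptions — implicitly, one is using that the ``bad'' pole of the football still carries nonnegative blow-up mass.
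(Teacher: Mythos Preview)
Your proposal is correct and follows exactly the approach the paper intends: the corollary is stated in the paper as an immediate application of Theorem~\ref{energy_existence} with no separate proof, and you have filled in the verification of hypotheses that the paper leaves implicit. In particular, your computation that the conformal blow-up at $q_1$ is (a constant multiple of) the flat metric on $\RR^n/\Gamma$, and hence has $m(\hat g_{q_1})=0$, is the key point needed to reduce the $n=4$ case of \eqref{energy_exist_cond} to the assumed Laplacian condition; this is correct and is precisely the extra detail the paper suppresses.
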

We note that for $|\Gamma| = 2$, this result also follows from \cite[Theorem 1.12]{Leung-Zhou}.
Another non-trivial example to which Theorem \ref{energy_existence} applies is to the Calabi metric $g_{CAL(n)}$, which is a $U(n)$-invariant Ricci-flat K\"ahler ALE metric on the total space $X_n$ of the line bundle $\mathcal O(-n)\to \mathbb P^{n-1}$. For details on the Calabi metric, see for example~\cite{M-V}.
\begin{corollary}[Calabi orbifold]
\label{c:Calabi}
For $n\geq 2$, take a $U(n)$-invariant conformal compactification $(\check X_n, \check g_{CAL(n)})$ of the Calabi metric $(X_n, g_{CAL(n)})$, such that the infinity of $g_{CAL(n)}$ is compactified to an orbifold point $\check q$, with quotient group $\mathbb Z/n \mathbb Z$. Let $K$ be a positive smooth function on $\check X_n$. Assume that
\begin{align}
n^{\frac{1}{n-1}}K(\check q) \geq& \sup K,\\
\label{laplacian>0}
\Delta_{\check{g}_{CAL(n)}} K(\check q) >& 0,
\end{align}
then there exists a positive smooth solution of equation \eqref{yamabeequ} with $g = \check g_{CAL(n)}$ and $p = \frac{n+1}{n-1}$.
\end{corollary}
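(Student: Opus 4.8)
The plan is to deduce Corollary~\ref{c:Calabi} directly from Theorem~\ref{energy_existence} applied to the orbifold $(\check X_n, \check g_{CAL(n)})$. First I would record the numerology. The total space $X_n$ of $\mathcal{O}(-n) \to \PP^{n-1}$ has complex dimension $n$, so $\check X_n$ has real dimension $N = 2n$; it carries a single orbifold point $\check q$ whose local group is $\Gamma = \ZZ/n\ZZ$, so in the notation of Theorem~\ref{energy_existence} we have $l = 1$, $q_1 = \check q$, $|\Gamma_1| = n$, and necessarily $i_0 = 1$. With $N = 2n$ the critical Yamabe exponent is $\frac{N+2}{N-2} = \frac{n+1}{n-1}$, which is exactly the exponent $p$ in the statement, and $|\Gamma_1|^{2/(N-2)} = n^{1/(n-1)}$, so \eqref{maxcond} reads precisely $n^{1/(n-1)} K(\check q) \geq \sup K$. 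I would also note that $\check g_{CAL(n)}$ has positive scalar curvature: it is conformal to the scalar-flat (indeed Ricci-flat) metric $g_{CAL(n)}$ via a conformal factor which is, up to a positive constant, the Green's function of $L_{g_{CAL(n)}}$ with pole at infinity, and one-point conformal compactifications of scalar-flat ALE metrics are Yamabe-positive; after the usual normalization $R_{\check g_{CAL(n)}} > 0$. Thus $(\check X_n, \check g_{CAL(n)})$ is an admissible input for Theorem~\ref{energy_existence}, with $K$ in the role of the prescribed function.

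Next I would verify the two hypotheses of Theorem~\ref{energy_existence} at $\check q$. Condition \eqref{maxcond} is the assumed inequality $n^{1/(n-1)} K(\check q) \geq \sup K$, as just observed. For \eqref{energy_exist_cond}, when $n \geq 3$ we have $N = 2n \geq 6 > 5$, so \eqref{energy_exist_cond} reduces to $\Delta_{\check g_{CAL(n)}} K(\check q) > 0$, which is exactly \eqref{laplacian>0}; Theorem~\ref{energy_existence} then produces the desired positive smooth solution of \eqref{yamabeequ} with $p = \frac{n+1}{n-1}$.

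The case $n = 2$ needs one more step, since there $N = 4$ and \eqref{energy_exist_cond} asks that $m(\hat g_{\check q}) + \frac{\Delta_{\check g_{CAL(2)}} K(\check q)}{2 K(\check q)} > 0$, where $\hat g_{\check q}$ is the conformal blow-up at $\check q$ from Definition~\ref{def_psi}. Since $K > 0$ and $\Delta_{\check g_{CAL(2)}} K(\check q) > 0$ by \eqref{laplacian>0}, it suffices to show $m(\hat g_{\check q}) \geq 0$. Here I would argue that, because $g_{CAL(2)}$ is scalar-flat and $\check g_{CAL(2)}$ is its one-point conformal compactification, the conformal covariance of the Green's function of the conformal Laplacian identifies $\hat g_{\check q}$ with a positive constant multiple of $g_{CAL(2)}$, that is, of the Eguchi--Hanson metric; since the ADM mass is positively homogeneous under constant rescaling of the metric, $m(\hat g_{\check q})$ has the same sign as $m(g_{CAL(2)})$. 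Finally I would compute $m(g_{CAL(2)}) = 0$: the Eguchi--Hanson metric decays at its ALE end at order $\tau = 4$, so in dimension $4$ the integrand in \eqref{mass_form} is $O(\rho^{-5})$ while the hypersurface $S_r/\Gamma$ has area $O(r^3)$, and letting $r \to \infty$ gives mass zero; equivalently one may invoke that hyperk\"ahler ALE $4$-manifolds have vanishing mass. Hence $m(\hat g_{\check q}) = 0$, \eqref{energy_exist_cond} holds for $n = 2$ as well, and Theorem~\ref{energy_existence} again yields the solution.

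The main obstacle, though a mild one, is everything specific to $n = 2$: identifying the conformal blow-up at $\check q$ with a rescaling of the Calabi (Eguchi--Hanson) metric by carefully tracking the conformal factor of the compactification, and then verifying that its ADM mass vanishes. For $n \geq 3$ the corollary is essentially a direct translation of Theorem~\ref{energy_existence}, the only work being the bookkeeping of the dimension (which is $2n$, not $n$) and the identification of the local group as $\ZZ/n\ZZ$ of order $n$.
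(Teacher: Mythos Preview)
Your proposal is correct and follows the paper's intended route: the corollary is stated without proof, as a direct specialization of Theorem~\ref{energy_existence} once one records that the real dimension is $2n$, the unique orbifold group has order $n$, and the critical exponent is $\frac{n+1}{n-1}$. Your additional care in the $n=2$ case---identifying the conformal blow-up at $\check q$ with (a rescaling of) the Eguchi--Hanson metric and invoking $m(g_{LEB(2)}) = -2(2-2) = 0$---is exactly the verification the paper leaves implicit, and is consistent with the later Theorem~\ref{t:LEB1} where the $n=2$ inequality $4(n-2)K(\check q) < \Delta K(\check q)$ reduces to $\Delta K(\check q) > 0$.
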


\begin{remark} In this case, from Theorem~1.3 and Remark~4.7 in \cite{monopole_V}, we know that there is no solution with $K = constant > 0$, which certainly does not contradict with the condition on the Laplacian at the orbifold point.  
However, Corollary \ref{c:Calabi} implies that any sufficiently small perturbation of a constant satisfying
$\Delta_{\check g_{CAL(n)}}K (\check{q}) > 0$ \textit{does} admit a solution. 
So the case of $K = constant$ is right on the boundary of existence. Furthermore, if we take a sequence of functions $K_k$ satisfying
\eqref{laplacian>0}, but converging to a positive constant in $C^2$ norm, then this gives an example where bubbling \textit{must} occur as $k \to \infty$. 
\end{remark}

 For $n=2$, $(X_2, g_{CAL(2)})$ is also known as the Eguchi-Hanson metric, which is included in the family of LeBrun metrics, which we discuss next.  

\subsection{LeBrun metrics}\label{lebrunsub} In this subsection, we will use the previous results to analyze the prescribed scalar curvature problem on the family of orbifolds mentioned above: the LeBrun negative mass metrics on $\mathcal{O}_{\mathbb{P}^1}(-n)$. Note: these are $4$-dimensional manifolds, so in this subsection the integer $n$ will instead be used to denote $-c_1$, where $c_1$ is the first Chern class of this line bundle. These metrics possess an isometric $\mathrm{U}(2)$-action. As we will see below, we have some results in the general case, but our most complete results are under a $\mathrm{U}(2)$-symmetry assumption. 

In \cite{LeBrun}, LeBrun presented the first known example of a scalar-flat ALE metric of negative ADM mass. For any $n\in \dN^*$, define
\begin{align}
g_{LEB(n)} =\frac{1+\hr^2}{n+\hr^2}d\hr^2+(1+\hr^2)\s_1^2+(1+\hr^2)\s_2^2+\frac{\hr^2(n+\hr^2)}{1+\hr^2}\s_3^2,
\end{align}
where $\hr\in (0,\infty)$ is the radial coordinate, and $\{\s_1,\s_2,\s_3\}$ is a left-invariant coframe on $S^3=SU(2)$. Attach a $\mathbb{P}^1$ at $\hr = 0$. After taking a quotient by $\Gamma_n = \ZZ/n\ZZ$, the metric extends smoothly over this $\mathbb{P}^1$. Furthermore, the metric is ALE with group action $\Gamma_n$ near $\hr =\infty$ and the resulting manifold is diffeomorphic to $\mathcal{O}_{\mathbb {P}^1}(-n)$.

The mass term defined in \cite{LeBrun} differs from our Definition \ref{mass_def} by a scaling factor. With a modification of the mass computed in \cite{LeBrun}, we obtain
\begin{align}
m(g_{LEB(n)}) = -2(n-2).
\end{align}
For any $n\in \dN^*$, $g_{LEB(n)}$ is scalar-flat. Note that $g_{LEB(1)}$ is conformal to the Fubini-Study metric on $\mathbb{P}^2$, and is known as the Burns metric, which is AF and has positive mass. As pointed out above, $g_{LEB(2)}$ is the Eguchi-Hanson metric, which is Ricci-flat ALE and has zero mass. For any $n\geq 3$, $g_{LEB(n)}$ has negative mass. More details and properties of LeBrun metrics can be found in \cite{D-L, LeBrun, monopole_V}.
 
We next choose an orbifold compactification by defining
\begin{align}
\check g_{LEB(n)} = \frac{1}{(n+\hr^2)^2}\cdot g_{LEB(n)}.
\end{align}
Then $(\check{\mathcal{O}}_{\mathbb {P}^1}(-n),  \check g_{LEB(n)})$ is a compact orbifold with singular point $\check{q}$ at $\hr=\infty$ with quotient group $\Gamma_n$. Its scalar curvature is computed to be
\begin{align}
R_{\check g_{LEB(n)}} = \frac{24n(n + \hr^2)}{1+\hr^2} > 0.
\end{align}
In this case, Theorem \ref{energy_existence} specializes to the following. 

\begin{theorem} 
\label{t:LEB1}Consider $(\check{\mathcal{O}}_{\mathbb {P}^1}(-n),  \check g_{LEB(n)})$ for $n \in \mathbb{N}^*$. Let $K$ be a positive smooth function such that 
\begin{align}
\begin{split}
\sup K &\leq n K(\check{q}),\\
4 (n -2)K(\check{q}) &< \Delta_{\check g_{LEB(n)}} K(\check{q}).
\end{split}
\end{align}
Then there exists a positive smooth solution of equation \eqref{yamabeequ} with $g=\check g_{LEB(n)}$ and $p = 3$.
\end{theorem}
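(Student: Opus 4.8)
The plan is to deduce Theorem~\ref{t:LEB1} from Theorem~\ref{energy_existence}, so the task is to check the latter's hypotheses; the only substantive point is the identification of the mass of the conformal blow-up of $\check g_{LEB(n)}$ at $\check q$. Throughout, $n\in\dN^*$ is the LeBrun parameter ($-c_1$), while the orbifold $(\check{\mathcal{O}}_{\mathbb{P}^1}(-n),\check g_{LEB(n)})$ is $4$-dimensional; when we invoke Theorem~\ref{energy_existence}, the symbol ``$n$'' in its statement is to be read as $4$. First, record the data: $(\check{\mathcal{O}}_{\mathbb{P}^1}(-n),\check g_{LEB(n)})$ is a compact Riemannian $4$-orbifold with positive scalar curvature $R_{\check g_{LEB(n)}}=\frac{24n(n+\hr^2)}{1+\hr^2}>0$, and for $n\geq2$ its singular locus is the single point $\check q$ (at $\hr=\infty$), with group $\Gamma_n=\ZZ/n\ZZ$. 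Thus in the notation of Theorem~\ref{energy_existence}, $l=1$, $q_1=\check q$, $\Gamma_1=\Gamma_n$, $|\Gamma_1|=n$, and necessarily $i_0=1$. Since the dimension is $4$, $|\Gamma_1|^{2/(4-2)}K(q_1)=n\,K(\check q)$, so \eqref{maxcond} reads $n\,K(\check q)\geq\sup K$, which is the first hypothesis of Theorem~\ref{t:LEB1}; the critical exponent is $p=\tfrac{4+2}{4-2}=3$; and the relevant case of \eqref{energy_exist_cond} is its first ($n=4$) line.

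The heart of the proof is to show that the conformal blow-up $\hat g_{\check q}$ of $\check g_{LEB(n)}$ at $\check q$ (Definition~\ref{def_psi}) equals $g_{LEB(n)}$, whence $m(\hat g_{\check q})=m(g_{LEB(n)})=-2(n-2)$. Since $\check g_{LEB(n)}=(n+\hr^2)^{-2}g_{LEB(n)}=\big((n+\hr^2)^{-1}\big)^{4/(4-2)}g_{LEB(n)}$, conformal covariance of the conformal Laplacian together with the scalar-flatness of $g_{LEB(n)}$ gives
\begin{align}
L_{\check g_{LEB(n)}}\big(n+\hr^2\big)=(n+\hr^2)^{3}\,L_{g_{LEB(n)}}(1)=0
\end{align}
on $\check{\mathcal{O}}_{\mathbb{P}^1}(-n)\setminus\{\check q\}$; hence $n+\hr^2$ is a positive $L_{\check g_{LEB(n)}}$-harmonic function on the complement of $\check q$ with a pole at $\check q$. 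To identify it with the normalized Green's function $\psi_{\check q}$ of Definition~\ref{def_psi}, i.e.\ to verify that its leading term at $\check q$ is $r^{-2}$ where $r$ is the $\check g_{LEB(n)}$-distance to $\check q$, substitute $s=1/\hr$: from the explicit form of $g_{LEB(n)}$ one computes $\check g_{LEB(n)}=ds^2+s^2(\s_1^2+\s_2^2+\s_3^2)+O(s^2)$ near $\check q$, so $r=s+O(s^3)$ and $n+\hr^2=s^{-2}+n=r^{-2}+O(1)$. Hence $\psi_{\check q}=n+\hr^2$ exactly (no spurious constant), and
\begin{align}
\hat g_{\check q}=\psi_{\check q}^{4/(4-2)}\,\check g_{LEB(n)}=(n+\hr^2)^2\cdot(n+\hr^2)^{-2}g_{LEB(n)}=g_{LEB(n)},
\end{align}
so $m(\hat g_{\check q})=m(g_{LEB(n)})=-2(n-2)$ by the mass computation recalled above.

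With this in hand, the $n=4$ line of \eqref{energy_exist_cond} at $q_{i_0}=\check q$ reads
\begin{align}
m(\hat g_{\check q})+\frac{\Delta_{\check g_{LEB(n)}}K(\check q)}{2K(\check q)}=-2(n-2)+\frac{\Delta_{\check g_{LEB(n)}}K(\check q)}{2K(\check q)}>0,
\end{align}
equivalently $4(n-2)K(\check q)<\Delta_{\check g_{LEB(n)}}K(\check q)$, which is the second hypothesis of Theorem~\ref{t:LEB1}. All hypotheses of Theorem~\ref{energy_existence} are thus satisfied (in ambient dimension $4$), so it produces a positive smooth solution of \eqref{yamabeequ} with $g=\check g_{LEB(n)}$ and $p=3$, as claimed. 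For $n=1$ the group $\Gamma_1$ is trivial, $\check q$ is a smooth point, and $m(g_{LEB(1)})=2>0$, so this case follows instead from the manifold analogue of Theorem~\ref{energy_existence} (cf.\ \cite{E-S}); for $n=2$ we have $m(g_{LEB(2)})=0$ and the argument is unchanged. I expect the one genuinely non-routine step to be the normalization check in the middle paragraph---verifying that the constant relating $n+\hr^2$ to the normalized Green's function is exactly $1$, so that no scaling factor intervenes in passing from $m(\hat g_{\check q})$ to $m(g_{LEB(n)})$---but once the change of variables $s=1/\hr$ is set up this is a short computation.
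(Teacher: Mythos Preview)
Your proof is correct and follows the same approach as the paper, which simply states that Theorem~\ref{t:LEB1} is the specialization of Theorem~\ref{energy_existence} to the LeBrun orbifolds. The paper records the key identification $\hat g_{\check q}=g_{LEB(n)}$ at the start of Section~\ref{s:LeBrun} (via $g_{LEB(n)}=(s^{-2}+n)^2\check g_{LEB(n)}$ being scalar-flat), but you have been more careful than the paper in verifying the normalization of the Green's function and in flagging the $n=1$ edge case where $\check q$ is smooth.
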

Next, we turn to the U$(2)$-invariant problem. Define a function space $\mathcal X_n$
\begin{align}
\label{X_def}
\mathcal X_n = \{ K : \check{\mathcal{O}}_{\mathbb {P}^1}(-n) \rightarrow \RR \ | 
\ K \mbox{ is smooth, U}(2)\mbox{-invariant, and } K > 0 \}.
\end{align}
Given any $K \in\mathcal X_n$, we are asking whether there exists a solution $u \in\mathcal X_n$ of the equation 
\begin{align}
\label{leb_conf}
-\Delta_{\check g_{LEB(n)}}u + \frac{1}{6}R_{\check g_{LEB(n)}}u = Ku^3.
\end{align}
Decompose $\mathcal X_n$ into three disjoint subsets
\begin{align}
\label{X_decomp}
\begin{split}
\mathcal X_{n, +} &=  \{K\in\mathcal X_n: \Delta_{\check g_{LEB(n)}} K(\check{q})-4 (n -2)K(\check{q}) > 0 \},\\
\mathcal X_{n, 0} &=  \{K\in\mathcal X_n: \Delta_{\check g_{LEB(n)}} K(\check{q})-4 (n -2)K(\check{q}) = 0 \},\\
\mathcal X_{n, -} &=  \{K\in\mathcal X_n: \Delta_{\check g_{LEB(n)}} K(\check{q})-4 (n -2)K(\check{q}) < 0 \}.
\end{split}
\end{align}
Define a function space
\begin{align}
\label{Omega_def}
\Omega_{\Lambda, n} = \{u\in \mathcal X_n: \Vert u\Vert_{C^{2,\alpha}(\check{\mathcal{O}}_{\mathbb {P}^1}(-n))} < \Lambda,\ u > \Lambda^{-1}\},
\end{align}
and a map
\begin{align}
\label{F_map_def}
F_{p,K,n}:\bar\Omega_{\Lambda, n}\to C^{2,\alpha}(\check{\mathcal{O}}_{\mathbb {P}^1}(-n))\ \ \mathrm{by\ \ }F_{p,K,n}(u) = u + L_{\check g_{LEB(n)}}^{-1}(Ku^p),
\end{align}
where $L_{\check g_{LEB(n)}} = \Delta_{\check g_{LEB(n)}} - \frac{1}{6}R_{\check g_{LEB(n)}}$.
Similar to \cite{Schoen, Li-Zhang, Li-Zhu, KMS, Marques}, we can define the $\mathrm{U}(2)$-invariant Leray-Schauder degree of $F_{p,K,n}$ in the region $\Omega_{\Lambda,n}$ with respect to $0\in C^{2,\alpha}(\check{\mathcal{O}}_{\mathbb {P}^1}(-n))$, which we denote by $\deg_{\mathrm{U}(2)}(F_{p,K,n}, \Omega_{\Lambda, n}, 0)$. Using Theorem \ref{cpt_crit_thm_combined} and some other special techniques, we present our last theorem, which gives a complete understanding of the U$(2)$-invariant Leray-Schauder degree.
\begin{theorem}
\label{lebrun_thm}
For any $n\in\mathbb N^*$,  there exists a constant $C$, depending only on $(\check{\mathcal{O}}_{\mathbb {P}^1}(-n),  \check g_{LEB(n)})$
such that we have the following conclusions.

\noindent
(1) 
For all $K\in \mathcal X_{n, +}$ and $\Lambda > C$, we have
\begin{align}
\deg_{\mathrm{U}(2)}(F_{3,K,n}, \Omega_{\Lambda,n}, 0) = -1.
\end{align}
Consequently, equation \eqref{leb_conf} admits at least one $\mathrm{U}(2)$-invariant solution in $\mathcal X$.

\noindent
(2) 
For all $K\in \mathcal X_{n, -}$ and $\Lambda >C$, we have
\begin{align}
\deg_{\mathrm{U}(2)}(F_{3,K,n}, \Omega_{\Lambda,n}, 0) = 0.
\end{align}
\end{theorem}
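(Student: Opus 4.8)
The plan is to work with the Leray--Schauder degree of $F_{p,K,n} = \mathrm{Id} - T_{p,K,n}$, where $T_{p,K,n}(u) = -L_{\check g_{LEB(n)}}^{-1}(Ku^p)$ is compact and preserves both $\mathrm{U}(2)$-invariance and positivity, so that it descends to a degree on the closed subspace of $\mathrm{U}(2)$-invariant $C^{2,\alpha}$ functions, and to compute it by a sequence of homotopies anchored on Theorem~\ref{cpt_crit_thm_combined}. First I would identify the relevant mass. Since $g_{LEB(n)}$ is scalar-flat and $\check g_{LEB(n)} = (n+\hr^2)^{-2}g_{LEB(n)}$, the function $v = n+\hr^2$ solves $L_{\check g_{LEB(n)}}v = 0$ on $\check{\mathcal O}_{\mathbb P^1}(-n)\setminus\{\check q\}$ and has leading behaviour $\rho^{-2}$ at $\check q$, where $\rho\sim\hr^{-1}$ is the distance to $\check q$; since $R_{\check g_{LEB(n)}} > 0$ makes $L_{\check g_{LEB(n)}}$ injective, $v$ is exactly the Green's function of Definition~\ref{def_psi}, so $\hat g_{\check q} = v^2\check g_{LEB(n)} = g_{LEB(n)}$ and $m(\hat g_{\check q}) = m(g_{LEB(n)}) = -2(n-2)$. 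Therefore $m(\hat g_{\check q}) + \frac{\Delta_{\check g_{LEB(n)}}K(\check q)}{2K(\check q)} = \frac{1}{2K(\check q)}\big(\Delta_{\check g_{LEB(n)}}K(\check q) - 4(n-2)K(\check q)\big)$ is positive, zero, or negative precisely when $K$ lies in $\mathcal X_{n,+}$, $\mathcal X_{n,0}$, or $\mathcal X_{n,-}$.

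Next I would check that the degree is defined on $\Omega_{\Lambda,n}$ for $\Lambda$ large. The key point is that for a $\mathrm{U}(2)$-invariant solution concentration can occur only at the unique $\mathrm{U}(2)$-fixed point $\check q$: any other point of $\check{\mathcal O}_{\mathbb P^1}(-n)$ has a positive-dimensional $\mathrm{U}(2)$-orbit (the exceptional $\mathbb P^1$, or a copy of $S^3/\Gamma_n$), which would contradict the finiteness of the blow-up set in the proof of Theorem~\ref{cpt_crit_thm_combined}. Re-running that blow-up analysis in the equivariant category, so that only the balance quantity at $\check q$ enters, one gets that whenever $K\notin\mathcal X_{n,0}$ the estimate \eqref{comp_est} holds for all $\mathrm{U}(2)$-invariant solutions of \eqref{yamabeequ} with $p=3$; hence $F_{3,K,n}$ has no zero on $\partial\Omega_{\Lambda,n}$ for $\Lambda > C$ and $\deg_{\mathrm{U}(2)}(F_{3,K,n},\Omega_{\Lambda,n},0)$ is well-defined. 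Since $\mathcal X_{n,+}$ and $\mathcal X_{n,-}$ are convex, hence path-connected, and along the linear path $K_t$ between two members the quantities $\inf K_t$, $\big|\Delta_{\check g_{LEB(n)}}K_t(\check q) - 4(n-2)K_t(\check q)\big|$, and $\|K_t\|_{C^2}$ stay uniformly controlled (the balance quantity keeping its sign), homotopy invariance gives that the degree is constant on each of $\mathcal X_{n,+}$ and $\mathcal X_{n,-}$.

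For part $(1)$, with $K\in\mathcal X_{n,+}$ the strict inequality \eqref{mass_ineq_sub} holds at $\check q$, so by Theorem~\ref{cpt_crit_thm_combined} the estimate \eqref{comp_est} is uniform for $1 < 1+\varepsilon\le p\le 3$, whence $\deg_{\mathrm{U}(2)}(F_{p,K,n},\Omega_{\Lambda,n},0)$ is independent of $p$ there. I would then fix a subcritical exponent $\bar p$: at a fixed subcritical exponent no concentration is possible for any positive $K$ (a blow-up limit would be a $\mathrm{U}(2)$-invariant, hence rotationally symmetric, positive entire solution of $-\Delta U = K(\check q)U^{\bar p}$ on $\mathbb R^4$, excluded by the Gidas--Spruck Liouville theorem), while the Harnack inequality and the positivity of $-L_{\check g_{LEB(n)}}$ keep solutions bounded away from $0$; hence \eqref{comp_est} holds uniformly along the linear homotopy from $K$ to a positive constant $K_0$, giving $\deg_{\mathrm{U}(2)}(F_{3,K,n},\Omega_{\Lambda,n},0) = \deg_{\mathrm{U}(2)}(F_{\bar p,K_0,n},\Omega_{\Lambda,n},0)$. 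This last is the $\mathrm{U}(2)$-invariant subcritical Yamabe degree, which equals $-1$ by the classical computation (Schoen~\cite{Schoen_csc}, Li--Zhang~\cite{Li-Zhang}); in the present $\mathrm{U}(2)$-invariant setting it can also be obtained directly by reducing \eqref{yamabeequ} to a second-order ODE in $\hr$ and verifying that the two-point boundary value problem has a single nondegenerate solution, of Morse index one (in particular the existence assertion in $(1)$ follows, since $-1\neq0$).

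For part $(2)$, with $K\in\mathcal X_{n,-}$ the subcritical estimate fails, so the $p$-homotopy is unavailable; instead I would use the homotopy in $K$, which needs only the $p=3$ estimate, to reduce to a reference $K_0\in\mathcal X_{n,-}$ for which \eqref{leb_conf} has no $\mathrm{U}(2)$-invariant solution at all, so that $F_{3,K_0,n}$ has no zero in $\bar\Omega_{\Lambda,n}$ and the degree is $0$. For $n\ge3$, take $K_0\equiv\mathrm{const}$: then $K_0\in\mathcal X_{n,-}$ and \eqref{leb_conf} is the Yamabe equation on $(\check{\mathcal O}_{\mathbb P^1}(-n),\check g_{LEB(n)})$, which has no $\mathrm{U}(2)$-invariant positive solution by the negative-mass analysis in \cite{monopole_V} (see also \cite{D-L}). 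For $n=1,2$ the constants lie in $\mathcal X_{n,+}$ or $\mathcal X_{n,0}$, so instead I would take $K_0\in\mathcal X_{n,-}$ with a sufficiently sharp local maximum at $\check q$ and exclude solutions by a Pohozaev identity localized near $\check q$, the sharp peak forcing concentration there while $\Delta_{\check g_{LEB(n)}}K_0(\check q) - 4(n-2)K_0(\check q) < 0$ obstructs it. Either way $\deg_{\mathrm{U}(2)}(F_{3,K,n},\Omega_{\Lambda,n},0) = 0$ on $\mathcal X_{n,-}$. I expect the main obstacle to be the evaluation ``$=-1$'' in part $(1)$ --- carrying the a priori estimates uniformly along the whole chain of homotopies down to the constant subcritical problem and then pinning the subcritical degree down exactly, rather than only showing it is nonzero --- together with the non-existence input in part $(2)$, especially for the small values $n=1,2$ where the reference function cannot be taken constant.
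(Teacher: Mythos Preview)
Your overall architecture matches the paper exactly: well-definedness of the degree via the $\mathrm{U}(2)$-invariant compactness (blow-up forced to $\check q$), constancy on each of $\mathcal X_{n,\pm}$ by homotopy invariance, the subcritical homotopy for part~(1), and a non-existence input for part~(2). The mass identification and the treatment of part~(1) are essentially what the paper does.

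The genuine gap is in part~(2). Your Pohozaev sketch for $n=1,2$ does not produce a non-existence statement: a localized Pohozaev identity constrains \emph{blow-up sequences}, not a single fixed equation, and ``the sharp peak forcing concentration'' is precisely the hard direction you have not justified. Nothing in the blow-up analysis tells you that every solution of a fixed equation with $K_0\in\mathcal X_{n,-}$ must concentrate. For $n\ge 3$ your citation is also shaky: the paper itself establishes the $\mathrm{U}(2)$-invariant non-existence for $K=\text{const}$ as a \emph{new} consequence of its own argument (and explicitly notes that the general, non-symmetric case is open for $n>2$), so you should not rely on \cite{monopole_V} or \cite{D-L} to supply it.

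The paper's route to the non-existence input is quite different from anything you propose and works uniformly in $n$. First, for $n=2$ (Eguchi--Hanson), an Obata-type integration of $v|E_{\check g}|^2$ together with the Bianchi identity shows that if $K'(s)\le 0$ then any $\mathrm{U}(2)$-invariant solution would force $E_{\check g}=0$ and $K=\text{const}$, which is excluded by \cite{monopole_V}. Second, for general $n$, the radial equation in the coordinate $t=\log\big((n+\hr^2)/\hr^2\big)$ has the form $\bar v''(t)=-\bar K(t)\,\Phi_n(t)\,\bar v(t)^3$ with an explicit $\Phi_n$; one checks the algebraic identity $\Phi_n(t)=\dfrac{1+(n-1)e^{-t}}{1+e^{-t}}\,\Phi_2(t)$, so a solution on $\mathcal O_{\mathbb P^1}(-n)$ with $K=K_{n,-}(s)=\dfrac{2+ns^2}{n+ns^2}\,K_{2,-}(s)$ transforms into a solution on $\mathcal O_{\mathbb P^1}(-2)$ with $K=K_{2,-}(\sqrt{2/n}\,s)$. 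Choosing $K_{2,-}$ strictly decreasing with $K_{2,-}''(0)<0$ gives $K_{n,-}\in\mathcal X_{n,-}$ and no solution for every $n\ge 1$, hence degree $0$. This Obata argument plus ODE transfer is the missing idea your proposal lacks.
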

\begin{remark}
We note that vanishing of the Leray-Schauder degree does not give any information regarding the existence of a solution. However, we can moreover show that there is no solution at all for a large class of functions in $\mathcal X_{n, -}$; see Theorem~\ref{lebrun_thm_2}.
In particular, there is no solution for $K = constant$ when $n \geq 2$.
For $n =2$ and $K = constant$, nonexistence of {\textit{any}} solution (symmetric or non-symmetric) was proved in \cite{monopole_V}. However, it is still an open question whether the case $K = constant$ possibly admits some non-symmetric solution when $n > 2$.
\end{remark}

\begin{remark} For any $n\in \mathbb N^*$, the set $\mathcal{X}_{n, 0}$ can be viewed as a ``wall'' in the space of positive radial functions $\mathcal{X}_n$, and the Leray-Schauder degree jumps by $1$ upon crossing this wall, which is a phenomenon observed in many other geometric PDE problems.
\end{remark}
All of the above results regarding the LeBrun metrics are proved in Section~\ref{s:LeBrun}.

\subsection{Acknowledgements} The authors would like to thank Richard Schoen for suggesting to consider the Green's function on an AF orbifold to prove Theorem \ref{orbifold_PMT}; this greatly simplified our original proof which was based on a resolution of singularities argument. 

\section{Properties of the mass}
\label{s:PMT}
In this section, we will prove Theorem \ref{orbifold_PMT}. 
For simplicity,  let us assume that there is exactly 1 orbifold point, which we denote as $q$, with orbifold group $\Gamma \subset \mathrm{O}(n)$ (the argument below easily generalizes to the case of multiple orbifold singularities).  
Let $r$ denote a positive smooth function which is the Euclidean distance in the AF coordinate system, and near $q$ is the distance to $q$.
We will first prove a lemma showing existence of a certain harmonic function on $X\setminus \{q\}$.
\begin{lemma}  
\label{lemma:G}
There exists a unique harmonic function $H : X \setminus \{q\} \rightarrow \RR$
which satisfies $H > 1$ and admits the expansion  
\begin{align}
H = 
\begin{cases}
r^{2-n} + O(r^{4-n - \epsilon}) & \mbox{ as } r \to 0\\
1 + A  r^{2-n} + O(r^{2-n - \epsilon})& \mbox{ as } r \to \infty \\
\end{cases},
\end{align}
for $\epsilon > 0$ sufficiently small, for some constant $A > 0$. 
\end{lemma}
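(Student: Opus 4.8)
The idea is to construct $H$ as a Green's function for the Laplacian on $X$ based at the orbifold point $q$, normalized to have the prescribed singularity $r^{2-n}$, and then to analyze its behavior both near $q$ and near infinity. First I would work on the orbifold $X$ with the singular point $q$ removed and a large compact exhaustion. Since $R(g_X) \geq 0$ is not directly used here — only the Laplacian appears — the main input is standard linear elliptic theory on the (smooth, noncompact) manifold $X \setminus \{q\}$, together with the fact that $n \geq 3$ so that $r^{2-n}$ is subharmonic-type data with the right decay. Concretely, let $\chi$ be a cutoff supported near $q$ equal to $1$ in a smaller neighborhood, and set $H = \chi r^{2-n} + v$, where $v$ solves $\Delta_{g_X} v = -\Delta_{g_X}(\chi r^{2-n})$. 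The right-hand side is compactly supported and smooth away from $q$; near $q$, since $r^{2-n}$ is exactly harmonic for the Euclidean model and $g_X$ is smooth there (in the orbifold sense, i.e.\ on the local cover), the error $\Delta_{g_X}(r^{2-n})$ is $O(r^{2-n-\tau_X})$ or better, hence in $L^p$ for a range of $p$, so $v$ can be found in a weighted Sobolev/H\"older space on the ALE end.

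The existence of $v$ with the correct asymptotics at infinity uses the standard Fredholm theory for the Laplacian on AF (ALE) manifolds in weighted spaces (as in Bartnik, or Lee--Parker): for a decay rate $\delta \in (2-n, 0)$ which is nonexceptional, $\Delta_{g_X}: C^{k,\alpha}_\delta \to C^{k-2,\alpha}_{\delta-2}$ is surjective with kernel the constants, so one obtains $v = c + A r^{2-n} + O(r^{2-n-\epsilon})$ as $r \to \infty$, where $A$ arises as the ``mass-like'' coefficient of the Green's function expansion. Adjusting $v$ by a constant, we may take $c = 1$. Near $q$, elliptic regularity on the local uniformization cover, combined with the improved decay of the source term $\Delta_{g_X}(\chi r^{2-n}) = O(r^{-n-\tau_X+2})$, gives the expansion $H = r^{2-n} + O(r^{4-n-\epsilon})$ provided $\tau_X > \frac{n-2}{2}$ (so that the exponent $4-n-\epsilon$ can be achieved; more precisely the remainder has the regularity of a Newtonian potential of an $O(r^{2-n-\tau_X})$ source, which on the ALE/orbifold local model is $O(r^{4-n-\tau_X})$, and $\tau_X > \frac{n-2}{2} > \epsilon$ suffices after possibly shrinking $\epsilon$).

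The two remaining points are \emph{uniqueness} and the \emph{sign conditions} $H > 1$ and $A > 0$. Uniqueness is immediate: the difference of two such $H$ is a bounded harmonic function on $X \setminus \{q\}$ with removable singularity at $q$ (since it is $O(r^{4-n-\epsilon}) = o(r^{2-n})$ there) that tends to $0$ at infinity, hence vanishes by the maximum principle on the orbifold $X$. For the sign of $H$: $H$ is harmonic and positive near $q$ (where $r^{2-n} \to +\infty$) and tends to $1 > 0$ at infinity; by the maximum principle on $X \setminus \{q\}$, $H > 0$ everywhere, and since its infimum would be attained only at $q$ or $\infty$ and near $\infty$ we can refine to $H = 1 + A r^{2-n} + \cdots$, we get $H \geq 1$, with strict inequality by the strong maximum principle unless $H \equiv 1$, which is incompatible with the singularity at $q$. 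Finally $A > 0$: by the expansion at infinity, $A$ is (a multiple of) the limit of $\int_{S_r} \partial_\nu H$ over a large sphere; applying Green's theorem on the region between a small sphere $S_\rho(q)$ and $S_r$, this flux equals (a positive multiple of) $\int_{S_\rho(q)} \partial_\nu H$, which as $\rho \to 0$ is controlled by the $r^{2-n}$ singularity and is strictly positive. I expect the \textbf{main obstacle} to be bookkeeping the precise decay exponents — ensuring the remainder terms genuinely improve to $O(r^{4-n-\epsilon})$ near $q$ and $O(r^{2-n-\epsilon})$ at infinity, which requires matching the order $\tau_X > \frac{n-2}{2}$ of the metric against the exceptional weights of the Laplacian and choosing $\epsilon$ small accordingly — rather than any conceptual difficulty, since the existence and positivity arguments are standard once the weighted linear theory is in place.
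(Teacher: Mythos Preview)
Your approach is essentially the same as the paper's: write $H$ as a cutoff times $r^{2-n}$ plus a correction solved for in weighted H\"older spaces, then add the constant $1$ and use the maximum principle. Two small points are worth noting. First, the paper works in \emph{doubly} weighted spaces $C^{k,\alpha}_{\delta_0,\delta_\infty}(X\setminus\{q\})$ with independent weights at $q$ and at infinity, and shows surjectivity of $\Delta_g$ by a removable-singularity argument on the cokernel; this handles both asymptotic regimes at once and is slightly cleaner than your ``weighted at infinity plus local regularity at $q$'' split. Second, your invocation of $\tau_X$ near $q$ is misplaced: $\tau_X$ is the order of the AF end, whereas at the orbifold point the metric is smooth on the local cover, so $g_{ij}=\delta_{ij}+O(r^2)$ and hence $\Delta_{g_X}(r^{2-n})=O(r^{2-n})$ there (not $O(r^{2-n-\tau_X})$); the remainder $O(r^{4-n-\epsilon})$ then comes directly from the weight you solve in, with no reference to $\tau_X$. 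For $A>0$, your flux argument is correct (and in fact yields $A=1/|\Gamma_q|$); the paper instead observes more directly that $H>1$ together with the expansion $H=1+Ar^{2-n}+\cdots$ forces $A>0$.
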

\begin{proof}
Let $\phi$ be the cutoff function
\begin{align}
\phi(t) =
\begin{cases}
1 & t \leq 1 \\
0 & t \geq 2\\
\end{cases},
\end{align}
and consider 
\begin{align}
h_0 = \phi( r/r_0) r^{2-n},
\end{align}
for $r_0 > 0$ small. Since $h_0$ is harmonic with respect to the Euclidean metric near point $q$, by expanding $\Delta_g$ at the Euclidean metric, it is not hard to see that
\begin{align}
\label{Lh}
\Delta_g h_0 =
\begin{cases}
O(r^{2-n}) & r \to 0 \\
0 & r \geq 2r_0\\
\end{cases}.
\end{align}
Denote $X_q = X \setminus \{q\}$. The argument below uses weighted H\"older space theory; for background we refer to \cite{Bartnik, Lee-Parker}. Consider the doubly weighted H\"older space $C^{k,\alpha}_{\delta_0, \delta_\infty}(X_q)$, which satisfies if $u \in C^{k,\alpha}_{\delta_0, \delta_\infty}(X_q)$
then 
\begin{align}
u =
\begin{cases}
 O(r^{\delta_0}) & r \to 0 \\
O(r^{\delta_{\infty}}) & r \to \infty\\
\end{cases}.
\end{align}
For any $\epsilon > 0$, from \eqref{Lh}, we have 
\begin{align}
\Delta h_0 \in C^{k-2,\alpha}_{2-n - \epsilon, -n + \epsilon}.
\end{align}
Consider the operator
\begin{align}
\label{Deltaw}
\Delta_g : C^{k,\alpha}_{4-n - \epsilon, 2 - n + \epsilon}(X_q)
\rightarrow  C^{k-2,\alpha}_{2-n - \epsilon, -n + \epsilon}(X_q).
\end{align}
The adjoint operator has domain $ ( C^{k-2,\alpha}_{2-n - \epsilon, -n + \epsilon }(X_q))^*$,
and kernel elements lie in the doubly weighted space
$C^{k,\alpha}_{ -2 + \epsilon,-\epsilon }(X_q)$.
The removable singularity theorem then says that 
a kernel element $u$ in this space extends to $X$, 
and then $u \in C^{k,\alpha}_{-\epsilon}(X)$ (the weighted space on $X$ with only a weight $\delta_{\infty} = - \epsilon$ at infinity), so $u \equiv 0$. 
Thus for $\epsilon >0$ sufficiently small, the operator in \eqref{Deltaw} is surjective,
and we can solve for $\Delta h_{\epsilon} = \Delta_g h_0$, 
with $h_{\epsilon} \in  C^{k,\alpha}_{4-n - \epsilon, 2 - n + \epsilon}(X_q)$.

The function $h \equiv  h_0 - h_{\epsilon} $ satisfies $\Delta_g h =  0$,
and by the existence of a harmonic expansion near $\infty$, 
it admits the expansion.  
\begin{align}
h = 
\begin{cases}
r^{2-n} + O(r^{4-n - \epsilon}) & r \to 0\\
A r^{2-n} + O(r^{2-n - \epsilon})& r \to \infty \\
\end{cases},
\end{align}
for some constant $A$.  We then define $H = 1 + h$,
which is harmonic. We have that 
$\lim_{r \to \infty} H = 1$, and $\lim_{r \to 0} H = + \infty$.
If $H$ were not strictly larger than $1$, then it would have an interior minimum. The  strong maximum principle would then imply that $H$ is constant, which is impossible. So $H > 1$, which clearly implies that $A > 0$. Obviously, $H$ is unique. 
\end{proof}

\begin{proof}[Proof of Theorem \ref{orbifold_PMT}]
For any constant $\delta > 0$, we define 
\begin{align}
H_\delta = \delta H + (1 - \delta),
\end{align}
which satisfies $\Delta_g H_\delta = 0$, $H_\delta > 1$, 
and admits the expansion 
\begin{align}
H_\delta = 
\begin{cases}
\delta r^{2-n} + O(r^{4-n - \epsilon}) & r \to 0\\
1 + \delta A r^{2-n} + O(r^{2-n - \epsilon})& r \to \infty \\
\end{cases},
\end{align}
for $\epsilon > 0$ sufficiently small, for the fixed constant $A$ from Lemma \ref{lemma:G}.

Next, we consider the metric $(X_q, g_\delta) = (X \setminus \{q\}, H_\delta^\frac{4}{n-2} g_X)$. 
Near $r \sim \infty$,  $g_\delta$ has a single AF end
of order $\min\{ \tau_X, n-2\}$.  Since $q$ is an orbifold point, near $q$, $g_\delta$ has a single ALE end of order $\tau = 2 - \epsilon$.  To see this, choose Riemannian normal 
coordinates $\{x^i\}$ for $g_X$ around $q$, then we have the expansions 
\begin{align}
g_X &= dx^2 + O(|x|^{2}),\\
H_\delta &= \delta |x|^{2-n} + O(|x|^{4-n-\epsilon}),
\end{align}
which yield the expansion
\begin{align}
g_\delta = H_\delta^{\frac{4}{n-2}} g_X
= \delta^{\frac{4}{n-2}}|x|^{-4} (1 + O(|x|^{2-\epsilon}))^{\frac{4}{n-2}}
(dx^2 + O(|x|^2)),
\end{align}
as $|x| \to 0$.
Next, define coordinates $y$ by  
\begin{align}
 y = \delta^{\frac{2}{n-2}}\frac{x}{|x|^2}.
\end{align}
A computation then shows that 
\begin{align}
g_\delta = dy^2 + O(|y|^{-2 + \epsilon})
\end{align}
as $|y| \to \infty$, so $g_\delta$ is indeed ALE of order $\tau = 2 - \epsilon$.  
Note also that the scalar curvature of $g_\delta$ is given by 
\begin{align}
R(g_\delta) = c(n)^{-1}H_\delta^{- \frac{n+2}{n-2}} ( -\Delta_g H_\delta + c(n) R_g H_\delta) 
=  H_\delta^{- \frac{4}{n-2}} R_g \geq 0. 
\end{align}
Given $\delta > 0$, we can choose a very large distance sphere $\Sigma_\delta$ in 
the ALE end of $g_\delta$ which is strictly concave with respect to the normal pointing to the AF end. 
Let $X_{\Sigma_\delta}$ be the manifold with boundary obtained by removing the ALE end outside of $\Sigma$, which is a manifold with strictly concave boundary with a single AF end. From \cite[Theorem~1.5 and Remark~1.7]{HirschMiao}, we conclude that 
\begin{align}
m(X_{\Sigma_\delta}, g_\delta) \geq 0.
\end{align}
But an easy computation shows that 
\begin{align}
m(X_{\Sigma_\delta}, g_\delta) = m(X, g_X) + b(n) \delta A,
\end{align}
where $b(n) > 0 $ is a dimensional constant. 
Since this is true for any constant $\delta > 0$, and $A$ is a fixed constant, we conclude
that 
\begin{align}
m(X,g_X) \geq 0. 
\end{align}
Note that if $m(X,g_X) = 0$, then we cannot conclude that 
$m(X_{\Sigma_\delta}, g_\delta) = 0$, since $A > 0$. Therefore we cannot directly use the equality case in \cite[Theorem~1.5]{HirschMiao}. So to finish the proof, if $\mathrm{mass}(X, g_X) =0$ then we instead argue as in \cite[Lemma~10.7]{Lee-Parker} to conclude that $g_X$ is Ricci-flat (this argument is valid in our orbifold setting). Since $g_X$ is asymptotically flat, we have asymptotic equality in Bishop's volume inequality (which holds for orbifolds; see \cite{Borzellino}). This implies that $g_X$ is flat, which clearly implies that there can be no nontrivial orbifold singularities, and $(X,g_X)$ is isometric to $(\RR^n, g_{Euc})$.
\end{proof}

As mentioned above, in dimension $4$, there are examples of ALE spaces which have negative mass \cite{LeBrun, HeinLeBrun}. So we note the following corollary of Theorem \ref{orbifold_PMT}. 
\begin{corollary} 
\label{c:discont}
If $(X,g)$ is a $4$-dimensional Riemannian orbifold with negative mass at an orbifold point $q$, then the mass function $m : X \rightarrow \RR$ is necessarily discontinuous at $q$. 
\end{corollary}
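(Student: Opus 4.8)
\textbf{Proof proposal for Corollary~\ref{c:discont}.}

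The plan is to argue by contradiction, exploiting the strict inequality in the equality case of Theorem~\ref{orbifold_PMT}. Suppose $q$ is a nontrivial orbifold point of the $4$-dimensional Riemannian orbifold $(X,g)$ at which the conformal blow-up $(X_q, \hat g_q)$ has negative mass, $m(\hat g_q) < 0$, and suppose toward a contradiction that the mass function $m : X \to \RR$ defined in the introduction (assigning to each $\bar x$ the ADM mass of the conformal blow-up at $\bar x$) is continuous at $q$. Then there is a sequence of \emph{smooth} points $\bar x_k \to q$ with $m(\hat g_{\bar x_k}) \to m(\hat g_q) < 0$, so in particular $m(\hat g_{\bar x_k}) < 0$ for $k$ large.

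The key observation is that for a smooth point $\bar x_k$, the conformal blow-up $(X_{\bar x_k}, \hat g_{\bar x_k}) = (X \setminus \{\bar x_k\}, \psi_{\bar x_k}^{4/(n-2)} g)$ is an \emph{asymptotically flat} orbifold (the removed point is smooth, so the Green's function behaves as in the manifold case near $\bar x_k$), and it is scalar-flat by construction, hence in particular $R \geq 0$; moreover it decays of order $\tau > (n-2)/2 = 1$ since it is scalar-flat AF in dimension $4$ (one can take $\tau$ close to $2$, or invoke the standard expansion of the Green's function). This orbifold still carries whatever other orbifold singularities $X$ has away from $\bar x_k$ — but crucially it is AF, not merely ALE, because blowing up at a smooth point does not create a new ALE end with nontrivial group. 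Therefore Theorem~\ref{orbifold_PMT} applies directly to $(X_{\bar x_k}, \hat g_{\bar x_k})$ and gives $m(\hat g_{\bar x_k}) \geq 0$, contradicting $m(\hat g_{\bar x_k}) < 0$. Hence $m$ cannot be continuous at $q$.

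The main point to be careful about — and what I expect to be the only real obstacle — is verifying that $(X_{\bar x_k}, \hat g_{\bar x_k})$ genuinely satisfies the hypotheses of Theorem~\ref{orbifold_PMT}: namely that it is AF (not ALE) with decay order $\tau_X > (n-2)/2$. The AF-ness is immediate from $\bar x_k$ being a smooth point together with Definition~\ref{def_psi}. The decay order follows from the expansion of the Green's function $\psi_{\bar x_k} = r^{2-n} + \text{(lower order)}$ in conformal normal coordinates, which gives $\hat g_{\bar x_k} = \delta_{ij} + O(r^{2-n})$ after inversion, i.e.\ order $\tau = n-2 = 2 > 1$ in dimension $4$. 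One should also note that a smooth point can be chosen arbitrarily close to $q$: indeed $\Sigma$ is finite, so a punctured neighborhood of $q$ consists entirely of smooth points, and the sequence $\bar x_k \to q$ can be taken within it. With these routine verifications in place, the contradiction with Theorem~\ref{orbifold_PMT} is immediate, and the discontinuity of $m$ at $q$ follows.
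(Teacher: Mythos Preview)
Your argument is correct and is precisely the reasoning the paper has in mind: the corollary is stated without proof as an immediate consequence of Theorem~\ref{orbifold_PMT}, and your proof supplies exactly that deduction (smooth points near $q$ give AF scalar-flat conformal blow-ups, to which Theorem~\ref{orbifold_PMT} applies to force nonnegative mass, contradicting continuity at $q$).
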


\subsection{Odd-dimensional cases}
If the dimension $n$ is odd, we have the following. 
\begin{proposition}Let $(X,g)$ is a compact Riemannian orbifold with isolated singularities
and odd-dimensional. Then any nontrivial orbifold point must have $\Gamma = \ZZ/2\ZZ$.
Furthermore, $(X,g)$ is a good orbifold. That is, there is a $\ZZ/2\ZZ$ action 
on a compact manifold $\tilde{X}$ with finitely many fixed points 
such that $X = \tilde{X} / (\ZZ/2\ZZ)$. Letting $\pi : \tilde{X} \rightarrow X$ denote the quotient mapping, then $\pi^* g$ is a smooth Riemannian metric on $\tilde{X}$. 
\end{proposition}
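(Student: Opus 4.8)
The plan is to prove the two assertions separately: the statement about the local groups is a fixed‑point argument, and the ``good orbifold'' statement is obtained by identifying $\tilde X$ with a suitable orientation double cover. Throughout we may assume $X$ is connected (otherwise argue component by component), that $l\geq 1$, and that $n\geq 3$ (for $n=1$ one has $\mathrm{O}(1)\cong\ZZ/2\ZZ$ and there is nothing to prove, and if $l=0$ then $X$ is already a closed manifold, so one may take $\tilde X = X\sqcup X$ with the swap involution).

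\textbf{Step 1: the local groups are $\ZZ/2\ZZ$.} Fix a nontrivial orbifold point $q_j$ with group $\Gamma_j\subset\mathrm{O}(n)$ acting freely on $\RR^n\setminus\{0\}$, hence on $S^{n-1}$, which is an \emph{even}-dimensional sphere since $n$ is odd. I would first show that every nontrivial $g\in\Gamma_j$ is an involution. A continuous fixed‑point‑free self‑map $f$ of $S^{n-1}$ has vanishing Lefschetz number, so (using $n-1$ even) $0=L(f)=1+\deg(f)$, i.e. $\deg(f)=-1$; if $g^2\neq e$, then $g^2$ is again a nontrivial element of $\Gamma_j$, hence also acts freely on $S^{n-1}$, and applying this to both $g$ and $g^2$ gives the contradiction $-1=\deg(g^2)=\deg(g)^2=1$. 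Thus $g^2=e$; but an orthogonal involution acting freely on $S^{n-1}$ has no $+1$‑eigenvalue, hence all eigenvalues equal $-1$, i.e. $g=-I$. Since every nontrivial element of $\Gamma_j$ equals $-I$, we conclude $\Gamma_j=\{I,-I\}\cong\ZZ/2\ZZ$. (Here $-I\in\mathrm{O}(n)\setminus\mathrm{SO}(n)$, and it does act freely, so this case genuinely occurs.)

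\textbf{Step 2: the punctured local models are non‑orientable.} Let $X^* = X\setminus\Sigma$, a smooth open $n$-manifold. Near $q_j$ the chart $\varphi_j$ identifies $U_j\setminus\{q_j\}$ with $(B_{\sigma_j}(0)\setminus\{0\})/\{\pm I\}$, with covering map $\pi_j$, deck group $\{\pm I\}$, and $\pi_1(U_j\setminus\{q_j\})\cong\ZZ/2\ZZ$ generated by a loop $\gamma_j$ that lifts to a path from $v$ to $-v$. Since $n$ is odd, $\det(-I)=-1$, so the deck transformation is orientation‑reversing; hence $U_j\setminus\{q_j\}$ is non‑orientable and $\gamma_j$ is orientation‑reversing, i.e. $\langle w_1(U_j\setminus\{q_j\}),\gamma_j\rangle = 1$. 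As $H^1(U_j\setminus\{q_j\};\ZZ/2\ZZ)\cong\ZZ/2\ZZ$, the class $w_1(U_j\setminus\{q_j\})$ is the nonzero one. By naturality of the first Stiefel--Whitney class under the open inclusion $\iota_j:U_j\setminus\{q_j\}\hookrightarrow X^*$, it follows that $\langle w_1(X^*),(\iota_j)_*\gamma_j\rangle = 1$ for every $j$; in particular $w_1(X^*)\neq 0$.

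\textbf{Step 3: building $\tilde X$.} Let $p:\widetilde{X^*}\to X^*$ be the double cover classified by $w_1(X^*)\in H^1(X^*;\ZZ/2\ZZ)$ --- the orientation double cover; its total space is connected (since $w_1(X^*)\neq0$) and orientable, with deck involution $\tau$. By Step 2, over each $U_j\setminus\{q_j\}$ the restriction of $p$ is classified by $\iota_j^*w_1(X^*)=w_1(U_j\setminus\{q_j\})\neq0$, so it is the connected (hence universal) double cover, which via $\varphi_j$ is exactly $\pi_j:B_{\sigma_j}(0)\setminus\{0\}\to U_j\setminus\{q_j\}$, and under this identification $\tau$ corresponds to $x\mapsto -x$. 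Define $\tilde X$ by adjoining to $\widetilde{X^*}$ one point $\tilde q_j$ for each $j$, declaring a neighborhood of $\tilde q_j$ to be $p^{-1}(U_j\setminus\{q_j\})\cup\{\tilde q_j\}\cong B_{\sigma_j}(0)$. Then $\tilde X$ is a smooth $n$-manifold, and the resulting map $\pi:\tilde X\to X$ (equal to $p$ on $\widetilde{X^*}$, sending $\tilde q_j\mapsto q_j$) is a proper branched double cover, so $\tilde X$ is compact. The involution $\tau$ extends over the $\tilde q_j$ (by $x\mapsto -x$ in each chart); it satisfies $\tau^2=\mathrm{id}$, acts freely on $\widetilde{X^*}$ (being a nontrivial deck transformation of a connected cover) and fixes only the centers of the charts, so $\mathrm{Fix}(\tau)=\{\tilde q_1,\dots,\tilde q_l\}$ is finite, and $\tilde X/\langle\tau\rangle = X$. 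Finally, $\pi^*g$ is smooth: away from the $\tilde q_j$ the map $\pi$ is a local diffeomorphism, and in each chart $B_{\sigma_j}(0)$ the metric $\pi^*g$ agrees on $B_{\sigma_j}(0)\setminus\{0\}$ with $(\varphi_j)_*\pi_j^*g$, which extends smoothly across the center by Definition~\ref{obf_def}.

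\textbf{The main obstacle.} The one genuinely non‑formal point is Step 2: deciding \emph{which} double cover of $X^*$ to use. The observation that for odd $n$ the map $-I$ is orientation‑reversing forces the link loop $\gamma_j$ around each singularity to be detected by $w_1$, which makes the orientation double cover restrict, near \emph{every} singular point, to the universal cover of the punctured neighborhood --- precisely what lets one fill in the branch points to obtain a smooth manifold $\tilde X$. Once this matching is established, Steps 1 and 3 are routine covering‑space theory together with the orbifold chart axioms.
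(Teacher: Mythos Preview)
Your proof is correct and follows the same strategy as the paper's: use the orientation double cover of $X\setminus\Sigma$ and fill in one point over each singularity. The only tactical differences are that you prove $\Gamma=\ZZ/2\ZZ$ via the Lefschetz fixed-point theorem (the paper instead uses the elementary fact that any $A\in\mathrm{O}(n)$ with $n$ odd has $\pm1$ as an eigenvalue), and your Step~2 phrases the non-orientability argument in terms of $w_1$ rather than the paper's observation that the link $\mathbb{RP}^{n-1}$ is a two-sided non-orientable hypersurface in $X\setminus\Sigma$.
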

\begin{proof}
For $n$ odd, any element $A \in \mathrm{O}(n)$ must have $\pm 1$ as an eigenvalue, so the only possibility for a nontrivial orbifold point is $\Gamma = \ZZ/2\ZZ$. 
Near any singular point $q$, a small distance sphere is homeomorphic to 
$\mathbb{RP}^{n-1}$, which is non-orientable if $n$ is odd. So if there is any nontrivial orbifold point, then $X$ contains a non-orientable $2$-sided hypersurface, which implies that $X \setminus \Sigma$ is non-orientable,  where $\Sigma$ is the finite set of singular points.  Let $\pi: X' \rightarrow X \setminus \Sigma$ denote the orientable double cover. 
Consider $\tilde{X} = X' \cup \{ \tilde{q}_1, \dots, \tilde{q}_j \}$ where $\tilde{q}_j$ are points, and extend $\pi: \tilde{X} \rightarrow X$ by letting $\pi(\tilde{q}_j) = q_j$. 
We extend to $\ZZ/2\ZZ$ action to $\tilde{X}$ with fixed points at $\tilde{q}_j$, and endow $\tilde{X}$ with the quotient topology. It is then straightforward to show that $\tilde{X}$ is a smooth manifold and $\pi^* g$ extends as a smooth Riemannian metric to $\tilde{X}$.
\end{proof}

\begin{corollary} Let $(X,g)$ be a compact Riemannian orbifold with isolated singularities and odd-dimensional. Then the mass function $m : X \rightarrow \RR$ satisfies $m > 0$ everywhere, unless $(X,g)$ is conformal to $(S^n, g_{round})$ or a ``football'' metric $S^n/ ( \ZZ/2\ZZ)$ with exactly $2$ singular points. 
\end{corollary}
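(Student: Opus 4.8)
The plan is to prove the dichotomy by treating the smooth points of $X$ and the orbifold points of $X$ separately, using at orbifold points the good-orbifold structure supplied by the preceding Proposition. (As in the rest of the paper, $m(\bar x)$ denotes the mass of the conformal blow-up $(X_{\bar x},\hat g_{\bar x})$ of Definition~\ref{def_psi}, which we assume is defined, i.e.\ $L_g$ has a positive Green's function.)

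First suppose $\bar x$ is a smooth point of $X$. Then $(X_{\bar x},\hat g_{\bar x})$ is a scalar-flat AF orbifold, so Theorem~\ref{orbifold_PMT} applies directly: $m(\bar x)\ge 0$, and $m(\bar x)=0$ forces $X_{\bar x}$ to have no orbifold singularities and $(X_{\bar x},\hat g_{\bar x})\cong(\RR^n,g_{Euc})$; undoing the blow-up, this happens exactly when $(X,g)$ is conformal to $(S^n,g_{round})$ (so in particular $X$ is a smooth manifold). Hence, unless $(X,g)$ is conformally round, $m>0$ at every smooth point. Now let $\bar x=q_j$ be an orbifold point; by the Proposition $\Gamma_j=\ZZ/2\ZZ$ and $X=\tilde X/(\ZZ/2\ZZ)$ for a compact manifold $\tilde X$ with $\pi^*g$ smooth, $\pi:\tilde X\to X$, and $\tilde q_j:=\pi^{-1}(q_j)$ an isolated fixed point of the involution. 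Since $\pi$ is a local isometry off the singular set, $\pi^*\psi_{q_j}$ satisfies $L_{\pi^*g}(\pi^*\psi_{q_j})=0$ on $\tilde X\setminus\{\tilde q_j\}$; and because the involution equals $-I$ in $\pi^*g$-normal coordinates at $\tilde q_j$ (its differential there has no $+1$-eigenvector, or it would fix a geodesic), $\pi^*\psi_{q_j}$ has leading term $\tilde r^{\,2-n}$ at $\tilde q_j$. By uniqueness of the Green's function, $\pi^*\psi_{q_j}=\tilde\psi_{\tilde q_j}$, so the conformal blow-up $(\tilde X_{\tilde q_j},\tilde g_{\tilde q_j})$ of $(\tilde X,\pi^*g)$ at $\tilde q_j$ is a scalar-flat AF \emph{manifold} — it has no orbifold points since $\tilde X$ is smooth — and $\pi$ exhibits $(X_{q_j},\hat g_{q_j})$ as its $\ZZ/2\ZZ$-quotient.

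I would then invoke the ordinary positive mass theorem (\cite{SYI,SYII,SchoenYauPMT}): $m(\tilde g_{\tilde q_j})\ge 0$, with equality iff $(\tilde X_{\tilde q_j},\tilde g_{\tilde q_j})\cong(\RR^n,g_{Euc})$. The sphere $S_r$ at infinity of $\tilde X_{\tilde q_j}$ doubly covers the quotient sphere $S_r/\ZZ_2$ at infinity of $X_{q_j}$ and the metric coefficients agree under $\pi$, so the factor $|\Gamma|$ in \eqref{mass_form} exactly compensates and $m(q_j)=m(\hat g_{q_j})=m(\tilde g_{\tilde q_j})\ge 0$. For the equality case: $m(q_j)=0$ gives $(\tilde X_{\tilde q_j},\tilde g_{\tilde q_j})\cong\RR^n$, whence $(X_{q_j},\hat g_{q_j})\cong\RR^n/\langle A\rangle$ for an isometric involution $A$ of $\RR^n$; every order-$2$ isometry of $\RR^n$ fixes a point, so up to translation $A\in\mathrm{O}(n)$, and for $\RR^n/\langle A\rangle$ to have only isolated singularities the fixed subspace of $A$ must be $\{0\}$, i.e.\ $A=-I$. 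Then $X_{q_j}=\RR^n/\{\pm I\}$ has a single orbifold point, so $X$ has exactly two orbifold points, and conformally compactifying identifies $(X,g)$ with a football $S^n/(\ZZ/2\ZZ)$ with two singular points. Conversely, the conformal blow-up of the round sphere at any point is $\RR^n$, and of the football at either singular point is $\RR^n/\{\pm I\}$, so in both exceptional cases $m$ vanishes somewhere; together with the two cases above this gives the stated dichotomy.

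The main obstacle is that Theorem~\ref{orbifold_PMT} cannot be applied at an orbifold point, where $X_{\bar x}$ is genuinely ALE (modeled on $\RR^n/\ZZ_2$ at infinity) rather than AF; bridging this gap is exactly what the good-orbifold double cover accomplishes, and the two places requiring care are (i) checking that $\pi^*\psi_{q_j}$ is literally the Green's function on $\tilde X$ with no spurious constant, so that the two conformal blow-ups correspond, and (ii) tracking the $|\Gamma|$ normalization and the rigidity statement of the ordinary positive mass theorem through the $\ZZ/2\ZZ$-quotient, in order to pin down the football as the only non-strict case.
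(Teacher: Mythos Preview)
Your proof is correct and follows essentially the same approach as the paper: handle smooth points directly via Theorem~\ref{orbifold_PMT}, and at an orbifold point lift the Green's function to the $\ZZ/2\ZZ$ double cover $\tilde X$ (a smooth manifold), invoke the classical positive mass theorem there, and transfer the conclusion and its rigidity case back down. You supply more detail than the paper on points (i) and (ii) in your final paragraph, and on why the $\ZZ/2\ZZ$-quotient of $\RR^n$ must be the standard cone $\RR^n/\{\pm I\}$, but the strategy is identical.
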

\begin{proof} At any smooth point of $X$, the mass of the Green's function metric is positive by Theorem \ref{orbifold_PMT}. If the mass at a smooth point were zero, then the Green's function metric would be Euclidean space, which would imply that
  $(X,g)$ is conformal to $(S^n, g_{round})$. At a singular point $q$, by uniqueness of the Green's function metric, the Green's function metric at $q$ must be the $\ZZ/2\ZZ$ quotient of the Green's function of $(\tilde{X}, \pi^* g)$ at $\tilde{q}$. Since $\tilde{X}$ is a manifold, by the usual positive mass theorem, the Green's function metric upstairs must have non-negative mass, so the Green's function downstairs must also. If the mass at an orbifold point was $0$, then the Green's function metric upstairs would have to be Euclidean space. This implies that the Green's function metric downstairs is a $\ZZ/2\ZZ$-quotient of Euclidean space, which implies that  $(X,g)$ is conformal to a ``football'' metric $S^n/ ( \ZZ/2\ZZ)$.
\end{proof}

The above remarks show that the odd-dimensional case of the orbifold Yamabe problem is equivalent to the $\ZZ/2\ZZ$-equivariant Yamabe problem on the manifold $\tilde{X}$. 

\section{Compactness preliminaries}
\label{prelim} 
Remarkable work in analyzing the blow-up points of equation \eqref{yamabeequ} has been done in \cite{Marques}, \cite{Li-Zhang}, \cite{Li-Zhu}, \cite{KMS}. In this section, we are going to quote some of their definitions and local results on manifolds, which also appear to hold on orbifolds by modification of proofs.

In the following context, we will write $\RR^n/\Gamma$ or $B_r(\bar x)/\Gamma$. If $\Gamma=\{e\}$ is the trivial group, it denotes the Euclidean space or a smooth ball; if $\Gamma\neq \{e\}$ is a finite nontrivial group in $O(n)$, it denotes the Euclidean cone or a quotient of a ball centered at a singular point $\bar x$.

\subsection{Conformal scalar curvature equation}
\label{csce}
Instead of dealing with equation \eqref{yamabeequ}, we will study the following conformal scalar curvature equation. Let $\Omega\subset \RR^n/\Gamma$ be an open neighborhood of the origin, and suppose $g$ is a Riemannian metric in $\Omega$. Suppose also $f$ is a positive $C^1$ function defined in $\Omega$. Consider a positive $C^2$ function $u$ satisfying
\begin{align}
\label{conformalequ}
L_g u + K f^{-\tau}u^p = 0,
\end{align}
where $K$ is a positive $C^2$ function, $1< p\leq \frac{n+2}{n-2}$ and $\tau = \frac{n+2}{n-2} - p$.


We note that \eqref{conformalequ} is scale invariant. To see this, let $u$ be a solution to equation \eqref{conformalequ}. For any $s>0$, define the rescaled solution $v(y)=s^{\frac{2}{p-1}}u(sy)$. Then $L_hv + \tilde K\tilde f^{-\tau}v^p = 0$, where $\tilde K(y) = K(sy)$, $\tilde f(y)= f(sy)$ and the components in metric $h$ in normal coordinates are given by $h_{ij}(y) = g_{ij}(sy)$. Note that $v$ satisfies an equation of the same type as equation \eqref{conformalequ}.

We also note that \eqref{conformalequ} is conformally invariant. To see this, suppose $\tilde g = \phi^{\frac{4}{n-2}}g$ is a metric conformal to $g$ and let $u$ be a solution to equation \eqref{conformalequ}. Then $\phi^{-1}u$ satisfies $L_{\tilde g}(\phi^{-1}u)+ K(\phi f)^{-\tau}(\phi^{-1}u)^p = 0$, which is again an equation of the same type.

These two properties, will be used later to take a rescaling of the coordinates and conformally map $g$ to some conformal normal metric, without changing the type of equation \eqref{conformalequ}.

\subsection{Pohozaev Identity}
Suppose $u:B_\s(0)/\Gamma\to\RR$ is a positive $C^2$ solution to the equation
\begin{align}
a^{ij}(x)\p_{ij}u + b^i(x)\p_i u + c(x) u + K(x) u ^p = 0,
\end{align}
where $p\neq -1, K\in C^1$ and $a^{ij}, b^i, c$ are continuous functions, $1\leq i,j\leq n$. Define
\begin{align}
P(r, u) = \int_{\{|x| =r\}/\Gamma}\Big(\frac{n-2}{2}u\frac{\p u}{\p \s}-\frac{r}{2}|\nabla u|^2 + r\Big|\frac{\p u }{\p r}\Big|^2\Big)d\s(r),
\end{align}
for $0<r<\s$. Then, we have the following lemma.
\begin{lemma}[\cite{Marques} Lemma 2.1]
\label{poholemma} For all $0 < r <\s$,
\begin{align}
\begin{split}
P(r, u) = & -\int_{\{|x|\leq r\}/\Gamma}\Big(x\cdot \nabla u + \frac{n-2}{2}u\Big)((a^{ij}-\delta^{ij})\p_{ij}u + b^i\p_i u)dx\\
&+\int_{\{|x|\leq r\}/\Gamma}\Big(\frac{1}{2}x\cdot \nabla c + c\Big)u^2dx -\frac{r}{2}\int_{\{|x|=r\}/\Gamma}cu^2d\s(r)\\
&+\frac{1}{p+1}\int_{\{|x|\leq r\}/\Gamma}(x\cdot \nabla K(x))u^{p+1}dx \\
&+\Big(\frac{n}{p+1}-\frac{n-2}{2}\Big)\int_{\{|x|\leq r\}/\Gamma}K(x)u^{p+1}dx\\
&-\int_{\{|x| = r\}/\Gamma}\frac{1}{p+1}K(x)ru^{p+1}d\s(r).
\end{split}
\end{align}
\end{lemma}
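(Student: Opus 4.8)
The plan is to follow the classical Pohozaev argument, multiplying the PDE by a suitable vector-field contraction and integrating by parts, while keeping careful track of the quotient by $\Gamma$. Since $u$ is a $C^2$ function on $B_\s(0)/\Gamma$, we may lift everything to the ball $B_\s(0) \subset \RR^n$ where $u$ becomes a $\Gamma$-invariant $C^2$ function; all integrals over $\{|x| \le r\}/\Gamma$ or $\{|x| = r\}/\Gamma$ are then $\frac{1}{|\Gamma|}$ times the corresponding integrals on the ball or sphere upstairs, and the divergence theorem applies verbatim because $\Gamma$ acts by isometries of $\RR^n$ fixing the origin (so the radial vector field $x \cdot \nabla$ and the outward normal $\p/\p\s$ are $\Gamma$-equivariant, and the boundary $\{|x|=r\}$ is $\Gamma$-invariant). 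Hence it suffices to prove the identity on $B_r(0) \subset \RR^n$ in the standard way.

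First I would take the ``Rellich-Pohozaev multiplier'' $x \cdot \nabla u + \frac{n-2}{2} u$ and pair it with the equation $a^{ij}\p_{ij}u + b^i \p_i u + cu + Ku^p = 0$, integrating over $\{|x| \le r\}$. For the principal part I would split $a^{ij}\p_{ij}u = \Delta u + (a^{ij} - \delta^{ij})\p_{ij} u$; the $(a^{ij}-\delta^{ij})\p_{ij}u$ piece together with the $b^i \p_i u$ piece gives the first (error) term on the right-hand side directly, with no further manipulation. For the flat Laplacian term, the standard computation
\[
\int_{\{|x| \le r\}} \Big( x \cdot \nabla u + \tfrac{n-2}{2} u \Big) \Delta u \, dx
\]
is integrated by parts twice: one uses $\int (x \cdot \nabla u)\Delta u = -\int \nabla(x\cdot \nabla u)\cdot \nabla u + \int_{\{|x|=r\}} (x \cdot \nabla u)\frac{\p u}{\p \s}$, the identity $\nabla(x \cdot \nabla u)\cdot \nabla u = |\nabla u|^2 + \frac12 x \cdot \nabla(|\nabla u|^2)$, and another integration by parts on the last term, producing precisely the boundary combination $\frac{n-2}{2} u \frac{\p u}{\p\s} - \frac{r}{2}|\nabla u|^2 + r|\frac{\p u}{\p r}|^2$ defining $P(r,u)$, after noting $\frac{\p u}{\p\s} = r^{-1}(x \cdot \nabla u) = \frac{\p u}{\p r}$ on $\{|x|=r\}$. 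For the zeroth-order term $cu$, pairing with the multiplier and integrating the $(x \cdot \nabla u)\, cu = \frac12 c\, x\cdot\nabla(u^2)$ piece by parts yields $\int (\frac12 x\cdot\nabla c + c)u^2 - \frac{r}{2}\int_{\{|x|=r\}} cu^2\,d\s$. Finally, for $Ku^p$, write $(x\cdot\nabla u)Ku^p = \frac{1}{p+1}K\,x\cdot\nabla(u^{p+1})$ and integrate by parts to get the $-\frac{1}{p+1}\int (x\cdot\nabla K)u^{p+1} - \frac{n}{p+1}\int Ku^{p+1} + \frac{r}{p+1}\int_{\{|x|=r\}}Ku^{p+1}\,d\s$ terms; combining with $\frac{n-2}{2}\int Ku^{p+1}$ from the other half of the multiplier gives the coefficient $\big(\frac{n}{p+1} - \frac{n-2}{2}\big)$ and the claimed boundary term. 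Summing the four contributions and moving the $P(r,u)$ boundary terms to the left produces the stated identity; the factors of $|\Gamma|$ cancel uniformly on both sides and so disappear.

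The main obstacle is not conceptual but bookkeeping: one must verify that every integration by parts is legitimate at the required regularity ($u \in C^2$, $K \in C^1$, $a^{ij}, b^i, c$ merely continuous, so no derivatives of these coefficients beyond $c$ and $K$ are taken — and indeed the identity only involves $\nabla c$ and $\nabla K$ through the combinations $x\cdot\nabla c$ and $x\cdot\nabla K$, which is consistent with the hypotheses), and that the orbifold point at the origin causes no difficulty. The latter is the genuinely orbifold-specific point: since the singular set is the single point $\{0\}$ and $u$ extends $C^2$ across it (being $C^2$ on $B_\s(0)/\Gamma$ in the sense of Definition~1.2, i.e. lifting to a $\Gamma$-invariant $C^2$ function on $B_\s(0)$), there is no inner boundary and no boundary contribution at the origin — the computation is identical to the manifold case once lifted to the cover. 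Thus, as the authors remark, the proof is exactly that of \cite[Lemma 2.1]{Marques} applied on the $\Gamma$-cover; I would state it as such, recording only the lift-to-the-cover reduction and the cancellation of $|\Gamma|$, and refer to \cite{Marques} for the identical flat-space computation.
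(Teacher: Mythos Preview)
Your proposal is correct and matches the paper's treatment: the paper gives no proof of this lemma beyond the citation to \cite[Lemma~2.1]{Marques}, and your plan is exactly the standard Pohozaev computation carried out on the $\Gamma$-cover with the $|\Gamma|$ factors cancelling, which is what that citation amounts to in the orbifold setting. One small caveat on your regularity discussion: the hypothesis only gives $c$ continuous, so the term $x\cdot\nabla c$ in the stated identity should be read as arising formally from the integration by parts (i.e.\ the identity is literally valid for $c\in C^1$, and in the applications $c=-c(n)R_{g_k}$ is smooth anyway); this is an artifact of the statement, not a flaw in your argument.
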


\subsection{Isolated and isolated simple blow-up points}
Let $\Omega = B_\s(\bar x)/\Gamma$ be (a quotient of) an open ball centered at point $\bar x$. Suppose $\{g_k\}$ is a sequence of Riemannian metrics in $\Omega$ converging, in the $C^2_{loc}$ topology, to a metric $g$. Let $R_k$ denote the scalar curvature of $g_k$ and $R_g$ denote the scalar curvature of the limit metric $g$.
Suppose $\{f_k\}$ is a sequence of positive $C^1$ functions converging in the $C^1_{loc}$ topology to a positive function $f$. Also suppose $\{K_k\}$ is a sequence of positive $C^2$ functions converging in the $C^2_{loc}$ topology to a positive function $K_\infty$.
Consider a sequence of positive $C^2$ functions $u_k$ satisfying
\begin{align}
\label{confseq}
L_{g_k}u_k + K_kf_k^{-\tau_k}u_k^{p_k} = 0\ \mathrm{in}\ \Omega,
\end{align}
where $1+\epsilon_0 < p_k \leq \frac{n+2}{n-2}$ for some $\epsilon_0 > 0$ and $\tau_k = \frac{n+2}{n-2} - p_k$.

\begin{definition}
\label{iso_def}
Suppose $u_k$ is a sequence of positive functions satisfying equation \eqref{confseq}.
If $\Gamma =\{e\}$, define $\bar x$ to be an isolated blow-up point for $u_k$ if there exists a sequence $x_k\in \Omega$, converging to $\bar x$, so that:
1) $x_k$ is a local maximum point of $u_k$;
2) $M_k:=u_k(x_k)\to \infty$ as $k\to \infty$;
3) there exist $r, C > 0$ such that $u_k(x) \leq Cd_{g_k}(x, x_k)^{-\frac{2}{p_k-1}}$ for every $x\in B_r(x_k)\subset \Omega$, where $B_r(x_k)$ denotes the geodesic ball of radius $r$, centered at $x_k$, with respect to the metric $g_k$.

If $\Gamma\neq \{e\}$, let $\pi:B_\s(\bar x) \to B_\s(\bar x)/\Gamma$ be the projection map. We say that $\bar x$ is an isolated blow-up point for $u_k$ if $\pi^*(\bar x) = \bar x$ is an isolated blow-up point for $\pi^*(u_k)$ in the lifting-up space.
\end{definition}

Define
\begin{align}
\label{U_def}
U_c(y) = \Big(\frac{n(n-2)}{c}\Big)^{\frac{n-2}{4}}(1+|y|^2)^{\frac{2-n}{2}}
\end{align}
on $\RR^n/\Gamma$, where $c$ is some positive constant. It is not hard to check that
\begin{align}
\Delta U_c(y) + c U_c^{\frac{n+2}{n-2}}(y) = 0.
\end{align}

\begin{remark}
\label{coord_setting}
From now on, for each $k$, assume that we work in $g_k$-normal coordinates $\{x^i\}$ centered at point $x_k$. Then, we will simply write $u_k(x)$ instead of $u_k(\exp_{x_k}(x))$ and $|x|$ instead of $d_{g_k}(x,x_k)$.

Moreover, by \cite[Theorem 5.1]{Lee-Parker}, there exists a conformal factor $\phi_k$, such that $\tilde g_k = \phi_k^{\frac{4}{n-2}} g_k$ is the conformal normal metric with conformal normal coordinates $\{\tilde x^i\}$ centered at $x_k$. Let $\tilde u_k = \phi_k^{-1}u_k$. By the property that equation \eqref{conformalequ} is conformally invariant as stated in Section \ref{csce}, $u_k$ and $\tilde u_k$ satisfies the same type of conformal scalar curvature equation \eqref{conformalequ}. Hence we may assume $g_k$ is already the conformal normal metric and $\{x^i\}$ is already the conformal normal coordiantes centered at $x_k$.
\end{remark}

Consider the change of variables
\begin{align}
\label{cov}
y = M_k^{\frac{p_k-1}{2}}x,
\end{align}
and define the rescaled metric and functions
\begin{align}
\label{res_func}
\begin{split}
&(h_k)_{ij}(y) = (g_k)_{ij}(M_k^{-\frac{p_k-1}{2}}y),\ v_k(y) = M_k^{-1}u_k(M_k^{-\frac{p_k-1}{2}}y),\\
&\tilde f_k(y) = f_k(M_k^{-\frac{p_k-1}{2}}y),\ \tilde K_k(y) = K_k(M_k^{-\frac{p_k-1}{2}}y)\mathrm{\ and\ }\tilde R_k(y) = R_k(M_k^{-\frac{p_k-1}{2}}y),
\end{split}
\end{align}
for $|y| < M_k^{\frac{p_k-1}{2}}r$, where $r$ is as in Definition \ref{iso_def}.
Using the property that equation $\eqref{conformalequ}$ is rescale invariant as stated in Section \ref{csce}, the rescaled functions satisfy
\begin{align}
\label{res_equ}
L_{h_k}v_k + \tilde K_k\tilde f_k^{-\tau_k}v_k^{p_k} = 0.
\end{align}
The following property holds for an isolated blow-up point. 
\begin{proposition}[\cite{Marques} Proposition 4.3]
\label{p:m43}
Assume $u_k$ is a sequence of positive functions satisfying equation \eqref{confseq} and $x_k\to \bar x$ is an isolated blow-up point. Moreover, if $\Gamma\neq \{e\}$, we require $x_k = \bar x$ for all large $k$'s. Assume $p_k \to \frac{n+2}{n-2}$, then there exist 
$R'_k \to \infty$ and $\epsilon_k \to 0$, such that after passing to a subsequence, 
\begin{align}
\Big \Vert v_k(y) - U_{K_\infty(\bar{x})} (y) \Big \Vert_{C^2(B_{R'_k}(0))/\Gamma} \leq \epsilon_k,
\end{align}
and 
\begin{align}
\label{e:me2}
\frac{R'_k}{\log(M_k)} \to 0,
\end{align}
as $k \to \infty$.
\end{proposition}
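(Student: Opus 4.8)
The plan is to run the standard bubble-extraction argument (as in \cite{Marques, Li-Zhang, Li-Zhu, KMS}), carried out equivariantly in the universal cover near $\bar x$. Fix $r_0>0$ so that for all large $k$ the ball $B_{r_0}(x_k)$ lies in $\Omega$ and $x_k$ is a local maximum of $u_k$ there; when $\Gamma\neq\{e\}$ we have $x_k=\bar x$ and we pull everything back to $B_\s(\bar x)$, where all the data are $\Gamma$-invariant and $0$ is a fixed point of $\Gamma$. By Remark~\ref{coord_setting} we may assume that each $g_k$ is already the conformal normal metric in conformal normal coordinates centered at $x_k$; the point of this reduction is that then $R_{g_k}(x_k)=0$ (by \cite[Theorem~5.1]{Lee-Parker}). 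In the rescaled variables $y=M_k^{(p_k-1)/2}x$ the functions $v_k,h_k,\tilde K_k,\tilde f_k,\tilde R_k$ of \eqref{res_func} satisfy \eqref{res_equ} on $B_{\rho_k}(0)$ with $\rho_k:=M_k^{(p_k-1)/2}r_0\to\infty$, and $v_k(0)=1$ is a local maximum, so $0<v_k\le 1$ on $B_{\rho_k}(0)$.

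The next step is to pass to the limit in \eqref{res_equ}. In normal coordinates $g_{ij}(x)=\delta_{ij}+O(|x|^2)$, so $h_k\to\delta$ in $C^2_{loc}(\RR^n)$ and the first-order (Christoffel) terms of $\Delta_{h_k}$ tend to $0$ on compact sets. Since the $g_k$ are uniformly $C^2$ and $R_{g_k}(x_k)=0$, we get $R(h_k)(y)=R_{g_k}(M_k^{-(p_k-1)/2}y)\to 0$ uniformly on compact sets, so the zeroth-order term $c(n)R(h_k)v_k$ disappears in the limit. Moreover $\tilde K_k\to K_\infty(\bar x)$ locally uniformly, $\tilde f_k$ stays in a fixed compact subset of $(0,\infty)$ and $\tau_k=\frac{n+2}{n-2}-p_k\to0$, so $\tilde f_k^{-\tau_k}\to1$ locally uniformly, and $p_k\to\frac{n+2}{n-2}$. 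Because $0<v_k\le1$ on balls exhausting $\RR^n$, interior elliptic (Schauder) estimates applied to \eqref{res_equ} bound $v_k$ in $C^{2,\alpha}_{loc}$; by Arzel\`a--Ascoli and a diagonal argument over an exhaustion by balls, a subsequence converges in $C^2_{loc}$ to a limit $v$ with $0\le v\le1$, $v(0)=1$, solving $\Delta v+K_\infty(\bar x)\,v^{(n+2)/(n-2)}=0$ on $\RR^n$ (and $\Gamma$-invariantly when $\Gamma\neq\{e\}$). The strong maximum principle upgrades $v\ge0$ to $v>0$, since $v(y_0)=0$ would force $v\equiv0$, contradicting $v(0)=1$.

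Now I would invoke the Caffarelli--Gidas--Spruck classification of positive entire solutions of the critical equation: $v$ must be a spherical solution, i.e., a translate and dilate of the function in \eqref{U_def} with parameter $K_\infty(\bar x)$. Since every such solution attains its maximum exactly at its center and $v(0)=1=\max v$, the center is $0$ and the dilation parameter is pinned down by the normalization, identifying $v$ with $U_{K_\infty(\bar x)}$; when $\Gamma\neq\{e\}$, a solution centered at the $\Gamma$-fixed point $0$ is automatically rotationally symmetric, hence $\Gamma$-invariant, so there is no obstruction, and after projecting back down we get $v_k\to U_{K_\infty(\bar x)}$ in $C^2_{loc}(\RR^n/\Gamma)$. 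As the limit is independent of the subsequence, the full sequence converges. Finally, to produce $R'_k,\epsilon_k$: for each $j\in\dN$ pick $k_j$ with $\|v_k-U_{K_\infty(\bar x)}\|_{C^2(B_j(0)/\Gamma)}\le 1/j$ for all $k\ge k_j$, and let $R'_k$ be the largest $j\le\log\log M_k$ with $k\ge k_j$ (so $R'_k\to\infty$ since $M_k\to\infty$), $\epsilon_k:=\|v_k-U_{K_\infty(\bar x)}\|_{C^2(B_{R'_k}(0)/\Gamma)}\to 0$; the cap $R'_k\le\log\log M_k$ yields $R'_k/\log M_k\to0$, which is \eqref{e:me2}.

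The main obstacle is the passage to the limiting equation: one must verify that both the Christoffel terms and, crucially, the scalar-curvature term $c(n)R(h_k)v_k$ vanish in the limit, which is precisely why the reduction to conformal normal coordinates in Remark~\ref{coord_setting} is made and where \cite[Theorem~5.1]{Lee-Parker} enters. Correctly handling the $\Gamma$-equivariance --- working in the cover and checking that the limiting bubble is forced to be centered at the fixed point --- is the other point requiring care. By contrast, the rate estimate \eqref{e:me2} is soft once $C^2_{loc}$ convergence is established.
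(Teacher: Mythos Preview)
Your approach is the standard bubble-extraction argument that the paper itself defers to \cite{Marques} for, and the overall structure (rescale, pass to the limit via elliptic estimates, classify the limit by Caffarelli--Gidas--Spruck, then diagonalize to produce $R'_k$ and $\epsilon_k$) is correct. Two points deserve correction, though neither is fatal.

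First, the inference ``$v_k(0)=1$ is a local maximum, so $0<v_k\le 1$ on $B_{\rho_k}(0)$'' is not justified: Definition~\ref{iso_def} only says $x_k$ is a \emph{local} maximum, with no uniform control on the size of the neighborhood. What you actually need is that $v_k$ is locally uniformly bounded, and this follows from the isolated blow-up bound $v_k(y)\le C|y|^{-2/(p_k-1)}$ (controlling $v_k$ away from $0$) together with a Harnack argument near the origin, as in the references you cite.

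Second, your formula $R(h_k)(y)=R_{g_k}(M_k^{-(p_k-1)/2}y)$ omits the scaling factor: since $(h_k)_{ij}(y)=(g_k)_{ij}(M_k^{-(p_k-1)/2}y)$ one has $R_{h_k}(y)=M_k^{-(p_k-1)}R_{g_k}(M_k^{-(p_k-1)/2}y)$, which tends to $0$ uniformly on compacta simply because $M_k\to\infty$ and $R_{g_k}$ is uniformly bounded. The scalar-curvature term therefore drops out of the limit by scaling alone, and the reduction to conformal normal coordinates is not needed here; \cite[Theorem~5.1]{Lee-Parker} becomes essential only later, in the Pohozaev analysis of Section~\ref{local_ana}, where one uses $R_k(0)=0$ and $(R_k)_{,i}(0)=0$ to control error terms.
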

\begin{proof} The case $K_k=constant$ is proved in \cite[Proposition 4.3]{Marques}.
For variable $K_k$, because $K_k\to K_\infty$ in the $C^0_{loc}$ norm, $\tilde K_k\to K_\infty(\bar x)$ in the $C^0_{loc}$ norm. By equation \eqref{res_equ} and the proof of \cite[Proposition 4.3]{Marques}, after passing to a subsequence, $v_k\to v>0$ in $C^2_{loc}$ norm, which satisfies
\begin{align}
\left\{\begin{array}{ll}\Delta v(y) + K_\infty(\bar x) v(y)^{p} = 0,\ \ y\in\RR^n/\Gamma\\v(0)=1,\ \ \nabla v(0)=0,\end{array}\right.
\end{align}
where $p = \lim_{k\to\infty}p_k$ and $\Delta$ denotes the Euclidean Laplacian. By \cite{G-S}, we must have $p=\frac{n+2}{n-2}$, and $v(y) = U_{K_\infty(\bar x)}(y)$. 
The proof of \eqref{e:me2} is the same as in \cite{Marques}.
\end{proof}

\begin{remark}
Under the same assumptions as Proposition \ref{p:m43}, we also have
\begin{align}
\Big \Vert v_k(y) - U_{K_k(x_k)} (y) \Big \Vert_{C^2(B_{R'_k}(0))/\Gamma} \leq \epsilon_k,
\end{align}
which is simply because $U_{K_k(x_k)}\to U_{K_\infty(\bar{x})}$ uniformly in $\RR^n/\Gamma$.
\end{remark}

\begin{definition}
Suppose $u_k$ is a sequence of positive functions satisfying equation \eqref{confseq} and $x_k\to \bar x$ is an isolated blow-up point. If $\Gamma =\{e\}$, define
\begin{align}
\bar u_k(r) = \frac{1}{Vol(S^{n-1})r^{n-1}}\int_{\p B_r(x_k)}u_kd\s(r),
\end{align}
where we are using $g_k$-normal coordinates and integrating with respect to the Euclidean volume form. We say $x_k\to\bar x$ is an isolated simple blow-up point if there exists a real number $0<\rho<r$ such that the functions
\begin{align}
\hat u_k(r) \equiv r^{\frac{2}{p_k-1}}\bar u_k(r)
\end{align}
have exactly one critical point in the interval $(0,\rho)$, for $k$ large.

If $\Gamma\neq \{e\}$, let $\pi:B_\s(\bar x)\to B_\s(\bar x)/\Gamma$ be the projection map. We say that $\bar x$ is an isolated simple blow-up point for $u_k$ if $\pi^*(\bar x)$ is an isolated simple blow-up point for $\pi^*(u_k)$ in the lifting-up space.
\end{definition}

Next, we give the special blow-up condition that we will work on.
\begin{condition}
\label{blow-up_cond}
Assume $u_k$ is a sequence of positive functions satisfying equation \eqref{confseq} and $x_k\to \bar x$ is an isolated simple blow-up point. 
Moreover, if  $\Gamma_{\bar{x}} \neq \{e\}$, then we require that $x_k = \bar x$ for all $k$ sufficiently large.
\end{condition}

\begin{remark}
In Condition \ref{blow-up_cond}, the main reason why we require $x_k = \bar x$ for all $k$ sufficiently large in the singular point case $\Gamma\neq\{e\}$ is the following. 
By assuming so, for each $k$, the geodesic ball centered at $x_k$ will be $B_r(\bar x)/\Gamma$. Then, when we later analyze some local integrals and let $k\to \infty$, we will not run into the case that integrals over smooth balls converge to an integral over a quotient of a smooth ball.  We also note that eventually, we will prove in Corollary~\ref{blow_is_iso} that if blow-up occurs at a singular point, then  Condition \ref{blow-up_cond} must hold.
\end{remark}

From now on, assume we are working in dimension $n=4$. In the following context, we will use $C$ to denote various constants which only depend on the limit metric $g$, $\inf K_\infty$, $\Vert K_\infty\Vert_{C^2}$ and possibly the chosen small radius $\rho_1$, $\delta$ and $\s$. The dependency is implied in the proof. Fix $\delta > 0$, and define 
\begin{align}
\lambda_k = (2 - \delta) \frac{p_k-1}{2} - 1.
\end{align}
\begin{proposition}[\cite{Li-Zhu}]
\label{p:lz73}
Assuming Condition \ref{blow-up_cond}, for sufficiently small $\delta > 0$, there exists 
constants $0 < \rho_1 < \rho$ and $C > 0$ such that 
\begin{align}
\begin{split}
M_k^{\lambda_k} u_k(x) \leq C|x|^{-2 + \delta},\\
M_k^{\lambda_k} |\nabla u_k(x)| \leq C |x|^{-3 + \delta},\\
M_k^{\lambda_k} |\nabla^2 u_k(x)| \leq C |x|^{-4 + \delta},
\end{split}
\end{align}
for every $x$ satisfying  
\begin{align}
R'_k M_k^{-\frac{p_k-1}{2}} \leq |x| \leq \rho_1.
\end{align}
As a consequence, it implies
\begin{align}
\begin{split}
v_k(y)\leq CM_k^{\delta\frac{p_k-1}{2}}(1+|y|)^{-2},\\
|\nabla v_k(y)|\leq CM_k^{\delta\frac{p_k-1}{2}}(1+|y|)^{-3},\\
|\nabla^2v_k(y)|\leq CM_k^{\delta\frac{p_k-1}{2}}(1+|y|)^{-4},
\end{split}
\end{align}
for every $y$ satisfying
\begin{align}
|y|\leq \rho_1M_k^{\frac{p_k-1}{2}}.
\end{align}
\end{proposition}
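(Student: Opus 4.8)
The plan is to adapt the proof of the analogous decay estimate of Li--Zhu \cite{Li-Zhu} (see also the presentation in \cite{Marques}) to the orbifold setting, using the rigidity imposed by Condition \ref{blow-up_cond}. First I would lift everything to the covering ball: if $\Gamma \neq \{e\}$, Condition \ref{blow-up_cond} guarantees $x_k = \bar x$ for all large $k$, so the geodesic ball centered at $x_k$ is exactly $B_r(\bar x)/\Gamma$, and $\pi^* u_k$ solves an equation of the same type \eqref{confseq} on the smooth ball $B_r(\bar x)$ with $\pi^*(\bar x)$ an isolated simple blow-up point (with all maxima at the fixed point $\pi^*(\bar x)$). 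Thus it suffices to prove the estimates upstairs, where the metrics $g_k$ converge in $C^2_{loc}$ to a smooth metric and the standard manifold machinery applies; the resulting bounds are $\Gamma$-invariant and descend. I will also use Remark \ref{coord_setting} to assume $g_k$ is already the conformal normal metric in conformal normal coordinates centered at $x_k$, so that the metric error terms are as small as possible near the blow-up point (this is what makes the $-2+\delta$ exponent, rather than $-2$, attainable).

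The heart of the argument is the Harnack-type/maximum-principle scheme of Li--Zhu. The key intermediate estimate is a \emph{spherical-average} decay bound: from the isolated simple blow-up property, the function $\hat u_k(r) = r^{2/(p_k-1)} \bar u_k(r)$ has a unique critical point in $(0,\rho)$ and is decreasing on an interval $(\rho_1, \rho)$ for a fixed $\rho_1$; combined with the Pohozaev identity (Lemma \ref{poholemma}) applied on spheres of radius in this range and the normalization $v_k \to U_{K_\infty(\bar x)}$ on large balls $B_{R'_k}(0)$ from Proposition \ref{p:m43}, one deduces $\bar u_k(r) \leq C M_k^{-\lambda_k} r^{-2+\delta}$ on $\{R'_k M_k^{-(p_k-1)/2} \le |x| \le \rho_1\}$ (this is where $n=4$ enters, as the relevant exponents and integrability thresholds are dimension-specific). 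To upgrade the average bound to a pointwise bound on $u_k$ itself, one runs a barrier argument: the function $\psi_k(x) = M_k^{-\lambda_k}\big(\epsilon |x|^{-2+\delta} + \epsilon^{-1} A |x|^{-\delta}\big)$ (or a similar two-parameter comparison function adapted to the operator $L_{g_k}$) is a supersolution of the linearized equation in the annular region once $\delta$ is small enough that the zeroth-order term $K_k f_k^{-\tau_k} u_k^{p_k-1}$ is dominated — here one uses $u_k^{p_k-1} \le C|x|^{-2}$ type control coming from the already-established bound near the sphere $|x| \sim R'_k M_k^{-(p_k-1)/2}$ — and one invokes the maximum principle on $\{R'_k M_k^{-(p_k-1)/2} \le |x| \le \rho_1\}$, checking the boundary inequalities at the inner sphere (via Proposition \ref{p:m43}) and the outer sphere (via the average bound and elliptic Harnack). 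This yields the first inequality $M_k^{\lambda_k} u_k(x) \le C|x|^{-2+\delta}$; the gradient and Hessian bounds then follow from interior Schauder estimates applied to the rescaled equation on balls of radius $\sim |x|/2$, using $C^2_{loc}$ convergence of $g_k$ and $C^1_{loc}$, $C^2_{loc}$ convergence of $f_k$, $K_k$.

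Finally, the stated consequence for $v_k$ is a direct change of variables: substituting $x = M_k^{-(p_k-1)/2} y$ into the three inequalities, using $u_k = M_k v_k$ and $\lambda_k = (2-\delta)\tfrac{p_k-1}{2} - 1$, converts $M_k^{\lambda_k}|x|^{-2+\delta}$ into $C M_k^{\delta(p_k-1)/2}(1+|y|)^{-2}$ on $\{R'_k \le |y| \le \rho_1 M_k^{(p_k-1)/2}\}$, and on the complementary region $|y| \le R'_k$ the bound is immediate from Proposition \ref{p:m43} since $v_k$ is there uniformly close to $U_{K_\infty(\bar x)}$, which itself decays like $(1+|y|)^{-2}$; the two regions patch together with a harmless enlargement of $C$.

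The main obstacle I anticipate is \emph{not} any single estimate but rather making sure the orbifold covering reduction is airtight in the subcritical case $p_k < \tfrac{n+2}{n-2}$ and that the constants genuinely depend only on the limiting data as claimed. Concretely: one must verify that all the comparison functions and cutoffs used downstairs can be chosen $\Gamma$-invariant (automatic here since they are radial in the normal/conformal-normal coordinates), that the uniqueness-of-critical-point hypothesis is preserved under lifting, and — crucially — that the constant $\rho_1$ and all intermediate constants can be taken uniform in $k$ despite the metrics $g_k$ varying; this is exactly where the $C^2_{loc}$ convergence $g_k \to g$ and the fixed-point condition $x_k = \bar x$ (so the coordinate charts do not drift) are used, and it is the one place where the argument genuinely differs from citing \cite{Li-Zhu} verbatim.
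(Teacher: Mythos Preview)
Your proposal is correct and takes essentially the same approach as the paper: the paper's proof is simply a citation to \cite[Lemma~7.3]{Li-Zhu} with the remark that the $n=3$ argument there generalizes directly to $n=4$. Your detailed sketch of the Li--Zhu barrier/maximum-principle scheme, together with the explicit lifting to the covering ball (which the paper leaves implicit in its definition of isolated simple blow-up at orbifold points), is an accurate unpacking of that citation; the Pohozaev identity is not actually needed at this preliminary stage---the spherical-average decay follows directly from the monotonicity of $\hat u_k$ past its unique critical point together with Proposition~\ref{p:m43}---but its mention does no harm.
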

\begin{proof} The proof is the same as \cite[Lemma 7.3]{Li-Zhu}. That proof was for $n =3$, but directly generalizes to higher dimensions. 
\end{proof}

\begin{proposition}
\label{grad0}
Assuming Condition \ref{blow-up_cond}, then 
there exists a constant $C$ such that
\begin{align}
|\nabla K_k( x_k )| \leq C  M_k^{-2+2\delta},
\end{align}
as $k \to \infty$. Consequently, 
\begin{align}
|\nabla K_{\infty} ( \bar{x})| = 0.
\end{align}
\end{proposition}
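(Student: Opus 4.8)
The strategy is to apply the Pohozaev identity (Lemma \ref{poholemma}) on a small ball of fixed radius $\rho_1$ around $x_k$, working in the conformal normal coordinates guaranteed by Remark \ref{coord_setting}, and to extract the gradient bound on $K_k$ from the term $\frac{1}{p_k+1}\int(x\cdot\nabla K_k)u_k^{p_k+1}\,dx$. First I would rewrite equation \eqref{confseq} in the form $a^{ij}\p_{ij}u_k + b^i\p_i u_k + c\, u_k + \tilde K_k u_k^{p_k} = 0$ with $\tilde K_k = K_k f_k^{-\tau_k}$, noting $\tilde K_k(x_k) = K_k(x_k)$ and $\nabla\tilde K_k(x_k) = \nabla K_k(x_k) + O(\tau_k)$ since $f_k(x_k)\to f(\bar x)>0$ (and $\tau_k\to 0$). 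In conformal normal coordinates in dimension $4$ one has $\det g_k = 1 + O(|x|^N)$ for any $N$ and $R_{g_k}(x_k) = 0$, $\nabla R_{g_k}(x_k) = 0$, so the curvature terms $\int(\frac12 x\cdot\nabla c + c)u_k^2$ and the boundary term $-\frac{r}{2}\int c u_k^2$ are higher order in the relevant sense. The key algebraic point is that the overall scaling weight $2n/(p_k+1) - (n-2)$ vanishes to leading order when $p_k\to (n+2)/(n-2)$, so the term $(\frac{n}{p_k+1} - \frac{n-2}{2})\int \tilde K_k u_k^{p_k+1}$ is controlled by $\tau_k$ times a bounded integral.

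The main estimate proceeds as follows. Rescale via $y = M_k^{(p_k-1)/2}x$ to pass to $v_k$ (equations \eqref{cov}, \eqref{res_func}, \eqref{res_equ}). Proposition \ref{p:m43} gives $v_k\to U_{K_\infty(\bar x)}$ in $C^2$ on expanding balls, and Proposition \ref{p:lz73} gives the global decay $v_k(y)\le C M_k^{\delta(p_k-1)/2}(1+|y|)^{-2}$ together with matching gradient bounds, valid for $|y|\le \rho_1 M_k^{(p_k-1)/2}$. I would expand $K_k(x) = K_k(x_k) + \nabla K_k(x_k)\cdot x + O(|x|^2)$ (using $\|K_k\|_{C^2}\le C$), so that $x\cdot\nabla K_k(x) = \nabla K_k(x_k)\cdot x + O(|x|^2)$. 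Under the change of variables $x = M_k^{-(p_k-1)/2}y$ and $u_k = M_k v_k$, the dominant term $\frac{1}{p_k+1}\int_{|x|\le\rho_1}(\nabla K_k(x_k)\cdot x)\,u_k^{p_k+1}\,dx$ becomes, after collecting powers of $M_k$, a constant multiple of $M_k^{\,\gamma_k}\,\nabla K_k(x_k)\cdot\int_{|y|\le\rho_1 M_k^{(p_k-1)/2}} y\, v_k^{p_k+1}\,dy$ for an explicit exponent $\gamma_k$; by symmetry of $U_{K_\infty(\bar x)}$ and the $C^2$ convergence on large balls, the bulk of this integral cancels, leaving a remainder that the decay estimates of Proposition \ref{p:lz73} bound by $C M_k^{\,\gamma_k + \text{(small in }\delta)}$ against a convergent integral $\int |y|\cdot |y|^{-2(p_k+1)}\,dy$ in dimension $4$ — this is where the hypothesis $n=4$ and the careful choice of small $\delta$ matter. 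Meanwhile the left-hand side $P(\rho_1, u_k)$ and all the other right-hand side terms of Lemma \ref{poholemma} are shown, by the same rescaling and decay bounds, to be $o$ of this quantity (the $O(|x|^2)$ Taylor remainder in $K_k$, the metric-error terms $(a^{ij}-\delta^{ij})\p_{ij}u_k$ and $b^i\p_i u_k$ which are small precisely because we are in conformal normal coordinates, and the $\tau_k$-terms).

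Putting the Pohozaev identity together and dividing by the (positive, explicitly known) factor multiplying $\nabla K_k(x_k)$, one obtains $|\nabla K_k(x_k)|\le C M_k^{-2+2\delta}$ as claimed; since $M_k\to\infty$ this forces $\nabla K_k(x_k)\to 0$, and because $x_k\to\bar x$ and $K_k\to K_\infty$ in $C^2_{loc}$, we conclude $|\nabla K_\infty(\bar x)| = 0$. The main obstacle I anticipate is the bookkeeping of the $M_k$-powers: one must verify that the exponent attached to the $\nabla K_k(x_k)$-term is strictly larger than the exponents attached to every competing term (the Taylor remainder term, the conformal-coordinate metric errors, and the subcritical $\tau_k$-corrections), uniformly for $\delta$ small, so that after division the surviving bound is genuinely of order $M_k^{-2+2\delta}$ and not larger. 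This is the analogue in the orbifold/variable-$K$ setting of the classical computation in \cite{Li-Zhu, Marques}; in the case $\Gamma\neq\{e\}$ Condition \ref{blow-up_cond} ensures $x_k = \bar x$, so all integrals are genuinely over $B_{\rho_1}(\bar x)/\Gamma$ and the rescaled limits are over $\RR^4/\Gamma$, for which the symmetry cancellation of $\int y\, U^{p+1}\,dy$ still holds since $\Gamma\subset\mathrm{O}(4)$ acts linearly.
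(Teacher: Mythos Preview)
Your proposal has a genuine gap: the scaling Pohozaev identity of Lemma \ref{poholemma} is the wrong tool for extracting $\nabla K_k(x_k)$. The term that identity produces is
\[
\frac{1}{p_k+1}\int_{\{|x|\le\rho_1\}/\Gamma}(x\cdot\nabla K_k)\,u_k^{p_k+1}\,dx,
\]
and after Taylor expansion $x\cdot\nabla K_k(x)=\nabla K_k(0)\cdot x + (\mathrm{Hess}\,K_k)_0(x,x)+O(|x|^3)$, the coefficient of $\nabla K_k(0)$ is the vector $\int x\,u_k^{p_k+1}\,dx$. You yourself observe that this integral vanishes to leading order by the radial symmetry of $U_{K_\infty(\bar x)}$ (and by $\Gamma\subset\mathrm{O}(4)$ in the orbifold case). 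But then there is no ``positive, explicitly known factor'' to divide by: the object multiplying $\nabla K_k(0)$ is small, not bounded below, so no bound on $|\nabla K_k(0)|$ follows. In fact the scaling identity naturally isolates $\Delta K_k(0)$, not $\nabla K_k(0)$ --- this is exactly how the paper uses it later in Lemma \ref{lemma_i5}, \emph{after} Proposition \ref{grad0} has already controlled the gradient.

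The paper's proof (following \cite[Lemma 7.8]{Li-Zhu}) instead tests the equation against the \emph{translation} generator: multiply \eqref{confseq} by $\eta\,\partial u_k/\partial x^j$ with $\eta$ a cutoff, and integrate by parts. The $K$-term then produces
\[
\int \frac{\partial K_k}{\partial x^j}\,\eta\,f_k^{-\tau_k}u_k^{p_k+1}\,dx,
\]
so that after Taylor expansion the coefficient of $\partial_j K_k(0)$ is $\int u_k^{p_k+1}\,dx$, the bubble volume, which is bounded below by a positive constant. All remaining terms are estimated by Propositions \ref{p:m43} and \ref{p:lz73} exactly as you describe, and division then yields $|\nabla K_k(x_k)|\le C M_k^{-2+2\delta}$. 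Your ingredients (conformal normal coordinates, rescaling, decay bounds) are the right ones; only the choice of test function needs to be changed from $x\cdot\nabla u_k$ to $\partial_j u_k$.
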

\begin{proof} The proof is very similar to \cite[Lemma 7.8]{Li-Zhu}, we provide here only a brief outline. Recall that $\{x^i\}$ is the $g_k$-normal coordinates centered at point $x_k$. For some fixed positive small $\s$, let $\eta$ be a smooth cutoff function such that $\eta(x) = 1$ for $|x|\leq \s/2$ and $\eta(x) =0$ for $|x|\geq \s$. Multipying equation \eqref{confseq} by $\eta(\p u_k/\p x^j)$, integrating by parts on $\{|x|\leq \s\}/\Gamma$, we get
\begin{align}
\begin{split}
&\int_{\{|x|\leq \s\}/\Gamma}\frac{\p K_k}{\p x^j} \eta f_k^{-\tau_k}u_k^{p_k+1}dx\\
\leq &C\int_{\{|x|\leq \s\}/\Gamma}\Big(|\nabla \eta|\cdot |\nabla u_k|^2 + \Big|\frac{\p(\eta R_{k})}{\p x^j}\Big|u_k^2 + \Big|\frac{\p( \eta  f_k^{-\tau_k})}{\p x^j}\Big|K_ku_k^{p_k+1}\Big)dx.
\end{split}
\end{align}
Then, using Proposition \ref{p:m43} in the ball $|x|\leq R'_kM_k^{-\frac{p_k-1}{2}}$ and Proposition \ref{p:lz73} in the annuli $R'_kM_k^{-\frac{p_k-1}{2}}\leq |x|\leq \s$, together with the assumption that $K_k$ converges to $K_\infty$ in the $C^0_{loc}$ norm, for large $k$'s we have
\begin{align}
\label{partial_K}
\int_{\{|x|\leq \s\}/\Gamma}\frac{\p K_k}{\p x^j}u_k^{p_k+1}dx\leq C_1 M_k^{-2+2\delta},
\end{align}
where $C_1$ depends on $\inf K_\infty$, $\Vert K_\infty\Vert_{C^0}$ and $\s$.
Next, the power series expansion
\begin{align}
\frac{\p K_k}{\p x^j}(x) = \frac{\p K_k}{\p x^j}(0) + \frac{\p^2 K_k}{\p x^j\p x^i}(0)\cdot x^i + O(|x|^2),
\end{align}
implies that
\begin{align}
\Big|\frac{\p K_k}{\p x^j}(0)\Big| \leq \Big|\frac{\p K_k}{\p x^j}(x)\Big| + C_2|x|,
\end{align}
where $C_2$ depends on $\Vert K_\infty\Vert_{C^2}$. Multiplying  by $u_k^{p_k+1}$, integrating over $\{|x|\leq \s\}/\Gamma$ and using inequality \eqref{partial_K}, we get
\begin{align}
\Big|\frac{\p K_k}{\p x^j}(0)\Big|\int_{\{|x|\leq \s\}/\Gamma} u_k(x)^{p_k+1}dx \leq C\Big(M_k^{-2+2\delta} +\int_{\{|x|\leq \s\}/\Gamma} |x|u_k(x)^{p_k+1}dx\Big).
\end{align}
The integral on the left limits to the volume of the bubble which is a finite constant, the integral on the right can be estimated similarly as above using Proposition \ref{p:m43} and Proposition \ref{p:lz73}. Therefore, we have proved
\begin{align}
|\nabla K_k( x_k )| \leq C M_k^{-2+2\delta},
\end{align}
where $C$ is a constant depending on $\inf K_\infty$, $\Vert K_\infty \Vert_{C^2}$, $g$ and $\s$.
\end{proof}

\begin{proposition}[\cite{Marques} Proposition 4.5] 
  Assuming Condition \ref{blow-up_cond}, then there exists a constant $C > 0$ 
and $0 < \rho_1 < \rho$ such that 
\begin{align}
M_k u_k (x) \leq C d_{g_k}(x,x_k)^{-2}
\end{align}
for $x$ satisfying $d_{g_k}(x,x_k) \leq \rho_1$.
\end{proposition}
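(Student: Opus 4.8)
The plan is to transplant the proof of \cite[Proposition~4.5]{Marques} (see also \cite[\S 7]{Li-Zhu}) to the present setting; the only genuinely new point is the reduction of the orbifold case to the manifold case. When $\Gamma_{\bar x}\neq\{e\}$, Condition~\ref{blow-up_cond} forces $x_k=\bar x$ for all large $k$, so every relevant ball is $B_r(\bar x)/\Gamma$; lifting to the $\Gamma$-invariant function $\pi^\ast u_k$ on $B_r(0)\subset\RR^n$ --- whose local maximum is automatically at the fixed point $0$, and which by Proposition~\ref{p:m43} is $C^2$-close to a single \emph{radial} bubble $U_{K_\infty(\bar x)}$ --- one applies the manifold estimate to $\pi^\ast u_k$ and pushes it back down, the only trace of $\Gamma$ being a harmless factor $|\Gamma|^{-1}$ in any local integral one computes. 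So I assume $\Gamma=\{e\}$ and, using Remark~\ref{coord_setting}, that $g_k$ is the conformal normal metric in $g_k$-normal coordinates $\{x^i\}$ centered at $x_k$, so that $R_{g_k}=O(|x|^2)$ and $\sqrt{\det g_k}=1+O(|x|^N)$ near the origin.

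Write $r_k=R'_kM_k^{-(p_k-1)/2}$. For $|x|\le r_k$ the bound $M_ku_k(x)\le C|x|^{2-n}$ comes directly from Proposition~\ref{p:m43} after undoing the rescaling \eqref{cov}--\eqref{res_func} (with $n=4$, $p_k\to 3$). On the remaining annulus $r_k\le|x|\le\rho_1$ I would rewrite \eqref{confseq} as $-L_{g_k}u_k=V_ku_k$ with $V_k:=K_kf_k^{-\tau_k}u_k^{p_k-1}\ge 0$; Proposition~\ref{p:lz73} then gives $|x|^2V_k(x)\le C(R'_k)^{-2+2\delta}\to 0$ uniformly on the annulus, so there $u_k$ solves the homogeneous equation $-L_{g_k}u_k=0$ up to a uniformly small potential, and the tail $\int_{r_k\le|z|\le\rho_1}K_kf_k^{-\tau_k}u_k^{p_k}\,dV_{g_k}$ is $o(M_k^{-1})$. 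Representing $u_k$ on $B_{\rho_1}$ by the Green's function $G_{g_k}$ of $-L_{g_k}$ (with zero Dirichlet data), the dominant term is the integral over the bubble,
\begin{align*}
\int_{|z|\le r_k}G_{g_k}(x,z)\,K_kf_k^{-\tau_k}u_k^{p_k}\,dV_{g_k}(z)\;\approx\;G_{g_k}(x,x_k)\cdot c_1 M_k^{-1}\;\approx\;c_1\gamma_n\,M_k^{-1}|x|^{2-n},
\end{align*}
because for $|x|\gg r_k$ the kernel is $\approx G_{g_k}(x,x_k)=\gamma_n|x|^{2-n}(1+o(1))$ on the support of the bubble, while a change of variables together with Proposition~\ref{p:m43} gives $\int_{|z|\le r_k}K_kf_k^{-\tau_k}u_k^{p_k}\,dV_{g_k}=K_\infty(\bar x)\big(\int_{\RR^4}U_{K_\infty(\bar x)}^3\big)M_k^{-1}(1+o(1))=c_1M_k^{-1}(1+o(1))$. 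This is the desired estimate, provided one shows that the remaining pieces --- the boundary term $\int_{\partial B_{\rho_1}}u_k(-\partial_\nu G_{g_k})$, the volume tail, and the metric and scalar-curvature errors --- are of lower order than $M_k^{-1}|x|^{2-n}$.

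The main obstacle is exactly this last point: Proposition~\ref{p:lz73} controls $u_k$ on $\partial B_{\rho_1}$ only up to the loss $M_k^{-\lambda_k}=M_k^{-1+\delta}$, so a naive estimate of the boundary term carries a spurious factor $M_k^{\delta}$. Removing it requires the ``flux balancing'' that is the technical heart of \cite[Proposition~4.5]{Marques}: applying Gauss's theorem one finds $r^{n-1}\bar u_k'(r)=-\mathrm{const}\cdot\int_{B_r}K_kf_k^{-\tau_k}u_k^{p_k}\,dV_{g_k}+o(M_k^{-1})$ for $r\in(r_k,\rho_1)$ --- the error terms being tamed by the conformal normal normalization together with Proposition~\ref{p:lz73} --- and integrating in $r$ pins the coefficient of the $|x|^{2-n}$ singularity of $\bar u_k$ to the bubble's charge $c_1M_k^{-1}$, the required identity between this charge and the asymptotic constant of $U_{K_\infty(\bar x)}$ being Gauss's theorem applied to $U_{K_\infty(\bar x)}$ itself. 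Here the \emph{simple} part of the isolated simple blow-up hypothesis --- the single critical point of $\hat u_k$, hence monotonicity of $r\mapsto r^{2/(p_k-1)}\bar u_k(r)$ past the scale $r_k$ --- is what forces the remainder, and in particular $\bar u_k(\rho_1)$, to be genuinely $O(M_k^{-1})$. Once that is established, a standard Harnack inequality on dyadic annuli (available since $|x|^2V_k\to 0$) upgrades the bound on $\bar u_k$ to the pointwise bound $M_ku_k(x)\le C|x|^{2-n}$ for $r_k\le|x|\le\rho_1$. None of these steps sees the orbifold group, so after the reduction of the first paragraph the argument goes through verbatim.
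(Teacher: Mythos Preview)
Your reduction of the orbifold case to the manifold case via the lift $\pi^*u_k$ is fine and matches the paper's use of Condition~\ref{blow-up_cond}. The genuine gap is elsewhere: you never establish a quantitative bound on $\tau_k=\frac{n+2}{n-2}-p_k$, and without it several of your ``$(1+o(1))$'' claims fail. For instance, the change of variables you invoke actually gives
\[
\int_{|z|\le r_k}K_kf_k^{-\tau_k}u_k^{p_k}\,dV_{g_k}
= M_k^{-1}\cdot M_k^{\tau_k}\!\int_{|y|\le R'_k}\tilde K_k\tilde f_k^{-\tau_k}v_k^{p_k}\,dy,
\]
and $M_k^{\tau_k}$ is uncontrolled unless $\tau_k\log M_k\to 0$; a priori one only knows $\tau_k\to 0$. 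Likewise, the isolated simple hypothesis gives monotonicity of $r^{2/(p_k-1)}\bar u_k(r)$, but this by itself only yields $\bar u_k(\rho_1)\lesssim (R'_k)^{-1}$, not $O(M_k^{-1})$, since $R'_k$ may grow arbitrarily slowly relative to $M_k$ by Proposition~\ref{p:m43}.

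In Marques' scheme the missing ingredient is exactly his Claim~2: a Pohozaev identity on $B_{\rho_1/2}$ whose sign structure forces $\tau_k\le CM_k^{-2+o(1)}$. The paper's point is that for \emph{variable} $K_k$ this identity acquires the extra term
\[
\frac{1}{p_k+1}\int_{\{|x|\le\rho_1/2\}/\Gamma}(x\cdot\nabla K_k)\,u_k^{p_k+1}\,dx,
\]
and the whole proof here consists of bounding this term by $CM_k^{-2+6\delta+o(1)}$, using the gradient estimate $|\nabla K_k(x_k)|\le CM_k^{-2+2\delta}$ from Proposition~\ref{grad0} together with Proposition~\ref{p:lz73}. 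One then obtains the slightly weaker (but sufficient) bound $\tau_k\le CM_k^{-2+6\delta+o(1)}$, after which the rest of Marques' argument does go through verbatim. Your sketch simply asserts the conclusion of this step without doing it, and your ``flux balancing'' cannot substitute for it.
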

\begin{proof}  The proof is very similar to the proof of \cite[Proposition 4.5]{Marques}. That proof was assuming $K_k = constant$.   For variable $K_k$, every step in the proof remains valid, except for Claim~2 of \cite[Proposition 4.5]{Marques}, which says that there exists $C > 0$ such that 
\begin{align}
\tau_k \leq C M_k^{-2 + 2 \delta + o(1)}log(M_k)\ \ \mathrm{as\ \ }k\to\infty.
\end{align}
This estimate does not hold in our setting, however, a modification of his arguments shows that there exists a constant $C > 0$ such that
\begin{align}
\label{tau_weak_est}
\tau_k \leq C M_k^{-2 + 6 \delta + o(1)}\ \ \mathrm{as\ \ }k\to\infty.
\end{align}
To verify this, note that when $K_k$ is a variable function, there will be an extra term on the left hand side of \cite[inequality (4.18)]{Marques}. That extra term is
\begin{align}
\begin{split}
&\frac{1}{p_k+1}\Big|\int_{\{|x|\leq \frac{\rho_1}{2}\}/\Gamma}(\nabla K_k(x)\cdot x)u_k(x)^{p_k+1}dx\Big|\\
\leq &\frac{1}{p_k+1}M_k^{3-p_k}\int_{\{|y|\leq \frac{\rho_1}{2}M_k^{\frac{p_k-1}{2}}\}/\Gamma}|\nabla_x K_k(M_k^{-\frac{p_k-1}{2}}y)|\cdot |M_k^{-\frac{p_k-1}{2}}y|v_k(y)^{p_k+1}dy.
\end{split}
\end{align}
For small $\rho_1$ and large $k$, when $|x|\leq \frac{\rho_1}{2}$, by power series expansion and Proposition~\ref{grad0}, we have
\begin{align}
|\nabla K_k(x)| \leq |\nabla K_k(x_k)| + C|x| \leq C(M_k^{-2+2\delta} + |x|).
\end{align}
Then for $|y|\leq \frac{\rho_1}{2}M_k^{\frac{p_k-1}{2}}$ and large $k$,
\begin{align}
\begin{split}
|\nabla_x K_k(M_k^{-\frac{p_k-1}{2}}y)|\cdot |M_k^{-\frac{p_k-1}{2}}y| \leq &C(M_k^{-2-\frac{p_k-1}{2}+2\delta}|y| + M_k^{-p_k+1}|y|^2)\\
\leq& C(\rho_1M_k^{-2+2\delta} + M_k^{-p_k+1})|y|^2\\
\leq & C M_k^{-2+2\delta}|y|^2.
\end{split}
\end{align}
On the other hand, by Proposition \ref{p:lz73}, we know
\begin{align}
v_k(y)^{p_k+1}\leq C M_k^{\delta\frac{p_k^2-1}{2}}(1+|y|)^{-2(p_k+1)}.
\end{align}
Therefore,
\begin{align}
\begin{split}
&M_k^{3-p_k}\int_{\{|y|\leq \frac{\rho_1}{2}M_k^{\frac{p_k-1}{2}}\}/\Gamma}|\nabla_x K_k(M_k^{-\frac{p_k-1}{2}}y)|\cdot |M_k^{-\frac{p_k-1}{2}}y|v_k(y)^{p_k+1}dy\\
\leq &CM_k^{3-p_k}M_k^{-2+2\delta}M_k^{\delta\frac{p_k^2-1}{2}}\int_{\{|y|\leq \frac{\rho_1}{2}M_k^{\frac{p_k-1}{2}}\}/\Gamma}(1+|y|)^{-2(p_k+1)}|y|^2dy\\
\leq &CM_k^{-2+6\delta+o(1)}\int_{\RR^4/\Gamma}(1+|y|)^{-8+o(1)}|y|^2dy\\
\leq& C M_k^{-2+6\delta+o(1)}\ \ \mathrm{as\ }k\to\infty.
\end{split}
\end{align}
Then \eqref{tau_weak_est} is proved following the same proof as in Claim 2 in \cite[Proposition 4.5]{Marques}.
\end{proof}

\begin{corollary}[\cite{Marques} Corollary 4.6]
\label{greenlimit}
Assuming Condition \ref{blow-up_cond}, after maybe passing to a subsequence, we have
\begin{align}
M_ku_k\to h_{\bar x}\ \mathrm{in}\ C^2_{loc}((B_r(\bar x)\setminus\{\bar x\})/\Gamma),
\end{align}
where $M_k$ is as defined in Definition \ref{iso_def} and $h_{\bar x} = aG(\cdot, \bar x)$ is a constant multiple of the standard Green function, i.e. $L_g(G(\cdot, \bar x)) = \delta_{\bar x}$ is the Dirac delta function at point $\bar x$. (Here, $g$ stands for the limit metric.)
\end{corollary}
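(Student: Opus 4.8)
The plan is to adapt the argument of \cite[Corollary~4.6]{Marques} (cf.\ \cite{Li-Zhu, Schoen1987}), passing to the orbifold cover when $\Gamma\neq\{e\}$. \emph{Step 1: convergence away from $\bar x$.} By the preceding proposition, $M_k u_k(x)\le C\,d_{g_k}(x,x_k)^{-2}$ whenever $d_{g_k}(x,x_k)\le\rho_1$, so $\{M_k u_k\}$ is uniformly bounded on every compact subset of $(B_{\rho_1}(\bar x)\setminus\{\bar x\})/\Gamma$. Setting $w_k\equiv M_k u_k$ and multiplying \eqref{confseq} by $M_k$,
\[
L_{g_k}w_k = -K_k f_k^{-\tau_k}M_k^{1-p_k}w_k^{p_k},
\]
and since $p_k\ge 1+\epsilon_0$ and $M_k\to\infty$, the right-hand side is $O(M_k^{-\epsilon_0})$ and hence tends to $0$ in $C^0_{loc}((B_{\rho_1}(\bar x)\setminus\{\bar x\})/\Gamma)$. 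Using $g_k\to g$ in $C^2_{loc}$ together with the convergence of $f_k$ and $K_k$, interior Schauder estimates give uniform $C^{2,\alpha}$ bounds on compact subsets; a diagonal subsequence then converges in $C^2_{loc}$ to a limit $h_{\bar x}\ge 0$ with $L_g h_{\bar x}=0$ on the punctured ball, so by the strong maximum principle either $h_{\bar x}\equiv 0$ or $h_{\bar x}>0$.

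\emph{Step 2: genuine singularity at $\bar x$.} Undoing the rescaling \eqref{cov} gives $M_k u_k(x)=M_k^{2}v_k(M_k^{(p_k-1)/2}x)$. By Proposition~\ref{p:m43}, $v_k\to U_{K_\infty(\bar x)}$ in $C^2(B_{R'_k}(0)/\Gamma)$ with $R'_k\to\infty$, and since $U_{K_\infty(\bar x)}(y)\sim c_0|y|^{-2}$ as $|y|\to\infty$ for a constant $c_0>0$ while $M_k^{2}M_k^{-(p_k-1)}=M_k^{\tau_k}\to 1$ (because $\tau_k\log M_k\to 0$ by \eqref{tau_weak_est}), one gets $M_k u_k(x)\to c_0|x|^{-2}$ along the transition radii $|x|\sim R'_k M_k^{-(p_k-1)/2}\to 0$. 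Feeding this into the Harnack and maximum-principle estimates for $w_k$ on dyadic annuli (whose nonlinear term is negligible as in Step~1), together with the one-critical-point property of $\hat u_k$ supplied by the isolated simple hypothesis — carried out exactly as in \cite[\S 4]{Marques} and \cite[\S 7]{Li-Zhu} on the manifold cover $B_\sigma(\bar x)$, which is legitimate precisely because Condition~\ref{blow-up_cond} puts the lifted maximum at the preimage of $\bar x$ — one concludes $h_{\bar x}\not\equiv 0$ and $\lim_{|x|\to 0}|x|^{2}h_{\bar x}(x)=a$ with $a>0$.

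\emph{Step 3: identification with the Green's function.} As $h_{\bar x}>0$ solves $L_g h_{\bar x}=0$ on $(B_r(\bar x)\setminus\{\bar x\})/\Gamma$ with $h_{\bar x}=a|x|^{2-n}(1+o(1))$ near $\bar x$ (Step~2), the classical B\^ocher-type analysis of positive solutions near an isolated singularity (applied to the $\Gamma$-invariant lift on the genuine ball $B_\sigma(0)$ when $\Gamma\neq\{e\}$: the remainder $h_{\bar x}-a\,(\text{local fundamental solution})$ is bounded, hence a classical solution of $L_g=0$) shows that $h_{\bar x}$ extends across $\bar x$ as a distributional solution of $L_g h_{\bar x}=a\,\delta_{\bar x}$ near $\bar x$. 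Since $\bar x$ is the only blow-up point, $\{u_k\}$ is locally bounded on $M\setminus\{\bar x\}$ and the convergence of Step~1 extends there, so $L_g h_{\bar x}=a\,\delta_{\bar x}$ holds on all of $M$; as $R_g>0$, the operator $L_g$ is invertible on the compact orbifold $(M,g)$, so this equation has the unique solution $a\,G(\cdot,\bar x)$, giving $h_{\bar x}=aG(\cdot,\bar x)$ with $a>0$. The main obstacle is Step~2 — establishing that the limit is truly singular with a strictly positive coefficient — which is exactly where the isolated simple structure and the stipulation $x_k=\bar x$ in Condition~\ref{blow-up_cond} are essential, since they allow the delicate Harnack/Pohozaev matching of \cite{Marques, Li-Zhu} to be transplanted to the manifold cover without ever comparing integrals over smooth balls with integrals over quotients.
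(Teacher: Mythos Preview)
Your argument is correct and is exactly the Marques proof the paper defers to (the paper's entire proof reads ``The proof of Marques remains valid for variable $K_k$''), fleshed out in detail and lifted to the orbifold cover where needed.

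One minor overreach: in Step~3 you invoke ``$\bar x$ is the only blow-up point'' to globalize the convergence to all of $M\setminus\{\bar x\}$ and then appeal to invertibility of $L_g$ on the compact orbifold, but nothing in Condition~\ref{blow-up_cond} or the local setup on $\Omega=B_\sigma(\bar x)/\Gamma$ supplies this --- indeed there is no ambient compact $M$ in this section. Fortunately you do not need it: the local B\^ocher step you already carried out --- showing $h_{\bar x}>0$, $L_g h_{\bar x}=0$ on the punctured ball, with leading singularity $a|x|^{2-n}$ and $a>0$ --- is precisely what ``$h_{\bar x}=aG(\cdot,\bar x)$'' means here and is all the subsequent lemmas (the computation of $a$ and of $\lim_{\sigma\to 0}P(\sigma,h_{\bar x})$) actually use.
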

\begin{proof}
The proof of Marques remains valid for variable $K_k$. 
\end{proof}
Then, we have the following:
\begin{proposition}\label{vUdiff}
Assuming Condition \ref{blow-up_cond}, then
\begin{align}
\label{tauest}
\tau_k\leq C M_k^{-2},
\end{align}
and there exists $\delta > 0$ such that
\begin{align}
\label{v_bound}
|v_k(y) - U_{K_{k}(x_k)}(y)| &\leq C M_k^{-2},\\
\label{v'_bound}
|\nabla(v_k-U_{K_{k}(x_k)})(y)|&\leq C M_k^{-2}(1+|y|)^{-1},\\
\label{v''_bound}
|\nabla^2(v_k-U_{K_{k}(x_k)})(y)|&\leq C M_k^{-2}(1+|y|)^{-2},
\end{align}
for $|y|\leq \delta M_i^{\frac{p_i-1}{2}}$.
\end{proposition}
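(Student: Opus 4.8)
The plan is to adapt the scheme of \cite[\S 7]{Li-Zhu} and \cite[\S 4]{Marques}, which establish this kind of refined bubble estimate when $K$ is constant, using the variable-$K$ modifications already carried out in Propositions~\ref{p:m43}, \ref{p:lz73}, \ref{grad0} and Corollary~\ref{greenlimit}. The argument has two stages: first improve the bound on $\tau_k$ from the proposition preceding Corollary~\ref{greenlimit} to the sharp bound \eqref{tauest} by means of the Pohozaev identity, and then feed \eqref{tauest} into a linearized analysis of $v_k - U_{K_k(x_k)}$ to obtain \eqref{v_bound}--\eqref{v''_bound}.

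\emph{Stage 1: the bound $\tau_k \le C M_k^{-2}$.} By Remark~\ref{coord_setting} we may assume $g_k$ is a conformal normal metric in conformal normal coordinates $\{x^i\}$ centered at $x_k$, and by Condition~\ref{blow-up_cond} that $x_k = \bar x$ in the singular case. Write \eqref{confseq} in non-divergence form $a^{ij}\p_{ij}u_k + b^i\p_i u_k + c\,u_k + (K_k f_k^{-\tau_k})u_k^{p_k} = 0$ and apply Lemma~\ref{poholemma} at a small fixed radius $\rho_1$. On the left-hand side, Corollary~\ref{greenlimit} gives $M_k^2 P(\rho_1, u_k) = P(\rho_1, M_k u_k) \to P(\rho_1, h_{\bar x})$, a finite number, so $|P(\rho_1, u_k)| \le C M_k^{-2}$. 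On the right-hand side, the only term of indefinite sign that is not manifestly $O(M_k^{-2})$ is $\big(\tfrac{4}{p_k+1}-1\big)\int_{\{|x|\le\rho_1\}/\Gamma} K_k u_k^{p_k+1}\,dx$, which is bounded below by $c\,\tau_k$: indeed $\tfrac{4}{p_k+1}-1\ge c\,\tau_k$, and by Proposition~\ref{p:m43} the integral converges to $\int_{\RR^4/\Gamma} K_\infty(\bar x)\,U_{K_\infty(\bar x)}^4 > 0$. Every remaining term is $O(M_k^{-2})$: the $\int(x\cdot\nabla K_k)u_k^{p_k+1}$ term by Taylor-expanding $\nabla K_k$ at $x_k$ and using $|\nabla K_k(x_k)|\le C M_k^{-2+2\delta}$ from Proposition~\ref{grad0}; the curvature terms because $R_{g_k}=O(|x|^2)$ in conformal normal coordinates; the boundary integrals at $|x|=\rho_1$ because $M_k u_k$ is bounded there; and the metric-error integral because, in conformal normal coordinates, the would-be dominant part of that integral — paired against the nearly radial profiles of $u_k$ supplied by Proposition~\ref{p:m43} in the bubble region $\{|x|\le R'_k M_k^{-(p_k-1)/2}\}$ and by Corollary~\ref{greenlimit} and Proposition~\ref{p:lz73} in the complementary annulus — is proportional to contractions of the curvature (the Ricci tensor, the trace of the Weyl tensor) that vanish, so that what remains is $O(M_k^{-2})$. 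Rearranging the identity yields $c\,\tau_k\le C M_k^{-2}$, i.e.\ \eqref{tauest}.

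\emph{Stage 2: the refined bubble estimates.} Put $U_k := U_{K_k(x_k)}$ and $\zeta_k := v_k - U_k$; subtracting $\Delta U_k + K_k(x_k)U_k^3 = 0$ from \eqref{res_equ} gives
\[
\Delta\zeta_k + 3K_k(x_k)U_k^2\,\zeta_k = \mathcal E_k + Q_k(\zeta_k),
\]
where $Q_k$ is quadratically small in $\zeta_k$, and $\mathcal E_k$ collects the errors from $L_{h_k}-\Delta$ (controlled by the conformal normal coordinates and Proposition~\ref{p:lz73}), from $\tilde K_k - K_k(x_k)$ (using Proposition~\ref{grad0} and \eqref{tauest}), from $\tilde f_k^{-\tau_k}-1 = O(\tau_k)$, and from $v_k^{p_k}-v_k^3 = O\big(\tau_k\log(2+|y|)\,v_k^{p_k}\big)$; thanks to \eqref{tauest} all of these are $O(M_k^{-2})$ in the appropriate weighted norm. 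The linearized operator $\Delta + 3K_k(x_k)U_k^2$ on $\RR^4/\Gamma$ is invertible modulo its bounded kernel on the relevant weighted H\"older spaces; when $\Gamma\neq\{e\}$ the translation modes $\p_{y^i}U_k$ are not $\Gamma$-invariant, so the kernel reduces to the dilation mode $y\cdot\nabla U_k + U_k$ — this is precisely where the requirement $x_k = \bar x$ of Condition~\ref{blow-up_cond} is used — and the kernel component of $\zeta_k$ is pinned down by the normalization of $v_k$ and $U_k$ at the origin together with the fact that $x_k$ is a local maximum of $u_k$. Inverting the operator gives \eqref{v_bound}, and interior elliptic estimates upgrade this to \eqref{v'_bound} and \eqref{v''_bound} on $\{|y|\le\delta M_k^{(p_k-1)/2}\}$.

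\emph{Main obstacle.} The crux is the bookkeeping in Stage 1: one must verify that every term on the right-hand side of the Pohozaev identity, apart from the sign-definite $\tau_k$-term, is genuinely $O(M_k^{-2})$, with no parasitic factors of $\log M_k$, $R'_k$, or $M_k^{c\delta}$. The metric-error integral is the sensitive one, and controlling it requires the full cancellation afforded by conformal normal coordinates — not merely the crude bound $g_{ij}-\delta_{ij}=O(|x|^2)$, which would only yield $\tau_k\le C M_k^{-2+2\delta}$ and force an iteration between Stages 1 and 2. Once \eqref{tauest} is in hand, Stage 2 is a standard, if lengthy, linear-inversion argument essentially identical to that of \cite{Li-Zhu} and \cite{Marques}.
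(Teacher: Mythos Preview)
Your ordering is the reverse of the paper's (and of Marques'): you try to extract the sharp bound $\tau_k\le CM_k^{-2}$ from the Pohozaev identity \emph{before} establishing the refined bubble estimates, whereas the paper follows Marques' scheme of proving the bubble estimate first (via a contradiction argument, \cite[Lemma~5.1]{Marques}) and \emph{then} the $\tau_k$ bound (\cite[Lemma~5.3]{Marques}). This is not merely a stylistic difference: your Stage~1 has a genuine circularity.

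Concretely, your claim that the metric-error and curvature integrals in the Pohozaev identity are $O(M_k^{-2})$ using only Propositions~\ref{p:m43}, \ref{p:lz73} and Corollary~\ref{greenlimit} does not hold. The ``full cancellation'' you invoke is precisely the identity $(\Delta_{h_k}-\bar\Delta_0)U_{K_k(x_k)}\equiv 0$ for radial functions in conformal normal coordinates; this is indeed what the paper uses in Lemma~\ref{lemma_i2}. But after that substitution, the integrand still contains $\bar d_{ij}\,\bar\p_{ij}(v_k-U_{K_k(x_k)})$, and at this stage the only available bound on $\bar\p_{ij}(v_k-U_{K_k(x_k)})$ in the outer annulus is the weak one from Proposition~\ref{p:lz73}, which carries the parasitic factor $M_k^{\delta(p_k-1)/2}$. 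Running the integral gives $M_k^2I_{k,2}=O(M_k^{2\delta}\log M_k)$, and $I_{k,4}$ behaves the same way; neither is $O(1)$, so you cannot conclude $\tau_k\le CM_k^{-2}$ directly. (Tellingly, the paper's own proofs of Lemmas~\ref{lemma_i2} and~\ref{lemma_i4} \emph{use} Proposition~\ref{vUdiff}.) The iteration you propose as a fallback does not obviously close either, since each pass still inherits a factor $M_k^{c\delta}$.

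The paper's route avoids this entirely: one sets $\Lambda_k=\max|v_k-U_{K_k(x_k)}|$, assumes for contradiction that $\Lambda_k^{-1}\max\{M_k^{-2},\tau_k\}\to 0$, and shows that the error $Q_k$ in the equation for $w_k=\Lambda_k^{-1}(v_k-U_{K_k(x_k)})$ tends to zero --- here one only needs $Q_k=o(1)$, not $O(M_k^{-2})$, so the weak bounds suffice. The paper's contribution is checking that the extra variable-$K$ term $\mathbb B_k=K_k(x_k)U^3-\tilde K_k\tilde f_k^{-\tau_k}U^{p_k}$ also satisfies $\Lambda_k^{-1}\mathbb B_k\to 0$ under the contradiction hypothesis, using Proposition~\ref{grad0} and \eqref{tau_weak_est}. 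Once $\Lambda_k\le C\max\{M_k^{-2},\tau_k\}$ is in hand, the Pohozaev argument (now with the metric-error integral controlled by $\Lambda_k$) yields $\tau_k\le CM_k^{-2}$ cleanly.
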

\begin{proof}
The proof for $K_k=constant$ is by \cite[Chapter 5]{Marques}. We need to verify that Marques' proof is valid for variable $K_k$. When $K_k$ is a variable function instead of a fixed constant, every $U_0$ in \cite[Chapter 5]{Marques} has to be replaced by $U_{K_{k}(x_k)}$, then \cite[equation (5.1)]{Marques} will become
\begin{align}
\notag
Q_k(y)
=&\Lambda_k^{-1} \Big\{ c(n)M_k^{-(p_k-1)}R_{g_k}\Big(M_k^{-\frac{p_k-1}{2}}y\Big)U_{K_{k}(x_k)}(y) + M_k^{-(1+N)\frac{p_k-1}{2}}O(|y|^N)|y|(1+|y|^2)^{-2}\\
\label{diff_Q_term}
&+\Big(K_k(x_k)U_{K_{k}(x_k)}^3 - \tilde K_k(y)\tilde f_k^{-\tau_k}U^{p_k}_{K_{k}(x_k)}\Big)\Big\},
\end{align}
where $\tilde K_k(y) = K_k(M_k^{-\frac{p_k-1}{2}}y)$ and $\tilde f_k(y) = f_k(M_k^{-\frac{p_k-1}{2}}y)$. Only the last term in \eqref{diff_Q_term} differs from the last term in \cite[equation (5.1)]{Marques}. Denote it by
\begin{align}
\mathbb{B}_k = K_k(x_k)U_{K_{k}(x_k)}^3 - \tilde K_k(y)\tilde f_k^{-\tau_k}U^{p_k}_{K_{k}(x_k)}.
\end{align}
To analyze it, by power series expansion we obtain
\begin{align}
\begin{split}
|\tilde K_k(y) - K_k(x_k)| =& |K_k(M_k^{-\frac{p_k-1}{2}}y) - K_k(0)|\\
\leq & |\nabla K_k(0)|M_k^{-\frac{p_k-1}{2}}|y| + C M_k^{-(p_k-1)}|y|^2.
\end{split}
\end{align}
By Proposition \ref{p:lz73}, $|\nabla K_k(0)| =  |\nabla K_k(x_k)|\leq CM_k^{-2+2\delta}$, hence
\begin{align}
\begin{split}
|\tilde K_k(y) - K_k(x_k)|\leq & C\Big(M_k^{-2-\frac{p_k-1}{2}+2\delta}|y| + M_k^{-(p_k-1)}|y|^2\Big)\\
\leq& C \Big(M_k^{-3+2\delta+o(1)} + M_k^{-2+\tau_k}|y|\Big)|y|.
\end{split}
\end{align}
Then we have the estimate
\begin{align}
\begin{split}
\mathbb{B}_k\leq&K_k(x_k)U_{K_{k}(x_k)}^{3}(1-(\tilde f_kU_{K_{k}(x_k)}^{3})^{-\tau_k}) + |\tilde K_k(y) - K_k(x_k)|\tilde f_k^{-\tau_k}U^{p_k}_{K_{k}(x_k)}\\
\leq& C \Big(\tau_k\log(\tilde f_kU_{K_{k}(x_k)})(1+|y|^2)^{-3} + (M_k^{-3+2\delta+o(1)} + M_k^{-2+\tau_k}|y|)|y|(1+|y|^2)^{-2}\Big).
\end{split}
\end{align}
Suppose
\begin{align}
\Lambda_k^{-1}\max\{M_k^{-2},\tau_k\}\to 0\ \ \mathrm{as\ }k\to\infty.
\end{align}
Note that \eqref{tau_weak_est} implies $\lim_{k\to\infty}M_k^{\tau_k} = 1$, so $\lim_{k\to\infty}\Lambda_k^{-1}\mathbb{B}_k\to 0$. Thus $|Q_k|\to 0$ as $k\to 0$ and \cite[Lemma 5.1]{Marques} can be proved following the rest of his proof. Also, \cite[Lemma 5.3]{Marques} remains valid with a similar modification to the term $\tilde Q_k$ in his proof. Therefore, our proposition is proved for variable $K_k$.
\end{proof}

\section{Local blow-up analysis}\label{local_ana}

\subsection{Application of Pohozaev Identity}
\label{app_poho}
Assuming Condition \ref{blow-up_cond}, recall that $g_k$ is the conformal normal metric with conformal normal coordinates $\{x^i\}$ centered at the point $x_k$. By \cite[Chapter~5]{Lee-Parker} and the explicit computations in \cite[Chapter 2]{Li-Zhang}, we have the following for each $k$. At $x = 0$ (the point $x_k$), the scalar curvature satisfies
\begin{align}
\label{Rest}
R_{k}(0) = 0, (R_{k})_{,i}(0) = 0.
\end{align}
Locally in a neighborhood of $x=0$, we have $(g_k)_{ij}= \delta_{ij} + O(r^2)$ and $\det(g_k) = 1 + O(r^N)$ for some $N \geq 5$. Write
\begin{align}
\label{laplacian}
\Delta_{g_k} = \frac{1}{\sqrt{\det(g_k)}}\p_i(\sqrt{\det(g_k)} (g_k)^{ij}\p_j) = \Delta_0 + (b_k)_i\p_i + (d_k)_{ij}\p_{ij},
\end{align}
where $\p_i=\frac{\p}{\p x^i}$, $\p_{ij}=\frac{\p^2}{\p x^i\p x^j}$ and $\Delta_0 = \sum_{i=1}^n\frac{\p^2}{(\p x^i)^2}$, then
\begin{align}
(b_k)_i = O(r^2)\ \ \mathrm{and\ \ }(d_k)_{ij}=g^{ij} - \delta_{ij}=O(r^2).
\end{align}
By the change of variables and the rescaled metric defined in \eqref{cov} and \eqref{res_func}, we have
\begin{align}
\label{rescaleh}
\Delta_{h_k} = \bar\Delta_0 + (\bar b_k)_i\bar\p_i + (\bar d_k)_{ij}\bar\p_{ij},
\end{align}
where
\begin{align}
\begin{split}
&(\bar b_k)_i(y) = M_k^{-\frac{p_k-1}{2}}(b_k)_i(M_k^{-\frac{p_k-1}{2}}y),\ \ (\bar d_k)_{ij}(y) = (d_k)_{ij}(M_k^{-\frac{p_k-1}{2}}y),\\
&\bar\p_i=\frac{\p}{\p y^i},\ \ \bar\p_{ij}=\frac{\p^2}{\p y^i\p y^j}\mathrm{\ \ and\ \ }\bar\Delta_0 = \sum_{i=1}^n\frac{\p^2}{(\p y^i)^2}.
\end{split}
\end{align}
It follows that
\begin{align}
\label{bdest}
|(\bar b_k)_i(y)| = M_k^{-\frac{3(p_k-1)}{2}}O(|y|)^2,\ \ |(\bar d_k)_{ij}(y)| = M_k^{-(p_k-1)}O(|y|)^2.
\end{align}
Using the above notation, we rewrite equation \eqref{confseq} in the $\{x^i\}$ coordinates as following
\begin{align}
((d_k)_{ij} + \delta_{ij})\p_{ij}u_k + (b_k)_i\p_i u_k - \frac{1}{6}R_ku_k + K_kf_k^{-\tau_k}u_k^{p_k}=0.
\end{align}
For some small $\s$, apply Lemma \ref{poholemma} to the above equation in $\{|x|\leq \s\}/\Gamma$, to obtain
\begin{align}
\label{pohoapply}
\begin{split}
P(\s, u_k) = &-\int_{\{|x|\leq \s\}/\Gamma}\Big(x\cdot \nabla_x u_k + u_k\Big)((d_k)_{ij}\p_{ij}u_k + (b_k)_i\p_i u_k) \\
&-\frac{1}{12}\int_{\{|x|\leq \s\}/\Gamma}(x\cdot \nabla_x R_k + 2R_k)u_k^2\\
&+\frac{\s}{12}\int_{\{|x|=\s\}/\Gamma}R_ku_k^2 + \frac{1}{p_k+1}\int_{\{|x|\leq \s\}/\Gamma}(x\cdot \nabla_x(K_kf_k^{-\tau_k}))u_k^{p_k+1}\\
&+\frac{\tau_k}{p_k+1}\int_{\{|x|\leq \s\}/\Gamma}K_kf_k^{-\tau_k}u_k^{p_k+1}-\frac{\s}{p_k+1}\int_{\{|x|=\s\}/\Gamma}K_kf_k^{-\tau_k}u_k^{p_k+1}\\
=&\int_{\{|x| =\s\}/\Gamma}\Big(u_k\frac{\p u_k}{\p \nu}-\frac{\s}{2}|\nabla u_k|^2 + \s\Big|\frac{\p u_k }{\p \nu}\Big|^2\Big).
\end{split}
\end{align}
Denote $R_k' = M_k^{\frac{p_k-1}{2}}\s$. Define the following
\begin{align}
\begin{split}
I_{k,1} =&-\frac{\s}{p_k+1}\int_{\{|x|=\s\}/\Gamma}K_kf_k^{-\tau_k}u_k^{p_k+1}d\s(x)\\
=&-\frac{\s}{p_k+1}M_k^{\frac{5-p_k}{2}}\int_{\{|y|=R_k'\}/\Gamma}\tilde K_k\tilde f_k^{-\tau_k}v_k^{p_k+1}d\s(y),
\end{split}
\end{align}
\begin{align}
\begin{split}
I_{k,2} =&\int_{\{|x|\leq \s\}/\Gamma}(-(b_k)_i\p_iu_k-(d_k)_{ij}\p_{ij}u_k)(\n_x u_k\cdot x + u_k)dx\\
=&M_k^{\tau_k}\int_{\{|y|\leq R_k'\}/\Gamma}(-(\bar b_k)_i\bar\p_i v_k-(\bar d_k)_{ij}\bar\p_{ij}v_k)(y\cdot \n_y v_k + v_k)dy,
\end{split}
\end{align}
\begin{align}
\begin{split}
I_{k,3} =&\frac{\s}{12}\int_{\{|x|=\s\}/\Gamma}R_ku_k^2d\s(x)
= \frac{\s}{12}M_k^{\frac{7-3p_k}{2}}\int_{\{|y|=R_k'\}/\Gamma}\tilde R_kv_k^2d\s(y),
\end{split}
\end{align}
\begin{align}
\begin{split}
I_{k,4} =&-\frac{1}{12}\int_{\{|x|\leq \s\}/\Gamma}(x\cdot\n_x R_k+2R_k)u_k^2dx\\
&= -\frac{1}{12}M_k^{4-2p_k}\int_{\{|y|\leq R_k'\}/\Gamma}(y\cdot\n_y \tilde R_k+2\tilde R_k)v_k^2dy,
\end{split}
\end{align}
\begin{align}
\begin{split}
I_{k,5} =& \frac{1}{p_k+1}\int_{\{|x|\leq \s\}/\Gamma}(x\cdot \nabla_x(K_kf_k^{-\tau_k}))u_k^{p_k+1}dx\\
&=\frac{1}{p_k+1}M_k^{\tau_k}\int_{\{|y|\leq R_k'\}/\Gamma}(y\cdot \nabla_{y}(\tilde K_k\tilde f_k^{-\tau_k}))v_k^{p_k+1}dy,
\end{split}
\end{align}
\begin{align}
\begin{split}
I_{k,6} = &\frac{\tau_k}{p_k+1}\int_{\{|x|\leq \s\}/\Gamma}K_kf_k^{-\tau_k}u_k^{p_k+1}dx=\frac{\tau_k}{p_k+1}M_k^{\tau_k}\int_{\{|y|\leq R_k'\}/\Gamma}\tilde K_k\tilde f_k^{-\tau_k}v_k^{p_k+1}dy,
\end{split}
\end{align}
\begin{align}
\begin{split}
I_{k,7}=&\int_{\{|x|=\s\}/\Gamma} \Big\{\Big(\Big|\frac{\p u_k}{\p \nu_x}\Big|^2 -\frac{1}{2}|\n_x u_k|^2\Big)\s+u_k\frac{\p u_k}{\p\nu_x}\Big\}d\s(x)\\
=&M_k^{\tau_k}\int_{\{|y|=R_k'\}/\Gamma}\Big\{\Big(\Big|\frac{\p v_k}{\p \nu_y}\Big|^2-\frac{1}{2}|\n_yv_k|^2\Big)R_k'+v_k\frac{\p v_k}{\p \nu_y}\Big\}d\s(y).
\end{split}
\end{align}
Then \eqref{pohoapply} becomes
\begin{align}
\label{pohozaev}
I_{k,7} = I_{k,1} + I_{k,2}+ I_{k,3}+ I_{k,4}+ I_{k,5}+ I_{k,6}.
\end{align}

\subsection{Main Estimates}\label{main_est_sect}
By \cite{G-S}, it is sufficient to consider the blow-up case as $p_k\to 3$, consequently $\tau_k=3-p_k\to 0$. Then we know $\lim_{k\to\infty}M_k^{\tau_k}=1$ from \eqref{tau_weak_est}. We can estimate terms in the Pohozaev identity through the following lemmas. We will use $C$, $C_1$ to denote various positive constants independent of $k$ and $\s$. For notational simplicity, we will omit $dx,dy,d\s(x)$ and $d\s(y)$ terms in integrals.
\begin{lemma}\label{lemma_i1} For small $\s > 0$,
\begin{align}
\label{I_1est}
\lim_{k\to\infty} M_k^2I_{k,1} = 0.
\end{align}
\end{lemma}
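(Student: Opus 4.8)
The plan is to unwind the definition of $I_{k,1}$ in rescaled coordinates and bound the spherical integral at radius $R_k' = M_k^{(p_k-1)/2}\sigma$ using the decay estimates for $v_k$. Recall
\begin{align}
I_{k,1} = -\frac{\s}{p_k+1}M_k^{\frac{5-p_k}{2}}\int_{\{|y|=R_k'\}/\Gamma}\tilde K_k\tilde f_k^{-\tau_k}v_k^{p_k+1}\,d\s(y).
\end{align}
Since $p_k\to 3$ we have $M_k^{\frac{5-p_k}{2}} = M_k^{1+o(1)}$, and $\tilde K_k, \tilde f_k^{-\tau_k}$ are uniformly bounded above (using $\tilde f_k \to f$ locally uniformly, $f>0$, and $\tau_k\to 0$). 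So the whole task reduces to showing
\begin{align}
M_k^{3+o(1)}\int_{\{|y|=R_k'\}/\Gamma} v_k^{p_k+1}\,d\s(y) \to 0.
\end{align}

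First I would apply Proposition 1.13 (the Li--Zhu type estimate) on the sphere $|y| = R_k' = \sigma M_k^{(p_k-1)/2}$, which lies in the allowed range $|y|\le \rho_1 M_k^{(p_k-1)/2}$ once $\sigma < \rho_1$. That gives $v_k(y) \le C M_k^{\delta\frac{p_k-1}{2}}(1+|y|)^{-2}$, hence on $|y| = R_k'$,
\begin{align}
v_k(y)^{p_k+1} \le C M_k^{\delta\frac{(p_k-1)(p_k+1)}{2}}(R_k')^{-2(p_k+1)} = C M_k^{\delta\frac{p_k^2-1}{2}} \sigma^{-2(p_k+1)} M_k^{-(p_k-1)(p_k+1)}.
\end{align}
The surface area of $\{|y|=R_k'\}/\Gamma$ in $\RR^4$ is $\le C (R_k')^{3} = C\sigma^3 M_k^{\frac{3(p_k-1)}{2}}$. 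Multiplying through, the power of $M_k$ appearing is
\begin{align}
3 + \delta\tfrac{p_k^2-1}{2} - (p_k^2-1) + \tfrac{3(p_k-1)}{2} + o(1),
\end{align}
and as $p_k\to 3$ this tends to $3 + 4\delta - 8 + 3 = -2 + 4\delta$, which is strictly negative for $\delta$ small. Thus $M_k^2 I_{k,1}\to 0$, with $\sigma$-dependence absorbed into the constant. The main (and really only) obstacle is bookkeeping: one must check the exponent of $M_k$ is negative \emph{for all $k$ large}, not just in the limit, which follows by continuity in $p_k$ since the limiting exponent $-2+4\delta$ is bounded away from $0$; and one must confirm that the range $R_k'=\sigma M_k^{(p_k-1)/2}$ is covered by Proposition 1.13 (choosing $\rho_1$ larger than the fixed $\sigma$, or equivalently shrinking $\sigma$). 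An alternative, slightly sharper route — not needed here — would be to use the refined bound \eqref{v_bound} from Proposition 1.17, $|v_k - U_{K_k(x_k)}|\le C M_k^{-2}$, together with the explicit decay of $U_{K_k(x_k)}$, but the crude estimate above already suffices.
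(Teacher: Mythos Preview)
Your argument is correct: the pointwise Li--Zhu bound $v_k(y)\le C M_k^{\delta(p_k-1)/2}(1+|y|)^{-2}$ on the sphere $|y|=R_k'=\sigma M_k^{(p_k-1)/2}$ (valid once $\sigma<\rho_1$), combined with the surface-area factor $(R_k')^3$, yields an exponent of $M_k$ that tends to $-2+4\delta<0$, so $M_k^2 I_{k,1}\to 0$ for each fixed small $\sigma$. Your bookkeeping is right, and the $o(1)$ corrections in the exponent are all of order $\tau_k$, hence harmless since $M_k^{\tau_k}\to 1$.

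The paper takes a different and shorter route. It first uses the equation $L_{g_k}u_k + K_kf_k^{-\tau_k}u_k^{p_k}=0$ to rewrite the integrand as $-(L_{g_k}u_k)\cdot u_k$, so that
\[
M_k^2 I_{k,1} = \frac{\sigma}{p_k+1}\int_{\{|x|=\sigma\}/\Gamma}(L_{g_k}(M_ku_k))(M_ku_k)\,d\sigma(x).
\]
Then it invokes Corollary~\ref{greenlimit}, which gives $M_ku_k\to h_{\bar x}=aG(\cdot,\bar x)$ in $C^2_{loc}$ away from $\bar x$, and concludes immediately since $L_g h_{\bar x}=0$ on $\{|x|=\sigma\}$. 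This argument avoids the exponent bookkeeping entirely and gives the exact limit $0$ rather than a rate. Your direct estimate, on the other hand, yields the quantitative decay $M_k^2|I_{k,1}|\le C(\sigma)M_k^{-2+4\delta+o(1)}$ and does not rely on the full $C^2_{loc}$ convergence of Corollary~\ref{greenlimit}; the paper's approach is cleaner, while yours is more self-contained at this point in the argument.
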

\begin{proof}
Using \eqref{confseq},
\begin{align}
I_{k,1} =\frac{\s}{p_k+1}\int_{\{|x|=\s\}/\Gamma}(L_{g_k}u_k)\cdot u_k.
\end{align}
By Corollary \ref{greenlimit}, $M_ku_k\to h$ in the $C^2_{loc}$ norm, hence for small $\s$,
\begin{align}
\begin{split}
\lim_{k\to\infty} M_k^2I_{k,1} =& \lim_{k\to\infty}\frac{\s}{p_k+1}\int_{\{|x|=\s\}/\Gamma}(L_{g_k}(M_ku_k))\cdot (M_ku_k)\\
&= \frac{\s}{p_k+1}\int_{\{|x|=\s\}/\Gamma}(L_{g}h)\cdot h =0.
\end{split}
\end{align}
\end{proof}

\begin{lemma}\label{lemma_i2}
For $\s \leq 1$, there exists some constant $C>0$ such that
\begin{align}
\label{I_2est}
\limsup_{k\to\infty}M_k^2|I_{k,2}| \leq C \s^2.
\end{align}
\end{lemma}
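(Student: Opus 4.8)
The quantity to estimate is
\[
I_{k,2} = M_k^{\tau_k}\int_{\{|y|\leq R_k'\}/\Gamma}\bigl(-(\bar b_k)_i\bar\p_i v_k-(\bar d_k)_{ij}\bar\p_{ij}v_k\bigr)(y\cdot \n_y v_k + v_k)\,dy,
\]
with $R_k' = M_k^{(p_k-1)/2}\s$. Since $M_k^{\tau_k}\to 1$, it suffices to bound the integral itself by $C\s^2 M_k^{-2}$ in the limit. The plan is to insert the pointwise bounds on the coefficients from \eqref{bdest}, namely $|(\bar b_k)_i(y)| = M_k^{-3(p_k-1)/2}O(|y|^2)$ and $|(\bar d_k)_{ij}(y)| = M_k^{-(p_k-1)}O(|y|^2)$, together with the decay estimates on $v_k$ and its derivatives. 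For $v_k$ I would use the sharp comparison with the standard bubble from Proposition \ref{vUdiff}: $v_k = U_{K_k(x_k)}(y) + O(M_k^{-2})$, and the derivative analogues, so that $v_k \leq C(1+|y|)^{-2}$, $|\n_y v_k|\leq C(1+|y|)^{-3}$, $|\n^2_y v_k|\leq C(1+|y|)^{-4}$ uniformly on the relevant region (these also follow from the weaker bounds in Proposition \ref{p:lz73} since $\delta$ is small, but the bubble bound is cleanest here). Hence $|y\cdot\n_y v_k + v_k|\leq C(1+|y|)^{-2}$.

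Combining these, the integrand is bounded by
\[
M_k^{-(p_k-1)}\,|y|^2\,\bigl(|\n^2_y v_k| + |y|^{-1}\cdot|y|\cdot|\bar b_k||\bar d_k|^{-1}\text{-type terms}\bigr)\cdot (1+|y|)^{-2},
\]
more precisely each term is of the form $M_k^{-(p_k-1)}|y|^2 (1+|y|)^{-j}(1+|y|)^{-2}$ with $j\geq 3$ for the first-order piece (after using that $\bar b_k$ carries an extra power $M_k^{-(p_k-1)/2}$, which is absorbed since $|y|\leq R_k'$ contributes at most $M_k^{(p_k-1)/2}\s$) and $j=4$ for the second-order piece. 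So the integrand is $\leq C M_k^{-(p_k-1)}|y|^2(1+|y|)^{-5}$ (the dominant, slowest-decaying contribution). Integrating over $\{|y|\leq R_k'\}/\Gamma \subset \RR^4$, the integral $\int_{|y|\leq R_k'}|y|^2(1+|y|)^{-5}\,dy$ behaves like $\int_1^{R_k'} \rho^{3}\cdot \rho^{2}\rho^{-5}\,d\rho = \int_1^{R_k'}\rho^{0}\,d\rho \sim R_k'$, wait — I should be more careful with the exponents and track which power gives the $\s^2\cdot M_k^{-2}$ scaling. The point is that $M_k^{-(p_k-1)} = M_k^{-2+\tau_k}$ and $R_k' = \s M_k^{(p_k-1)/2}$, so a factor $(R_k')^{2} = \s^2 M_k^{p_k-1}$ coming from the tail of the $|y|$-integral exactly cancels the $M_k^{-(p_k-1)}$, leaving $C\s^2 M_k^{\tau_k}$, which times the outer $M_k^{\tau_k}$ and the desired $M_k^2$ prefactor, together with $M_k^{2\tau_k}\to 1$, yields $\limsup_k M_k^2|I_{k,2}|\leq C\s^2$. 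I would organize this by splitting the region into $\{|y|\leq 1\}$, where everything is bounded and the contribution is $O(M_k^{-(p_k-1)}) = o(M_k^{-2+\epsilon})$, and $\{1\leq |y|\leq R_k'\}$, where one substitutes the decay rates and integrates explicitly in polar coordinates, being careful that the quotient by $\Gamma$ only improves the constant.

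The main obstacle is bookkeeping: one must identify the slowest-decaying term in the integrand and verify that after multiplying by $M_k^2$ the $k$-dependence is controlled by a uniform constant while the $\s$-dependence comes out as exactly $\s^2$ (not $\s$ or a constant). Concretely, the second-order term $(\bar d_k)_{ij}\bar\p_{ij}v_k\cdot(y\cdot\n v_k+v_k)$ contributes $M_k^{-(p_k-1)}\int |y|^2(1+|y|)^{-6}\,dy$, which is $O(M_k^{-(p_k-1)})=O(M_k^{-2+\tau_k})$ with no help from $\s$ — this is $\leq C M_k^{-2}$ but does \emph{not} have the $\s^2$ decay. One must check such terms are actually $o(\s^2 M_k^{-2})$ or otherwise absorb them; in fact the integral $\int_{\RR^4}|y|^2(1+|y|)^{-6}\,dy$ converges, so this term is $CM_k^{-(p_k-1)}$, and since it carries no positive power of $\s$ one instead notes $M_k^{-(p_k-1)} = o(1)\cdot M_k^{-2}$... no: $M_k^{-(p_k-1)} = M_k^{\tau_k}M_k^{-2} \to M_k^{-2}$, so this gives a bound $CM_k^{-2}$, not $C\s^2 M_k^{-2}$. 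The resolution is that this particular term must be handled using instead that $(\bar d_k)_{ij}$ comes with the conformal-normal-coordinate improvement (so the pointwise bound $O(|y|^2)$ can be upgraded, or the integral against the bubble vanishes to leading order by symmetry), or else the statement is genuinely $\limsup M_k^2|I_{k,2}|\leq C\s^2$ only because the problematic convergent-integral term, being $\s$-independent, is reabsorbed by shrinking $\s$ — i.e. one proves the slightly different bound $\limsup_k M_k^2|I_{k,2}|\leq C(\s^2 + \text{[term that} \to 0])$. I would resolve this by carefully using Proposition \ref{vUdiff} to replace $v_k$ by $U_{K_k(x_k)}$ up to $O(M_k^{-2})$ errors and exploiting the parity of $U$ (radial, even) against the odd factors $\bar b_k, y$ in the leading term, so that the surviving contribution genuinely scales like $\s^2 M_k^{-2}$; tracking this cancellation is the crux of the argument.
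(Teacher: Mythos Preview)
Your overall strategy is the paper's strategy: bound coefficients by \eqref{bdest}, bound $|y\cdot\nabla_y v_k + v_k|\leq C(1+|y|)^{-2}$ via Proposition~\ref{vUdiff}, and you correctly isolate the obstacle --- the second-order piece $(\bar d_k)_{ij}\bar\p_{ij}v_k$ yields a convergent integral of size $CM_k^{-(p_k-1)}$ with \emph{no} factor of $\sigma^2$. Your instinct to subtract $U_{K_k(x_k)}$ and pick up an extra $M_k^{-2}$ from Proposition~\ref{vUdiff} on the derivatives of $v_k-U$ is exactly right.

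The gap is in the mechanism you propose for why $\big((\bar b_k)_i\bar\p_i+(\bar d_k)_{ij}\bar\p_{ij}\big)U_{K_k(x_k)}$ drops out. It is not a parity cancellation: $\bar b_k$ and $\bar d_k$ are not odd, so integrating against an even radial function does not kill them. The paper instead uses the \emph{identity}
\[
\big((\bar b_k)_i\bar\p_i+(\bar d_k)_{ij}\bar\p_{ij}\big)U_{K_k(x_k)}=(\Delta_{h_k}-\bar\Delta_0)U_{K_k(x_k)}\equiv 0,
\]
which holds because $U$ is radial and $\{y^i\}$ is a rescaling of \emph{conformal normal} coordinates: the Gauss-lemma property $g^{ij}x^j=x^i$ plus $\det g_k=1+O(r^N)$ force $\Delta_{g_k}f=\Delta_0 f$ for radial $f$ (to the required order). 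Once you invoke this identity, you may replace $v_k$ by $v_k-U_{K_k(x_k)}$ \emph{inside the operator}, apply \eqref{v'_bound}--\eqref{v''_bound}, and the integrand becomes $\leq CM_k^{-1-p_k}(1+|y|^2)^{-1}$; integrating over $|y|\leq \sigma M_k^{(p_k-1)/2}$ then produces exactly $C\sigma^2$ after multiplying by $M_k^2$.
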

\begin{proof}
Because $U_{K_k(x_k)}(y)$ is a radial function and $\{y^i\}$ is a rescale of the conformal normal coordinates, we know
\begin{align}
(\Delta_{h_k} - \bar\Delta_0)U_{K_k(x_k)} = ((\bar b_k)_i\bar\p_i+(\bar d_k)_{ij}\bar\p_{ij})U_{K_k(x_k)}\equiv 0.
\end{align}
Then, we have
\begin{align}
\begin{split}
M_k^2|I_{k,2}| \leq& M_k^{2+\tau_k}\int_{\{|y|\leq \s M_k^{\frac{p_k-1}{2}}\}/\Gamma}|((\bar b_k)_i\p_i+(\bar d_k)_{ij}\p_{ij})v_k|\cdot|\n_y v_k\cdot y + v_k|\\
=&M_k^{2+\tau_k}\int_{\{|y|\leq \s M_k^{\frac{p_k-1}{2}}\}/\Gamma}|((\bar b_k)_i\p_i+(\bar d_k)_{ij}\p_{ij})(v_k-U_{K_k(x_k)})|\cdot|\n_y v_k\cdot y + v_k|.
\end{split}
\end{align}
For $\s\leq 1$, we have $|y|\leq \s M_k^{\frac{p_k-1}{2}}\leq M_k^{\frac{p_k-1}{2}}$, hence $M_k^{-1}\leq M_k^{-\frac{p_k-1}{2}}\leq C(1+|y|)^{-1}$ for large $k$. By \eqref{bdest} and Proposition \ref{vUdiff}, for large $k$, we have
\begin{align}
\begin{split}
&|((\bar b_k)_i(y)\bar \p_i+(\bar d_k)_{ij}(y)\bar \p_{ij})(v_k(y)-U_{K_k(x_k)}(y))|\\
\leq& CM_k^{-2}M_k^{-(p_k-1)}|y|^2(1+|y|)^{-1}(M_k^{-\frac{p_k-1}{2}}+(1+|y|)^{-1})\\
\leq& C M_k^{-1-p_k}|y|^2(1+|y|)^{-2} \leq  C M_k^{-1-p_k},
\end{split}
\end{align}
and
\begin{align}
\begin{split}
|y\cdot \n_y v_k(y) + v_k(y)| \leq& |\n_y U_{K_k(x_k)}|\cdot |y| + |\n_y(v_k-U)|\cdot |y| + |v_k-U| + U\\
\leq& C \Big(|y|^2(1+|y|^2)^{-2}+M_k^{-2}(1+|y|)^{-1}|y| + M_k^{-2}+(1+|y|^2)^{-1}\Big)\\
\leq&C \Big((1+|y|^2)^{-1} + M_k^{-2}\Big) \leq C (1+|y|^2)^{-1}.
\end{split}
\end{align}
Thus, for large $k$,
\begin{align}
\begin{split}
M_k^2|I_{k,2}| \leq& C M_k^{2+\tau_k}M_k^{-1-p_k}\int_{\{|y|\leq \s M_k^{\frac{p_k-1}{2}}\}/\Gamma}(1+|y|^2)^{-1}\\
\leq & C M_k^{1-p_k+\tau_k}\int_{0}^{\s M_k^{\frac{p_k-1}{2}}}r^3(1+r^2)^{-1}dr\\
\leq & C M_k^{1-p_k+\tau_k}((\s M_k^{\frac{p_k-1}{2}})^2 + C_1)\\
=& C M_k^{\tau_k}\s^2 + CC_1M_k^{1-p_k+\tau_k} \leq C \s^2\ \ \mathrm{as\ }k\to\infty.
\end{split}
\end{align}
\end{proof}
\begin{lemma}\label{lemma_i3}
There exist constants $C>0$ and $0< \delta < 1$ such that when $\s <\delta$,
\begin{align}
\label{I_3est}
\limsup_{k\to\infty}M_k^2|I_{k,3}| \leq C \s^2.
\end{align}
\end{lemma}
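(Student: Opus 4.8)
The plan is to estimate $I_{k,3}$ directly in the original coordinates $\{x^i\}$, starting from
\begin{align}
I_{k,3} = \frac{\s}{12}\int_{\{|x|=\s\}/\Gamma} R_k u_k^2 \, d\s(x),
\end{align}
rather than from the rescaled expression. The reason is that the rescaled sphere $\{|y| = R_k'\}$ has radius $R_k' = M_k^{(p_k-1)/2}\s \to \infty$, where the $C^2_{loc}$ control on $v_k$ is unavailable, whereas on the \emph{fixed} coordinate sphere $\{|x|=\s\}$ we can invoke Corollary~\ref{greenlimit}.

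First I would use that $g_k$ is the conformal normal metric centered at $x_k$: by \eqref{Rest}, $R_k(0) = 0$ and $(R_k)_{,i}(0) = 0$, so Taylor expansion gives $|R_k(x)| \leq C|x|^2$ for $|x| \leq \delta$, with $\delta \in (0,1)$ and $C$ independent of $k$ (the conformal normal coordinate system of \cite[Theorem~5.1]{Lee-Parker} depends smoothly on the base point and $x_k \to \bar x$, $g_k \to g$ in $C^2_{loc}$, so the constants do not degenerate). Hence, for $\s < \delta$,
\begin{align}
M_k^2 |I_{k,3}| \leq \frac{\s}{12}\Big(\sup_{\{|x|=\s\}}|R_k|\Big)\int_{\{|x|=\s\}/\Gamma} M_k^2 u_k^2 \, d\s(x) \leq C\s^3 \int_{\{|x|=\s\}/\Gamma} M_k^2 u_k^2 \, d\s(x).
\end{align}
Next I would pass to the limit: by Corollary~\ref{greenlimit}, $M_k u_k \to h_{\bar x} = aG(\cdot,\bar x)$ in $C^2_{loc}\big((B_r(\bar x)\setminus\{\bar x\})/\Gamma\big)$; taking $\delta < r$, the sphere $\{|x|=\s\}/\Gamma$ (which for $\Gamma=\{e\}$ is centered at $x_k$ and converges to the sphere about $\bar x$) lies in a fixed compact subset of this punctured ball, so $M_k^2 u_k^2 \to h_{\bar x}^2$ uniformly there and $\limsup_{k\to\infty}\int_{\{|x|=\s\}/\Gamma} M_k^2 u_k^2 \, d\s(x) = \int_{\{|x|=\s\}/\Gamma} h_{\bar x}^2 \, d\s(x)$. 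Since the Green's function of $L_g$ in dimension $4$ has leading singularity $|x|^{2-n} = |x|^{-2}$ at $\bar x$ (cf.\ Definition~\ref{def_psi}), we get $h_{\bar x}^2 \leq C|x|^{-4}$ near $\bar x$, and since the Euclidean $3$-volume of the coordinate sphere $\{|x|=\s\}/\Gamma$ is $O(\s^3)$, this yields $\int_{\{|x|=\s\}/\Gamma} h_{\bar x}^2 \, d\s(x) \leq C\s^{-1}$ for $\s$ small. Combining the three displays gives $\limsup_{k\to\infty} M_k^2 |I_{k,3}| \leq C\s^3\cdot C\s^{-1} = C\s^2$, as claimed.

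The one point requiring care is the uniformity in $k$ of the bound $|R_k(x)| \leq C|x|^2$ on a $k$-independent ball — that the constant $C$ and the radius $\delta$ do not degenerate as $x_k\to\bar x$; this is exactly the kind of statement built into the standing conventions of Section~\ref{prelim} (smooth dependence of conformal normal coordinates on the base point, together with $C^2_{loc}$ convergence $g_k\to g$). Beyond that the estimate is routine, and in particular no rescaling argument is needed — which is what makes this the simplest among the estimates for the $I_{k,j}$.
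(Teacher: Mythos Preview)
Your proof is correct but takes a different route from the paper. The paper works in the rescaled coordinates: it bounds $|\tilde R_k(y)| \leq C\s^2$ on $\{|y| = \s M_k^{(p_k-1)/2}\}$ and then uses the pointwise estimate from Proposition~\ref{vUdiff}, namely $|v_k - U_{K_k(x_k)}| \leq CM_k^{-2}$, to get $v_k^2(y) \leq C|y|^{-4}$ on that sphere; the rescaled integral is then computed directly, producing a factor $M_k^{2\tau_k}$ which tends to $1$. You instead stay in the original coordinates and invoke Corollary~\ref{greenlimit} to pass to the limit $M_k^2 u_k^2 \to h_{\bar x}^2$ on the fixed sphere $\{|x|=\s\}$, then bound the Green's function there. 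Both arguments are valid and yield the same $C\s^2$. Your approach is a bit shorter and sidesteps the exponent bookkeeping; the paper's approach has the advantage of using only the quantitative pointwise bound on $v_k$ (uniform in $k$) rather than the softer $C^2_{loc}$ convergence statement, and it keeps the treatment of $I_{k,3}$ methodologically parallel to that of $I_{k,2}$ and $I_{k,4}$.
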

\begin{proof}
Due to \eqref{Rest}, there exist $C>0$ and $0<\delta <1$ such that when $\s <\delta$, on $|x|=\s$, by power series expansion, $|R_k(x)|\leq C|x|^2 = C\s^2$.
Hence $|\tilde R_k(y)| \leq C\s^2$ on $|y| = \s M_k^{\frac{p_k-1}{2}}$.
On the other hand, on $|y| = \s M_k^{\frac{p_k-1}{2}} \leq M_k^{\frac{p_k-1}{2}}$, $M_k^{-1}\leq M_k^{-\frac{p_k-1}{2}}\leq |y|^{-1}$. By Proposition \ref{vUdiff},
\begin{align}
\label{v_k^2}
\begin{split}
v_k^2(y) \leq& [|v_k(y)-U_{K_k(x_k)}(y)| + U_{K_k(x_k)}(y)]^2\\
\leq& C[M_k^{-2} + (1+|y|^2)^{-1}]^2 \leq C |y|^{-4}.
\end{split}
\end{align}
Then, we have the estimate
\begin{align}
\begin{split}
M_k^2|I_{k,3}|  \leq&\frac{\s}{12}M_k^{\frac{11-3p_k}{2}}\int_{\{|y|=\s M_k^{\frac{p_k-1}{2}}\}/\Gamma}|\tilde R_k|v_k^2\\
\leq &C\s^3M_k^{\frac{11-3p_k}{2}}\int_{\{|y|=\s M_k^{\frac{p_k-1}{2}}\}/\Gamma}|y|^{-4}\\
\leq &C \s^3M_k^{\frac{11-3p_k}{2}}(\s M_k^{\frac{p_k-1}{2}})^{-1}
= C \s^2 M_k^{2\tau_k} \leq C \s^2\ \ \mathrm{as\ }k\to\infty.
\end{split}
\end{align}
\end{proof}
\begin{lemma}\label{lemma_i4}
There exist constants $C>0$ and $0< \delta < 1$ such that when $\s <\delta$,
\begin{align}
\label{I_4est}
\limsup_{k\to\infty}M_k^2|I_{k,4}| \leq C \s^2.
\end{align}
\end{lemma}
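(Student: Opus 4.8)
The plan is to follow the template of the proof of Lemma~\ref{lemma_i3}, now exploiting the vanishing conditions in \eqref{Rest} for $R_k$ \emph{and} for its first derivative. The first step is a Taylor expansion of $x\cdot\nabla_x R_k+2R_k$ about the origin. Since \eqref{Rest} gives $R_k(0)=0$ and $(R_k)_{,i}(0)=0$, the second-order Taylor formula produces constants $C>0$ and $0<\delta<1$, independent of $k$, such that for $\s<\delta$
\[
|x\cdot\nabla_x R_k+2R_k|\le C|x|^2 \qquad \text{on }\{|x|\le\s\}/\Gamma,
\]
the uniformity in $k$ coming from the $C^2_{loc}$-convergence of the conformal normal metrics $g_k\to g$ (this is exactly the estimate already used in Lemma~\ref{lemma_i3} for $R_k$ alone). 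Passing to the rescaled variable $y=M_k^{(p_k-1)/2}x$, and using that $y\cdot\nabla_y\tilde R_k+2\tilde R_k$ at $y$ equals $x\cdot\nabla_x R_k+2R_k$ at $x=M_k^{-(p_k-1)/2}y$, this becomes $|y\cdot\nabla_y\tilde R_k+2\tilde R_k|\le C M_k^{-(p_k-1)}|y|^2$ for $|y|\le R_k'=\s M_k^{(p_k-1)/2}$.

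Next I would bound $v_k$ as in \eqref{v_k^2}: by Proposition~\ref{vUdiff}, $|v_k-U_{K_k(x_k)}|\le CM_k^{-2}$, and on the range $|y|\le R_k'\le M_k^{(p_k-1)/2}\le M_k$ we have $M_k^{-2}\le(1+|y|^2)^{-1}$ for $k$ large, so $v_k(y)^2\le C(1+|y|^2)^{-2}$. Substituting both bounds into the rescaled formula for $I_{k,4}$, the prefactor $M_k^2\cdot M_k^{4-2p_k}=M_k^{6-2p_k}$ together with the factor $M_k^{-(p_k-1)}$ from the bound on $x\cdot\nabla_x R_k+2R_k$ gives total weight $M_k^{7-3p_k}$, and integrating in polar coordinates on $\RR^4/\Gamma$ (using $r^5(1+r^2)^{-2}\le r$),
\[
M_k^2|I_{k,4}|\le C M_k^{7-3p_k}\int_0^{R_k'} r^5(1+r^2)^{-2}\,dr\le C M_k^{7-3p_k}(R_k')^2=C\s^2M_k^{6-2p_k},
\]
since $(R_k')^2=\s^2M_k^{p_k-1}$.

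Finally, recalling from Section~\ref{main_est_sect} that it suffices to treat $p_k\to3$, we have $6-2p_k=2\tau_k$, and since $M_k^{\tau_k}\to1$ by \eqref{tau_weak_est} we conclude $\limsup_{k\to\infty}M_k^2|I_{k,4}|\le C\s^2$. I do not expect a genuine obstacle here, as the computation is structurally identical to Lemmas~\ref{lemma_i2} and \ref{lemma_i3}; the only points needing care are (i) that the $O(|x|^2)$ Taylor bound for $x\cdot\nabla_x R_k+2R_k$ has a constant uniform in $k$, which follows from \eqref{Rest} and the convergence of the conformal normal metrics, and (ii) the power bookkeeping, where the cancellation $M_k^{7-3p_k}(R_k')^2=\s^2M_k^{6-2p_k}$ is precisely what makes the bound scale like $\s^2$ rather than diverge. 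The quotient by $\Gamma$ enters only through the harmless factor $1/|\Gamma|$ in the volume of the relevant ball.
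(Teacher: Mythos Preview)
Your proof is correct and follows essentially the same approach as the paper's: both use \eqref{Rest} to bound $|x\cdot\nabla_x R_k+2R_k|\le C|x|^2$, rescale, bound $v_k^2\le C(1+|y|^2)^{-2}$ via Proposition~\ref{vUdiff}, and then estimate the resulting radial integral. Your bookkeeping is in fact slightly cleaner, since you use $r^5(1+r^2)^{-2}\le r$ directly rather than carrying an additive constant $C_1$ as the paper does.
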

\begin{proof}
By \eqref{Rest}, there exist $C>0$ and $0<\delta<1$ such that when $\s<\delta$, in the ball $|x|\leq\s$, by power series expansion, we have
\begin{align}
|R_k(x)|\leq C|x|^2, \quad |\n_x R_{k}(x)| \leq |\n_x R_{k}(0)| + C|x|\leq C|x|.
\end{align}
The second inequality implies
\begin{align}
|x\cdot \n_x R_{k}(x)|\leq |x|\cdot |\n_x R_{k}(x)|\leq C|x|^2.
\end{align}
It follows that in the ball $|y|\leq \s M_k^{\frac{p_k-1}{2}}$,
$|\tilde R_k(y)|\leq C M_k^{-(p_k-1)}|y|^2$, and
\begin{align}
|y\cdot \n_y \tilde R_{k}(y)| = |(M_k^{-\frac{p_k-1}{2}}y)\cdot \n_x R_{k}(M_k^{-\frac{p_k-1}{2}}y)|\leq C M_k^{-(p_k-1)}|y|^2.
\end{align}
On the other hand, similar to \eqref{v_k^2}, for $|y|\leq \s M_k^{\frac{p_k-1}{2}}$ and large $k$, $v_k^2(y) \leq C(1+|y|^2)^{-2}$.
Then, for large $k$, we have the estimate
\begin{align}
\begin{split}
M_k^2|I_{k,4}| \leq&C M_k^{6-2p_k}\int_{\{|y|\leq \s M_k^{\frac{p_k-1}{2}}\}/\Gamma}(|y\cdot\n_y \tilde R_k|+2|\tilde R_k|)v_k^2\\
\leq& C  M_k^{6-2p_k} M_k^{-(p_k-1)} \int_{\{|y|\leq \s M_k^{\frac{p_k-1}{2}}\}/\Gamma} |y|^2(1+|y|^2)^{-2}\\
\leq & C M_k^{7-3p_k} \int_0^{\s M_k^{\frac{p_k-1}{2}}}r^5(1+r^2)^{-2}dr\\
\leq& C M_k^{7-3p_k} ((\s M_k^{\frac{p_k-1}{2}})^2+ C_1) = C\s^2 M_k^{2\tau_k} + CC_1M_k^{7-3p_k}\leq C\s^2\ \ \mathrm{as\ }k\to\infty.
\end{split}
\end{align}
\end{proof}
Next, we estimate the most important term $M_k^2I_{k,5}$.
\begin{lemma}\label{lemma_i5}
There exists a constant $0<\delta <1$ such that when $\s < \delta$,
\begin{align}
\label{I_5est}
\lim_{k\to\infty}M_k^2I_{k,5}=\frac{2\Delta_xK_\infty(\bar x)\cdot Vol(S^3)}{3|\Gamma|K_\infty(\bar x)^2}.
\end{align}
\end{lemma}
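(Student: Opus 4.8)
The plan is to Taylor-expand $K_k$ about the blow-up point $x_k$ in the conformal normal coordinates, show that every contribution to $M_k^2 I_{k,5}$ other than the Hessian term is $o(1)$, and then evaluate the Hessian term against the standard bubble $U_{K_\infty(\bar x)}$. I would start from the rescaled expression
\begin{align*}
M_k^2 I_{k,5} = \frac{M_k^{2+\tau_k}}{p_k+1}\int_{\{|y|\leq R_k'\}/\Gamma}\big(y\cdot\n_y(\tilde K_k\tilde f_k^{-\tau_k})\big)v_k^{p_k+1}, \qquad R_k'=\s M_k^{\frac{p_k-1}{2}},
\end{align*}
and split $y\cdot\n_y(\tilde K_k\tilde f_k^{-\tau_k}) = \tilde f_k^{-\tau_k}(y\cdot\n_y\tilde K_k) - \tau_k\tilde K_k\tilde f_k^{-\tau_k-1}(y\cdot\n_y\tilde f_k)$. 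For the $\tilde f_k$-summand: since $f_k$ is $C^1$ and bounded below near $\bar x$, $|y\cdot\n_y\tilde f_k| = |x\cdot\n_x f_k(x)|\leq C|x| = CM_k^{-\frac{p_k-1}{2}}|y|$ with $\tilde K_k,\tilde f_k^{-\tau_k-1}$ bounded; combined with $\tau_k\leq CM_k^{-2}$ from Proposition~\ref{vUdiff} and the decay $v_k^{p_k+1}\leq C(1+|y|^2)^{-(p_k+1)}$ valid on $\{|y|\leq R_k'\}$ once $\s<\delta$ (also Proposition~\ref{vUdiff}), this summand contributes at most $CM_k^{2+\tau_k}\cdot M_k^{-2}\cdot M_k^{-\frac{p_k-1}{2}}\int_{\RR^4}|y|(1+|y|^2)^{-(p_k+1)}\leq CM_k^{\tau_k-\frac{p_k-1}{2}}\to 0$, the $|y|$-integral being uniformly finite as $p_k\to3$.

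For the $\tilde K_k$-summand, I would write $y\cdot\n_y\tilde K_k(y) = x\cdot\n_x K_k(x)$ with $x=M_k^{-\frac{p_k-1}{2}}y$ and Taylor-expand at $x_k=0$:
\begin{align*}
x\cdot\n_x K_k(x) = x\cdot\n_x K_k(0) + (\p_i\p_j K_k)(0)\,x^ix^j + o(|x|^2),
\end{align*}
the remainder being uniform in $k$ by the $C^2_{loc}$-convergence $K_k\to K_\infty$. The linear term is handled by Proposition~\ref{grad0}, which gives $|\n_x K_k(0)| = |\n K_k(x_k)|\leq CM_k^{-2+2\delta}$, so its contribution to $M_k^2 I_{k,5}$ is $\leq CM_k^{2+\tau_k}M_k^{-\frac{p_k-1}{2}}M_k^{-2+2\delta}\int_{\RR^4}|y|v_k^{p_k+1}\leq CM_k^{\tau_k-\frac{p_k-1}{2}+2\delta}\to 0$ once $\delta<1/2$. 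The remainder equals $M_k^{-(p_k-1)}|y|^2\omega_k(M_k^{-\frac{p_k-1}{2}}|y|)$ with $\omega_k(t)\to 0$ uniformly in $k$; since $M_k^{2+\tau_k}M_k^{-(p_k-1)} = M_k^{2\tau_k}\to 1$ by \eqref{tau_weak_est}, its contribution is $\leq CM_k^{2\tau_k}\int|y|^2\omega_k(M_k^{-\frac{p_k-1}{2}}|y|)v_k^{p_k+1}$, which tends to $0$ by splitting into $|y|\leq R$ (where $\omega_k\to0$) and $|y|>R$ (where $\int_{|y|>R}|y|^2(1+|y|^2)^{-(p_k+1)}$ is uniformly small for $p_k$ near $3$).

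It remains to evaluate the Hessian term $(\p_i\p_j K_k)(0)x^ix^j = M_k^{-(p_k-1)}(\p_i\p_j K_k)(0)y^iy^j$, whose prefactor is again $M_k^{2+\tau_k}M_k^{-(p_k-1)} = M_k^{2\tau_k}\to1$. Hence $\lim_k M_k^2 I_{k,5} = \lim_k \frac{1}{p_k+1}\int_{\{|y|\leq R_k'\}/\Gamma}(\p_i\p_j K_k)(0)\,y^iy^j\,\tilde f_k^{-\tau_k}v_k^{p_k+1}$. Using $v_k\to U_{K_\infty(\bar x)}$ in $C^2_{loc}$ (Proposition~\ref{p:m43}), the $k$-uniform integrable majorant from Proposition~\ref{vUdiff} (with $p_k\to3$), $\tilde f_k^{-\tau_k}\to1$ uniformly, $(\p_i\p_j K_k)(0)\to(\p_i\p_j K_\infty)(\bar x)$ (since $K_k\to K_\infty$ in $C^2_{loc}$ and, under Condition~\ref{blow-up_cond}, $x_k=\bar x$ in the singular case), and $R_k'\to\infty$, dominated convergence applies; the quotient integral unfolds to $\frac1{|\Gamma|}$ times the $\RR^4$-integral of the ($\Gamma$-invariant, after averaging $(\p_i\p_j K_\infty)(\bar x)$ over $\Gamma\subset\mathrm{O}(4)$, which leaves its trace unchanged) integrand, and rotational symmetry of $U_{K_\infty(\bar x)}$ gives $\int_{\RR^4}y^iy^j U^4 = \frac{\delta^{ij}}{4}\int_{\RR^4}|y|^2 U^4$. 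With $n=4$ one has $U_c = (8/c)^{1/2}(1+|y|^2)^{-1}$, so
\begin{align*}
\int_{\RR^4}|y|^2 U_c^4 = 64c^{-2}Vol(S^3)\int_0^\infty\frac{r^5}{(1+r^2)^4}\,dr = 64c^{-2}Vol(S^3)\cdot\frac16 = \frac{32}{3}c^{-2}Vol(S^3).
\end{align*}
Collecting $\frac{1}{4}\cdot\frac{1}{4|\Gamma|}\cdot\frac{32}{3}$ with $c=K_\infty(\bar x)$ and $\Delta_x K_\infty(\bar x)=\sum_i(\p_i\p_iK_\infty)(\bar x)$ (the coordinate Laplacian, which at the center of conformal normal coordinates equals the metric Laplacian $\Delta_g K_\infty(\bar x)$) gives exactly $\dfrac{2\Delta_x K_\infty(\bar x)\,Vol(S^3)}{3|\Gamma|K_\infty(\bar x)^2}$.

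I expect the main obstacle to be the power bookkeeping rather than any conceptual difficulty: because one multiplies through by the large factor $M_k^2$, each error term (the $\tilde f_k$-correction, the linear Taylor term, the sub-quadratic remainder) survives only marginally, and its vanishing depends on invoking a sharp prior estimate — the gradient bound $|\n K_k(x_k)|\leq CM_k^{-2+2\delta}$ (Proposition~\ref{grad0}), the refined comparison bounds for $v_k$ and $\tau_k\leq CM_k^{-2}$ (Proposition~\ref{vUdiff}) — while the precise survival of the Hessian term hinges on the exact cancellation $M_k^{2+\tau_k-(p_k-1)} = M_k^{2\tau_k}\to1$.
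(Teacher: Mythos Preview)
Your proposal is correct and follows essentially the same route as the paper: Taylor-expand $K_k$ at $x_k$, kill the linear term via Proposition~\ref{grad0}, show the sub-quadratic remainder and the $f_k$-correction are $o(M_k^{-2})$, and evaluate the surviving Hessian term against the standard bubble using the spherical symmetry of $U_{K_\infty(\bar x)}$; your final integral computation agrees with the paper's. The only organizational difference is that the paper first replaces $v_k^{p_k+1}$ by $U_{K_k(x_k)}^{p_k+1}$ (using $|v_k-U|\leq CM_k^{-2}$) and then discards the Taylor remainder, whereas you discard the Taylor remainder first and then pass to the limit by dominated convergence --- and your use of an $o(|x|^2)$ remainder (justified by equicontinuity of second derivatives under $C^2_{loc}$-convergence) is slightly more careful about the stated $C^2$ regularity of $K$ than the paper's $O(|x|^3)$.
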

\begin{proof} First, note that $\lim_{k\to\infty}f_k^{-\tau_k}(x) = 1$, and
\begin{align}
\lim_{k\to\infty}|\nabla_xf_k^{-\tau_k}(x)| = \lim_{k\to\infty}-\tau_kf_k^{-\tau_k-1}(x)|\nabla_xf_k(x)| = 0
\end{align}
uniformly for $|x|\leq \s$. It follows
\begin{align}
\begin{split}
\lim_{k\to\infty}M_k^2I_{k,5}= & \frac{1}{p_k+1}M_k^2\int_{\{|x|\leq \s\}/\Gamma}(x\cdot \nabla_x(K_kf_k^{-\tau_k}))u_k^{p_k+1}\\
=&\frac{1}{p_k+1}M_k^2\int_{\{|x|\leq \s\}/\Gamma}(x\cdot \nabla_xK_k)u_k^{p_k+1}\\
=&\frac{1}{p_k+1}M_k^{2+\tau_k}\int_{\{|y|\leq \s M_k^{\frac{p_k-1}{2}}\}/\Gamma}(y\cdot \nabla_{y}\tilde K_k)v_k^{p_k+1}.
\end{split}
\end{align}
There exists a constant $0<\delta <1$ such that when $\s < \delta$, by power series expansion and Proposition \ref{grad0}, we have
\begin{align}
\begin{split}
K_k(x) = &K_k(0) + (K_k)_{,i}(0)x^i + \frac{1}{2}(K_k)_{,ij}(0)x^ix^j + O(|x|^3)\\
=&K_k(0) + \frac{1}{2}(K_k)_{,ij}(0)x^ix^j + O(M_k^{-2+2\delta}|x|)+O(|x|^3),
\end{split}
\end{align}
for $|x|\leq \s$, where $(K_k)_{,ij}(0)$ denotes the second order partial derivatives of $K_k$ in coordinates $\{x^i\}$ at the point $x_k$. It is not hard to verify that
\begin{align}
\begin{split}
&x\cdot \nabla_x[K_k(0) + O(M_k^{-2+2\delta}|x|)+O(|x|^3)] =O( M_k^{-2+2\delta}|x|)+O(|x|^3),\\
&x\cdot\nabla_x\Big(\frac{1}{2}(K_k)_{,ij}(0)x^ix^j\Big) = (K_k)_{,ij}(0)x^ix^j.
\end{split}
\end{align}
Thus
\begin{align}
x\cdot \nabla_xK_k(x) =  (K_k)_{,ij}(0)x^ix^j + O( M_k^{-2+2\delta}|x|)+O(|x|^3),
\end{align}
which implies
\begin{align}
y\cdot \nabla_y\tilde K_k(y) =  M_k^{-(p_k-1)}(K_k)_{,ij}(0)y^iy^j + O( M_k^{-2-\frac{p_k-1}{2}+2\delta}|y|)+O(M_k^{-3\frac{p_k-1}{2}}|y|^3).
\end{align}
Hence
\begin{align}
\label{weak_power_est}
|y\cdot \nabla_y\tilde K_k(y)| \leq  C M_k^{-(p_k-1)}(1+|y|)^2
\end{align}
and
\begin{align}
\label{strong_power_est}
|y\cdot \nabla_y\tilde K_k(y) -  M_k^{-(p_k-1)}(K_k)_{,ij}(0)y^iy^j|\leq C M_k^{-3\frac{p_k-1}{2}}(1+|y|)^3,
\end{align}
for $|y|\leq \s M_k^{\frac{p_k-1}{2}}$ and large $k$.

On the other hand, by power series expansion and Proposition \ref{vUdiff},
\begin{align}
\begin{split}
|v_k^{p_k+1}(y) - U_{K_k(x_k)}^{p_k+ 1}(y)|\leq& C\cdot (p_k+1)U_{K_k(x_k)}^{p_k}(y)|v_k(y) - U_{K_k(x_k)}(y)|\\
\leq& C (1+|y|^2)^{-p_k}M_k^{-2}.
\end{split}
\end{align}
Together with estimate \eqref{weak_power_est}, for large $k$, we have
\begin{align}
\begin{split}
&M_k^{2+\tau_k}\int_{\{|y|\leq \s M_k^{\frac{p_k-1}{2}}\}/\Gamma}(y\cdot \nabla_{y}\tilde K_k)|v_k^{p_k+1}- U_{K_k(x_k)}^{p_k+ 1}|\\
\leq& C M_k^{2+\tau_k}M_k^{-(p_k-1)}M_k^{-2}\int_{\{|y|\leq \s M_k^{\frac{p_k-1}{2}}\}/\Gamma}(1+|y|)^2(1+|y|^2)^{-p_k}\\
\leq& C M_k^{-(p_k-1)+\tau_k}\int_0^{\s M_k^{\frac{p_k-1}{2}}}r^3(1+r)^2(1+r^2)^{-3+o(1)}dr\\
\leq&C M_k^{-(p_k-1)+\tau_k} (\s M_k^{\frac{p_k-1}{2}} + C_1) \leq C M_k^{-\frac{p_k-1}{2}+\tau_k} \to 0\ \ \mathrm{as\ }k\to\infty.
\end{split}
\end{align}
Therefore,
\begin{align}
\lim_{k\to\infty}M_k^2I_{k,5}= \lim_{k\to\infty}\frac{1}{p_k+1}M_k^{2+\tau_k}\int_{\{|y|\leq \s M_k^{\frac{p_k-1}{2}}\}/\Gamma}(y\cdot \nabla_{y}\tilde K_k)U_{K_k(x_k)}^{p_k+1}.
\end{align}
Using estimate \eqref{strong_power_est}, for large $k$, we have
\begin{align}
\begin{split}
&M_k^{2+\tau_k}\int_{\{|y|\leq \s M_k^{\frac{p_k-1}{2}}\}/\Gamma}|y\cdot \nabla_y\tilde K_k(y) -  M_k^{-(p_k-1)}(K_k)_{,ij}(0)y^iy^j|\cdot U_{K_k(x_k)}^{p_k+1}\\
\leq&C M_k^{2+\tau_k}M_k^{-3\frac{p_k-1}{2}}\int_{\{|y|\leq \s M_k^{\frac{p_k-1}{2}}\}/\Gamma}(1+|y|)^3(1+|y|^2)^{-4+o(1)}\\
\leq & C M_k^{2 - 3\frac{p_k-1}{2} + \tau_k}\int_0^{\s M_k^{\frac{p_k-1}{2}}}r^3(1+r)^3(1+r^2)^{-4+o(1)}dr\\
\leq&  C M_k^{2 - 3\frac{p_k-1}{2} + \tau_k}((\s M_k^{\frac{p_k-1}{2}})^{-1+o(1)} + C_1)\to0\ \ \mathrm{as\ }k\to 0.
\end{split}
\end{align}
Therefore,
\begin{align}
\lim_{k\to\infty}M_k^2I_{k,5}= \lim_{k\to\infty}\frac{1}{p_k+1}M_k^{2\tau_k}\int_{\{|y|\leq \s M_k^{\frac{p_k-1}{2}}\}/\Gamma}(K_k)_{,ij}(0)y^iy^jU_{K_k(x_k)}^{p_k+1}.
\end{align}
Since $U_{K_k(x_k)}$ is a radial function,
\begin{align}
\int_{\{|y|\leq \s M_k^{\frac{p_k-1}{2}}\}/\Gamma}y^iy^jU_{K_k(x_k)} = \left\{\begin{array}{ll}0,&i\neq j,\\ \frac{1}{4}\int_{\{|y|\leq \s M_k^{\frac{p_k-1}{2}}\}/\Gamma}|y|^2U_{K_k(x_k)},&i=j.\end{array}\right.
\end{align}
Thus we obtain
\begin{align}
\lim_{k\to\infty}M_k^2I_{k,5}=& \lim_{k\to\infty}\frac{1}{p_k+1}M_k^{2\tau_k}\int_{\{|y|\leq \s M_k^{\frac{p_k-1}{2}}\}/\Gamma}\frac{1}{4}\Delta_xK_k(0)\cdot |y|^2U_{K_k(x_k)}^{p_k+1},
\end{align}
where $\Delta_xK_k(0) = \Delta_xK_k(x_k)$ is the Laplacian of $K_k$ in coordinates $\{x^i\}$ at the point $x_k$. Because $\Delta_xK_k(x_k)\to \Delta_xK_\infty(\bar x)$ and $U_{K_k(x_k)}^{p_k+1}\to U_{K_\infty(\bar x)}^{4}$ uniformly on $\RR^4/\Gamma$, we have
\begin{align}
\begin{split}
\lim_{k\to\infty}M_k^2I_{k,5}=& \frac{1}{16}\int_{y\in\RR^4/\Gamma}\Delta_xK_\infty(\bar x)\cdot |y|^2\Big(\frac{8}{K_\infty(\bar x)}\Big)^2(1+|y|^2)^{-4}\\
=&\frac{4\Delta_xK_\infty(\bar x)\cdot Vol(S^3)}{|\Gamma|K_\infty(\bar x)^2}\int_0^\infty r^5(1+r^2)^{-4}dr\\
=&\frac{2\Delta_xK_\infty(\bar x)\cdot Vol(S^3)}{3|\Gamma|K_\infty(\bar x)^2}.
\end{split}
\end{align}
\end{proof}
Finally,  we have achieved the following proposition.
\begin{proposition}\label{pohoprop}
Assuming Condition \ref{blow-up_cond}, we have the following inequality
\begin{align}
\lim_{\s\to0}P(\s, h_{\bar x}) \geq \frac{2\Delta_gK_\infty(\bar x)\cdot Vol(S^3)}{3|\Gamma|K_\infty(\bar x)^2}.
\end{align}
Moreover, if $\tau_k = 0$ for all $k$, we have the equality
\begin{align}
\lim_{\s\to0}P(\s, h_{\bar x}) = \frac{2\Delta_gK_\infty(\bar x)\cdot Vol(S^3)}{3|\Gamma|K_\infty(\bar x)^2}.
\end{align}
Here, $\Delta_g$ is the Laplacian with respect to the limit conformal normal metric $g$.
\end{proposition}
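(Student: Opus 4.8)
The plan is to substitute the estimates of Lemmas \ref{lemma_i1}--\ref{lemma_i5} into the Pohozaev identity \eqref{pohozaev}, multiply through by $M_k^2$, pass to the limit $k\to\infty$ with $\s$ fixed, and only then send $\s\to 0$.

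First I would observe that, in dimension $4$, $I_{k,7}$ is exactly $P(\s,u_k)$, and since $P(\s,\cdot)$ is quadratic, $M_k^2 I_{k,7}=P(\s,M_ku_k)$. By Corollary \ref{greenlimit} we have $M_ku_k\to h_{\bar x}$ in $C^2_{loc}((B_r(\bar x)\setminus\{\bar x\})/\Gamma)$, and since the sphere $\{|x|=\s\}/\Gamma$ lies in this region for $0<\s<r$, it follows that $\lim_{k\to\infty}M_k^2I_{k,7}=P(\s,h_{\bar x})$. For the remaining terms: $\lim_k M_k^2 I_{k,1}=0$ by Lemma \ref{lemma_i1}; $\limsup_k|M_k^2I_{k,j}|\le C\s^2$ for $j=2,3,4$ by Lemmas \ref{lemma_i2}--\ref{lemma_i4}; $\lim_k M_k^2I_{k,5}=\frac{2\Delta_gK_\infty(\bar x)\cdot Vol(S^3)}{3|\Gamma|K_\infty(\bar x)^2}$ by Lemma \ref{lemma_i5}; and $I_{k,6}\ge 0$, since $\tau_k=\frac{n+2}{n-2}-p_k\ge 0$ by subcriticality and the integrand $\tilde K_k\tilde f_k^{-\tau_k}v_k^{p_k+1}$ is positive. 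Solving \eqref{pohozaev} for $M_k^2 I_{k,6}$, taking $\liminf_k$, and using $\liminf_k M_k^2 I_{k,6}\ge 0$ then gives
\[
P(\s,h_{\bar x})\ \ge\ \frac{2\Delta_gK_\infty(\bar x)\cdot Vol(S^3)}{3|\Gamma|K_\infty(\bar x)^2}\ -\ C\s^2.
\]
In the special case $\tau_k=0$ for all $k$, the term $I_{k,6}$ vanishes identically, so the same manipulation yields the two-sided bound $\big|P(\s,h_{\bar x})-\frac{2\Delta_gK_\infty(\bar x)\cdot Vol(S^3)}{3|\Gamma|K_\infty(\bar x)^2}\big|\le C\s^2$.

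It remains to let $\s\to 0$, which requires that $\lim_{\s\to 0}P(\s,h_{\bar x})$ exists. I would get this from the fact that $h_{\bar x}=aG(\cdot,\bar x)$ is a multiple of the Green's function of $L_g$, and that in dimension $4$, with $g$ the conformal normal metric centered at $\bar x$ (so $g_{ij}=\delta_{ij}+O(r^2)$, $\det g=1+O(r^N)$, $R_g(\bar x)=0$, $\n R_g(\bar x)=0$), the Green's function admits the expansion $G=r^{-2}+A+O(r)$ near $\bar x$; substituting this into the definition of $P(\s,\cdot)$ and expanding in $\s$ shows $P(\s,h_{\bar x})=c+O(\s)$, so the limit exists (and is a universal constant times $A$, hence proportional to $m(\hat g_{\bar x})$). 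Letting $\s\to 0$ in the two displays above then gives the claimed inequality and, in the case $\tau_k=0$, the equality.

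The analytic heart of the argument is already contained in the preceding lemmas, Lemma \ref{lemma_i5} in particular, so what is left is essentially bookkeeping. The two points deserving attention are (i) the sign observation $I_{k,6}\ge 0$, which is precisely where subcriticality $p_k\le\frac{n+2}{n-2}$ enters and which is what forces an inequality rather than an equality in the general case, and (ii) invoking the conformal-normal-coordinate expansion of the Green's function of $L_g$ to justify the existence of the limit $\s\to 0$.
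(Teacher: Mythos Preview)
Your proposal is correct and follows essentially the same route as the paper: combine Lemmas \ref{lemma_i1}--\ref{lemma_i5} with the Pohozaev identity \eqref{pohozaev}, identify $\lim_k M_k^2 I_{k,7}=P(\s,h_{\bar x})$ via Corollary \ref{greenlimit}, and use the sign observation $I_{k,6}\ge 0$ (with equality when $\tau_k=0$). You are in fact slightly more careful than the paper in one respect: you explicitly justify the existence of $\lim_{\s\to 0}P(\s,h_{\bar x})$ from the conformal-normal expansion of the Green's function, whereas the paper tacitly uses this limit here and only computes it afterwards in Lemma \ref{lemma_pohoA}.
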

\begin{proof}
By Lemma \ref{lemma_i1} -- Lemma \ref{lemma_i5} and equation \eqref{pohozaev},
\begin{align}
\lim_{\s\to0}\limsup_{k\to\infty}M_k^2I_{k,7} = \frac{2\Delta_gK_\infty(\bar x)\cdot Vol(S^3)}{3|\Gamma|K_\infty(\bar x)^2} + \lim_{\s\to0}\limsup_{k\to\infty}M_k^2I_{k,6}.
\end{align}
By Corollary \ref{greenlimit},
\begin{align}
\begin{split}
&\limsup_{k\to\infty}M_k^2I_{k,7}\\
=&\limsup_{k\to\infty} \int_{\{|x|=\s\}/\Gamma} \Big\{\Big(\Big|\frac{\p (M_ku_k)}{\p \nu_x}\Big|^2 -\frac{1}{2}|\n_x (M_ku_k)|^2\Big)\s+(M_ku_k)\frac{\p (M_ku_k)}{\p\nu_x}\Big\}\\
=&P(\s, h_{\bar x}).
\end{split}
\end{align}
On the other hand, recall that
\begin{align}
I_{k,6} = &\frac{\tau_k}{p_k+1}\int_{\{|x|\leq \s\}/\Gamma}K_kf_k^{-\tau_k}u_k^{p_k+1}.
\end{align}
It's clear that $I_{k,6}\geq 0$, and $I_{k,6} = 0$ if $\tau_k=0$. Our proposition is proved.
\end{proof}

\subsection{Green Function}
Assuming Condition \ref{blow-up_cond}, Corollary \ref{greenlimit} tells us that $M_ku_k \to h_{\bar x} = aG(\cdot, \bar x)$ in $C^2_{loc}((B_r(\bar x)-\{\bar x\})/\Gamma)$. We are going to determine the constant $a$ and the regular part of $G(\cdot, \bar x)$ evaluated at $\bar x$, where $G(\cdot, \bar x)$ is the standard Green function for $L_g = \Delta_g - \frac{1}{6}R_g$ at the point $\bar x$, and $g$ is the limit metric, i.e. for any $C^2$ function $\phi$ and small $\s > 0$,
\begin{align}
\int_{\{|x|\leq \s\}/\Gamma}G(L_g\phi)dV_g=\phi(\bar x).
\end{align}
Hence Corollary \ref{greenlimit} implies that
\begin{align}
\lim_{k\to\infty}\int_{\{|x|\leq \s\}/\Gamma}(M_ku_k)(L_{g_k}\phi)dV_{g_k} = a \phi(\bar x).
\end{align}
\begin{lemma}\label{lemma_aterm}
The constant $a$ in Corollary \ref{greenlimit} is
\begin{align}
a = -\frac{4\sqrt{2}Vol(S^3)}{|\Gamma|\sqrt{K_\infty(\bar x)}}.
\end{align}
\end{lemma}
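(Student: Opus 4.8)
The plan is to test the defining equation \eqref{confseq} against a cutoff function and compare the two sides as $k\to\infty$: on one side the left-hand side is governed by the convergence $M_ku_k\to h_{\bar x}=aG(\cdot,\bar x)$ from Corollary~\ref{greenlimit}, and on the other side the right-hand side concentrates at $\bar x$ with the bubble profile $U_{K_\infty(\bar x)}$ from \eqref{U_def}. Matching the two limits pins down $a$, and the value then comes out of an elementary integral of $U_{K_\infty(\bar x)}^3$ over $\RR^4/\Gamma$.

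Concretely, fix a small $\s>0$ with $\s<\delta$ (so that Proposition~\ref{vUdiff} applies on the whole relevant range) and pick $\phi\in C^\infty_c(\{|x|<\s\}/\Gamma)$ with $\phi\equiv 1$ near $\bar x$, so $\phi(\bar x)=1$. Since $L_{g_k}$ is self-adjoint and $\phi$ is compactly supported, integrating by parts twice and using \eqref{confseq} gives
\begin{align*}
\int_{\{|x|\le\s\}/\Gamma}(M_ku_k)(L_{g_k}\phi)\,dV_{g_k}
=\int_{\{|x|\le\s\}/\Gamma}(L_{g_k}(M_ku_k))\,\phi\,dV_{g_k}
=-M_k\int_{\{|x|\le\s\}/\Gamma}K_kf_k^{-\tau_k}u_k^{p_k}\,\phi\,dV_{g_k}.
\end{align*}
By the limit identity recorded just before the statement of this lemma (the consequence of Corollary~\ref{greenlimit}), the left-hand side converges to $a\phi(\bar x)=a$. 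So it remains to compute the limit of the right-hand side.

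For the right-hand side I would use the change of variables $y=M_k^{(p_k-1)/2}x$ from \eqref{cov} and the rescalings in \eqref{res_func}. Since $n=4$, collecting powers of $M_k$ (the outer $M_k$, the factor $M_k^{p_k}$ from $u_k^{p_k}=M_k^{p_k}v_k^{p_k}$, and the factor $M_k^{-2(p_k-1)}$ from $dx$) produces $M_k^{3-p_k}=M_k^{\tau_k}$, so the integral equals
\begin{align*}
-M_k^{\tau_k}\int_{\{|y|\le\s M_k^{(p_k-1)/2}\}/\Gamma}\tilde K_k\,\tilde f_k^{-\tau_k}\,v_k^{p_k}\,\phi\big(M_k^{-(p_k-1)/2}y\big)\,\sqrt{\det h_k}\;dy.
\end{align*}
Now $M_k^{\tau_k}\to 1$ by \eqref{tau_weak_est}, while $\phi(M_k^{-(p_k-1)/2}y)\to 1$, $\tilde f_k^{-\tau_k}\to 1$, $\sqrt{\det h_k}\to 1$, $\tilde K_k\to K_\infty(\bar x)$, $p_k\to 3$, and $v_k\to U_{K_\infty(\bar x)}$ in $C^2_{loc}$ by Proposition~\ref{p:m43}. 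To pass to the limit under the integral I would split the domain into $\{|y|\le R\}$, where the integrand converges uniformly, and its complement, where the pointwise bound $v_k(y)\le U_{K_k(x_k)}(y)+CM_k^{-2}\le C(1+|y|)^{-2}+CM_k^{-2}$ from Proposition~\ref{vUdiff} makes the tail $O(R^{4-2p_k}+M_k^{-2})$, which vanishes upon letting $k\to\infty$ and then $R\to\infty$ (using $p_k$ near $3>2$). Hence the right-hand side tends to $-K_\infty(\bar x)\int_{\RR^4/\Gamma}U_{K_\infty(\bar x)}^{3}\,dy=-\tfrac{K_\infty(\bar x)}{|\Gamma|}\int_{\RR^4}U_{K_\infty(\bar x)}^{3}\,dy$.

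Finally, with $n=4$ one has $U_c(y)=(8/c)^{1/2}(1+|y|^2)^{-1}$ from \eqref{U_def}, so $U_c^3=(8/c)^{3/2}(1+|y|^2)^{-3}$, and since $\int_0^\infty r^3(1+r^2)^{-3}\,dr=\tfrac14$,
\begin{align*}
\int_{\RR^4}U_c^{3}\,dy=\Big(\frac{8}{c}\Big)^{3/2}Vol(S^3)\int_0^\infty\frac{r^3}{(1+r^2)^3}\,dr=\frac{16\sqrt2}{4}\cdot\frac{Vol(S^3)}{c^{3/2}}=\frac{4\sqrt2\,Vol(S^3)}{c^{3/2}}.
\end{align*}
Setting $a$ equal to the limit of the right-hand side and substituting $c=K_\infty(\bar x)$ yields $a=-K_\infty(\bar x)\cdot\frac{4\sqrt2\,Vol(S^3)}{|\Gamma|K_\infty(\bar x)^{3/2}}=-\frac{4\sqrt2\,Vol(S^3)}{|\Gamma|\sqrt{K_\infty(\bar x)}}$, as claimed. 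The main obstacle is the passage to the limit in the rescaled bubble integral: the region of integration expands to all of $\RR^4/\Gamma$, so one needs decay estimates for $v_k^{p_k}$ that are uniform in $k$ — this is precisely where Propositions~\ref{p:m43} and \ref{vUdiff} together with $M_k^{\tau_k}\to 1$ from \eqref{tau_weak_est} are used; everything else is bookkeeping of scaling factors and the elementary integral above.
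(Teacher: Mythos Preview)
Your proof is correct and follows essentially the same route as the paper: test against a cutoff, move $L_{g_k}$ onto $M_ku_k$, rescale to the $y$-variables, and identify the limit as $-K_\infty(\bar x)\int_{\RR^4/\Gamma}U_{K_\infty(\bar x)}^3$. The only cosmetic difference is that you invoke the equation \eqref{confseq} immediately to replace $L_{g_k}(M_ku_k)$ by the nonlinear term, whereas the paper first separates $L_{g_k}=\Delta_{g_k}-\tfrac16R_{g_k}$, shows the scalar-curvature piece carries a factor $M_k^{4-2p_k}\to0$, and then uses the limit equation $\Delta U=-K_\infty(\bar x)U^3$; your tail estimate via Proposition~\ref{vUdiff} is in fact more explicit than the paper's appeal to Proposition~\ref{p:m43} alone.
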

\begin{proof}
Recall Remark~\ref{coord_setting} and the change of variables in \eqref{cov}, \eqref{res_func}. For any $C^2$ function $\phi$, define $\tilde \phi(y) = \phi(M_k^{-\frac{p_k-1}{2}}y)$. Integrating by parts, we obtain
\begin{align}
\begin{split}
&\int_{\{|x|\leq \s\}/\Gamma}(M_ku_k)(L_{g_k}\phi)\\
 = &\int_{\{|x|\leq \s\}/\Gamma}\Big(\Delta_x - \frac{1}{6}R_k\Big)(M_ku_k)\cdot \phi\\
=&M_k^{-2(p_k-1)}\int_{\{|y|\leq \s M_k^{\frac{p_k-1}{2}}\}/\Gamma}\Big(M_k^{p_k-1}\Delta_y-\frac{1}{6}\tilde R\Big)(M_k^2v_k)\cdot\tilde\phi\\
=&\Big(M_k^{\tau_k}\int_{\{|y|\leq \s M_k^{\frac{p_k-1}{2}}\}/\Gamma}(\Delta_yv_k)\cdot\tilde\phi\Big) -\Big(\frac{1}{6}M_k^{4-2p_k}\int_{\{|y|\leq M_k^{\frac{p_k-1}{2}}\s\}/\Gamma}\tilde R_k v_k\tilde \phi\Big).
\end{split}
\end{align}
Since $\lim_{k\to\infty}M_k^{4-2p_k}=0$, the second integral vanishes when $k\to\infty$. Thus
\begin{align}
\begin{split}
a\phi(\bar x) =&\lim_{\s\to0}\lim_{k\to\infty}\int_{\{|x|\leq \s\}/\Gamma}(M_ku_k)(L_{g_k}\phi)\\
 = &\lim_{\s\to 0}\lim_{k\to\infty}\int_{\{|y|\leq M_k^{\frac{2}{n-2}}\s\}/\Gamma}(\Delta_yv_k)\cdot\phi(M_k^{-\frac{p_k-1}{2}}y)\\
=&\Big(\int_{\RR^n/\Gamma}\Delta_yU_{K_\infty(\bar x)}(y)dy\Big)\cdot \phi(\bar x),
\end{split}
\end{align}
where the last equality is by Proposition \ref{p:m43}. Therefore the constant $a$ is
\begin{align}
\begin{split}
a = \int_{\RR^n/\Gamma}\Delta_yU_{K_\infty(\bar x)}(y)dy
&=-\int_{\RR^n/\Gamma}K_\infty(\bar x)U_{K_\infty(\bar x)}^3(y)dy\\
=&-\frac{16\sqrt{2}}{\sqrt{K_\infty(\bar x)}}\int_{\RR^n/\Gamma}(1+|y|^2)^{-3}dy\\
=&-\frac{16\sqrt{2}Vol(S^3)}{|\Gamma|\sqrt{K_\infty(\bar x)}}\int_{0}^\infty r^3(1+r^2)^{-3}dr =-\frac{4\sqrt{2}Vol(S^3)}{|\Gamma|\sqrt{K_\infty(\bar x)}}.
\end{split}
\end{align}
\end{proof}

On the other hand, recall that we assume for each $k$, $g_k$ is the conformal normal metric with conformal normal coordinates centered at point $x_k$, hence the limit metric $g$ is the conformal normal metric with conformal normal coordinates centered at point $\bar x$. Thus we have the following proposition.
\begin{proposition}
\label{prop_Abterm}
In $g$-conformal normal coordinates $\{x^i\}$ centered at $\bar x$, $G(\cdot, \bar x)$ has the expansion
\begin{align}
G(\cdot, \bar x) = b\psi_{\bar x}= b[r^{-2} + A_{\bar x} + O(r)],
\end{align}
where $r = |x|$, $\psi_{\bar x}$ is from Definition \ref{def_psi},
$A_{\bar x}$ is a constant, and
\begin{align}
\label{b}
b = -\frac{|\Gamma|}{2Vol(S^{3})}.
\end{align}
\end{proposition}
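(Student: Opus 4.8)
The plan is to show that $G(\cdot,\bar x)$ is a constant multiple of the Green's function $\psi_{\bar x}$ of Definition~\ref{def_psi}, and then to pin down the constant by a Green's-identity computation of its delta mass. Since, as noted just above the statement, the limit metric $g$ is the conformal normal metric in $g$-conformal normal coordinates $\{x^i\}$ centered at $\bar x$, the classical Lee--Parker expansion of the conformal Green's function in conformal normal coordinates (see \cite{Lee-Parker}) gives, in dimension $4$, that $\psi_{\bar x} = r^{-2} + A_{\bar x} + O(r)$ near $\bar x$, with $A_{\bar x}$ a constant; in dimension $4$ this is precisely the point where conformal normal coordinates are used, to kill the $\log r$ term that would otherwise appear. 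Granting this expansion, what remains is the relation $G(\cdot,\bar x) = b\,\psi_{\bar x}$ and the value of $b$.

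First I would compute $L_g\psi_{\bar x}$ in the sense of distributions. Fix $\phi \in C^2$ supported in a small ball around $\bar x$. Since $L_g = \Delta_g - \tfrac16 R_g$ is formally self-adjoint and the zeroth-order terms cancel in $\psi_{\bar x} L_g\phi - \phi L_g\psi_{\bar x}$, Green's identity on the annulus $\{\epsilon \leq |x| \leq \sigma\}/\Gamma$, together with $L_g\psi_{\bar x} = 0$ away from $\bar x$, reduces $\int_{\{|x|\leq\sigma\}/\Gamma}\psi_{\bar x}(L_g\phi)\,dV_g$ to $\lim_{\epsilon\to0}\int_{\{|x|=\epsilon\}/\Gamma}(\psi_{\bar x}\,\p_\nu\phi - \phi\,\p_\nu\psi_{\bar x})\,dS$, where $\p_\nu = -\p_r$ is the outward normal derivative of the annulus on the inner sphere. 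The term $\int \psi_{\bar x}\,\p_\nu\phi\,dS$ is $O(\epsilon)\to 0$, since $\psi_{\bar x} = O(\epsilon^{-2})$ while $\{|x|=\epsilon\}/\Gamma$ has area $|\Gamma|^{-1}Vol(S^3)\epsilon^3$; the term $-\int \phi\,\p_\nu\psi_{\bar x}\,dS = \int \phi\,\p_r\psi_{\bar x}\,dS$, with $\p_r\psi_{\bar x} = -2r^{-3} + O(1)$ and $\phi\to\phi(\bar x)$ uniformly on the shrinking sphere, tends to $-2\,|\Gamma|^{-1}Vol(S^3)\,\phi(\bar x)$. Hence $L_g\psi_{\bar x} = -\tfrac{2\,Vol(S^3)}{|\Gamma|}\,\delta_{\bar x}$.

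Comparing this with the defining normalization $\int_{\{|x|\leq\sigma\}/\Gamma} G(\cdot,\bar x)(L_g\phi)\,dV_g = \phi(\bar x)$, the function $w := G(\cdot,\bar x) + \tfrac{|\Gamma|}{2\,Vol(S^3)}\psi_{\bar x}$ satisfies $L_g w = 0$ weakly on $M$, the two delta masses cancelling exactly. Since $G(\cdot,\bar x)$ and $\psi_{\bar x}$ each have only the integrable $r^{-2}$-type singularity at $\bar x$, we have $w \in L^1_{loc}$, so elliptic regularity upgrades $w$ to a smooth (in the orbifold sense) solution of $L_g w = 0$ on the closed orbifold $M$; since $g$ is conformal to the ambient metric of positive scalar curvature, $L_g$ has trivial kernel, so $w \equiv 0$. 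Therefore $G(\cdot,\bar x) = b\,\psi_{\bar x}$ with $b = -\tfrac{|\Gamma|}{2\,Vol(S^3)}$, and combining this with the expansion $\psi_{\bar x} = r^{-2} + A_{\bar x} + O(r)$ yields the assertion.

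The only delicacy is bookkeeping: the sign of the inner normal, and the factor $|\Gamma|$ entering through the area $|\Gamma|^{-1}Vol(S^3)\epsilon^3$ of the link $S^3/\Gamma$, together with the invocation of the Lee--Parker conformal-normal-coordinate expansion. None of these steps is substantial — the determination of $b$ is the classical identification of the fundamental solution of the Laplacian on a Euclidean cone, and the remainder is a brief uniqueness argument using positivity of the conformal Laplacian.
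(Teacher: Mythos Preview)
Your proof is correct and follows essentially the same approach as the paper, which simply cites \cite[Definition~6.2 and Lemma~6.4]{Lee-Parker} and tracks the normalization constants through the notational differences (their $\Box$ versus our $L_g$, their $\Gamma_P$ versus our $G$, etc.). The only difference is that you derive the value of $b$ explicitly via the Green's-identity computation of the delta mass of $\psi_{\bar x}$ together with the uniqueness argument from triviality of $\ker L_g$, whereas the paper leaves that derivation entirely to the cited reference; both routes rest on the same Lee--Parker expansion of $\psi_{\bar x}$ in conformal normal coordinates.
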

\begin{proof}
The proof is given by \cite[Definition 6.2 and Lemma 6.4]{Lee-Parker}. Note that our notation is different from \cite{Lee-Parker}. Our point $\bar x$ is their point $P$; our operator $L_g$ is equal to $-1/6$ multiplied with their box operator $\Box$; our $G(\cdot, \bar x)$ is equal to $-6$ multiplied with their $\Gamma_P$, our $\psi_{\bar x}$ is their $G$. And the $\Gamma$ in our equation \eqref{b} is the quotient group near point $\bar x$.\\
\end{proof}
Then we can relate the Pohozaev identity with the constant term $A_{\bar x}$.
\begin{lemma}\label{lemma_pohoA}
We have
\begin{align}
\lim_{\s\to0}P(\s, h_{\bar x}) = -\frac{16Vol(S^3)}{|\Gamma|K_\infty(\bar x)}\cdot A_{\bar x}.
\end{align}
\end{lemma}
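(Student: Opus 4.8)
The plan is to combine what we already know about the limit function $h_{\bar x}$ with a direct computation of the Pohozaev surface integral. By Corollary~\ref{greenlimit} we have $h_{\bar x} = a\,G(\cdot,\bar x)$, by Proposition~\ref{prop_Abterm} we have $G(\cdot,\bar x) = b\,\psi_{\bar x} = b\bigl(r^{-2} + A_{\bar x} + \eta\bigr)$ in the $g$-conformal normal coordinates $\{x^i\}$ centered at $\bar x$, where $r = |x|$ and $\eta = O(r)$, and by Lemma~\ref{lemma_aterm} together with \eqref{b},
\[
ab = \Bigl(-\tfrac{4\sqrt{2}\,Vol(S^3)}{|\Gamma|\sqrt{K_\infty(\bar x)}}\Bigr)\Bigl(-\tfrac{|\Gamma|}{2\,Vol(S^3)}\Bigr) = \frac{2\sqrt{2}}{\sqrt{K_\infty(\bar x)}},\qquad (ab)^2 = \frac{8}{K_\infty(\bar x)}.
\]
So, writing $C_0 := ab$, near $\bar x$ we have $h_{\bar x} = C_0\bigl(r^{-2} + A_{\bar x} + \eta\bigr)$. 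The one regularity point I would record first is that $|\nabla\eta| = O(1)$ near $\bar x$: since $L_g h_{\bar x} = 0$ away from $\bar x$, and in conformal normal coordinates $R_g(\bar x) = 0$, $(R_g)_{,i}(\bar x) = 0$, $\sqrt{\det g} = 1 + O(r^N)$ with $N \ge 5$, and $g^{ij}x^j = x^i$, a short computation gives $\Delta_g(r^{-2}) = O(r)$ and $R_g r^{-2} = O(1)$, hence $\Delta_g\eta = C_0^{-1}\tfrac{1}{6}R_g h_{\bar x} - \Delta_g(r^{-2}) = O(1)$ near $\bar x$; since $\eta$ extends continuously across $\bar x$ and $\nabla\eta \in L^1_{loc}$, the equation extends with no distributional mass, so elliptic regularity gives $\eta \in C^{1,\alpha}$ near $\bar x$. (Alternatively this is the $C^1$-form of the Lee--Parker expansion in \cite[Lemma~6.4]{Lee-Parker}.)

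Next I would substitute $u = h_{\bar x}$ into the Pohozaev quantity
\[
P(\sigma, h_{\bar x}) = \int_{\{|x| = \sigma\}/\Gamma}\Bigl( h_{\bar x}\,\partial_r h_{\bar x} - \tfrac{\sigma}{2}\,|\nabla h_{\bar x}|^2 + \sigma\,|\partial_r h_{\bar x}|^2\Bigr)\,d\sigma
\]
(here $n = 4$, so $\tfrac{n-2}{2} = 1$; all quantities are Euclidean, and in conformal normal coordinates the difference between the Euclidean and the $g$-quantities is $O(\sigma^2)$, contributing only $o(1)$), and use $\int_{\{|x|=\sigma\}/\Gamma}d\sigma = |\Gamma|^{-1}Vol(S^3)\,\sigma^3$. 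The key algebraic fact is that the pure singular part $C_0 r^{-2}$ is Euclidean-harmonic with $P(\sigma, C_0 r^{-2}) \equiv 0$: on $\{|x| = \sigma\}$ its three contributions to the integrand are $-2C_0^2\sigma^{-5}$, $-2C_0^2\sigma^{-5}$ and $+4C_0^2\sigma^{-5}$, which cancel. Thus all $\sigma^{-2}$-divergent terms cancel; expanding the remaining (bilinear) cross terms and using $|\eta| = O(r)$, $|\nabla\eta| = O(1)$, every term containing $\eta$ is $O(\sigma)$ and drops out in the limit, and the only surviving finite contribution is the cross term between $C_0 r^{-2}$ and the constant $C_0 A_{\bar x}$ inside $h_{\bar x}\,\partial_r h_{\bar x}$, namely
\[
\int_{\{|x|=\sigma\}/\Gamma} C_0^2 A_{\bar x}\,(-2\sigma^{-3})\,d\sigma = -\frac{2\,C_0^2 A_{\bar x}\,Vol(S^3)}{|\Gamma|}.
\]
Therefore
\[
\lim_{\sigma\to 0} P(\sigma, h_{\bar x}) = -\frac{2\,(ab)^2 A_{\bar x}\,Vol(S^3)}{|\Gamma|} = -\frac{16\,Vol(S^3)}{|\Gamma|\,K_\infty(\bar x)}\,A_{\bar x},
\]
which is the assertion.

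The bulk of the work --- though routine --- is the term-by-term bookkeeping: checking that the $\sigma^{-2}$ pieces cancel exactly, and that each quantity built from $\eta$ (in particular $\int_{S_\sigma}\partial_r\eta$ and $\int_{S_\sigma}|\nabla\eta|^2$) is genuinely $O(\sigma)$. The only step that is not a plain Euclidean computation is the $C^1$-control $|\nabla\eta| = O(1)$ of the remainder $\eta = \psi_{\bar x} - r^{-2} - A_{\bar x}$; this is exactly where the choice of conformal normal coordinates is used, since it is precisely the vanishing of $R_g$ to second order at $\bar x$ together with $\sqrt{\det g} = 1 + O(r^N)$ that keeps $\Delta_g\eta$ bounded near $\bar x$. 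Everything else is a direct computation on the geodesic spheres $\{|x| = \sigma\}/\Gamma$.
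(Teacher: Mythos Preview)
Your proof is correct and follows essentially the same approach as the paper: write $h_{\bar x} = ab(r^{-2} + A_{\bar x} + O(r))$, plug into the Pohozaev surface integral, observe that the $\sigma^{-5}$ divergent pieces cancel and that the only finite contribution comes from the $A_{\bar x}$ cross term, then substitute the values of $a$ and $b$. The paper simply expands the integrand directly, while you organize the cancellation via the identity $P(\sigma, C_0 r^{-2}) \equiv 0$, and you give a more explicit justification for $|\nabla\eta| = O(1)$ than the paper does (which just cites the Lee--Parker expansion); these are minor presentational differences, not a different argument.
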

\begin{proof} We will write $G$ instead of $G(\cdot, \bar x)$. Using $h_{\bar x} = a G$, we have
\begin{align}
P(\s, h_{\bar x}) = a^2\int_{\{|x|=\s\}/\Gamma} \Big\{\Big(\Big|\frac{\p G}{\p \nu}\Big|^2 -\frac{1}{2}|\n G|^2\Big)\s+G\frac{\p G}{\p\nu_x}\Big\}.
\end{align}
By Proposition \ref{prop_Abterm}, on $|x| = \s$ for small $\s$,
\begin{align}
&G= b[\s^{-2} + A_{\bar x} + O(\s)], \quad \Big|\frac{\p G}{\p \nu}\Big|=b[-2\s^{-3} + O(1)],\quad |\nabla G|=b[-2\s^{-3} + O(1)].
\end{align}
Then we have
\begin{align}
\notag
\Big(\Big|\frac{\p G}{\p \nu}\Big|^2 -\frac{1}{2}|\n G|^2\Big)\s +G\frac{\p G}{\p\nu_x}
&=b^2\Big(\frac{\s}{2}\cdot[-2\s^{-3}+O(1)]^2+[\s^{-2}+A_{\bar x}+O(\s)]\cdot[-2\s^{-3}+O(1)]\Big)\\
&=b^2\Big(\frac{\s}{2}\cdot[4\s^{-6}+O(\s^{-3})]-2\s^{-5}-2A_{\bar x}\s^{-3}+O(\s^{-2})\Big)\\
\notag
&=-2b^2A_{\bar x}\s^{-3} + O(\s^{-2}).
\end{align}
Hence
\begin{align}
P(\s, h_{\bar x}) =& a^2\int_{\{|x|=\s\}/\Gamma} \Big(-2b^2A_{\bar x}\s^{-3} + O(\s^{-2})\Big)
=-\frac{2a^2b^2A_{\bar x}\cdot Vol(S^3)}{|\Gamma|} + O(\s).
\end{align}
Letting $\s \to0$, using Lemma \ref{lemma_aterm} and Proposition \ref{prop_Abterm}, we have
\begin{align}
\begin{split}
\lim_{\s\to0}P(\s, h_{\bar x}) =& -\frac{2a^2b^2A_{\bar x}\cdot Vol(S^3)}{|\Gamma|}\\
=&(-2)\cdot\frac{Vol(S^3)}{|\Gamma|}\cdot \frac{32Vol(S^3)^2}{|\Gamma|^2K_\infty(\bar x)}\cdot \frac{|\Gamma|^2}{4Vol(S^3)^2}\cdot A_{\bar x} =-\frac{16Vol(S^3)}{|\Gamma|K_\infty(\bar x)}\cdot A_{\bar x}.
\end{split}
\end{align}
\end{proof}
\begin{proposition}\label{Aprop}
Assuming Condition \ref{blow-up_cond}, we have the following inequality
\begin{align}
A_{\bar x}\leq -\frac{\Delta_gK_\infty(\bar x)}{24K_\infty(\bar x)}.
\end{align}
Moreover, if $\tau_k = 0$ for all $k$, we have the equality
\begin{align}
A_{\bar x} = -\frac{\Delta_gK_\infty(\bar x)}{24K_\infty(\bar x)}.
\end{align}
Here, $\Delta_g$ is the Laplacian with respect to the limit conformal normal metric $g$. 
\end{proposition}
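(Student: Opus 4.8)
The plan is to derive the bound purely by combining the two evaluations of $\lim_{\s\to 0}P(\s, h_{\bar x})$ that are already in hand. On the one side, Lemma~\ref{lemma_pohoA} gives the exact identity $\lim_{\s\to0}P(\s, h_{\bar x}) = -\tfrac{16\,Vol(S^3)}{|\Gamma|\,K_\infty(\bar x)}\,A_{\bar x}$; on the other, Proposition~\ref{pohoprop} gives the lower bound $\lim_{\s\to0}P(\s, h_{\bar x}) \geq \tfrac{2\,\Delta_gK_\infty(\bar x)\,Vol(S^3)}{3\,|\Gamma|\,K_\infty(\bar x)^2}$. So the first step is simply to substitute the former into the latter and solve for $A_{\bar x}$.

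The one thing to watch is the sign. Because the coefficient of $A_{\bar x}$ in Lemma~\ref{lemma_pohoA} is negative, dividing the resulting inequality through by $-\tfrac{16\,Vol(S^3)}{|\Gamma|\,K_\infty(\bar x)}$ reverses its direction; the factors of $|\Gamma|$, $Vol(S^3)$ and one power of $K_\infty(\bar x)$ then cancel, and since $2/(3\cdot 16) = 1/24$ one is left with $-A_{\bar x} \geq \tfrac{\Delta_gK_\infty(\bar x)}{24\,K_\infty(\bar x)}$, i.e.\ exactly the asserted inequality $A_{\bar x} \leq -\Delta_gK_\infty(\bar x)/(24K_\infty(\bar x))$.

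For the equality statement I would recall that in the proof of Proposition~\ref{pohoprop} the only place an inequality is incurred is the term $I_{k,6} = \tfrac{\tau_k}{p_k+1}\int_{\{|x|\leq\s\}/\Gamma}K_kf_k^{-\tau_k}u_k^{p_k+1}$, which is nonnegative and vanishes identically when $\tau_k = 0$ for all $k$. In that case Proposition~\ref{pohoprop} holds with equality, so the chain above is an equality throughout and $A_{\bar x} = -\Delta_gK_\infty(\bar x)/(24K_\infty(\bar x))$.

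I do not anticipate any real obstacle: all the analytic work has already been done — the term-by-term estimates of the Pohozaev identity in Lemmas~\ref{lemma_i1}--\ref{lemma_i5}, the identification $M_ku_k\to aG(\cdot,\bar x)$ of Corollary~\ref{greenlimit} with the constant $a$ computed in Lemma~\ref{lemma_aterm}, and the conformal-normal-coordinate expansion $G(\cdot,\bar x) = b[r^{-2}+A_{\bar x}+O(r)]$ of Proposition~\ref{prop_Abterm} feeding into Lemma~\ref{lemma_pohoA}. The proof of this proposition is therefore a one-line bookkeeping step combining Lemma~\ref{lemma_pohoA} with Proposition~\ref{pohoprop}, with the sign reversal being the only subtlety.
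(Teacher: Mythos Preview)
Your proposal is correct and matches the paper's own proof essentially line for line: the paper also simply combines the identity of Lemma~\ref{lemma_pohoA} with the inequality of Proposition~\ref{pohoprop}, divides through by the negative coefficient of $A_{\bar x}$ (reversing the inequality), and notes that the $\tau_k=0$ case gives equality because Proposition~\ref{pohoprop} then holds with equality. There is nothing to add.
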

\begin{proof}
Assuming Condition \ref{blow-up_cond}, by Proposition \ref{pohoprop}, we have
\begin{align}
\lim_{\s\to0}P(\s, h_{\bar x}) \geq \frac{2\Delta_gK_\infty(\bar x)\cdot Vol(S^3)}{3|\Gamma|K_\infty(\bar x)^2}.
\end{align}
By Lemma \ref{Aprop}, we have
\begin{align}
\lim_{\s\to0}P(\s, h_{\bar x}) = -\frac{16Vol(S^3)}{|\Gamma|K_\infty(\bar x)}\cdot A_{\bar x}.
\end{align}
Hence
\begin{align}
A_{\bar x}\leq -\frac{2\Delta_gK_\infty(\bar x)\cdot Vol(S^3)}{3|\Gamma|K_\infty(\bar x)^2}\cdot \frac{|\Gamma|K_\infty(\bar x)}{16Vol(S^3)} = -\frac{\Delta_gK_\infty(\bar x)}{24K_\infty(\bar x)}.
\end{align}
The case that $\tau_k = 0$ for all $k$ follows similarly.
\end{proof}
Moreover, by relating $A_{\bar x}$ with the mass, we can remove the assumption "conformal normal metric", as follows.
\begin{proposition}
\label{mass_prop}
Assuming Condition \ref{blow-up_cond}, let $g = \lim_{k\to \infty}g_k$ be the limit metric, but do not necessarily assume that $g_k, g$ are conformal normal metrics. Let
$\hat g_{\bar x} = \psi^2_{\bar x} g$
be the conformal blow-up of $g$ at the point $\bar x$, as in Definition \ref{def_psi}.
We have the following inequality
\begin{align}
m(\hat g_{\bar x}) \leq -\frac{\Delta_gK_\infty(\bar x)}{2K_\infty(\bar x)}.
\end{align}
Moreover, if $\tau_k = 0$ for all $k$, we have the equality
\begin{align}
m(\hat g_{\bar x}) = -\frac{\Delta_gK_\infty(\bar x)}{2K_\infty(\bar x)}.
\end{align}
\end{proposition}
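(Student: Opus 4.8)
The plan is to upgrade Proposition~\ref{Aprop} --- which controls the conformal normal regular term $A_{\bar x}$ of the limit metric --- into the conformally invariant statement about the ADM mass asserted here. Since Proposition~\ref{Aprop} was proved under the running convention of Remark~\ref{coord_setting} that the $g_k$ are conformal normal at $x_k$, whereas now $g_k\to g$ is arbitrary, the argument splits into three parts: (i) pass to the conformal normal gauge; (ii) identify the regular term of the conformal normal metric with the mass of its conformal blow-up; (iii) observe that both the mass of the conformal blow-up and the quantity $\Delta_g K_\infty(\bar x)/K_\infty(\bar x)$ are unchanged by the gauge change, so the bound descends to the original $g$.

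For (i): given $g_k\to g$, I would use \cite[Theorems~5.1 and 5.6]{Lee-Parker} to produce conformal factors $\phi_k$ with $\tilde g_k:=\phi_k^{4/(n-2)}g_k$ conformal normal at $x_k$, normalized so that $\phi_k(x_k)=1$ and $\nabla\phi_k(x_k)=0$; the explicit nature of that construction yields $\phi_k\to\phi$ smoothly, with $\tilde g:=\phi^{4/(n-2)}g$ conformal normal at $\bar x$, $\phi(\bar x)=1$, $\nabla\phi(\bar x)=0$. By the conformal invariance of \eqref{conformalequ} (Section~\ref{csce}), $\tilde u_k:=\phi_k^{-1}u_k$ solves an equation of the same type, and since $\phi_k\to\phi$ is smooth with $\phi_k(x_k)=1$ and $\nabla\phi_k(x_k)=0$, the blow-up data at $x_k$ (local maximum, $M_k\to\infty$, the pointwise decay bound, and the one-critical-point condition defining ``isolated simple'') is preserved, so Condition~\ref{blow-up_cond} again holds. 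Applying Proposition~\ref{Aprop} to this sequence then gives, writing $\widetilde A_{\bar x}$ for the conformal normal regular term of $\tilde g$ at $\bar x$,
\[
\widetilde A_{\bar x}\ \leq\ -\frac{\Delta_{\tilde g}K_\infty(\bar x)}{24\,K_\infty(\bar x)},
\]
with equality when $\tau_k=0$ for all $k$.

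For (ii) and (iii): by Definition~\ref{def_psi} and Proposition~\ref{prop_Abterm} one has $\psi^{\tilde g}_{\bar x}=r^{2-n}+\widetilde A_{\bar x}+O(r)$ in $\tilde g$-conformal normal coordinates, and inverting these coordinates --- as in the mass computation of \cite[Section~9]{Lee-Parker} --- the relevant end of $\hat{\tilde g}_{\bar x}=(\psi^{\tilde g}_{\bar x})^{4/(n-2)}\tilde g$ is ALE and $m(\hat{\tilde g}_{\bar x})=4(n-1)\widetilde A_{\bar x}$, i.e.\ $12\widetilde A_{\bar x}$ in dimension $4$; the factor $1/|\Gamma|$ from integrating the flux over $S_r/\Gamma$ is absorbed by the $|\Gamma|$ in Definition~\ref{mass_def}, so the dimensional constant is as in the manifold case. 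Next, the conformal blow-up depends only on the pointed conformal class: the Green's function $\psi_{\bar x}$ of Definition~\ref{def_psi} transforms by $\psi^{\tilde g}_{\bar x}=\phi(\bar x)^{-1}\phi^{-1}\psi^{g}_{\bar x}=\phi^{-1}\psi^{g}_{\bar x}$, hence $\hat{\tilde g}_{\bar x}=\phi^{-4/(n-2)}(\psi^{g}_{\bar x})^{4/(n-2)}\cdot\phi^{4/(n-2)}g=\hat g_{\bar x}$, so $m(\hat g_{\bar x})=m(\hat{\tilde g}_{\bar x})=12\widetilde A_{\bar x}$. Finally, $\nabla\phi(\bar x)=0$ makes the conformal change of the Laplacian act at $\bar x$ only through the pointwise factor $\phi(\bar x)^{-4/(n-2)}=1$, so $\Delta_{\tilde g}K_\infty(\bar x)=\Delta_gK_\infty(\bar x)$ (recall $\nabla K_\infty(\bar x)=0$ by Proposition~\ref{grad0}) and $K_\infty(\bar x)$ is unchanged; combining with the displayed bound gives $m(\hat g_{\bar x})\leq -\Delta_gK_\infty(\bar x)/(2K_\infty(\bar x))$, with equality when $\tau_k\equiv 0$.

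The main obstacle I anticipate is step (ii): checking that the classical identity $m=4(n-1)A_{\bar x}$ of \cite[Section~9]{Lee-Parker} genuinely survives the passage to orbifolds with the $|\Gamma|$-normalized mass of Definition~\ref{mass_def} --- that is, carefully tracking the factor of $|\Gamma|$ through the coordinate inversion and the boundary flux integral, and verifying that the inverted conformal blow-up is ALE of order exceeding $(n-2)/2$ so that, by the Bartnik-type reasoning recalled after Definition~\ref{mass_def}, the mass is well defined and independent of the asymptotic coordinates. Everything else is either a direct appeal to results already established in the excerpt or a routine conformal-change computation.
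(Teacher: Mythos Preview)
Your proposal is correct and follows essentially the same approach as the paper: pass to the conformal normal gauge via Lee--Parker, invoke Proposition~\ref{Aprop}, and then undo the gauge using (a) the identity $m=12A_{\bar x}$ and (b) the invariance of $\Delta K_\infty(\bar x)$ under a conformal change with $\phi(\bar x)=1$, $\nabla\phi(\bar x)=0$. The paper collapses your steps (ii)--(iii) into a single citation of \cite[Lemma~9.7]{Lee-Parker} for $m(\hat g_{\bar x})=12A_{\bar x}$, whereas you spell out both the coordinate-inversion computation and the clean observation that $\hat{\tilde g}_{\bar x}=\hat g_{\bar x}$ via the transformation law $\psi^{\tilde g}_{\bar x}=\phi^{-1}\psi^g_{\bar x}$; this is a slightly more self-contained route but not a genuinely different argument. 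One minor remark: the parenthetical appeal to $\nabla K_\infty(\bar x)=0$ is unnecessary, since in the conformal transformation formula for the Laplacian the first-order term already vanishes at $\bar x$ because $\nabla\phi(\bar x)=0$.
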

\begin{proof}
For each $k$, let $r_k$ denote the $g_k$-distance function from the point $x_k$. Assume
$\tilde g_k = \phi_k^2 g_k$
is the conformal normal metric with conformal normal coordinates $\{\tilde x^i\}$ centered at point $x_k$. By \cite[Theorem 5.6]{Lee-Parker}, after applying a dilation and a translation to the coordinates $\{\tilde x^i\}$, we may assume $\phi_k(x_k) = 1$ and $\nabla\phi_k(x_k)=0$, in other words, for small $r_k$,
$\phi_k = 1 + O(r_k^2)$. When $k\to 0$, we have that
$\tilde g = \phi^2 g$,
where $g, \tilde g, \phi$ are the limit of $g_k, \tilde g_k, \phi_k$ as $k\to\infty$. Moreover, $\phi = 1 + O(r^2)$, where $r$ is the $g$-distance function from the point $\bar x$.
Next, recall that the conformal transformation law of the Laplacian for $\tilde g = e^{2\varphi} g$ is
\begin{align}
\label{trans_lap}
\Delta_{\tilde g} = e^{-2\varphi}\Delta_g + (n-2)e^{-2\varphi}g^{ij}\frac{\p \varphi}{\p x_j}\frac{\p}{\p x_i}.
\end{align}
Thus, for any $f\in C^2$, we have
$\Delta_{\tilde g}f(\bar x) = \Delta_{g}f(\bar x)$.
Let $A_{\bar x}$ be the regular part corresponding to conformal blow-up of $\tilde g$ at point $\bar x$. Let $\hat g_{\bar x}$ be the conformal blow-up of $g$ at point $\bar x$. By \cite[Lemma~9.7]{Lee-Parker}, we have 
\begin{align}
\label{m=12A}
m(\hat g_{\bar x}) = 12 A_{\bar x}.
\end{align}
Therefore, we know
\begin{align}
\begin{split}
&m(\hat g_{\bar x}) \leq -\frac{\Delta_gK_\infty(\bar x)}{2K_\infty(\bar x)}\ \ \Longleftrightarrow\ \ A_{\bar x}\leq -\frac{\Delta_{\tilde g}K_\infty(\bar x)}{24K_\infty(\bar x)},\\
&m(\hat g_{\bar x}) = -\frac{\Delta_gK_\infty(\bar x)}{2K_\infty(\bar x)}\ \ \Longleftrightarrow\ \ A_{\bar x}= -\frac{\Delta_{\tilde g}K_\infty(\bar x)}{24K_\infty(\bar x)},
\end{split}
\end{align}
which implies that this proposition is equivalent to Proposition \ref{Aprop}.
\end{proof}

\section{Blow-up points must be isolated and simple}
\label{s:blow-up}
In \cite{Li-Zhu}, they proved that on a 3-dimensional compact manifold, all blow-up points for $\eqref{confseq}$ must be isolated simple blow-up points. The same result in higher dimensions is proved by \cite{KMS}, \cite{Li-Zhang}, but only for constant prescribed scalar curvature. Here, we will modify their proofs to show that the same result holds on 4-dimensional compact orbifolds, for a sequence of variable prescribed scalar curvatures.

Let $(M, g)$ be a compact Riemannian 4-dimensional orbifold with singularities
\begin{align}
\Sigma_\Gamma = \{(q_1,\Gamma_1),\cdots,(q_l,\Gamma_l)\}
\end{align}
and positive scalar curvature $R_g$. Assume $u$ is a positive $C^2$ solution of equation \eqref{conformalequ} on $M$, where $K$ is a positive $C^2$ function and $f$ is a positive $C^1$ function. For any point $\bar x\in M$, define $\Omega_{\bar x,\s}$ in the following:

a) if $\bar x$ is a smooth point, define $\Omega_{\bar x,\s} = B_\s(\bar x)$ for some $\s>0$ such that its closure $\bar \Omega_{\bar x,\s}$ doesn't include any singular point. In other words, $d_g(\bar x, \{q_1,\cdots,q_l\}) > \s$;

b) if $\bar x = q_j$ for some $1\leq j\leq l$, choose $\s = \s_j$ where $\s_j$ is as defined in Definition \ref{obf_def}. Then the neighborhood of $\bar x$ is a quotient ball $B_\s(\bar x)/\Gamma$. Define $\Omega_{\bar x,\s} = \pi_j^*(B_\s(\bar x)/\Gamma)$ to be the lifting-up space. Denote the lifting-up functions and metric still by $u$, $f$, $K$ and $g$.

First, let us recall a lemma from \cite{Li-Zhu}.
\begin{lemma}[\cite{Li-Zhu} Lemma 5.1]
\label{LZ_lemma5.1}
Let $(M,g), u, K, f, \Omega_{\bar x, \s}$ be as defined above. Given any small $\varepsilon >0$ and large $R'>1$, there exists a large positive $C_0$, depending only on $M, g, \Vert f\Vert_{C^1(M)}$, $\inf_M K, \Vert K\Vert_{C^2(M)}, \varepsilon$ and $R'$ such that for any compact $S\subset \bar \Omega_{\bar x, \s}$, if $u$ satisfies
\begin{align}
\label{max_u_cond}
\max_{x\in \bar \Omega_{\bar x, \s}\setminus S} d_g(x, S)^{\frac{2}{p-1}}u(x) \geq C_0,
\end{align}
then we have $p > 3-\varepsilon$ and for some local maximum point of $u$ in $\Omega_{\bar x, \s}\setminus S$, denoted as $x_0$,
\begin{align}
\Big\Vert u(x_0)^{-1}u\Big(\exp_{x_0}\Big(u(x_0)^{-\frac{p-1}{2}}x\Big)\Big) - U_{K(x_0)}(x)\Big\Vert_{C^2(|x|<2R')}<\varepsilon,
\end{align}
where $d_g(x,S)$ denotes the distance of $y$ to $S$, and $d_g(x,S)=1$ if $S=\emptyset$.
\end{lemma}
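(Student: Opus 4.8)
The plan is to argue by contradiction, using a blow-up (rescaling) argument combined with a Liouville-type classification, exactly in the spirit of Schoen and of Li--Zhu. Since the statement is local and, by construction, $\Omega_{\bar x,\s}$ is a \emph{smooth} ball (one has already passed to the local universal cover near any orbifold point), the orbifold structure contributes nothing beyond $\Gamma$-equivariance of the lifted data, and the argument is identical to the manifold case. Suppose the conclusion fails for some $\varepsilon_0>0$ and $R_0'>1$. Then, for each $j$, there are a positive $C^2$ function $K_j$ with $\inf_M K_j\ge\delta_0$ and $\|K_j\|_{C^2(M)}\le\Lambda_0$, a positive $C^1$ function $f_j$ with $\|f_j\|_{C^1(M)}\le\Lambda_0$, an exponent $p_j\in(1,3]$ (with $\tau_j=3-p_j$), a positive $C^2$ solution $u_j$ of \eqref{conformalequ} for these data, and a compact $S_j\subset\bar\Omega_{\bar x,\s}$, such that $\max_{x\in\bar\Omega_{\bar x,\s}\setminus S_j}d_g(x,S_j)^{2/(p_j-1)}u_j(x)\ge j$, yet either $p_j\le 3-\varepsilon_0$ or the asserted $C^2$-estimate fails at every local maximum of $u_j$ in $\Omega_{\bar x,\s}\setminus S_j$. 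Passing to a subsequence, $p_j\to p_*$, $K_j\to K_\infty$ in $C^{1,\alpha}_{loc}$ and $f_j\to f_\infty$ in $C^{0,\alpha}_{loc}$; in the regime where the lemma is applied one has the uniform bound $p_j\ge 1+\epsilon_0$, which I will use to rule out the degenerate case $p_*=1$.

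Next I would pick $\bar x_j\in\bar\Omega_{\bar x,\s}\setminus S_j$ attaining the maximum, set $M_j:=u_j(\bar x_j)$ and $\sigma_j:=d_g(\bar x_j,S_j)$, so that $\sigma_j^{2/(p_j-1)}M_j\ge j\to\infty$, and (after a further subsequence) $\bar x_j\to x_*$ with $c_*:=K_\infty(x_*)>0$. In $g$-normal coordinates centered at $\bar x_j$, define the rescaling $v_j(y)=M_j^{-1}u_j\!\left(\exp_{\bar x_j}(M_j^{-(p_j-1)/2}y)\right)$ and $R_j:=\tfrac12 M_j^{(p_j-1)/2}\sigma_j\to\infty$. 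By the scale-invariance of \eqref{conformalequ}, $v_j$ solves $L_{h_j}v_j+\tilde K_j\tilde f_j^{-\tau_j}v_j^{p_j}=0$ on $B_{R_j}(0)$, with $h_j\to\delta_{\mathrm{Euc}}$ in $C^2_{loc}$, $\tilde K_j\to c_*$, and $\tilde f_j^{-\tau_j}$ converging to a positive constant. From the maximality of $\bar x_j$ one gets $v_j\le 2^{2/(p_j-1)}\le C(\epsilon_0)$ on $B_{R_j}(0)$; moreover $v_j(0)=1$, $v_j>0$, and the first-order condition at $\bar x_j$ for the weighted quantity gives $|\nabla v_j(0)|=O(R_j^{-1})\to0$. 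Interior Schauder estimates bound $v_j$ in $C^{2,\alpha}_{loc}(\RR^4)$, so along a subsequence $v_j\to v$ in $C^2_{loc}(\RR^4)$, with $v>0$ (strong maximum principle), $v(0)=1$, $\nabla v(0)=0$, and $\Delta v+c\,v^{p_*}=0$ on $\RR^4$ for a positive constant $c$.

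The decisive step is the Liouville dichotomy. If $p_*<3$, the existence of the positive (bounded) solution $v$ of the \emph{subcritical} equation on $\RR^4$ contradicts the Gidas--Spruck nonexistence theorem \cite{G-S}; hence $p_*=3$, which already contradicts $p_j\le 3-\varepsilon_0$ and thus forces us, for $j$ large, into the second alternative. With $p_*=3$, the classification of positive entire solutions of the critical equation \cite{G-S}, together with $v(0)=1$ and $\nabla v(0)=0$, identifies $v$ with the standard bubble $U_{K_\infty(x_*)}$; since this bubble is radially strictly decreasing with strict maximum at the origin and $v_j\to U_{K_\infty(x_*)}$ in $C^2_{loc}$, for $j$ large $v_j$ has a local maximum $y_j\in B_1(0)$ with $y_j\to0$ and $v_j(y_j)\to 1$. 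Then $x_0^{(j)}:=\exp_{\bar x_j}(M_j^{-(p_j-1)/2}y_j)$ is a local maximum of $u_j$, and it lies in $\Omega_{\bar x,\s}\setminus S_j$ for $j$ large, since its distance to $\bar x_j$ is $O(M_j^{-(p_j-1)/2})=o(\sigma_j)$. Finally, re-centering the rescaling at $x_0^{(j)}$ — which differs from the one at $\bar x_j$ only by a translation by $y_j\to0$ and a dilation by $v_j(y_j)^{(p_j-1)/2}\to1$, both negligible in $C^2(|x|<2R_0')$ — together with $K_j(x_0^{(j)})\to K_\infty(x_*)$ and the continuous dependence of $U_c$ on $c$, yields
\[
\Bigl\|\,u_j(x_0^{(j)})^{-1}u_j\bigl(\exp_{x_0^{(j)}}(u_j(x_0^{(j)})^{-(p_j-1)/2}x)\bigr)-U_{K_j(x_0^{(j)})}(x)\,\Bigr\|_{C^2(|x|<2R_0')}\longrightarrow 0,
\]
which is $<\varepsilon_0$ for $j$ large, contradicting the second alternative and completing the proof.

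I expect the main obstacles to be twofold. First, converting the maximizer $\bar x_j$ of the \emph{weighted} quantity $d_g(\cdot,S_j)^{2/(p_j-1)}u_j$ into a genuine local maximum $x_0$ of $u_j$ that is interior to $\Omega_{\bar x,\s}\setminus S_j$ (including staying away from $\partial\Omega_{\bar x,\s}$), and carefully propagating the re-centering of the blow-up through the $C^2$ norm — this is where essentially all the bookkeeping lives. Second, the Liouville step: one must verify that the rescaled domains $B_{R_j}(0)$ genuinely exhaust $\RR^4$ (this uses $\sigma_j^{2/(p_j-1)}M_j\to\infty$ together with $p_j\ge 1+\epsilon_0$) and invoke the correct nonexistence (subcritical) and classification (critical) statements on $\RR^4$. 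By contrast, the orbifold feature is dispatched for free by the passage to the local cover already built into the definition of $\Omega_{\bar x,\s}$.
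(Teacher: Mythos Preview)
Your proposal is correct and follows exactly the Li--Zhu contradiction/rescaling/Liouville approach that the paper invokes; indeed the paper's own proof is just a pointer to \cite[Lemma~5.1]{Li-Zhu} together with the remark that the argument is local and hence transfers to the lifted ball $\Omega_{\bar x,\s}$, so you have in fact written out what the paper only cites. The one place where the paper is slightly more explicit than you is in dispatching the boundary issue you flag: rather than working directly on the open ball $\Omega_{\bar x,\s}$ and worrying about $\bar x_j$ drifting to $\partial\Omega_{\bar x,\s}$, the paper observes that one may run Li--Zhu's argument on the ambient $M$ with the compact set enlarged so that $M\setminus S\subset B_\s(\bar x)/\Gamma$, which automatically keeps the weighted maximizer in the interior---a small but useful device you may want to incorporate.
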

\begin{proof}
The case that $K$ is a positive constant and $\Omega_{\bar x, \s}$ is replaced by a compact 3-dimensional manifold $M$ is proved in \cite[Lemma 5.1]{Li-Zhu}. As mentioned in \cite[Section~7]{Li-Zhu}, it can be generalized to the case $K$ is a variable function. With a straightforward modification, it can also be generalized to higher dimensions.
Moreover, note that the statement and proof in \cite[Lemma 5.1]{Li-Zhu} is just a local argument in $M\setminus S$. Thus their proof remains valid in our case by choosing their $S$ to be any compact subset of $M$ satisfying $M\setminus S\subset B_\s(\bar x)/\Gamma$, and considering the lifting-up of $M\setminus S$ if $\bar x$ is a singular point.
\end{proof}

Using Lemma \ref{LZ_lemma5.1}, we can prove the following.
\begin{proposition}[\cite{Li-Zhu} Proposition 5.1]
\label{LZ_prop5.1}
Let $(M,g), u, K, f, \Omega_{\bar x, \s}$ be as defined in the beginning of this section. Given small $\varepsilon > 0$ and large $R'$, there exist some positive constants $C_0$ and $C_1$ depending on $M, g, \Vert f\Vert_{C^1(M)}, \inf_M K, \Vert K\Vert_{C^2(M)}, \varepsilon$ and $R'$ such that if
\begin{align}
\max_{\Omega_{\bar x, \s}} u > C_0,
\end{align}
then there exists some integer $N = N(u) \geq 1$ and $N$ local maximum points of $u$ denoted as $\{x_1,\cdots, x_N\}\subset \Omega_{\bar x,\s}$, such that:\\
1) $3-\varepsilon <p\leq 3$,\\
2) $\overline{B_{r_i}(x_i)}\cap \overline{B_{r_j}(x_j)}=\emptyset$ for $i\neq j$, where $r_j = R' u(x_j)^{-\frac{p-1}{2}}$,\\
and for each $j$,
\begin{align}
\Big\Vert u(x_j)^{-1}u\Big(\exp_{x_j}\Big(u(x_j)^{-\frac{p-1}{2}}x\Big)\Big) - U_{K(x_j)}(x)\Big\Vert_{C^2(|x|<2R')}<\varepsilon,
\end{align}
3) $d_g(x_i,x_j)^{\frac{2}{p-1}}u(x_j)\geq C_0$ for $j>i$, while $d_g(x, \{x_1,\cdots, x_N\})^{\frac{2}{p-1}}u(x)\leq C_1$ for all $x\in \Omega_{\bar x, \s}$.
\end{proposition}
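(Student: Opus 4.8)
The plan is to carry out, inside $\Omega_{\bar x,\s}$, the bubble--extraction procedure of Schoen \cite{Schoen1987} and Li--Zhu \cite{Li-Zhu}, iterating Lemma \ref{LZ_lemma5.1}. Everything is local: in case (b) the set $\Omega_{\bar x,\s}$ is the smooth covering ball $B_\s(0)\subset\RR^4$ carrying the pulled-back ($\Gamma$-invariant) data $u,f,K,g$, so no feature of the orbifold structure enters here and the argument is identical to the variable-$K$ manifold one. First I would set $S_0=\emptyset$; with the convention $d_g(\cdot,\emptyset)\equiv 1$, the hypothesis $\max_{\Omega_{\bar x,\s}}u>C_0$ is exactly \eqref{max_u_cond} for $S=S_0$, so Lemma \ref{LZ_lemma5.1} yields $p>3-\varepsilon$ (which, with the standing bound $p\le 3$, is conclusion (1)) and a local maximum $x_1\in\Omega_{\bar x,\s}$ at which the rescaled $C^2$-closeness to $U_{K(x_1)}$ of conclusion (2) holds. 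Inductively, given $S_j=\{x_1,\dots,x_j\}$, I test \eqref{max_u_cond} with $S=S_j$: if it fails I stop and set $N=j$; if it holds, Lemma \ref{LZ_lemma5.1} produces a genuinely new local maximum $x_{j+1}\in\Omega_{\bar x,\s}\setminus S_j$ with the rescaled $C^2$-closeness to $U_{K(x_{j+1})}$ and, after absorbing a fixed dimensional factor into the constant (using that $2/(p-1)$ is bounded since $p>3-\varepsilon$), the bound $d_g(x_{j+1},S_j)^{2/(p-1)}u(x_{j+1})\ge C_0$; since $d_g(x_{j+1},S_j)\le d_g(x_i,x_{j+1})$ for every $i\le j$, this is the first half of conclusion (3), while the recorded $C^2$-estimates give the bubble part of conclusion (2). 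When the iteration halts at stage $N$, the failure of \eqref{max_u_cond} for $S=S_N$ says precisely that $d_g(x,\{x_1,\dots,x_N\})^{2/(p-1)}u(x)<C_0$ on $\bar\Omega_{\bar x,\s}\setminus S_N$ (and the left side vanishes on $S_N$), which is the second half of conclusion (3) with $C_1=C_0$.

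It remains to see that the balls $B_{r_j}(x_j)$, $r_j=R'u(x_j)^{-(p-1)/2}$, are pairwise disjoint and that $N$ is finite; these are the only substantive points. For disjointness I would argue by contradiction: suppose $\overline{B_{r_i}(x_i)}\cap\overline{B_{r_j}(x_j)}\ne\emptyset$ with, say, $r_i\ge r_j$, so that $d_g(x_i,x_j)<2r_i=2R'u(x_i)^{-(p-1)/2}$. Then $x_j$ lies in the region where the rescaled bubble estimate at $x_i$ applies, and since $U_{K(x_i)}\le U_{K(x_i)}(0)$ this forces $u(x_j)\le C\,u(x_i)$, hence $r_j$ and $r_i$ are comparable; but the separation bound gives $d_g(x_i,x_j)\ge C_0^{(p-1)/2}u(x_j)^{-(p-1)/2}=(C_0^{(p-1)/2}/R')\,r_j$, and for $C_0$ large relative to $R'$ this contradicts $d_g(x_i,x_j)<2r_i$. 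The size of $C_0$ thus forced depends only on $R'$ and $n$, consistent with the asserted dependencies of $C_0,C_1$.

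For finiteness of $N=N(u)$, I would rescale by $y=u(x_j)^{(p-1)/2}(x-x_j)$ and use the bubble estimate together with $p\le 3$ to get
\[
\int_{B_{r_j}(x_j)}u^{p+1}\,dV_g=(1+o(1))\,u(x_j)^{3-p}\!\int_{|y|<R'}\!v_j(y)^{p+1}\,dy\ \ge\ \epsilon_0>0,
\]
where $v_j$ is the rescaled profile of $u$ near $x_j$ (close in $C^2$ to $U_{K(x_j)}$, hence bounded below on $\{|y|<R'\}$ in terms of $\inf K$ and $\|K\|_{C^0}$) and $\epsilon_0$ is uniform in $j$. Since the $B_{r_j}(x_j)$ are pairwise disjoint and lie in the fixed compact set $\bar\Omega_{\bar x,\s}$, summing gives $N\,\epsilon_0\le\int_{\bar\Omega_{\bar x,\s}}u^{p+1}\,dV_g<\infty$, so the iteration terminates after finitely many steps. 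I expect the disjointness step to be the main obstacle: it is where one must convert the rather weak separation produced by Lemma \ref{LZ_lemma5.1} into genuine geometric disjointness of the bubbles, which requires the correct quantitative interplay between $C_0$ and $R'$; once that is secured, the finiteness is the standard remark that distinct critical-exponent bubbles each absorb a fixed amount of $\int u^{p+1}$.
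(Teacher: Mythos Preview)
Your proposal is correct and follows the same inductive bubble-extraction scheme as the paper, which iterates Lemma~\ref{LZ_lemma5.1} until the weighted maximum drops below $C_0$. The only differences are cosmetic: the paper takes $S=\bigcup_{j=1}^{l}\overline{B_{r_j}(x_j)}$ at each step rather than the point set $\{x_1,\dots,x_j\}$ (which makes ball-disjointness slightly more direct but the second half of (3) slightly less so), and it terminates the iteration by noting that each bubble carries a definite amount of $\int_{B_{r_j}(x_j)}|\nabla u|^2$ rather than of $\int u^{p+1}$.
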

\begin{proof}
The case that $K$ is a positive constant and $\Omega_{\bar x, \s}$ is replaced by a compact 3-dimensional manifold $M$ is proved in \cite[Proposition 5.1]{Li-Zhu}.
Briefly, that proof was completed by an induction process as following: first apply \cite[Lemma 5.1]{Li-Zhu} with $S=\emptyset$ to get the first local maximum point of $u$, denoted by $x_1$; assuming we have already got local maximum points $\{x_1,\cdots, x_l\}$, apply \cite[Lemma 5.1]{Li-Zhu} with $S=\cup_{j=1}^l\overline{B_{r_j}(x_j)}$ to get the next local maximum point. If the condition $\eqref{max_u_cond}$ is not satisfied in any step, the process stops. This process must stop after a finite number of times because each time $\int_{B_{r_j}(x_j)}|\nabla u|^2$ is greater than some positive universal constant and their sum $\sum_j\int_{B_{r_j}(x_j)}|\nabla u|^2\leq \int_M|\nabla u|^2$ which is finite. It is not hard to verify the set $\{x_1,\cdots,x_N\}$ constructed by the process satisfies all the above properties. In our case, we can continue the same process by inductively applying our Lemma \ref{LZ_lemma5.1} with $S$ as mentioned above. Everything follows the same way and our proposition is proved.
\end{proof}
Next, we can rule out bubble accumulation.
\begin{proposition}\label{iso}
Let $(M,g), u, K, f, \Omega_{\bar x, \s}$ be as defined in the beginning of this section. Let $\varepsilon, R', C_0, C_1$ and $\{x_1,\cdots, x_N\}$ be as defined in Proposition \ref{LZ_prop5.1}. If $\varepsilon$ is sufficiently small and $R'$ is sufficiently large, then there exists a positive constant $\bar C$, which only depends on $M$, $g$, $\Vert f\Vert_{C^1(M)}$, $\inf_M K$, $\Vert K\Vert_{C^2(M)}$, $\varepsilon$ and $R'$, such that if
$\max_{\Omega_{\bar x, \s}} u \geq C_0$,
then
$d_g(x_j, x_l) \geq \bar C$,
for all $j\neq l$.
\end{proposition}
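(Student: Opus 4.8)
The plan is to argue by contradiction. Suppose the conclusion fails: there is a sequence $u_k$ of solutions of \eqref{confseq} (with the associated data converging as in Section~\ref{prelim}), with $\max_{\Omega_{\bar x,\s}}u_k\to\infty$, and, writing $x_{1,k},\dots,x_{N_k,k}$ for the points produced by Proposition~\ref{LZ_prop5.1}, two of these with $d_g\to 0$. Relabel so that $x_{1,k}$ is the point of maximal height among those belonging to a collapsing pair, and $x_{2,k}$ is a bubble-tree point nearest to $x_{1,k}$; then $\s_k:=d_g(x_{1,k},x_{2,k})\to 0$ and $u_k(x_{1,k})\ge u_k(x_{j,k})$ whenever $d_g(x_{1,k},x_{j,k})\to 0$. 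Set $\mu_{i,k}=u_k(x_{i,k})^{-(p_k-1)/2}$, so $\mu_{1,k}\le\mu_{2,k}$. Property~3 of Proposition~\ref{LZ_prop5.1} (with $i=1<j=2$) gives $\s_k^{2/(p_k-1)}u_k(x_{2,k})\ge C_0$, hence $u_k(x_{2,k})\to\infty$ and $\mu_{2,k}\le C\s_k$, while the disjointness of the balls $B_{R'\mu_{i,k}}(x_{i,k})$ gives $\s_k/\mu_{i,k}\ge R'$. Rescaling around $x_{1,k}$ at the bubble scale, $v_k(y)=u_k(x_{1,k})^{-1}u_k(\mu_{1,k}y)$ in $g_k$-normal coordinates, the usual bubbling analysis (Proposition~\ref{LZ_prop5.1}(2) on $\{|y|<2R'\}$, the Harnack bound of Lemma~\ref{LZ_lemma5.1}, interior elliptic estimates with the rescaled metrics $\to$ Euclidean and rescaled scalar curvatures $\to 0$, and the Caffarelli--Gidas--Spruck classification together with the choice of $x_{1,k}$ which makes $v_k\le 1$ on compacts) yields, along a subsequence, $v_k\to U$ in $C^2_{loc}(\RR^n)$, where $U$ is the standard bubble: radial, strictly decreasing, with its only critical point at $0$. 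Let $\hat x_{2,k}$ be the rescaled image of $x_{2,k}$; then $|\hat x_{2,k}|=(1+o(1))\s_k/\mu_{1,k}\ge R'$.

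\textbf{Case A: $|\hat x_{2,k}|$ stays bounded along a subsequence.} Then $\hat x_{2,k}\to\hat x_2$ with $|\hat x_2|\ge R'>0$; since $x_{2,k}$ is a local maximum of $u_k$, $\hat x_{2,k}$ is a critical point of $v_k$, and passing to the limit $\nabla U(\hat x_2)=0$ --- impossible.

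\textbf{Case B: $|\hat x_{2,k}|\to\infty$, i.e. $\s_k/\mu_{1,k}\to\infty$.} Now rescale at the separation scale: $w_k(y)=\s_k^{2/(p_k-1)}u_k(\s_k y)$, which (by the scale invariance noted in Section~\ref{csce}) solves an equation of type \eqref{confseq} with metrics converging to the flat metric on $\RR^n$ and prescribed-curvature functions converging to the constant $K_\infty(\bar x)$. Writing $\bar z_{i,k}$ for the rescaled images of $x_{i,k}$, we have $\bar z_{1,k}=0$, $|\bar z_{2,k}|\to 1$, no bubble-tree point of $w_k$ in $B_1(0)\setminus\{0\}$ (by the choice of $x_{2,k}$ as nearest to $x_{1,k}$), and $w_k(0)=(\s_k/\mu_{1,k})^{2/(p_k-1)}\to\infty$; moreover property~3 of Proposition~\ref{LZ_prop5.1} gives $w_k(y)\le C_1|y|^{-2/(p_k-1)}$ on $B_1(0)\setminus\{0\}$. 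Combining this decay bound with the bubble profile on $\{|y|\lesssim\mu_{1,k}/\s_k\}$ shows that $r\mapsto r^{2/(p_k-1)}\overline{w_k}(r)$ has exactly one critical point in $(0,\rho)$ for every $\rho<1$, so $0$ is an isolated simple blow-up point of $w_k$ and Condition~\ref{blow-up_cond} holds for $w_k$ at $0$ (the group is trivial). By Corollary~\ref{greenlimit} applied to $w_k$, after a subsequence $w_k(0)\,w_k\to a_0\,G_0$ in $C^2_{loc}(B_\rho(0)\setminus\{0\})$, with $G_0$ having leading singularity $|y|^{2-n}$ at $0$ and $a_0<0$ the nonzero constant of Lemma~\ref{lemma_aterm}; since $w_k(0)\to\infty$, it follows that $w_k\to 0$ in $C^2_{loc}(B_\rho(0)\setminus\{0\})$. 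To conclude, distinguish two sub-cases by the behaviour of $\s_k/\mu_{2,k}$. If $\s_k/\mu_{2,k}$ is bounded, then $w_k(\bar z_{2,k})$ is bounded, the bubble of $w_k$ at $\bar z_{2,k}$ does not collapse, so $w_k$ converges in $C^2$ on a full neighbourhood of $\bar z_2:=\lim\bar z_{2,k}$ to a nontrivial bubble; thus the $C^2_{loc}$ limit $w$ of $w_k$ on $B_2(0)$ minus its (finitely many, roughly $1$-separated) blow-up points solves $\Delta w+K_\infty(\bar x)w^{3}=0$, is $\ge 0$, is positive near $\bar z_2$, hence --- by the strong maximum principle, since $\Delta w\le 0$ and the punctured domain is connected --- is positive everywhere, contradicting $w\equiv 0$ on $B_\rho(0)\setminus\{0\}$ via unique continuation. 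If instead $\s_k/\mu_{2,k}\to\infty$, then $\bar z_2$ is also an isolated simple blow-up point of $w_k$, Corollary~\ref{greenlimit} gives $w_k(\bar z_{2,k})\,w_k\to a_2 G_2$ with $a_2<0$ and $G_2$ singular at $\bar z_2$, and evaluating $w_k(0)w_k$ and $w_k(\bar z_{2,k})w_k$ at a point of the (nonempty) overlap $B_\rho(0)\cap B_\rho(\bar z_2)$, $\tfrac12<\rho<1$, shows $w_k(0)/w_k(\bar z_{2,k})\to r_\infty\in[1,\infty)$; hence $\mu_{2,k}/\mu_{1,k}$ is bounded, so $u_k(x_{1,k})$ and $u_k(x_{2,k})$ are comparable, and we are back to the situation of two bubbles of comparable height whose separation $\s_k$ dominates their scales --- which is excluded by a Pohozaev-identity estimate on the neck annulus $\{R\mu_{1,k}<|x-x_{1,k}|<\s_k/2\}$ showing that the neck carries negligible energy while the bubble at $x_{2,k}$ would have to carry a full quantum (as in the treatments of \cite{Schoen, Li-Zhu, Li-Zhang, KMS, Marques}).

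\textbf{Main obstacle.} Case~B is the crux. The genuinely delicate points are: (i) verifying that the rescaled sequence $w_k$ has \emph{isolated simple} blow-up at $0$ (and, in the last sub-case, at $\bar z_2$) so that Corollary~\ref{greenlimit} applies --- this requires the profile/decay bookkeeping above, valid for the full range $1+\varepsilon<p_k\le 3$ and not just $p_k=3$; and (ii) handling the fully collapsing sub-case, where the Green's-function limits give only partial information and one must either push through the Pohozaev estimate on the neck or, alternatively, iterate the construction --- replacing $u_k$ by $w_k$ yields a blow-up sequence whose bubble-tree points are uniformly separated at the new scale, and since $u_k$ has only finitely many bubble-tree points, hence finitely many comparability classes of bubble scales, finitely many such rescalings reduce everything to Case~A or to the non-collapsing sub-case.
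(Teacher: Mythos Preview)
Your setup and Case~A are fine, and the first sub-case of Case~B (where $\sigma_k/\mu_{2,k}$ stays bounded) is a correct if roundabout argument. The genuine gap is in the second sub-case of Case~B, which is the heart of the matter: you assert that ``two bubbles of comparable height whose separation $\sigma_k$ dominates their scales'' are ``excluded by a Pohozaev-identity estimate on the neck annulus'' (or alternatively by an iteration), but you do not carry out either argument. The neck Pohozaev estimate you allude to is precisely the nontrivial step, and the iteration sketch is not quite right either---rescaling by $\sigma_k$ normalizes the distance between $x_{1,k}$ and $x_{2,k}$, but does nothing to prevent other pairs of bubble points from still collapsing at the new scale, so it is not clear that finitely many iterations reduce to Case~A.

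The paper's proof avoids all of this case analysis by going straight to the separation-scale rescaling $v_k(y)=\delta_k^{2/(p_k-1)}u_k(\delta_k y)$, citing \cite{KMS} for the facts that $\{0,\bar y_2\}$ are isolated simple blow-up points and that the Green's-function limit has expansion $a_1(|y|^{-2}+b_1+O(|y|))$ with $b_1>0$ (the positivity of $b_1$ coming from the contribution of the second singularity), and then applying Proposition~\ref{Aprop}. The crucial observation you are missing is that after this rescaling the prescribed-curvature functions satisfy $\tilde K_k\to K(\bar x)$ \emph{as a constant} in $C^2_{loc}$, so $\Delta_g K_\infty\equiv 0$ and Proposition~\ref{Aprop} yields $b_1=A_{\bar x}\le 0$, contradicting $b_1>0$. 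This is exactly the Pohozaev information you need, already packaged in Proposition~\ref{Aprop}; invoking it replaces your entire Case~B (both sub-cases) with two lines. Your Case~A is likewise subsumed, since if $\delta_k/\mu_{1,k}$ stays bounded the $\delta_k$-rescaled sequence is globally bounded and converges to a standard bubble with two distinct local maxima, which is impossible.
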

\begin{proof}
The proof is similar to \cite[Proposition 8.3]{KMS} and \cite[Proposition 5.2]{Li-Zhu}.
We will prove it by contradiction. Suppose that such a constant $\bar C$ does not exist, then there exist sequences $p_k\to p\in(3-\varepsilon,3]$ and $\{u_k\}$ with $\max_{\Omega_{\bar x, \s}} u_k\geq C_0$ and
\begin{align}
\lim_{k\to\infty} \min_{j\neq l}d_g(x_j(u_k), x_l(u_k)) = 0.
\end{align}
Without loss of generality, assume that
\begin{align}
\delta_k = d_g(x_1(u_k), x_2(u_k)) = \min_{j\neq l}d_g(x_j(u_k), x_l(u_k))\to 0\ \mathrm{as\ }k\to\infty.
\end{align}
For each $k$, take normal coordinates $\{x^i\}$ centered at point $x_1(u_k)$ and consider change of variables $y = \delta_k^{-1}x$. Rescale $u_k$ by
\begin{align}
v_k(y) = \delta_k^{\frac{2}{p_k-1}}u_k(\delta_k y),\ \ \forall |y| < \delta_k^{-1}.
\end{align}
Then $v_k$ satisfies
\begin{align}
L_{h_k} v_k + \tilde K_k\tilde f_k^{-\tau_k}v_k^{p_k} = 0,
\end{align}
where $(h_k)_{ij}(y) = g_{ij}(\delta_ky)$, $\tilde K_k(y) = K(\delta_ky)$ and $\tilde f_k(y) = f(\delta_ky)$. If $x_j(u_k)\in B_{\sqrt{\delta_k}}(x_1)$, denote by $y_j(u_k) = \delta_k^{-1}x_j(u_k)$ the $y$-coordinate of point $x_j(u_k)$. By the proof of \cite[Proposition 8.3]{KMS}, we have $y_1(u_k) = 0$, $y_2(u_k)\to \bar y_2$ with $|\bar y_2| = 1$, $\{0, \bar y_2\}$ are isolated simple blow-up points for $\{v_k\}$, and
\begin{align}
v_k(0)v_k(y)\to a_1(|y|^{-2} + b_1 + O(y))\ \ \mathrm{in\ }C^2_{loc}(\RR^4-S'),
\end{align}
where $S'$ denotes the set of blow-up points for $\{v_k\}$ and $a_1,b_1>0$ are some positive constants. On the other hand,
\begin{align}
\begin{split}
\tilde K_k(y) &= K(\delta_ky) \to K(0)\mathrm{\ in\ }C^0_{loc}\mathrm{\ norm},\\
|\nabla_y\tilde K_k(y)| &= \delta_k|\nabla_xK(\delta_ky)| \to 0\mathrm{\ in\ }C^0\mathrm{\ norm},\\
|\nabla^2_y\tilde K_k(y)| &= \delta_k^2|\nabla^2_xK(\delta_ky)| \to 0\mathrm{\ in\ }C^0\mathrm{\ norm}.
\end{split}
\end{align}
Hence $\tilde K_k$ converges to the constant $K(0)$ in the $C^2_{loc}$ norm, where $K(0)$ by definition is the $K$ value at the limit point of $x_1(u_k)$ as $k\to\infty$, possibly by passing to a subsequence. Applying Proposition~\ref{Aprop} to the blow-up sequence $\{v_k\}$, we get $b_1 \leq 0$, which contradicts $b_1 > 0$. Therefore our proposition is proved.
\end{proof}

\begin{corollary}
\label{blow_is_iso}
Let $(M,g)$ be a compact Riemannian 4-dimensional orbifold with positive scalar curvature. Suppose $\{f_k\}$ is a sequence of positive $C^1$ functions converging in the $C^1_{loc}$ topology to a positive function $f$. Also suppose $\{K_k\}$ is a sequence of positive $C^2$ functions converging in the $C^2_{loc}$ topology to a positive function $K_\infty$.
Let $\{u_k\}$ be a sequence of positive solutions of equation \eqref{confseq} on $M$ with $g_k = g$ and $\max_M u_k\to\infty$. Then $p_k\to 3$ and the set of blow-up points is finite and consists only of isolated blow-up points.
Moreover, if blow-up occurs at a singular point, i.e. $x_k \to \bar x$ and $u_k(x_k)\to \infty$ where $\bar x$ is a singular point, then there exists an integer $N\in\mathbb N^*$ such that for any $k> N$, $x_k = \bar x$.
\end{corollary}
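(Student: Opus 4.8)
The plan is to deduce everything from the local structure results of this section --- Propositions~\ref{LZ_prop5.1} and~\ref{iso} --- together with the Gidas--Spruck Liouville theorem \cite{G-S}, and to treat the singular point statement by exploiting the $\Gamma$-invariance of the lifted problem. First, to see that $p_k\to 3$: since $\max_M u_k\to\infty$, pick $x_k$ realizing $M_k:=\max_M u_k=u_k(x_k)$ and rescale, $v_k(y)=M_k^{-1}u_k(\exp_{x_k}(M_k^{-(p_k-1)/2}y))$, lifting through an orbifold chart if $x_k$ is near a singular point so that the $v_k$ live on Euclidean balls. Since $0\le v_k\le 1$, elliptic estimates give $v_k\to v$ in $C^2_{loc}(\RR^4)$ with $v(0)=1$, $v>0$, and $\Delta v+K_\infty(\bar x)v^{p}=0$ on $\RR^4$ for $p=\lim p_k$; by \cite{G-S} this forces $p=3$, and since the argument applies along any subsequence, $p_k\to 3$ (this is the argument already used for Proposition~\ref{p:m43}).

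For $k$ large we have $\max_M u_k>C_0$, so Proposition~\ref{LZ_prop5.1} gives local maxima $x_1(u_k),\dots,x_{N_k}(u_k)$ satisfying its properties~1)--3), and Proposition~\ref{iso} (with $\varepsilon$ small, $R'$ large) gives $d_g(x_i(u_k),x_j(u_k))\ge\bar C$ for $i\ne j$. A packing bound on the compact manifold $M$ forces $N_k\le N_0$ uniformly in $k$; combined with property~3) (and $2/(p_k-1)\to1$), every sequence $y_k\to\bar x$ with $u_k(y_k)\to\infty$ satisfies $d_g(y_k,\{x_j(u_k)\})\to0$, so the blow-up set $S=\{\bar x:\exists\,y_k\to\bar x,\ u_k(y_k)\to\infty\}$ is $\bar C$-separated, hence finite. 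Fixing $\bar x\in S$, the separation $\bar C$ shows that for $k$ large exactly one list point $x_{j_0}(u_k)$ lies in $B_{\bar C/4}(\bar x)$, so on that ball property~3) becomes $u_k(x)\le C_1 d_g(x,x_{j_0}(u_k))^{-2/(p_k-1)}$; together with property~2) and the fact that $u_k(x_{j_0}(u_k))\to\infty$ (which follows from property~3) applied to $y_k$), this is exactly the statement that $\bar x$ is an isolated blow-up point in the sense of Definition~\ref{iso_def}.

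Finally, let $\bar x$ be a singular point with group $\Gamma=\Gamma_{\bar x}$ (so $|\Gamma|\ge2$), and suppose $x_k\to\bar x$ are local maxima of $u_k$ with $u_k(x_k)\to\infty$; assume for contradiction that $x_k\ne\bar x$ along a subsequence, and work in the lifting-up space $\Omega_{\bar x,\s}\subset\RR^4$, where $\pi^*u_k$ is $\Gamma$-invariant, solves \eqref{confseq}, and has maximum $\ge u_k(x_k)>C_0$ for $k$ large. A preimage $\tilde x_k$ of $x_k$ has orbit consisting of $|\Gamma|\ge2$ distinct points $\tilde x_k^{(1)},\dots,\tilde x_k^{(|\Gamma|)}$ (the $\Gamma$-action is free away from $0$), each a local maximum of $\pi^*u_k$ with value $u_k(x_k)\to\infty$, all converging to $0$ because $d_g(\tilde x_k^{(i)},0)=d_g(x_k,\bar x)\to0$. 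By property~3) of Proposition~\ref{LZ_prop5.1} each $\tilde x_k^{(i)}$ lies within a bubble radius of some list point, and by the single-bubble profile in property~2) (for $\varepsilon$ small and $R'$ large, a local maximum inside the bubble region of a list point must coincide with that list point) each $\tilde x_k^{(i)}$ is a list point. Hence the $|\Gamma|\ge2$ distinct points $\tilde x_k^{(i)}$ are list points and therefore, by Proposition~\ref{iso}, pairwise at distance $\ge\bar C$, contradicting $d_g(\tilde x_k^{(i)},\tilde x_k^{(i')})\le2\,d_g(x_k,\bar x)\to0$. Thus $x_k=\bar x$ for all large $k$.

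The main obstacle is this last step: promoting ``$\tilde x_k^{(i)}$ is within a bubble radius of a list point'' to ``$\tilde x_k^{(i)}$ \emph{equals} a list point'', which is what allows the uniform separation of Proposition~\ref{iso} to be applied. This requires the precise single-bubble description near each list point from property~2) together with a comparison of the values of $u_k$ at $\tilde x_k^{(i)}$ and at the nearby list point, carried out exactly as in \cite[Proposition~8.3]{KMS} and \cite[Proposition~5.2]{Li-Zhu}; the orbifold features of the problem enter only through the legitimacy of passing to the lifting-up space, which the setup of this section already provides.
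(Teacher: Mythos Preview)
Your proof is correct and follows essentially the same route as the paper's: reduce to the local structure via Propositions~\ref{LZ_prop5.1} and~\ref{iso}, use a packing argument for finiteness, and for the singular point statement lift to the covering $\Omega_{\bar x,\s}$ and derive a contradiction from the fact that the $|\Gamma|$ distinct preimages $\tilde x_k^{(i)}$ all converge to~$0$. The paper's proof is terser on the last step --- it simply asserts that $d_{\tilde g}(\tilde x_k^{(1)},\tilde x_k^{(2)})\to 0$ ``contradicts Proposition~\ref{iso}'' --- whereas you correctly flag that Proposition~\ref{iso} as stated concerns the \emph{list points} produced by Proposition~\ref{LZ_prop5.1}, so one must argue that the lifted local maxima $\tilde x_k^{(i)}$ coincide with (or serve as) list points before invoking the separation bound. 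Your resolution via the single-bubble profile (property~2) with $\varepsilon$ small, $R'$ large) and the cited arguments in \cite{KMS,Li-Zhu} is the right way to close this; alternatively, one can note that the \emph{proof} of Proposition~\ref{iso} only uses that the two colliding points are local maxima with values tending to infinity around which the rescaled solution has the standard bubble profile, which the $\tilde x_k^{(i)}$ satisfy by the rescaling argument of Proposition~\ref{p:m43}.
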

\begin{proof}
By the assumption of $f_k$ and $K_k$, there exists a constant $C_2$ such that for large $k$,
\begin{align}
\Vert f_k\Vert_{C^1(M)} \leq C_2 \Vert f\Vert_{C^1(M)}, \quad
\inf_{M}K_k \geq C_2 \inf_M K_\infty, \quad
\Vert K_k\Vert_{C^2(M)} \leq C_2 \Vert K_\infty\Vert_{C^2(M)}.
\end{align}
By Proposition \ref{iso}, in each $\Omega_{x,\s}$, blow-up points must be isolated blow-up points and the number $N(u_k)$ as defined in Proposition \ref{LZ_prop5.1} must have an uniformly upper bound, otherwise, there cannot exist a constant $\bar C$ such that $d_g(x_i(u_k),x_j(u_k))\geq \bar C$ for all $i\neq j$ and $k$. A compact orbifold can be covered by finitely many sufficiently small open balls, which gives us finitely many $\Omega_{x,\s}$ as defined in the beginning of this section. Therefore the set of blow-up points on $M$ is finite and consists only of isolated blow-up points.

Moreover, if blow-up occurs at a singular point, i.e. $x_k \to \bar x$ and $u_k(x_k)\to \infty$ where $\bar x$ is a singular point associated with a nontrivial quotient group $\Gamma$. Suppose there exists a subsequence, still denoted by $(u_k, x_k)$, such that $x_k\neq \bar x$ for any $k$. Consider the lifting-up space $\Omega_{\bar x,\s}$ as defined in the beginning of this section. Let $\tilde x_k^{(1)},\cdots, \tilde x_k^{(|\Gamma|)}$ denote the lifting-up points of $x_k$. In the lifting-up space, it's clear that
\begin{align}
d_{\tilde g}(\tilde x_k^{(1)}, \tilde x_k^{(2)})\to 0\ \ \mathrm{as\ }k\to\infty,
\end{align}
which contradicts against Proposition \ref{iso}. Thus our corollary is proved.
\end{proof}

Given Corollary \ref{blow_is_iso}, we are able to conclude the following.
\begin{proposition}\label{iso_sim}
Let $(M,g), f_k, K_k, g_k$ be as defined in Corollary \ref{blow_is_iso}. Assume $u_k$ is a sequence of positive functions satisfying equation \eqref{confseq} and $x_k\to \bar x$ is an isolated blow-up point. Then $\bar x$ is an isolated simple blow-up point for $\{u_k\}$.
\end{proposition}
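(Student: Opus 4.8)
The plan is to argue by contradiction, following the classical Schoen / Li--Zhu strategy. By Corollary \ref{blow_is_iso} we may assume $p_k \to 3$, and that $x_k = \bar x$ for all large $k$ whenever $\bar x$ is a nontrivial orbifold point; write $M_k = u_k(x_k)$. Suppose $\bar x$ is \emph{not} an isolated simple blow-up point. By Proposition \ref{p:m43}, in the bubble coordinates $y = M_k^{(p_k-1)/2}x$ the rescaled solutions converge in $C^2$ to the standard bubble $U_{K_\infty(\bar x)}$ on balls of radius $R'_k \to \infty$; since the spherically-averaged profile of that bubble has a unique critical point, $\hat u_k(r) = r^{2/(p_k-1)}\bar u_k(r)$ has exactly one critical point in $(0, R'_kM_k^{-(p_k-1)/2})$, at the scale $r \sim M_k^{-(p_k-1)/2}$. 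Negating the definition of isolated simple blow-up and applying a diagonal extraction, we may pass to a subsequence and take $r_k$ to be the \emph{second} critical point of $\hat u_k$; then $r_k \to 0$, and since $r_k$ would otherwise lie in the bubble region (where $\hat u_k$ has a single critical point) we must have $r_kM_k^{(p_k-1)/2} > R'_k \to \infty$.

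Next I would rescale $u_k$ at the scale $r_k$: set $w_k(y) = r_k^{2/(p_k-1)}u_k(\exp_{x_k}(r_ky))$, with rescaled metric $(\bar h_k)_{ij}(y) = g_{ij}(r_ky)$ and potentials $\tilde K_k(y) = K_k(r_ky)$, $\tilde f_k(y) = f_k(r_ky)$. Because $r_k \to 0$, we have $\bar h_k \to g_{Euc}$ and $\tilde K_k \to K_\infty(\bar x)$, $\tilde f_k \to f(\bar x)$ (positive constants) in $C^2_{loc}(\RR^4/\Gamma)$, and $w_k$ solves an equation of type \eqref{confseq}. Then $0$ is an isolated blow-up point for $\{w_k\}$: it is a local maximum, $w_k(0) = (r_kM_k^{(p_k-1)/2})^{2/(p_k-1)} \to \infty$, and $w_k(y) \le C|y|^{-2/(p_k-1)}$ is inherited from the isolated blow-up estimate for $u_k$. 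It is moreover isolated \emph{simple}: from the exact identity $\hat w_k(s) = \hat u_k(r_ks)$, for any $\rho < 1$ the critical points of $\hat w_k$ in $(0,\rho)$ correspond to those of $\hat u_k$ in $(0,\rho r_k) \subset (0, r_k)$, which by the choice of $r_k$ (and since $\rho r_k \gg M_k^{-(p_k-1)/2}$, so the bubble-scale critical point already lies inside $(0,\rho r_k)$) consists of exactly one point. Finally Condition \ref{blow-up_cond} holds for $\{w_k\}$ at $0$: in the orbifold-point case $x_k = \bar x$, so $0$ is the orbifold point of the rescaled space.

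Now the machinery of Sections \ref{prelim}--\ref{local_ana} applies to $\{w_k\}$ (after the standard conformal reduction of Remark \ref{coord_setting}). By Corollary \ref{greenlimit}, together with Lemma \ref{lemma_aterm} and Proposition \ref{prop_Abterm}, $w_k(0)w_k \to c\,\psi_0$ in $C^2_{loc}((\RR^4\setminus\{0\})/\Gamma)$ for some $c \ne 0$, where $\psi_0 = r^{-2} + A_0 + O(r)$ and $A_0$ is the regular part $(m(\hat g)=12A_0)$; since the limit metric is flat, the limit is harmonic away from $0$ and its spherical average equals $c(s^{-2} + A_0)$ exactly (the nonconstant harmonic modes averaging to zero over spheres). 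On the other hand $\hat w_k'(1) = r_k\hat u_k'(r_k) = 0$, and since $w_k(0)\hat w_k(s) = s^{2/(p_k-1)}\bigl(w_k(0)\bar w_k(s)\bigr)$ converges in $C^1_{loc}((0,\infty))$ (as $p_k\to3$) to $c\bigl(s^{-1} + A_0 s\bigr)$, we obtain $\tfrac{d}{ds}(s^{-1}+A_0s)|_{s=1} = 0$, i.e. $A_0 = 1 > 0$. But applying Proposition \ref{mass_prop} to $\{w_k\}$ at $0$ gives $A_0 \le -\Delta_{g_{Euc}}K_\infty(\bar x)/\bigl(24K_\infty(\bar x)\bigr) = 0$, since the limit potential $K_\infty(\bar x)$ is constant. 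This contradiction completes the proof.

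The hard part will be the two structural verifications that precede this final step: separating $r_k$ from the bubble scale (the estimate $r_kM_k^{(p_k-1)/2}\to\infty$), so that $0$ is an isolated \emph{simple} blow-up point for $\{w_k\}$; and checking that the rescaled data $(\bar h_k,\tilde K_k,\tilde f_k)$ still meet every hypothesis of Sections \ref{prelim}--\ref{local_ana} --- in particular $p_k\to3$ and, when $\bar x$ is an orbifold point, the normalization $x_k=\bar x$ --- so that Corollary \ref{greenlimit} and Proposition \ref{mass_prop} genuinely apply to $\{w_k\}$. Once these are in place, the remainder is the standard blow-up argument, cf. \cite[Proposition 5.2]{Li-Zhu} and \cite[Proposition 8.3]{KMS}, carried out in the orbifold setting exactly as in Proposition \ref{iso}.
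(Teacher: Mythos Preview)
Your proposal is correct and follows essentially the same route as the paper: rescale at the second critical radius, verify that the origin becomes an isolated simple blow-up point for the rescaled sequence with limit data $(g_{Euc},\,K_\infty(\bar x)=\text{const})$, read off the constant term $A_0=1$ from the criticality condition $\hat w_k'(1)=0$, and contradict Proposition~\ref{Aprop} (equivalently Proposition~\ref{mass_prop}). The paper's own proof is terser --- it defers the structural verifications (your ``hard part'') to \cite[Lemma 8.2]{KMS} and simply quotes the conclusion $b_2=1$ before invoking Proposition~\ref{Aprop} --- but the logic is identical.
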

\begin{proof}
The proof is similar to \cite[Lemma 8.2]{KMS} and \cite[Proposition 4.1]{Li-Zhu}.
First, by Corollary~\ref{blow_is_iso}, without loss of generality, we may assume $x_k = \bar x$ for all $k$ if $\bar x$ is a singular point. Suppose $\bar x$ is an isolated blow-up point, but not an isolated simple blow-up point. Let $x = \{x^i\}$ be normal coordinates centered at $x_k$ and define the rescaled function
\begin{align}
v_k(y) = \tau_k^{\frac{2}{p_k-1}}u_k(\tau_k y),\ \ \forall |y| < \tau_k^{-1}.
\end{align}
Then $v_k$ satisfies
\begin{align}
L_{h_k} v_k + \tilde K_k\tilde f_k^{-\tau_k}v_k^{p_k} = 0,
\end{align}
where $(h_k)_{ij}(y) = (g_k)_{ij}(\tau_ky)$, $\tilde K_k(y) = K_k(\tau_ky)$ and $\tilde f_k(y) = f_k(\tau_ky)$. By the proof of \cite[Lemma 8.2]{KMS}, the origin $y=0$ is an isolated simple blow-up point for $\{v_k\}$ and
\begin{align}
v_k(0)v_k(y)\to h(y) = a_2(|y|^{-2} + b_2)\ \ \mathrm{in\ }C^2_{loc}((\RR^4-\{0\})/\Gamma),
\end{align}
where $a_2 =b_2 =1$,  where $\Gamma = \{e\}$ if $\bar{x}$ is a smooth point, but $\Gamma$ is the quotient group if $\bar x$ is a singular point.

On the other hand, because $K_k$ converges to $K_\infty$ in the $C^2_{loc}$ norm, we know
\begin{align}
\begin{split}
\tilde K_k(y) &= K_k(\tau_ky) \to K_\infty (\bar x)\mathrm{\ in\ }C^0_{loc}\mathrm{\ norm},\\
|\nabla_y\tilde K_k(y)| &= \tau_k|\nabla_xK_k(\tau_ky)| \to 0\mathrm{\ in\ }C^0_{loc}\mathrm{\ norm},\\
|\nabla^2_y\tilde K_k(y)| &= \tau_k^2|\nabla^2_xK_k(\tau_ky)| \to 0\mathrm{\ in\ }C^0_{loc}\mathrm{\ norm}.
\end{split}
\end{align}
Hence $\tilde K_k$ converges to the constant $K_\infty(\bar x)$ in the $C^2_{loc}$ norm.
Applying Proposition~\ref{Aprop} to the blow-up sequence $\{v_k\}$, we obtain
$b_2 \leq 0$, which contradicts $b_2 = 1$. Therefore $\bar x$ is an isolated simple blow-up points for $\{u_k\}$.
\end{proof}

\begin{corollary}
\label{blow_is_isosim}
Let $(M,g), f_k, K_k, g_k$ be as defined in Corollary \ref{blow_is_iso}. Assume $u_k$ is a sequence of positive functions satisfying equation \eqref{confseq}, then Condition~\ref{blow-up_cond} is a necessary condition for any blow-up point.
\end{corollary}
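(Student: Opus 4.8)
The plan is to observe that this corollary is a direct synthesis of the two main results of this section, so there is essentially nothing new to establish. The statement of Condition~\ref{blow-up_cond} has exactly two components: (i) $x_k\to\bar x$ is an isolated \emph{simple} blow-up point; and (ii) in the orbifold case $\Gamma_{\bar x}\neq\{e\}$, the points $x_k$ coincide with $\bar x$ for all $k$ sufficiently large. I would verify each of these in turn for an arbitrary blow-up point $\bar x$ of the given sequence.

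First I would apply Corollary~\ref{blow_is_iso}. Since $\{u_k\}$ is a sequence of solutions of \eqref{confseq} on the compact $4$-orbifold $M$ with $\max_M u_k\to\infty$, that corollary gives $p_k\to 3$, shows that the blow-up set is finite and consists only of \emph{isolated} blow-up points, and --- crucially --- asserts that if blow-up occurs at a nontrivial orbifold point $\bar x$, then $x_k=\bar x$ for all $k$ larger than some $N$. This is precisely part (ii) of Condition~\ref{blow-up_cond}, and it simultaneously supplies the hypothesis ``isolated blow-up point'' (and, in the singular case, the normalization $x_k=\bar x$) that is needed for the next step.

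Next I would invoke Proposition~\ref{iso_sim}, whose hypotheses are now met: $x_k\to\bar x$ is an isolated blow-up point for $\{u_k\}$, with $g_k = g$, $f_k\to f$ in $C^1_{loc}$, and $K_k\to K_\infty$ in $C^2_{loc}$. That proposition upgrades the conclusion from ``isolated'' to ``isolated simple,'' which is part (i). Combining the two assertions shows that Condition~\ref{blow-up_cond} holds at every blow-up point, which is the claim.

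I do not expect any genuine obstacle here; the substantive work was carried out in Propositions~\ref{iso}, \ref{iso_sim} and Corollary~\ref{blow_is_iso}. The only point requiring attention --- and it has already been handled in those statements --- is the order of operations in the singular case: one must first pass to the range $k>N$ where $x_k=\bar x$, and only then run the isolated-simple argument, so that the relevant geodesic balls are honest quotient balls $B_r(\bar x)/\Gamma$ rather than smooth balls collapsing onto the singular point. Once this ordering is respected, the corollary follows formally from the cited results.
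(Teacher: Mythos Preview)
Your proposal is correct and matches the paper's own proof, which simply states that the corollary follows immediately from combining Corollary~\ref{blow_is_iso} and Proposition~\ref{iso_sim}. Your additional remarks about the order of operations in the singular case are accurate and consistent with how those earlier results were set up.
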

\begin{proof} This follows immediately from combining Corollary~\ref{blow_is_iso} and Proposition~\ref{iso_sim}. 
\end{proof}

\begin{proof}[Proof of Theorem \ref{cpt_crit_thm_combined}] To prove the upper bound $u\leq C$ under assumption \eqref{mass_ineq_sub} in Theorem~\ref{cpt_crit_thm_combined}, suppose the contrary. Then there exist $p_k\to 3$ and $\{u_k\}$ satisfying
\begin{align}
L_g u_k + K_ku_k^{p_k} = 0,
\end{align}
with $\max_M u_k\to\infty$, where $\{K_k\}$ is a sequence of positive $C^2$ functions converging in the $C^2_{loc}$ topology to a positive function $K_\infty$. Let $x_k$ denote the point where $u_k$ obtains a maximum, after possibly passing to a subsequence, we may assume $x_k\to \bar x$ is a blow-up point. By Corollary \ref{blow_is_isosim}, the sequence $(u_k,x_k)$ satisfies Condition \ref{blow-up_cond}, where $g_k$ in equation \eqref{confseq} is the metric conformal to $g$ with conformal normal coordinates centered at $x_k$. By Proposition \ref{mass_prop}, we know
\begin{align}
m(\hat g_{\bar x}) \leq -\frac{\Delta_gK_\infty(\bar x)}{2K_\infty(\bar x)},
\end{align}
which contradicts against assumption \eqref{mass_ineq_sub} in Theorem \ref{cpt_crit_thm_combined}. Therefore we know that $u \leq C$
for $u$ and $C$ as stated in Theorem \ref{cpt_crit_thm_combined}. 
Next, assume $u$ obtains $\sup_M u$ at a point $P$, then $\Delta u(P) \leq 0$. By \eqref{yamabeequ}, $u(P) \geq \sqrt{R_g(P)/(6K(P))}\geq c_0$. By the Harnack inequality, 
\begin{align}
\inf_M u\geq (1/c_1)u(P)\geq c_0/c_1 \geq 1/C, 
\end{align}
for sufficiently large $C$.
By standard elliptic estimates, we conclude that
$\Vert u\Vert_{C^{2,\alpha}(M)}\leq C$.
Furthermore, because for $p<3$, there always exist subcritical solutions for any $K$ (the proof on manifolds remains valid in the orbifold setting; see for example \cite{E-S, Lee-Parker}).  Take a sequence of subcritical solutions and let $p\to 3$, due to \eqref{comp_est}, they limit to a critical solution for $p=3$. Thus the second part of Theorem \ref{cpt_crit_thm_combined} is proved.

The first part of Theorem \ref{cpt_crit_thm_combined} can be proved similarly, by fixing $p_k=3$ in the above proof.
\end{proof}

\section{Variational methods}
\label{s:Energy}
Let $(M, g)$ be a compact Riemannian $n$-orbifold with singularities $\Sigma_\Gamma = \{(q_1,\Gamma_1),\cdots,(q_l,\Gamma_l)\}$ and positive scalar curvature $R_g$. Let $K$ be a positive smooth function on $M$. In this section, we will study equation \eqref{yamabeequ} using a variational method. Consider the energy functional
\begin{align}
\label{functional_J}
J_p(u, K, M) = \frac{\int_M (|\n u|^2 + c(n)R_g u^2 d)Vol_{g}}{(\int_M K |u|^{p+1}dVol_{g})^{\frac{2}{p+1}}},
\end{align}
for $1<p\leq \frac{n+2}{n-2}$ and $u\in W^{1,2}(M)$. Define the minimal energy to be
\begin{align}
E(p, K) = \inf_{u\in W^{1,2}(M)}J_p(u, K, M).
\end{align}
Let $Q(S^n)$ denote the Sobolev quotient of $S^n$, which is also the minimal energy on $S^n$ for $K \equiv 1$, that is, 
\begin{align}
\label{QS^n}
Q(S^n) = \inf_{\phi\in W^{1,2}(S^n)}J_{\frac{n+2}{n-2}}(\phi, 1, S^n) = \frac{n(n-2)}{4} Vol(S^n)^{2/n}.
\end{align}
The following theorem generalizes \cite[Proposition~1.1]{E-S} to the orbifold case, and also generalizes \cite[Theorem 3.1]{Akutagawa} from the case $K = constant$ to the case of variable $K$.
\begin{theorem}
\label{energy_obf}
Let $(M, g)$ be a compact Riemannian $n$-orbifold with positive scalar curvature and singularities $\Sigma_\Gamma = \{(q_1,\Gamma_1),\cdots,(q_l,\Gamma_l)\}$. Let $K$ be a positive smooth function on $M$. Define the modified maximum value of $K$
\begin{align}
\label{B_K}
B_K := \max\Big\{\sup_{x\in M}\{K(x)\}, \max_{1\leq i \leq l}\{|\Gamma_i|^{\frac{2}{n-2}}K(q_i)\}\Big\}.
\end{align}
Then the following inequality always holds
\begin{align}
\label{akubound0}
(B_K)^{\frac{n-2}{n}}E\Big(\frac{n+2}{n-2}, K\Big) \leq Q(S^n).
\end{align}
Furthermore, if
\begin{align}
\label{akubound}
(B_K)^{\frac{n-2}{n}}E\Big(\frac{n+2}{n-2}, K\Big) < Q(S^n),
\end{align}
then there exists a positive smooth solution $u$ of \eqref{yamabeequ} with $p=\frac{n+2}{n-2}$, such that 
\begin{align}
J_{\frac{n+2}{n-2}}(u, K, M)=E\Big(\frac{n+2}{n-2}, K\Big).
\end{align}
\end{theorem}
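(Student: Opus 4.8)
The plan is to run the classical subcritical-approximation scheme of Yamabe--Trudinger--Aubin, keeping track of the orbifold points separately; this is the orbifold analogue of \cite[Proposition~1.1]{E-S} and \cite[Theorem~3.1]{Akutagawa}, in the spirit of \cite{M-M}. Throughout write $p_*=\frac{n+2}{n-2}$, so that $p_*+1=\frac{2n}{n-2}$ and $\frac{2}{p_*+1}=\frac{n-2}{n}$.

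First I would establish the bound \eqref{akubound0} by inserting Aubin test functions. Concentrating the cut-off radial profile $U_c$ from \eqref{U_def} with $c=K(\bar x)$ at a smooth maximum point $\bar x$ of $K$ gives $J_{p_*}(\,\cdot\,,K,M)\to K(\bar x)^{-\frac{n-2}{n}}Q(S^n)$, hence $E(p_*,K)\le(\sup_M K)^{-\frac{n-2}{n}}Q(S^n)$. Repeating this at an orbifold point $q_i$, now in a uniformizing ball quotient $B/\Gamma_i$ (the radial $U_c$ descends to the quotient), the quotient carries $|\Gamma_i|^{-1}$ of the volume upstairs, so both the Dirichlet numerator and $\int K v_\varepsilon^{p_*+1}$ pick up a factor $|\Gamma_i|^{-1}$; since $\frac{2}{p_*+1}=\frac{n-2}{n}$ this yields $J_{p_*}\to\big(|\Gamma_i|^{2/(n-2)}K(q_i)\big)^{-\frac{n-2}{n}}Q(S^n)$. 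Taking the best of these test values over the smooth maximum and all the $q_i$ gives \eqref{akubound0}.

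Next, for $1<p<p_*$ the embedding $W^{1,2}(M)\hookrightarrow L^{p+1}(M)$ is compact (Sobolev and Rellich hold on the orbifold via a finite uniformizing atlas), so $J_p$ has a minimizer $u_p$, which may be taken $\ge 0$ and, by the strong maximum principle and elliptic regularity in the uniformizing charts, is smooth and positive and (after scaling) solves \eqref{yamabeequ} with exponent $p$. Normalizing $\int_M Ku_p^{p+1}=1$ gives $\int_M(|\nabla u_p|^2+c(n)R_gu_p^2)=E(p,K)$; since $R_g>0$ this controls $\|u_p\|_{W^{1,2}}$, and $\limsup_{p\to p_*}E(p,K)\le E(p_*,K)<\infty$ because $J_p(v)\to J_{p_*}(v)$ for every fixed $v$. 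Hence $\{u_p\}$ is bounded in $W^{1,2}(M)$, and I would pass to a subsequence $p\to p_*$ and apply an orbifold Struwe-type profile decomposition, $u_p=u+\sum_{j=1}^J B_p^{(j)}+o(1)$ in $W^{1,2}$, where $u\ge 0$ is the weak limit (a weak solution of $-\Delta_g u+c(n)R_g u=\mu_* Ku^{p_*}$, $\mu_*=\lim E(p,K)$ along the subsequence) and each $B_p^{(j)}$ is a bubble concentrating at some $\bar x_j\in M$. The key orbifold point: if $\bar x_j=q_i$, then the rescaled profile is a positive entire $\Gamma_i$-invariant solution of $\Delta V+\mathrm{const}\cdot V^{p_*}=0$ on $\RR^n$ (dilations commute with the linear $\Gamma_i$-action), so by \cite{G-S} it is a standard bubble $U_c$; and since $\Gamma_i$ acts \emph{freely} on $\RR^n\setminus\{0\}$, invariance forces it centered at the origin. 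Hence this bubble contributes $|\Gamma_i|^{-1}\int_{\RR^n}|\nabla U_c|^2$ to $\lim E(p,K)$ and $|\Gamma_i|^{-1}K(q_i)\int_{\RR^n}U_c^{p_*+1}$ to the normalization.

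Finally I would close the argument by energy accounting. With $t_0=\int_M Ku^{p_*+1}$ and $t_j>0$ the mass of $B_p^{(j)}$, Brezis--Lieb gives $t_0+\sum_j t_j=1$, while the Sobolev identity $\int_{\RR^n}|\nabla U_c|^2=Q(S^n)\big(\int_{\RR^n}U_c^{p_*+1}\big)^{\frac{n-2}{n}}$ and $\int_M(|\nabla u|^2+c(n)R_gu^2)\ge E(p_*,K)\,t_0^{\frac{n-2}{n}}$ yield
\begin{align*}
\lim_{p\to p_*}E(p,K)\ &\ge\ E(p_*,K)\,t_0^{\frac{n-2}{n}}+Q(S^n)\sum_{j=1}^{J}\big(|\Gamma_{i_j}|^{2/(n-2)}K(\bar x_j)\big)^{-\frac{n-2}{n}}t_j^{\frac{n-2}{n}}\\
&\ge\ E(p_*,K)\,t_0^{\frac{n-2}{n}}+B_K^{-\frac{n-2}{n}}Q(S^n)\sum_{j=1}^{J}t_j^{\frac{n-2}{n}},
\end{align*}
since $|\Gamma_{i_j}|^{2/(n-2)}K(\bar x_j)\le B_K$ always (smooth points have trivial group and $K\le\sup_M K$). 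If $J\ge 1$ then $t_0<1$, and superadditivity of $t\mapsto t^{\frac{n-2}{n}}$ on $[0,1]$ (so $\sum_j t_j^{\frac{n-2}{n}}\ge(1-t_0)^{\frac{n-2}{n}}$ and $t_0^{\frac{n-2}{n}}+(1-t_0)^{\frac{n-2}{n}}\ge 1$) combined with $E(p_*,K)\le B_K^{-\frac{n-2}{n}}Q(S^n)$ from \eqref{akubound0} shows the right-hand side \emph{strictly} exceeds $E(p_*,K)$ as soon as the strict inequality \eqref{akubound} holds, contradicting $E(p_*,K)\ge\limsup_{p\to p_*}E(p,K)\ge\lim_{p\to p_*}E(p,K)$. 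Therefore $J=0$, so $u_p\to u$ strongly in $W^{1,2}$ and $L^{p_*+1}$, giving $\int_M Ku^{p_*+1}=1$ (in particular $u\not\equiv 0$) and $J_{p_*}(u,K,M)=E(p_*,K)$; positivity and smoothness of $u$ follow as before. The hard part is precisely this last step together with the profile decomposition on a compact orbifold --- in particular the fact that a bubble forming at a singular point $q_i$ is exactly the $\Gamma_i$-symmetric bubble centered there, which is what replaces $\sup_M K$ by $B_K$ in the energy threshold. The rest (the Aubin expansions, the Brezis--Lieb bookkeeping, and the elementary inequality) is routine.
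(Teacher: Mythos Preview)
Your outline is correct, but it follows a genuinely different route from the paper's proof. For \eqref{akubound0} both arguments coincide (Aubin bubbles at a smooth maximum and at the $q_i$, with the $|\Gamma_i|^{-1}$ volume factor). For the existence part, however, the paper does \emph{not} run a subcritical scheme on the orbifold. Instead it excises the singular set: assuming a single singularity $(q,\Gamma)$, it minimizes $J_{p_*}$ over $C_c^\infty(M\setminus\overline{B_{1/k}(q)})$ for each $k$, invokes \cite[Proposition~1.1]{E-S} on the smooth manifold-with-boundary $M\setminus B_{1/k}(q)$ to produce minimizers $u_k$, and then proves a uniform $C^0$ bound for $\{u_k\}$ by a single-bubble blow-up argument (rescale at the max point, pass to a limit on $\RR^n$, and use $\int_{\RR^n}\tilde v^{2n/(n-2)}\le|\Gamma|K(q)^{-1}$ to force $Q(S^n)\le(B_K)^{\frac{n-2}{n}}E(p_*,K)$, contradicting \eqref{akubound}). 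Strong $W^{1,2}$ convergence and regularity then finish.

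Your approach trades that excision/black-box step for a direct subcritical approximation plus an orbifold Struwe decomposition and Brezis--Lieb energy accounting. This is more self-contained and handles many bubbles at once, but it puts all the weight on establishing the profile decomposition in the orbifold setting---in particular the dichotomy that a bubble near $q_i$ either concentrates on a scale small compared with its distance to $q_i$ (and so is a ``smooth'' bubble with trivial group) or is genuinely $\Gamma_i$-invariant and hence, by your freeness-of-the-action remark, centered at the origin. The paper's route avoids this by reducing to known manifold results and a single rescaling, at the cost of the extra exhaustion argument; your route is cleaner conceptually once the orbifold Struwe machinery is in hand.
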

\begin{proof}
We first prove the inequality \eqref{akubound0}. The quantity $B_K$ is attained at either a singular point or a smooth point. 
Consider any point $q$ with  $|\Gamma|^{\frac{2}{n-2}}K(q) = B_K$,
where $\Gamma$ is the orbifold group at $q$ if $q$ is a singular point, or $\Gamma = \{e\}$ if $q$ is a smooth point. 
Take a conformal mapping $g_q = u_q^{\frac{4}{n-2}}g$ such that $g_q$ is the conformal normal metric centered at the point $q$. Consider the Green function with the power series expansion
\begin{align}
G_q = \frac{1}{4n(n-1)Vol(S^{n-1})}(r_q^{2-n} + H_q),
\end{align}
where $r_q$ is the geodesic distance from $q$ based on the metric $g_q$ and $H_q$ is the higher order term.
Take a family of test functions
\begin{align}
\label{testvarphi}
\varphi_{q, \lambda} = u_q\cdot \Bigg(\frac{\lambda}{1+\lambda^2(r_q^{2-n} + H_q)^{\frac{2}{2-n}}}\Bigg)^{\frac{n-2}{2}},
\end{align}
where $\lambda >0$. By \cite[Proposition 5.1]{M-M}, we have the estimation
\begin{align}
\label{Jest}
J_{\frac{n+2}{n-2}}(\varphi_{q,\lambda}, K, M) = \frac{c(n)\hat c_0}{|\Gamma|^{\frac{2}{n}}K(q)^{\frac{n-2}{n}}} + O(1/\lambda^2),
\end{align}
where
\begin{align}
\label{hatc_0}
\hat c_0 = 4n(n-1)\Big(\int_{\RR^n}\frac{dx}{(1+r^2)^n}\Big)^{\frac{2}{n}}.
\end{align}
Note that our functional $J_{\frac{n+2}{n-2}}$ is equal to $c(n)$ multiplied with the functional $J$ defined in \cite[Chapter~1]{M-M}. If $q$ is an orbifold point,  we estimate integrals in $J_{\frac{n+2}{n-2}}$ by lifting everything up to the universal cover near the orbifold point $q$. Therefore, compared to \cite[Proposition~5.1]{M-M}, our estimation \eqref{Jest} has an extra factor of $c(n)/|\Gamma|^{\frac{2}{n}}$. By a direct computation, we know that
\begin{align}
\label{c(n)}
c(n)\hat c_0 = Q(S^n).
\end{align}
Clearly, we have
\begin{align}
E\Big(\frac{n+2}{n-2}, K\Big) \leq \lim_{\lambda\to\infty}J_{\frac{n+2}{n-2}}(\varphi_{q,\lambda}, K, M) = \frac{Q(S^n)}{|\Gamma|^{\frac{2}{n}}K(q)^{\frac{n-2}{n}}}.
\end{align}
By our assumption, we have $|\Gamma|^{\frac{2}{n}}K(q)^{\frac{n-2}{n}} = B_K^{\frac{n-2}{n}}$, hence \eqref{akubound0} is proved.

We next show that the strict inequality \eqref{akubound} implies the existence result.  The proof is a modification of \cite[Theorem 3.1]{Akutagawa}. If we have proved the case that $M$ has only one singularity, the more general cases can be proved by an induction on the number of singularity points. Hence we may assume $M$ has only one singularity $(q, \Gamma)$. Let $X = M - \{q\}$. Note that
\begin{align}
E\Big(\frac{n+2}{n-2}, K\Big) = \inf_{u\in C_c^\infty(X)}J_{\frac{n+2}{n-2}}(u, K, X).
\end{align}
Let $B_\rho$ denote the open geodesic ball centered at $q$ of radius $\rho$. Define
\begin{align}
\label{Y_k_def}
Y_k \equiv \inf_{u\in C_c^\infty(X \setminus \overline{B_{1/k}})}J_{\frac{n+2}{n-2}}(u, K, X\setminus\overline{B_{1/k}}),
\end{align}
for $k\in\mathbb N^*$. It follows that
\begin{align}
Y_k \geq Y_{k+1} \geq Y_{k+2} \geq\cdots, \ \ \mathrm{and}\ \ \lim_{k\to\infty} Y_k= E\Big(\frac{n+2}{n-2}, K\Big).
\end{align}
By \eqref{akubound}, there exists a large integer $k_0$, such that
\begin{align}
(B_K)^{\frac{n-2}{n}}Y_k < Q(S^n)\ \mathrm{for\ any\ }k\geq k_0.
\end{align}
Due to \eqref{B_K}, it implies
\begin{align}
\label{E-S_prop1.1}
\bigg(\sup_{x\in X \setminus \overline{B_{1/k}}}\{K(x)\}\bigg)^{\frac{n-2}{n}}Y_k < Q(S^n)\ \mathrm{for\ any\ }k\geq k_0.
\end{align}
Note that on the manifold with boundary $(N,\p N)= (X - B_{1/k}, \p B_{1/k})$, when we apply integration by parts to any function in $C_c^\infty(X \setminus \overline{B_{1/k}})$, the boundary integral term always vanishes. As a result, the variational method used in \cite[Proposition 1.1]{E-S} remains valid here. Thus, \eqref{E-S_prop1.1} implies that for each $k\geq k_0$, there exists a non-negative $J_{\frac{n+2}{n-2}}(\cdot, K, X\setminus\overline{B_{1/k}})$-minimizer $u_k\in C^\infty(X - B_{1/k})$, such that
\begin{align}
\label{Lebesgue}
&J_{\frac{n+2}{n-2}}(u_k, K, X\setminus\overline{B_{1/k}}) = Y_k,\ \ \int_{X\setminus\overline{B_{1/k}}}Ku_k^{\frac{2n}{n-2}}dVol_g=1,\\
&u_k = 0\ \mathrm{on\ }\p B_{1/k}\ \ \mathrm{and}\ \ u_k > 0\ \mathrm{in\ } X\setminus\overline{B_{1/k}}.
\end{align}
Denote the zero extension of each $u_k$ to $M$ by also the same symbol $u_k$. Suppose the sequence $\{u_k\}$ has a uniform $C^0$-bound, i.e., there exists a constant $C>0$ such that $\Vert u_k\Vert_{C^0(M)} \leq C \mathrm{\ for\ }k\geq k_0$, then there exists a non-negative $J_{\frac{n+2}{n-2}}(\cdot, K, M)$-minimizer $u\in W^{1,2}(M)$ with $||u||_{C^0(M)}\leq C$, such that
\begin{align}
u_k\to u\mathrm{\ weakly\ in\ }W^{1,2}(M),\quad u_k\to u\mathrm{\ strongly\ in\ }L^2(M).
\end{align}
By \eqref{Lebesgue}, Lebesgue's bounded convergence theorem, and the above uniform $C^0$-bound, we have
\begin{align}
\int_{M}Ku^{\frac{2n}{n-2}}dVol_g = 1,
\end{align}
which further implies 
\begin{align}
u_k\to u\mathrm{\ strongly\ in\ }W^{1,2}(M).
\end{align}
By the standard elliptic estimate, we obtain that $u\in C^\infty(M)$. The maximum principle implies that $u>0$ everywhere on $M$; see for example \cite[Proposition~3.75]{Aubin}.

To complete the proof, it is sufficient to show a uniform $C^0$-bound for the sequence $\{u_k\}$. For each $k$, take the absolute maximum point $x_k\in X$ of $u_k$ and denote $M_k \equiv u_k(x_k)$. Taking a subsequence if necessary, there exists a point $\bar x\in M$ such that
\begin{align}
\lim_{k\to \infty}x_k = \bar x.
\end{align}
Suppose that there is not a uniform $C^0$-bound for $\{u_k\}$, that is $\lim_{k\to\infty}M_k = \infty$. There will be two cases.

Case 1: $\bar x = q$ (blow-up occurs at the singular point). In this case, we consider the universal cover of a small neighborhood around $q$. Let $\{\tilde x_k\}$ be a sequence of lifting-up points of $\{x_k\}$ in the same branch of the lifting-up space. Let $\tilde u_k$, $\tilde K$ denote the lifting-up functions of $u_k$, $K$, respectively. In the lifting-up space, for each $k$, let $\{\tilde x^i\}$ be a normal coordinate system in a small ball $B_\s(\tilde x_k)$ centered at each $\tilde x_k$. Consider the change of variables $\tilde y = M_k^{\frac{2}{n-2}}\tilde x$ and define the rescaled function $\tilde v_k(\tilde y) = M_k^{-1}\cdot \tilde u_k(M_k^{-\frac{2}{n-2}}\tilde y)$. By Theorem~2.1 of Chapter~5 in \cite{SchoenYaubook},
\begin{align}
\label{vktovconv}
\tilde v_k\to \tilde v\ \mathrm{in\ }C^2_{loc}(\RR^n),
\end{align}
where $\tilde v$ satisfies the equation
\begin{align}
\label{eucl_tilde}
-\Delta_0 \tilde v = E\Big(\frac{n+2}{n-2}, K\Big)K(q)\tilde v^{\frac{n+2}{n-2}}\ \mathrm{on}\ \RR^n.
\end{align}
Note that
\begin{align}
\int_{\RR^n}K(q)\tilde v(y)^{\frac{2n}{n-2}}dy= \lim_{r\to\infty}\int_{|\tilde y|\leq r}K(q)\tilde v(\tilde y)^{\frac{2n}{n-2}}d\tilde y.
\end{align}
For each $r>0$, by \eqref{vktovconv} and changing variables, we have
\begin{align}
\begin{split}
\int_{|\tilde y|\leq r}K(q)\tilde v(\tilde y)^{\frac{2n}{n-2}}d\tilde y =& \lim_{k\to\infty}\int_{|\tilde y|\leq r}\tilde K(M_k^{-\frac{2}{n-2}}\tilde y)\tilde v_k(\tilde y)^{\frac{2n}{n-2}}d\tilde y\\
=& \lim_{k\to\infty}\int_{|\tilde x|\leq M_k^{-\frac{2}{n-2}}r}\tilde K(\tilde x)\tilde u_k(\tilde x)^{\frac{2n}{n-2}}d\tilde x\\
\leq &|\Gamma|\cdot \lim_{k\to\infty}\int_{M}Ku_k^{\frac{2n}{n-2}}dVol_g= |\Gamma|,
\end{split}
\end{align}
which implies
\begin{align}
\label{denobound_tilde}
\int_{\RR^n}\tilde v^{\frac{2n}{n-2}}\leq |\Gamma|K(q)^{-1}.
\end{align}
Multiplying $\tilde v$ with \eqref{eucl_tilde} and integrating by parts, we obtain
\begin{align}
\int_{\RR^n}|\nabla \tilde v|^2 = E\Big(\frac{n+2}{n-2}, K\Big)K(q)\cdot\int_{\RR^n}\tilde v^{\frac{2n}{n-2}}.
\end{align}
Together with \eqref{denobound_tilde}, we have
\begin{align}
\begin{split}
 Q(S^n) &\leq J_{\frac{n+2}{n-2}}(\tilde v, 1, \RR^n) =
\frac{\int_{\RR^n}|\nabla \tilde v|^2}{(\int_{\RR^n}\tilde v^{\frac{2n}{n-2}})^{\frac{n-2}{n}}} 
\\&= E\Big(\frac{n+2}{n-2}, K\Big)K(q)\cdot\Big(\int_{\RR^n}\tilde v^{\frac{2n}{n-2}}\Big)^{\frac{2}{n}}\leq E\Big(\frac{n+2}{n-2}, K\Big)[|\Gamma|^{\frac{2}{n-2}}K(q)]^{\frac{n-2}{n}}.
\end{split}
\end{align}
Note that $q$ is a singular point on $M$, so $|\Gamma|^{\frac{2}{n-2}}K(q) \leq B_K$, which leads to a contradiction against~\eqref{akubound}.

Case 2: $\bar x\neq q$ (blow-up occurs at a smooth point). The argument is identical to Case 1 with $|\Gamma| = 1$. Hence
\begin{align}
\label{smoothblow-up}
E\Big(\frac{n+2}{n-2}, K\Big)K(\bar x)^{\frac{n-2}{n}} \geq Q(S^n).
\end{align}
Since $\bar x$ is a smooth point on $M$, $K(\bar x) \leq B_K$, which again leads to a contradiction against~\eqref{akubound}.
\end{proof}

\begin{proof}[Proof of Theorem~\ref{energy_existence}]
The proof will be based on Theorem \ref{energy_obf} and \cite[Proposition 5.1]{M-M}. Under the assumptions of Theorem \ref{energy_existence}, we simply denote the singular point $(q_{i_0}, \Gamma_{i_0})$ by $(q, \Gamma)$. Let $\varphi_{q,\lambda}$ be as defined in \eqref{testvarphi}. By \cite[Proposition 5.1]{M-M} and \eqref{c(n)}, we have the estimation
\begin{align}
J_{\frac{n+2}{n-2}}(\varphi_{q,\lambda}, K, M) = \frac{Q(S^n)}{|\Gamma|^{\frac{2}{n}}K(q)^{\frac{n-2}{n}}}\Bigg(1-\hat c_2\frac{\Delta K(q)}{K(q)\lambda^2} - \hat d_1 \left(\begin{array}{ll}\frac{H_q + O(\frac{\log \lambda}{\lambda^2})}{\lambda^2}&\mathrm{for\ }n=4\\ \frac{H_q}{\lambda^3}&\mathrm{for\ }n=5 \\ \frac{W_q\log\lambda}{\lambda^4}&\mathrm{for\ }n=6 \\0&\mathrm{for\ }n\geq 7 \\\end{array}\right) \Bigg)
\end{align}
up to error $O(1/\lambda^4)$ for large $\lambda$, where\footnote{The constant factor $\frac{2n}{n-2}$ was missing when the authors computed $\hat d_1$ in \cite[Proposition 5.1]{M-M}. We've corrected the mistake here.}
\begin{align}
\hat c_2 = \frac{\int_{\RR^n}\frac{r^2dx}{(1+r^2)^n}}{2n\int_{\RR^n}\frac{dx}{(1+r^2)^n}}\ \ \mathrm{and}\ \ \hat d_1 = \frac{2n\int_{\RR^n}\frac{r^ndx}{(1+r^2)^{n+1}}}{(n-2)\int_{\RR^n}\frac{dx}{(1+r^2)^n}}.
\end{align}
Moreover, in dimension 4, $\hat c_2 = 1/4$ and $\hat d_1 = 6$. Then, it is clear that under the following assumptions
\begin{equation}
\label{condH_q}
\begin{cases}
H_q + \frac{\Delta_g K(q)}{24 K(q)} > 0, & \mathrm{for\ }n=4,\\
\Delta_g K(q) > 0, & \mathrm{for\ }n \geq 5,
\end{cases}
\end{equation}
by choosing sufficiently large $\lambda$, we can get
\begin{align}
J_{\frac{n+2}{n-2}}(\varphi_{q,\lambda}, K, M) < \frac{Q(S^n)}{|\Gamma|^{\frac{2}{n}}K(q)^{\frac{n-2}{n}}}.
\end{align}
Note that we have $B_K=|\Gamma|^{\frac{2}{n-2}}K(q)$ under the assumption of Theorem~\ref{energy_existence}, hence the above inequality implies
\begin{align}
(B_K)^{\frac{n-2}{n}} E\Big(\frac{n+2}{n-2}, K\Big) < Q(S^n).
\end{align}
By Theorem \ref{energy_obf}, there exists a postive smooth solution of equation \eqref{yamabeequ}. Moreover, by \eqref{m=12A}, we know that condition \eqref{energy_exist_cond} is equivalent to condition \eqref{condH_q} in dimension 4.
\end{proof}

\section{LeBrun metrics}
\label{s:LeBrun}
Recall the LeBrun metric $(\mathcal O_{\mathbb{P}^1}(-n), g_{LEB(n)})$ in $\hr$-coordinates and its conformal compactification $(\check{\mathcal O}_{\mathbb{P}^1}(-n), \check g_{LEB(n)})$ from Section~\ref{lebrunsub}. On $\check{\mathcal O}_{\mathbb{P}^1}(-n)$, define $s = 1/\hr$ to be the inverted radial coordinate centered at the orbifold point $\check q$. Then
\begin{align}
\check g_{LEB(n)} =& \frac{1+s^2}{(1+n s^2)^3}ds^2 + \frac{s^2(s^2+1)}{(n s^2+1)^2}\Big[\s_1^2+\s_2^2 +\frac{(1+n s^2)}{(1+s^2)^2}\s_3^2\Big]\\
\label{leb_obf}
=&ds^2 + s^2(\s_1^2+\s_2^2+\s_3^2) + O(s^2)\ \ \mathrm{\ as \ }s \to 0.
\end{align}
Hence we may choose normal coordinates $\{x^i\}$ centered at $\check q$ such that $s = |x| + O(|x|^3)$. Also note that $g_{LEB(n)} = (n+\hr^2)^2\cdot \check g_{LEB(n)} = (s^{-2}+n)^2\cdot \check g_{LEB(n)}$ is scalar-flat, hence $g_{LEB(n)}$ is the conformal blow-up of $\check g_{LEB(n)}$ at the point $\check q$, as in Definition \ref{def_psi}. We will next study
\begin{align}
\label{sub_leb_conf}
\Delta_{\check g_{LEB(n)}}u - \frac{1}{6}R_{\check g_{LEB(n)}}u = -Ku^p,\ \ 1<p\leq 3.
\end{align}
In the $U(2)$-invariant case, Theorem \ref{cpt_crit_thm_combined} specializes to the following.

\begin{proposition}
\label{cpt_leb}
Let $\mathcal X_n$, $\mathcal X_{n,+}$, $\mathcal X_{n,0}$ and $\mathcal X_{n,-}$ be as defined in $\eqref{X_def}$ and $\eqref{X_decomp}$. On the orbifold $(\check{\mathcal{O}}_{\mathbb {P}^1}(-n),  \check g_{LEB(n)})$, for any $K\in\mathcal X_{n,+}\cup \mathcal X_{n,-}$, there exists some constant $C$, depending only on $\inf K$ and $\Vert K\Vert_{C^2}$, such that
\begin{align}
\label{comp_est_leb}
1/C\leq u\leq C\mathrm{\ \ and\ \ }\Vert u\Vert_{C^{2,\alpha}}\leq C,
\end{align}
for all U$(2)$-invariant solutions $u\in\mathcal X_n$ of \eqref{sub_leb_conf} with $p=3$, where $0<\alpha<1$. Moreover, if $K\in\mathcal X_{n,+}$,
then \eqref{comp_est_leb} holds for all  $1< 1+\varepsilon <p\leq 3$, where $C$ in addition depends on $\varepsilon$. Consequently, in this case there exists a $U(2)$-invariant solution
$u$ of \eqref{sub_leb_conf} with $p=3$. 
\end{proposition}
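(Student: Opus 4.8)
The plan is to derive this as a specialization of Theorem~\ref{cpt_crit_thm_combined} together with the blow-up analysis of Sections~\ref{prelim}--\ref{s:blow-up}; the only extra input needed is that the $\mathrm{U}(2)$-symmetry forces any concentration to take place at the orbifold point $\check q$, so that the sign condition defining $\mathcal X_{n,\pm}$---which is imposed only at $\check q$---already suffices to preclude blow-up.

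\emph{Compactness.} Both assertions are proved by contradiction. If the upper bound $u\leq C$ fails, there is a sequence of $\mathrm{U}(2)$-invariant solutions $u_k\in\mathcal X_n$ of \eqref{sub_leb_conf} with exponents $p_k$---where $p_k=3$ for all $k$ in the first assertion, and $p_k\in(1+\varepsilon,3]$ in the assertion for $K\in\mathcal X_{n,+}$---with $\max_M u_k\to\infty$. By \cite{G-S} we may pass to a subsequence with $p_k\to 3$. Since $\max_M u_k\to\infty$ there is at least one blow-up point, and by Corollary~\ref{blow_is_iso} the set of blow-up points is finite; since $u_k$, $K$ and $\check g_{LEB(n)}$ are all $\mathrm{U}(2)$-invariant, this finite set is $\mathrm{U}(2)$-invariant, hence---being finite and invariant under the connected group $\mathrm{U}(2)$---consists of $\mathrm{U}(2)$-fixed points. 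But $\mathrm{U}(2)$ acts transitively on the zero section $\mathbb{P}^1$ and with $3$-dimensional orbits on the region $\hr\in(0,\infty)$, so its only fixed point on $\check{\mathcal O}_{\mathbb{P}^1}(-n)$ is $\check q$; thus the blow-up set is $\{\check q\}$, and a maximum point $x_k$ of $u_k$ satisfies $x_k\to\check q$, whence $x_k=\check q$ for $k$ large by the last statement of Corollary~\ref{blow_is_iso}. Then by Corollary~\ref{blow_is_isosim} the sequence $(u_k,\check q)$ satisfies Condition~\ref{blow-up_cond}. Now apply Proposition~\ref{mass_prop} with limit data $\check g_{LEB(n)}$, $K$: since $g_{LEB(n)}=(s^{-2}+n)^2\,\check g_{LEB(n)}$ is scalar-flat, it is the conformal blow-up $\hat g_{\check q}$ of $\check g_{LEB(n)}$ at $\check q$, and by the mass computation of Section~\ref{lebrunsub}, $m(\hat g_{\check q})=m(g_{LEB(n)})=-2(n-2)$. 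Hence Proposition~\ref{mass_prop} gives
\begin{align}
\Delta_{\check g_{LEB(n)}}K(\check q)-4(n-2)K(\check q)\leq 0,\qquad\text{with equality if } p_k\equiv 3.
\end{align}
The inequality contradicts $K\in\mathcal X_{n,+}$, which settles the subcritical assertion, while when $p_k\equiv 3$ the equality contradicts $K\in\mathcal X_{n,+}\cup\mathcal X_{n,-}$ by \eqref{X_decomp}, which settles the $p=3$ assertion. Thus $u\leq C$; the lower bound $u\geq 1/C$ and the $C^{2,\alpha}$-bound then follow from the Harnack inequality and standard elliptic estimates exactly as in the proof of Theorem~\ref{cpt_crit_thm_combined}.

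\emph{Existence for $K\in\mathcal X_{n,+}$.} As in the proof of Theorem~\ref{cpt_crit_thm_combined}, for every $p<3$ the subcritical problem has a solution; carrying out the variational argument inside the $\mathrm{U}(2)$-invariant subspace of $W^{1,2}$---using the compact Sobolev embedding for $p<\frac{n+2}{n-2}$, the principle of symmetric criticality, and the maximum principle---produces a positive $\mathrm{U}(2)$-invariant solution $u_p$ of \eqref{sub_leb_conf}. Fixing $\varepsilon$ small, the estimate \eqref{comp_est_leb} just established is uniform over $p\in(1+\varepsilon,3]$, so letting $p\uparrow 3$ and extracting a $C^2$-convergent subsequence yields a $\mathrm{U}(2)$-invariant solution of \eqref{sub_leb_conf} with $p=3$, which is positive by the uniform lower bound and hence lies in $\mathcal X_n$.

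I expect the main obstacle to be the reduction in the second paragraph---verifying cleanly that the (finite) blow-up set of a $\mathrm{U}(2)$-invariant blowing-up sequence must be $\{\check q\}$---since once this is in hand the contradiction follows immediately from Proposition~\ref{mass_prop} and the explicitly known value $m(g_{LEB(n)})=-2(n-2)$. A secondary point requiring care is the bookkeeping of exponent ranges: the inequality/equality dichotomy in Proposition~\ref{mass_prop} is precisely what distinguishes $\mathcal X_{n,+}$, where subcritical exponents are permitted and hence existence follows, from $\mathcal X_{n,-}$, where only $p=3$ can be treated.
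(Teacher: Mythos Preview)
Your proof is correct and follows essentially the same approach as the paper's: both reduce to Theorem~\ref{cpt_crit_thm_combined} after arguing that blow-up can only occur at $\check q$. The only cosmetic difference is in that reduction---the paper observes that if blow-up occurred at some $\{s=s_0\}$ with $s_0>0$, then by $\mathrm{U}(2)$-invariance the entire orbit would blow up, contradicting isolatedness (Proposition~\ref{iso}), whereas you argue that the finite blow-up set is invariant under the connected group and hence consists of fixed points; both routes are valid and immediate.
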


\begin{proof}
Assume $\{u_k\}\subset \mathcal X_n$ is a family of U$(2)$-invariant solutions of \eqref{sub_leb_conf} with corresponding exponents $p_k\to 3$ as $k\to\infty$. Assume $u_k$ blows up at a point $\bar x$ when $k\to\infty$. Suppose $\bar x$ is in the set $\{s = s_0\}$ for some $s_0>0$. Because $u_k$ is U$(2)$-invariant, $u_k$ blows up on the entire set $\{s=s_0\}$, which is either a hypersurface if $s_0<\infty$ or the $\mathbb{CP}^1$ component if $s_0=\infty$. However, either case is impossible, because Proposition \ref{iso} implies that blow-up points are isolated. Therefore, $u_k$ must blow up at the orbifold point $\check q$. It is clear that if $K\in \mathcal X_{n, +}$ implies \eqref{mass_ineq_sub}, and $K\in \mathcal X_{n, -}$ implies \eqref{mass_ineq}.  By the proof and statement of Theorem \ref{cpt_crit_thm_combined}, this proposition is proved.
\end{proof}
If $u(s)$ satisfies equation \eqref{leb_conf}, then
$u^2g_{\check g_{LEB(n)}}$ has scalar curvature $6 K(s)$. This latter condition is
 equivalent to $v^2g_{g_{LEB(n)}}$ having scalar curvature $6K(s)$, where
\begin{align}
\label{vu}
v(s) \equiv \frac{u(s)}{n + 1/s^2},
\end{align}
which is equivalent to
\begin{align}
\label{leb_ALE_conf}
\Delta_{g_{LEB(n)}} v = -K v^3.
\end{align}
Define another function space
\begin{align}
\mathcal Y_n = \Big\{h(s): (0,\infty) \rightarrow \RR_+ \ | \ h(s) = \frac{f(s)}{n + 1/s^2}, f(s)\in \mathcal X_n\Big\}.
\end{align}
There is an obvious bijection between the solution set of equation \eqref{leb_conf} in $\mathcal X_n$ and the solution set of equation \eqref{leb_ALE_conf} in $\mathcal Y_n$.

\begin{proposition}
\label{O(-2)}
On $(\mathcal{O}_{\mathbb {P}^1}(-2),  g_{LEB(2)})$, assume $K(s)\in\mathcal X_2$ satisfies
\begin{align}
\label{K'<0}
K'(s) \leq 0\ \ \mathrm{for\ any\ } s\geq 0,
\end{align}
then there is no solution in $\mathcal Y_2$ that solves \eqref{leb_ALE_conf}. Consequently, there is no solution in $\mathcal X_2$ that solves \eqref{leb_conf}.
\end{proposition}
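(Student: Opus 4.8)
The plan is to pass to the ODE produced by the $\mathrm{U}(2)$-invariant reduction and then run a Pohozaev/Rellich-type argument that combines the monotonicity forced by the equation with the sign of $K'$.

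First I would set up the ODE. By the tautological bijection $\mathcal X_2\leftrightarrow\mathcal Y_2$ between solutions of \eqref{leb_conf} and \eqref{leb_ALE_conf}, it suffices to rule out $v\in\mathcal Y_2$. Elements of $\mathcal Y_2$ are radial, so with $\sqrt{\det g_{LEB(2)}}=(1+\hr^2)\hr$ up to a constant, equation \eqref{leb_ALE_conf} becomes
\begin{align*}
\big(\hr(2+\hr^2)\,\dot v\big)' \;=\; -\,\hr(1+\hr^2)\,K\,v^3 ,\qquad \dot{}\,=\,\tfrac{d}{d\hr},
\end{align*}
on $(0,\infty)$ with $v>0$; smoothness over the bolt $\mathbb P^1$ at $\hr=0$ forces $\dot v(0)=0$, and membership in $\mathcal Y_2$ forces, at the ALE end, an expansion $v=c_\infty\hr^{-2}+O(\hr^{-4})$ with $c_\infty>0$ and \emph{no} $\hr^{-3}$ term. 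The vanishing of this $\hr^{-3}$ coefficient is the analytic shadow of the geometric fact that the metric $v^2g_{LEB(2)}=u^2\check g_{LEB(2)}$ lies in the conformal class $[\check g_{LEB(2)}]$, so its conformal blow-up at $\check q$ (Definition~\ref{def_psi}) coincides with $g_{LEB(2)}$ itself — the Eguchi–Hanson metric, which has zero ADM mass. Next I record monotonicity: $W:=\hr(2+\hr^2)\dot v$ satisfies $\dot W=-\hr(1+\hr^2)Kv^3<0$ with $W(0)=0$, hence $W<0$ on $(0,\infty)$, so $\dot v<0$; thus $v$ is \emph{strictly decreasing} (and $W(\hr)\to -2c_\infty$ at infinity).

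The crux is a Pohozaev identity: multiply the ODE by a suitably chosen combination $\psi(\hr)\dot v+\chi(\hr)v$ — built so that near infinity it is the conformal Pohozaev multiplier (asymptotic dilation plus lower order term) — and integrate over $(0,\infty)$, retaining all boundary terms. The boundary term at $\hr=0$ vanishes because $\hr(2+\hr^2)$, $\hr(1+\hr^2)$ and $\dot v(0)$ vanish there; the boundary term at $\hr=\infty$ is a fixed multiple of the mass of the conformal blow-up, hence $0$ by the first paragraph. The remaining bulk integral splits into an ``energy'' part $\int_0^\infty A(\hr)\dot v^2+\int_0^\infty B(\hr)v^2$ coming from the geometry of $g_{LEB(2)}$, and a ``prescribed curvature'' part $\tfrac14\int_0^\infty C(\hr)\,\dot K\,v^4$ plus lower order terms, where $\dot K=dK/d\hr=-\hr^2K'(s)\ge 0$ by hypothesis~\eqref{K'<0}. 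One then chooses $\psi,\chi$ (with growth slow enough to justify the boundary analysis) so that $A\ge 0$, $B\ge 0$, $C\le 0$, and the lower-order coefficients are nonpositive; if the natural choice makes the $v^2$-coefficient change sign, it is absorbed into the $\dot v^2$-term using a weighted Hardy inequality available because $v$ decays like $\hr^{-2}$. The identity then reads ``(nonnegative energy) $=0+$ (nonpositive curvature term)'', which forces $\dot v\equiv 0$, contradicting the strict monotonicity. Since a solution $u\in\mathcal X_2$ of \eqref{leb_conf} produces $v=u/(n+1/s^2)\in\mathcal Y_2$ solving \eqref{leb_ALE_conf}, this also gives the stated consequence.

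I expect the sign bookkeeping in the last step to be the main obstacle: it is precisely here that all the special features are used — the $\mathrm{U}(2)$-symmetry to get an ODE, the explicit coefficients of $g_{LEB(2)}$ to control $A,B$, the vanishing mass of Eguchi–Hanson to kill the boundary term at infinity, and $K'\le 0$ to sign the curvature term. As a sanity check, the constant-curvature case $K'\equiv 0$ is consistent with this scheme and recovers the nonexistence proved in \cite{monopole_V}.
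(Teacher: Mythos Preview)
Your monotonicity step is correct and matches the paper: the ODE $\big(\hr(2+\hr^2)\dot v\big)'=-\hr(1+\hr^2)Kv^3<0$ with $\dot v(0)=0$ gives $\dot v<0$, i.e.\ $v'(s)\ge 0$.

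The second half is where you diverge from the paper, and where your argument has a genuine gap. You propose an unspecified Pohozaev multiplier $\psi\dot v+\chi v$ and assert that one can choose $\psi,\chi$ so that the bulk coefficients $A,B$ are nonnegative and $C$ nonpositive, possibly after a weighted Hardy correction. None of this is carried out, and you explicitly flag the sign bookkeeping as the main obstacle; as written, the argument is a hope rather than a proof. (Also, $dK/d\hr=-\hr^{-2}K'(s)$, not $-\hr^2K'(s)$, though the sign conclusion survives.)

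The paper avoids this altogether by an Obata-type argument on the \emph{compact} orbifold $(\check M,\check g)=(\check{\mathcal O}_{\mathbb P^1}(-2),v^2 g_{LEB(2)})$. Since $g_{LEB(2)}$ is Ricci-flat, the traceless Ricci of $\check g$ is $E_{\check g}=v^{-1}(-2\nabla^2 v+\tfrac12(\Delta v)\check g)$. Integrating $v|E_{\check g}|^2$ over $\check M$, integrating by parts (the boundary term at the orbifold point vanishes because $v=O(s^2)$), and using the contracted Bianchi identity $\nabla_j E_{\check g}^{ij}=\tfrac32\nabla^i K$, one obtains
\[
\int_{\check M} v|E_{\check g}|^2\,dV_{\check g}=\int_0^\infty C(s)\,K'(s)\,v'(s)\,ds
\]
with $C(s)>0$. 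The monotonicity $v'(s)\ge 0$ and the hypothesis $K'(s)\le 0$ force $E_{\check g}\equiv 0$, hence $K'\equiv 0$, and then the constant-$K$ nonexistence from \cite{monopole_V} finishes. This route never needs to guess multipliers or invoke Hardy inequalities: the Obata identity supplies exactly the integrand $K'(s)v'(s)$ whose sign is transparent.
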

\begin{proof}
Suppose $v(s) \in\mathcal Y_2$ solves equation \eqref{leb_ALE_conf}. It implies that $\check g = v^2g_{LEB(2)}$ is a compact orbifold metric with $R_{\check g}=6 K(s)$, and it has an orbifold point $\check q$ at $s = 0$. Denote this orbifold by $\check M$. Define $\hat v(\hr) = v(1/\hr)$ and $\hat K(\hr) = K(1/\hr)$. Then in the $\hr$-coordinate, we have
\begin{align}
\frac{n + 3\hr^2}{\hr(1+\hr^2)}\hat v'(\hr) + \frac{n +\hr^2}{1+\hr^2}\hat v''(\hr) = -\hat K(\hr)\hat v(\hr)^3.
\end{align}
Denote the left hand side of the above equation by LHS. We observe that
\begin{align}
\mathrm{LHS} \leq 0\ \ \mathrm{and}\ \ \mathrm{LHS} = \frac{[(n\hr+\hr^3)\hat v'(\hr)]'}{\hr(1+\hr^2)}.
\end{align}
It follows that
\begin{align}
[(n\hr+\hr^3)\hat v'(\hr)]'\leq 0,
\end{align}
which implies that
\begin{align}
(n\hr+\hr^3)\hat v'(\hr) \leq  0.
\end{align}
Thus $\hat v'(\hr) \leq 0$ for any $\hr\geq 0$, which implies that
\begin{align}
\label{v'>0}
v'(s) \geq 0,\ \ \mathrm{for\ any\ } s\geq 0.
\end{align}
Similar to the proof of \cite[Theorem 1.3]{monopole_V}, letting $E$ denote the traceless Ricci tensor, by the conformal transformation formula for $E$ and the fact that $g_{LEB(2)}$ is Ricci-flat, we have
\begin{align}
E_{\check g} = v^{-1}(-2\nabla^2 v + (\Delta v/2)\check g),
\end{align}
where $\nabla$ and $\Delta$ are respect to metric $\check g$. Using the argument of \cite{Obata}, integrating $v |E_{\check g}|^2$ on $\check M$, we obtain
\begin{align}
\begin{split}
\int_{\check M}v |E_{\check g}|^2dVol_{\check g} = &\int_{\check M}v E^{ij}_{\check g}\Big\{v^{-1}(-2\nabla^2 v + (\Delta v/2)\check g)_{ij} \Big\}dVol_{\check g} \\
=& -2\int_{\check M} E^{ij}_{\check g}(\nabla^2 v)_{ij}dVol_{\check g}\\
=& -2\lim_{\varepsilon\to 0}\int_{\check M\setminus B_\varepsilon(\check q)} E^{ij}_{\check g}(\nabla^2 v)_{ij}dVol_{\check g},
\end{split}
\end{align}
where $B_{\varepsilon}(\check q)$ denotes the geodesic ball centered at $\check q$ with radius $\varepsilon$, and the indices $i=0,1,2,3$ denote the directions $ds, \s_1, \s_2, \s_3$ in the moving coframe. Integrating by parts, we have
\begin{align}
\int_{\check M}v |E_{\check g}|^2dVol_{\check g} =-2\lim_{\varepsilon\to 0}\Big(\int_{\p B_\varepsilon(\check q)} E^{ij}_{\check g}(\nabla v)_i\nu_jd\s_{\check g} - \int_{\check M\setminus B_\varepsilon(\check q)} \nabla_jE^{ij}_{\check g}\cdot (\nabla v)_idVol_{\check g}\Big).
\end{align}
By \eqref{vu}, it is clear that $v(s) = O(s^2)$ near $s = 0$. Also because the curvature and volume term for $\check g$ are bounded near $s=0$, the first integral term on the right hand side goes to zero as $\varepsilon\to 0$. For the second integral term on the right hand side, since $v(s)$ is a radial function, the only nonvanishing component of $(\nabla v)_i$ is $(\nabla v)_0 = v'(s)$ along the $\p/\p s$ direction. On the other hand, by the Bianchi identity,
\begin{align}
\nabla_jE_{\check g}^{ij} = \nabla_j\Big(Ric_{\check g}^{ij}-\frac{1}{4}R_{\check g}\cdot  \check g^{ij}\Big) = \frac{1}{2}\nabla^i R_{\check g} - \frac{1}{4}\nabla^i R_{\check g} = \frac{3}{2}\nabla^i K.
\end{align}
Since $K(s)$ is a radial function, the only nonvanishing component of $(\nabla K)^i$ is $(\nabla K)^0 = K'(s)$ along the $ds$ direction. Hence
\begin{align}
\label{bianchi}
\nabla_jE^{ij}_{\check g} = \begin{cases}
\frac{3}{2}K'(s), & i=0,\\
0, & i=1,2,3.
\end{cases}
\end{align}
It follows
\begin{align}
\int_{\check M}v |E_{\check g}|^2dVol_{\check g} =& \lim_{\varepsilon\to 0} \int_{\varepsilon}^\infty C(s)K'(s)v'(s)ds,
\end{align}
where $C(s)$ is a positive function in $s$, depending on the volume term of $\check g$ at each $s$. By \eqref{K'<0} and \eqref{v'>0}, we know
\begin{align}
\lim_{\varepsilon\to 0} \int_{\varepsilon}^\infty C(s)K'(s)v'(s)ds \leq 0,
\end{align}
hence
\begin{align}
\int_{\check M}v |E_{\check g}|^2dVol_{\check g} \leq 0,
\end{align}
which implies $E_{\check g} = 0$. By \eqref{bianchi}, we know $K'(s) \equiv 0$, so that $K = constant$. By \cite[Theorem~1.3]{monopole_V}, such a solution $v$ does not exist.
\end{proof}
For each $n$, we define another radial coordinate by
\begin{align}
t(\hr) = \log\Big(\frac{n+\hr^2}{\hr^2}\Big)\ \ \mathrm{and\ its\ inverse}\ \ \hr(t)=\sqrt{\frac{n}{e^{t}-1}}.
\end{align}
Hence
\begin{align}
\label{st}
t(s) = \log(n s^2+1)\ \ \mathrm{and}\ \ s(t) = \sqrt{\frac{e^{t}-1}{n}}.
\end{align}
We have defined three radial coordinates $\hr, s, t\in [0,\infty]$. It is not hard to see that $1/\hr$, $s$ and $t$ are monotonic to each other. By basic computations, on the ALE manifold $\mathcal O_{\mathbb{P}^1}(-n)$, we have
\begin{align}
\label{Delta_t}
\Delta_{g_{LEB(n)}} =& \frac{n + 3\hr^2}{\hr(1+\hr^2)}\frac{\p}{\p\hr} + \frac{n +\hr^2}{1+\hr^2}\frac{\p^2}{\p\hr^2}
= \frac{4(1-e^{-t})^3}{e^{-t}(1+(n-1)e^{-t})}\frac{\p^2}{\p t^2}.
\end{align}
Next, we prove the following non-existence result. 
\begin{theorem}
\label{lebrun_thm_2}
For any $n\in\mathbb N^*$, on $(\check{\mathcal{O}}_{\mathbb {P}^1}(-n),  \check g_{LEB(n)})$, there is no solution in $\mathcal X_n$ for equation \eqref{leb_conf} with
\begin{align}
\label{Kn-}
K = K_{n, -}(s) = \frac{2+n s^2}{n + n s^2}K_{2,-}(s),
\end{align}
where $K_{2,-}(s)\in\mathcal X_n$ is any monotonically decreasing function in $s$. Moreover, we know
\begin{align}
K = K_{n, -}(s)\in \mathcal X_{n, -}\mathrm{\ if\ }K_{2,-}''(0) < 0.
\end{align}
In particular, for $n\geq 2$, it implies that there is no $U(2)$-invariant solution for the Yamabe Problem $K = constant$.  
\end{theorem}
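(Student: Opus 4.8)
The plan is to reduce the nonexistence statement to the Eguchi--Hanson case $n=2$, which was settled in Proposition~\ref{O(-2)}, by transplanting a hypothetical $\mathrm{U}(2)$-invariant solution along the common radial coordinate $t$ from \eqref{st}. A direct Obata-type argument on $\check g_{LEB(n)}$ is unavailable for $n\geq 3$ since $g_{LEB(n)}$ is not Ricci-flat, so passing through $n=2$ is the natural route.

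First I would suppose $u\in\mathcal X_n$ solves \eqref{leb_conf} with $K=K_{n,-}$ and derive a contradiction. By the bijection between solutions of \eqref{leb_conf} in $\mathcal X_n$ and solutions of \eqref{leb_ALE_conf} in $\mathcal Y_n$ recorded after \eqref{vu}, the radial function $v(s)=u(s)/(n+1/s^2)\in\mathcal Y_n$ satisfies $\Delta_{g_{LEB(n)}}v=-K_{n,-}v^3$ on $\mathcal O_{\mathbb{P}^1}(-n)$. Writing $v$ in the $t$-coordinate, \eqref{Delta_t} turns this into $a_n(t)\,v''(t)=-K_{n,-}(t)\,v(t)^3$ with $a_n(t)=4(1-e^{-t})^3\big(e^{-t}(1+(n-1)e^{-t})\big)^{-1}$. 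Let $\hr',\,s'=1/\hr',\,t'$ be the corresponding LeBrun coordinates on $\mathcal O_{\mathbb{P}^1}(-2)$, and define $w:=v$ under the identification $t'=t$. Dividing by $a_n$ and multiplying by $a_2$ gives $a_2(t')\,w''(t')=-\tilde K(t')\,w(t')^3$, i.e. $\Delta_{g_{LEB(2)}}w=-\tilde K\,w^3$, with $\tilde K=K_{n,-}\,a_2/a_n$. Since $a_2(t)/a_n(t)=\big(1+(n-1)e^{-t}\big)/\big(1+e^{-t}\big)$ and $e^{-t'}=\big(2(s')^2+1\big)^{-1}$, this ratio equals $\big(2(s')^2+n\big)/\big(2((s')^2+1)\big)$; moreover $t'=t$ forces $ns^2=2(s')^2$, hence
\[
K_{n,-}=\frac{2+ns^2}{n+ns^2}\,K_{2,-}(s)=\frac{2\big(1+(s')^2\big)}{n+2(s')^2}\,K_{2,-}\!\big(\sqrt{2/n}\,s'\big).
\]
The two rational factors then cancel exactly, leaving $\tilde K(s')=K_{2,-}\big(\sqrt{2/n}\,s'\big)$ --- this cancellation is the whole reason for the prefactor $\tfrac{2+ns^2}{n+ns^2}$ in the definition of $K_{n,-}$ --- and in particular $\tilde K'(s')\leq 0$.

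The main obstacle, as I see it, is to check that the transplanted data is admissible, i.e. $w\in\mathcal Y_2$ and $\tilde K\in\mathcal X_2$, even though $\mathcal O_{\mathbb{P}^1}(-n)$ and $\mathcal O_{\mathbb{P}^1}(-2)$ are non-diffeomorphic. Here a short computation gives $\big(2+1/(s')^2\big)\,w(s')=\tfrac{2}{n}\,f\big(\sqrt{2/n}\,s'\big)$, where $f=(n+1/s^2)v\in\mathcal X_n$; since $t'=t$ yields $\hr^2=\tfrac{n}{2}(\hr')^2$, the two smoothness conditions defining $\mathcal X_2$ --- smoothness in the squared radial variable near the orbifold point $s'=0$, and smoothness across the core $\mathbb{P}^1$ at $\hr'=0$ --- transfer directly from the corresponding conditions for $f$, so $\tfrac{2}{n}f(\sqrt{2/n}\,\cdot)\in\mathcal X_2$ and hence $w\in\mathcal Y_2$; the identical argument shows $\tilde K\in\mathcal X_2$. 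With this in hand, $w\in\mathcal Y_2$ solves \eqref{leb_ALE_conf} on $\mathcal O_{\mathbb{P}^1}(-2)$ with the nonincreasing potential $\tilde K\in\mathcal X_2$, which is forbidden by Proposition~\ref{O(-2)}; this contradiction proves the nonexistence statement.

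For the remaining claims I would argue as follows. By \eqref{leb_obf} one may take normal coordinates near $\check q$ in which $\check g_{LEB(n)}$ agrees with the flat metric up to $O(|x|^2)$, so for any radial $F$ smooth in $s^2$ one has $\Delta_{\check g_{LEB(n)}}F(\check q)=4F''(0)$. Writing $K_{n,-}(s)=P(s)K_{2,-}(s)$ with $P(s)=\tfrac{2+ns^2}{n+ns^2}$, one computes $P(0)=\tfrac{2}{n}$, $P'(0)=0$, $P''(0)=\tfrac{2(n-2)}{n}$, and $K_{2,-}'(0)=0$, whence
\[
\Delta_{\check g_{LEB(n)}}K_{n,-}(\check q)-4(n-2)K_{n,-}(\check q)=\frac{8}{n}\,K_{2,-}''(0),
\]
so $K_{n,-}\in\mathcal X_{n,-}$ precisely when $K_{2,-}''(0)<0$, by the definition in \eqref{X_decomp}. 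Finally, to cover the Yamabe case $K\equiv c>0$: for $n=2$ take $K_{2,-}\equiv c$, and for $n\geq 3$ take $K_{2,-}(s)=\tfrac{cn(1+s^2)}{2+ns^2}$, which is positive and smooth on $\check{\mathcal O}_{\mathbb{P}^1}(-n)$, is monotonically decreasing in $s$ (its $s$-derivative has the sign of $2-n$), and satisfies $\tfrac{2+ns^2}{n+ns^2}K_{2,-}(s)=c$; the nonexistence result then rules out $\mathrm{U}(2)$-invariant solutions of the constant-$K$ problem for all $n\geq 2$.
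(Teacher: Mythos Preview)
Your proof is correct and follows essentially the same route as the paper: reduce to the Eguchi--Hanson case via the common $t$-coordinate, observe that the prefactor $\tfrac{2+ns^2}{n+ns^2}$ is engineered precisely so that the transplanted potential becomes $K_{2,-}(\sqrt{2/n}\,s')$, and then invoke Proposition~\ref{O(-2)}. Your treatment is in fact slightly more complete than the paper's in two places: you explicitly verify that the transplanted data $(w,\tilde K)$ lies in $\mathcal Y_2\times\mathcal X_2$ (the paper asserts this without comment), and you spell out the choice $K_{2,-}(s)=\tfrac{cn(1+s^2)}{2+ns^2}$ that realizes $K\equiv c$ for $n\geq 3$, which the paper leaves implicit in the phrase ``in particular''.
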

\begin{proof}
For such a $K = K_{n, -}(s)$, suppose there is a $u_n(s)\in \mathcal X_n$ that solves equation \eqref{leb_conf}, then $v_n(s)=u_n(s)/(n+s^{-2})\in \mathcal Y_n$ solves equation \eqref{leb_ALE_conf}. Define
\begin{align}
\label{barKn-}
\bar v_n(t) = v_n\Big(\sqrt{\frac{e^{t}-1}{n}}\Big),\ \ \bar K_{n,-}(t) = K_{n,-}\Big(\sqrt{\frac{e^{t}-1}{n}}\Big).
\end{align}
By \eqref{st} and \eqref{Delta_t}, we know \eqref{leb_ALE_conf} is equivalent to the following equation
\begin{align}
\bar v_n''(t) = -\bar K_{n,-}(t)\cdot \frac{e^{-t}(1+(n-1)e^{-t})}{4(1-e^{-t})^3} \bar v_n(t)^3.
\end{align} 
It implies
\begin{align}
\label{transform_equ}
\bar v_n''(t) = -\frac{(1+(n-1)e^{-t})\bar K_{n,-}(t)}{1+e^{-t}}\cdot \frac{e^{-t}(1+e^{-t})}{4(1-e^{-t})^3} \bar v_n(t)^3.
\end{align}
Define
\begin{align}
\label{barK2-}
\bar K_{2,-}(t) = \frac{(1+(n-1)e^{-t})\bar K_{n,-}(t)}{1+e^{-t}},
\end{align}
then \eqref{transform_equ} becomes
\begin{align}
\label{beta_t_equ}
\bar v_n''(t) = -\bar K_{2,-}(t)\cdot \frac{e^{-t}(1+e^{-t})}{4(1-e^{-t})^3} \bar v_n(t)^3,
\end{align}
which can be viewed as \eqref{leb_ALE_conf} on $(\mathcal{O}_{\mathbb {P}^1}(-2),  g_{LEB(2)})$ in the $t$-coordinate, with $K=\bar K_{2,-}(t)$ and $v=\bar v_n(t)$. To transform $\bar v_n(t)$ and $\bar K_{2,-}(t)$ back to $s$-coordinate on $\mathcal{O}_{\mathbb {P}^1}(-2)$, using \eqref{st} with $n=2$, we define
\begin{align}
v_2(s) = \bar v_n(\log(2 s^2+1))\ \ \mathrm{and}\ \ K_2(s) = \bar K_{2,-}(\log(2 s^2+1)).
\end{align}
Thus we know that on $\mathcal{O}_{\mathbb {P}^1}(-2)$, $v_2(s)\in\mathcal Y_2$ solves \eqref{leb_ALE_conf} with $K = K_2(s)\in \mathcal X_2$. By combining \eqref{Kn-}, \eqref{barKn-} and \eqref{barK2-}, it is not hard to see that
\begin{align}
K_2(s) = \frac{n+2s^2}{2+2s^2}\bar K_{n,-}(\log(2 s^2+1)) =  \frac{n+2s^2}{2+2s^2}K_{n,-}\Big(\sqrt{\frac{2}{n}}s\Big) = K_{2,-}\Big(\sqrt{\frac{2}{n}}s\Big).
\end{align}
Our assumption that $K_{2,-}(s)$ is a monotonically decreasing function in $s$ implies $K_2'(s)\leq 0$ for all $s$, which leads to a contradiction against Proposition \ref{O(-2)}.

For the last part, assume $K(s)\in \mathcal X_n$. Near $s=0$, 
$K$ has a power series expansion
\begin{align}
K(s) = K(0) + \frac{1}{2}K''(0)s^2 + O(s^4).
\end{align}
By \eqref{trans_lap} and \eqref{leb_obf},
\begin{align}
\Delta_{\check g_{LEB(n)}}K(0) = \Delta_{g_{\RR^4/\Gamma_n}}K(0) = K''(0) + \frac{3}{s}K'(s)\Big|_{s=0} = 4K''(0).
\end{align}
So, $K_{n,-}(s)\in \mathcal X_{n, -}$ is equivalent to 
\begin{align}
\frac{K_{n,-}''(0)}{K_{n,-}(0)} < n - 2.
\end{align}
If $K_{2,-}''(0)<0$, then we have
\begin{align}
\frac{K_{n,-}''(0)}{K_{n,-}(0)} = \frac{\frac{d^2}{ds^2}\Big( \frac{2+n s^2}{n + n s^2}K_{2,-}(s)\Big)}{ \frac{2+n s^2}{n + n s^2}K_{2,-}(s)}\Bigg|_{s=0} = n-2 + \frac{K_{2,-}''(0)}{K_{2,-}(0)} < n-2.
\end{align}
It follows $K_{n,-}(s)\in \mathcal X_{n, -}$.
\end{proof}

\begin{proof}[Proof of Theorem \ref{lebrun_thm}]
By Proposition \ref{cpt_leb}, and homotopy invariance of the Leray-Schauder degree, $\deg(F_{p,K,n},\Omega_{\Lambda,n}, 0)$ is equal to a constant in either case (1) or (2) in Theorem~\ref{lebrun_thm}. For any $n\in \mathbb N^*$, Theorem~\ref{lebrun_thm_2} implies that
\begin{align}
\deg(F_{3,K,n},\Omega_{\Lambda,n}, 0)= 0,
\end{align}
for $K\in \mathcal X_{n,-}$
satisfying the assumptions of Theorem~\ref{lebrun_thm_2}.
Homotopy invariance implies that the degree is zero for any $K\in \mathcal X_{n,-}$, which proves (2) of Theorem~\ref{lebrun_thm}. Part (1) of Theorem~\ref{lebrun_thm} is proved by a standard subcritical degree counting argument; see \cite{Schoen_csc}. 
\end{proof}

\bibliographystyle{amsalpha}
\bibliography{JV}

\providecommand{\bysame}{\leavevmode\hbox to3em{\hrulefill}\thinspace}
\providecommand{\MR}{\relax\ifhmode\unskip\space\fi MR }
\providecommand{\MRhref}[2]{%
  \href{http://www.ams.org/mathscinet-getitem?mr=#1}{#2}
}
\providecommand{\href}[2]{#2}
\begin{thebibliography}{BACCH96}

\bibitem[AB03]{ABI}
K.~Akutagawa and B.~Botvinnik, \emph{Yamabe metrics on cylindrical manifolds},
  Geom. Funct. Anal. \textbf{13} (2003), no.~2, 259--333.

\bibitem[AB04]{ABII}
Kazuo Akutagawa and Boris Botvinnik, \emph{The {Y}amabe invariants of orbifolds
  and cylindrical manifolds, and {$L^2$}-harmonic spinors}, J. Reine Angew.
  Math. \textbf{574} (2004), 121--146.

\bibitem[ACM14]{ACMI}
Kazuo Akutagawa, Gilles Carron, and Rafe Mazzeo, \emph{The {Y}amabe problem on
  stratified spaces}, Geom. Funct. Anal. \textbf{24} (2014), no.~4, 1039--1079.

\bibitem[ACM19]{ACMII}
\bysame, \emph{The {Y}amabe problem on {D}irichlet spaces}, Tsinghua lectures
  in mathematics, Adv. Lect. Math. (ALM), vol.~45, Int. Press, Somerville, MA,
  2019, pp.~101--122.

\bibitem[Aku12]{Akutagawa}
Kazuo Akutagawa, \emph{Computations of the orbifold {Y}amabe invariant}, Math.
  Z. \textbf{271} (2012), no.~3-4, 611--625.

\bibitem[Aub98a]{Aubin3}
Thierry Aubin, \emph{Some nonlinear problems in {R}iemannian geometry},
  Springer-Verlag, Berlin, 1998.

\bibitem[Aub98b]{Aubin}
\bysame, \emph{Some nonlinear problems in {R}iemannian geometry}, Springer
  Monographs in Mathematics, Springer-Verlag, Berlin, 1998.

\bibitem[BAA08]{BA}
Mohamed Ben~Ayed and Mohameden~Ould Ahmedou, \emph{Multiplicity results for the
  prescribed scalar curvature on low spheres}, Ann. Sc. Norm. Super. Pisa Cl.
  Sci. (5) \textbf{7} (2008), no.~4, 609--634.

\bibitem[BACCH96]{Duke}
Mohamed Ben~Ayed, Yansong Chen, Hichem Chtioui, and Mokhles Hammami, \emph{On
  the prescribed scalar curvature problem on {$4$}-manifolds}, Duke Math. J.
  \textbf{84} (1996), no.~3, 633--677.

\bibitem[Bar86]{Bartnik}
Robert Bartnik, \emph{The mass of an asymptotically flat manifold}, Comm. Pure
  Appl. Math. \textbf{39} (1986), no.~5, 661--693.

\bibitem[Bor92]{Borzellino}
Joseph~Ernest Borzellino, \emph{Riemannian geometry of orbifolds}, ProQuest
  LLC, Ann Arbor, MI, 1992, Thesis (Ph.D.)--University of California, Los
  Angeles.

\bibitem[CGS89]{G-S}
Luis~A. Caffarelli, Basilis Gidas, and Joel Spruck, \emph{Asymptotic symmetry
  and local behavior of semilinear elliptic equations with critical {S}obolev
  growth}, Comm. Pure Appl. Math. \textbf{42} (1989), no.~3, 271--297.

\bibitem[CGY93]{CGY}
Sun-Yung~A. Chang, Matthew~J. Gursky, and Paul~C. Yang, \emph{The scalar
  curvature equation on {$2$}- and {$3$}-spheres}, Calc. Var. Partial
  Differential Equations \textbf{1} (1993), no.~2, 205--229.

\bibitem[CL01]{ChenLin}
Chiun-Chuan Chen and Chang-Shou Lin, \emph{Prescribing scalar curvature on
  {$S^N$}. {I}. {A} priori estimates}, J. Differential Geom. \textbf{57}
  (2001), no.~1, 67--171.

\bibitem[CLV21]{CLV}
Gilles Carron, J\o rgen~Olsen Lye, and Boris Vertman, \emph{Convergence of the
  {Y}amabe flow on singular spaces with positive {Y}amabe constant},
  arXiv.org:2106.01799, 2021.

\bibitem[DL16]{D-L}
Michael~G. Dabkowski and Michael~T. Lock, \emph{On {K}\"{a}hler conformal
  compactifications of {$U(n)$}-invariant {ALE} spaces}, Ann. Global Anal.
  Geom. \textbf{49} (2016), no.~1, 73--85.

\bibitem[Dru03]{Druet}
Olivier Druet, \emph{From one bubble to several bubbles: the low-dimensional
  case}, J. Differential Geom. \textbf{63} (2003), no.~3, 399--473.

\bibitem[ES86]{E-S}
Jos\'{e}~F. Escobar and Richard~M. Schoen, \emph{Conformal metrics with
  prescribed scalar curvature}, Invent. Math. \textbf{86} (1986), no.~2,
  243--254.

\bibitem[Hab00]{Habermann}
Lutz Habermann, \emph{Riemannian metrics of constant mass and moduli spaces of
  conformal structures}, Lecture Notes in Mathematics, vol. 1743,
  Springer-Verlag, Berlin, 2000.

\bibitem[HL16]{HeinLeBrun}
Hans-Joachim Hein and Claude LeBrun, \emph{Mass in {K}\"{a}hler geometry},
  Comm. Math. Phys. \textbf{347} (2016), no.~1, 183--221.

\bibitem[HM20]{HirschMiao}
Sven Hirsch and Pengzi Miao, \emph{A positive mass theorem for manifolds with
  boundary}, Pacific J. Math. \textbf{306} (2020), no.~1, 185--201.

\bibitem[KMS09]{KMS}
M.~A. Khuri, F.~C. Marques, and R.~M. Schoen, \emph{A compactness theorem for
  the {Y}amabe problem}, J. Differential Geom. \textbf{81} (2009), no.~1,
  143--196.

\bibitem[LeB88]{LeBrun}
Claude LeBrun, \emph{Counter-examples to the generalized positive action
  conjecture}, Comm. Math. Phys. \textbf{118} (1988), no.~4, 591--596.

\bibitem[Li95]{YanyanI}
Yanyan Li, \emph{Prescribing scalar curvature on {$S^n$} and related problems.
  {I}}, J. Differential Equations \textbf{120} (1995), no.~2, 319--410.

\bibitem[Li96]{YanyanII}
\bysame, \emph{Prescribing scalar curvature on {$S^n$} and related problems.
  {II}. {E}xistence and compactness}, Comm. Pure Appl. Math. \textbf{49}
  (1996), no.~6, 541--597.

\bibitem[LP87]{Lee-Parker}
John~M. Lee and Thomas~H. Parker, \emph{The {Y}amabe problem}, Bull. Amer.
  Math. Soc. (N.S.) \textbf{17} (1987), no.~1, 37--91.

\bibitem[LZ99]{Li-Zhu}
Yanyan Li and Meijun Zhu, \emph{Yamabe type equations on three-dimensional
  {R}iemannian manifolds}, Commun. Contemp. Math. \textbf{1} (1999), no.~1,
  1--50.

\bibitem[LZ05]{Li-Zhang}
Yanyan Li and Lei Zhang, \emph{Compactness of solutions to the {Y}amabe
  problem. {II}}, Calc. Var. Partial Differential Equations \textbf{24} (2005),
  no.~2, 185--237.

\bibitem[LZ14]{Leung-Zhou}
Man~Chun Leung and Feng Zhou, \emph{Prescribed scalar curvature equation on
  {$S^n$} in the presence of reflection or rotation symmetry}, Proc. Amer.
  Math. Soc. \textbf{142} (2014), no.~5, 1607--1619.

\bibitem[Mal02]{Malchiodi}
Andrea Malchiodi, \emph{The scalar curvature problem on {$S^n$}: an approach
  via {M}orse theory}, Calc. Var. Partial Differential Equations \textbf{14}
  (2002), no.~4, 429--445.

\bibitem[Mar05]{Marques}
Fernando~Coda Marques, \emph{A priori estimates for the {Y}amabe problem in the
  non-locally conformally flat case}, J. Differential Geom. \textbf{71} (2005),
  no.~2, 315--346.

\bibitem[May17]{Mayer}
Martin Mayer, \emph{A scalar curvature flow in low dimensions}, Calc. Var.
  Partial Differential Equations \textbf{56} (2017), no.~2, Paper No. 24, 41.

\bibitem[MM20]{MalchiodiMayer}
Andrea Malchiodi and Martin Mayer, \emph{Prescribing {M}orse scalar curvatures:
  subcritical blowing-up solutions}, J. Differential Equations \textbf{268}
  (2020), no.~5, 2089--2124.

\bibitem[MM21]{M-M}
\bysame, \emph{Prescribing {M}orse {S}calar {C}urvatures: {B}low-{U}p
  {A}nalysis}, Int. Math. Res. Not. IMRN (2021), no.~18, 14123--14203.

\bibitem[Mon17]{MondelloI}
Ilaria Mondello, \emph{The local {Y}amabe constant of {E}instein stratified
  spaces}, Ann. Inst. H. Poincar\'{e} Anal. Non Lin\'{e}aire \textbf{34}
  (2017), no.~1, 249--275.

\bibitem[Mon18]{MondelloII}
\bysame, \emph{An {O}bata singular theorem for stratified spaces}, Trans. Amer.
  Math. Soc. \textbf{370} (2018), no.~6, 4147--4175.

\bibitem[MV20]{M-V}
Peyman Morteza and Jeff~A. Viaclovsky, \emph{The {C}alabi metric and
  desingularization of {E}instein orbifolds}, J. Eur. Math. Soc. (JEMS)
  \textbf{22} (2020), no.~4, 1201--1245.

\bibitem[Nak90]{Nakajima}
Hiraku Nakajima, \emph{Self-duality of {ALE} {R}icci-flat {$4$}-manifolds and
  positive mass theorem}, Recent topics in differential and analytic geometry,
  Adv. Stud. Pure Math., vol.~18, Academic Press, Boston, MA, 1990,
  pp.~385--396.

\bibitem[Oba72]{Obata}
Morio Obata, \emph{The conjectures on conformal transformations of {R}iemannian
  manifolds}, J. Differential Geometry \textbf{6} (1971/72), 247--258.

\bibitem[Sch84]{Schoen}
Richard~M. Schoen, \emph{Conformal deformation of a {R}iemannian metric to
  constant scalar curvature}, J. Differential Geom. \textbf{20} (1984), no.~2,
  479--495.

\bibitem[Sch89a]{Schoen_Courses}
\bysame, \emph{Courses at {C}ourant {I}nstitute and {S}tanford {U}niversity},
  1989.

\bibitem[Sch89b]{Schoen1987}
\bysame, \emph{Variational theory for the total scalar curvature functional for
  {R}iemannian metrics and related topics}, Topics in calculus of variations
  ({M}ontecatini {T}erme, 1987), Lecture Notes in Math., vol. 1365, Springer,
  Berlin, 1989, pp.~120--154.

\bibitem[Sch91]{Schoen_csc}
\bysame, \emph{On the number of constant scalar curvature metrics in a
  conformal class}, Differential geometry, Pitman Monogr. Surveys Pure Appl.
  Math., vol.~52, Longman Sci. Tech., Harlow, 1991, pp.~311--320.

\bibitem[SY79]{SYI}
Richard~M. Schoen and Shing-Tung Yau, \emph{On the proof of the positive mass
  conjecture in general relativity}, Comm. Math. Phys. \textbf{65} (1979),
  no.~1, 45--76.

\bibitem[SY81]{SYII}
\bysame, \emph{Proof of the positive mass theorem. {II}}, Comm. Math. Phys.
  \textbf{79} (1981), no.~2, 231--260.

\bibitem[SY94]{SchoenYaubook}
R.~Schoen and S.-T. Yau, \emph{Lectures on differential geometry}, Conference
  Proceedings and Lecture Notes in Geometry and Topology, I, International
  Press, Cambridge, MA, 1994.

\bibitem[SY17]{SchoenYauPMT}
Richard~M. Schoen and Shing-Tung Yau, \emph{Positive scalar curvature and
  minimal hypersurface singularities}, arXiv.org:1704.05490, 2017.

\bibitem[SZ96]{SchoenZhang}
Richard~M. Schoen and Dong Zhang, \emph{Prescribed scalar curvature on the
  {$n$}-sphere}, Calc. Var. Partial Differential Equations \textbf{4} (1996),
  no.~1, 1--25.

\bibitem[Via10]{monopole_V}
Jeff~A. Viaclovsky, \emph{Monopole metrics and the orbifold {Y}amabe problem},
  Ann. Inst. Fourier (Grenoble) \textbf{60} (2010), no.~7, 2503--2543 (2011).

\bibitem[Via13]{Viaclovsky_Tohoku}
\bysame, \emph{Einstein metrics and {Y}amabe invariants of weighted projective
  spaces}, Tohoku Math. J. (2) \textbf{65} (2013), no.~2, 297--311.

\bibitem[Yam60]{Yamabe}
Hidehiko Yamabe, \emph{On a deformation of {R}iemannian structures on compact
  manifolds}, Osaka Math. J. \textbf{12} (1960), 21--37.

\end{thebibliography}

\end{document}